\documentclass[a4paper,10pt]{amsart}



\usepackage[dvipsnames]{xcolor} 
\usepackage{amsmath,amsthm,amssymb}
\usepackage{enumitem}
\usepackage{tikz}
\usepackage{hyperref}
\usepackage{cleveref}
\usepackage{amsfonts,marvosym,amscd}
\usepackage{mathtools}
\usepackage{mathscinet}
\usepackage{wasysym} 
\usepackage{graphicx}
\usepackage{psfrag}
\usepackage{datetime}
\usepackage{tikz-cd} 
\usepackage{adjustbox}
\usepackage{verbatim}
\usepackage{subcaption}

\usepackage[margin=25mm]{geometry}

\usepackage{tabstackengine}
\setstackgap{L}{.75\baselineskip}


\usetikzlibrary{decorations.pathmorphing}
\usetikzlibrary{positioning}
\usetikzlibrary{calc}


\makeatletter
\tikzset{
    dot diameter/.store in=\dot@diameter,
    dot diameter=3pt,
    dot spacing/.store in=\dot@spacing,
    dot spacing=10pt,
    dots/.style={
        line width=\dot@diameter,
        line cap=round,
        dash pattern=on 0pt off \dot@spacing
    }
}
\makeatother



\raggedbottom


\newtheorem{theorem}{Theorem}[section]
\newtheorem{lemma}[theorem]{Lemma}
\newtheorem{proposition}[theorem]{Proposition}
\newtheorem{corollary}[theorem]{Corollary}

\newtheorem{conjecture}[theorem]{Conjecture}
\newtheorem{fact}[theorem]{Fact}
\newtheorem{problem}[theorem]{Problem}

\newenvironment{customthm}[1]
  {\innercustomthm}
  {\endinnercustomthm}

\theoremstyle{definition}
\newtheorem{definition}[theorem]{Definition}

\newtheorem{example}[theorem]{Example}

\theoremstyle{remark}
\newtheorem{remark}[theorem]{Remark}


\DeclareMathOperator{\Hom}{Hom}

\DeclareMathOperator{\Ext}{Ext}
\DeclareMathOperator{\End}{End}

\DeclareMathOperator{\modules}{mod}
\renewcommand{\mod}{\modules}
\DeclareMathOperator{\additive}{add}

\DeclareMathOperator{\Stab}{Stab}

\DeclareMathOperator{\Filt}{Filt}

\DeclareMathOperator{\brick}{brick}
\DeclareMathOperator{\tors}{tors} 
\DeclareMathOperator{\torf}{torf}
\DeclareMathOperator{\dimu}{\underline{dim}}

\renewcommand{\emptyset}{\varnothing}


\newcommand{\mg}[1]{\mathsf{MC}(#1)} 
\newcommand{\mge}[1]{\widetilde{\mathsf{MC}}(#1)} 
\newcommand{\mgel}[1]{\widetilde{\mathsf{MC}}_{\lambda}(#1)} 
\newcommand{\mgea}[2]{\widetilde{\mathsf{MC}}_{#1}(#2)} 
\newcommand{\mgmap}[1]{\mg{#1}}
\newcommand{\mgtmap}[1]{\mge{#1}}
\newcommand{\mgs}[1]{\mathsf{MG}(#1)} 
\newcommand{\mgse}[1]{\widetilde{\mathsf{MG}}(#1)} 





\newcommand{\rtss}[1]{\mathsf{R2SS}(#1)} 
\newcommand{\inv}[1]{\mathrm{inv}(#1)} 
\newcommand{\invc}[1]{\mathrm{inv}_{c}(#1)} 
\newcommand{\inva}[2]{\mathrm{inv}_{#1}(#2)}

\newcommand{\sbf}[2]{\mathcal{B}(#1, #2)} 
\newcommand{\sbfb}[2]{\mathcal{B}\bigl(#1, #2\bigr)} 
\newcommand{\ef}[2]{\mathcal{E}_c(#1, #2)} 
\newcommand{\asef}[2]{\mathcal{W}_c(#1, #2)} 
\newcommand{\asefa}[3]{\mathcal{W}_{#1}(#2, #3)}


\newcommand{\hor}[1]{#1^{\bot}} 
\newcommand{\hol}[1]{\prescript{\bot}{}{#1}} 


\newcommand{\bruhat}[2]{\mathcal{B}(#1, #2)} 
\newcommand{\stash}[2]{\mathcal{S}(#1, #2)} 


\newcommand{\st}{\mid} 




\newcommand{\bricks}[1]{\mathsf{B}(#1)} 







\newcommand{\covrel}[1]{\mathsf{E}(#1)}
\newcommand{\polygon}[2]{#1 \textup{\pentagon} #2}
\newcommand{\coll}[1]{\mathsf{Pos}(#1)}
\newcommand{\preord}{\preccurlyeq} 
\newcommand{\preordstr}{\prec} 
\newcommand{\preordop}{\succcurlyeq}
\newcommand{\precdot}{\prec\mathrel{\mkern-5mu}\mathrel{\cdot}}
\newcommand{\preordot}{\precdot}


\newcommand{\heap}[1]{\mathsf{Heap}(#1)}



\makeatletter
\setcounter{tocdepth}{1} 
\def\@tocline#1#2#3#4#5#6#7{\relax
  \ifnum #1>\c@tocdepth 
  \else
    \par \addpenalty\@secpenalty\addvspace{#2}%
    \begingroup \hyphenpenalty\@M
    \@ifempty{#4}{%
      \@tempdima\csname r@tocindent\number#1\endcsname\relax
    }{%
      \@tempdima#4\relax
    }%
    \parindent\z@ \leftskip#3\relax \advance\leftskip\@tempdima\relax
    \rightskip\@pnumwidth plus4em \parfillskip-\@pnumwidth
    #5\leavevmode\hskip-\@tempdima
      \ifcase #1
       \or\or \hskip 1em \or \hskip 2em \else \hskip 3em \fi%
      #6\nobreak\relax
    \dotfill\hbox to\@pnumwidth{\@tocpagenum{#7}}\par
    \nobreak
    \endgroup
  \fi}
\makeatother


\usepackage[backend=biber,
        giveninits=true,
	style=alphabetic,
	url=false,
	doi=false,
	isbn=false,
	maxbibnames=99]{biblatex}
\addbibresource{biblio.bib}
\DeclareFieldFormat{title}{#1} 
\renewbibmacro{in:}{} 
\emergencystretch=1em 
\DeclareFieldFormat{postnote}{#1} 


\title[
Contractions of preorders on maximal chains
]
{Preorders on maximal chains: hyperplane arrangements, \\ Cambrian lattices, and maximal green sequences}
\author{Mikhail Gorsky}
\email{mikhail.gorskii@uni-hamburg.de}
\address{Universit\"at Hamburg, Fachbereich Mathematik, Bundesstraße 55, 20146 Hamburg, Germany}
\author{Nicholas J. Williams}
\email{nw480@cam.ac.uk}
\address{Department of Pure Mathematics and Mathematical Statistics, Centre for Mathematical Sciences, University of Cambridge, Wilberforce Road, Cambridge, CB3 0WB, United Kingdom}

\subjclass[2020]{Primary: 06A07; Secondary: 16G20, 17B22, 13F60}

\keywords{Preorders, posets, lattices, edge labellings, quotients, contractions of posets, weak Bruhat order, Cambrian lattices, higher Bruhat orders, preprojective algebras, maximal green sequences.
}
\thanks{}

\begin{document}

\begin{abstract}
We study preorders on (equivalence classes of) maximal chains in the general context of polygonal lattices endowed with suitably nice edge labellings.
We show that, given a quotient of polygonal lattices, such edge labellings descend to the quotient, and that there is an induced order-preserving surjective map on the preordered sets of equivalence classes of maximal chains. Under a natural condition ensuring that the domain is a poset, the map is a contraction of preordered sets.
We apply this to lattices of regions of simplicial hyperplane arrangements, where the preorders are partial orders, in particular to finite Coxeter arrangements.
For the latter, each choice of Coxeter element gives us a different partial order on the set of equivalence classes of maximal chains; these generalise certain reoriented higher Bruhat orders in dimension two.
The maps of posets of maximal chains induced by Cambrian congruences generalise the map of Kapranov and Voevodsky from the higher Bruhat orders to the higher Stasheff--Tamari orders in dimension two.
While the fibres of this map are known not always to be intervals, our results show that they are always connected.
We show that, in the case of Cambrian lattices, the induced maps on maximal chains have nice descriptions in terms of orientations of rank-two root subsystems, in a way which resembles taking the stable objects of a Rudakov stability condition.
We finally consider the algebraic realisation of the weak order and Cambrian lattices via torsion-free classes of preprojective and path algebras, relating the posets of maximal chains to our earlier work on maximal green sequences.
\end{abstract}

\maketitle

\tableofcontents

\section{Introduction}

Many interesting mathematical objects can be construed as maximal chains of a particular poset.
Composition series are maximal chains in the poset of submodules and flags are maximal chains in the poset of vector subspaces.
Standard Young tableaux are maximal chains of the subposets given by principal order ideals of Young's lattice.
Particular examples that we will study in this paper are reduced expressions of the longest element of a Coxeter group and maximal green sequences of a finite-dimensional algebra.

In some cases, sets of maximal chains can themselves possess natural partial orders, perhaps when first subject to an equivalence relation.
The original example here \textit{par excellence} is the higher Bruhat orders of Manin and Schechtman \cite{ms}, originally introduced in the context of hyperplane arrangements and higher braid groups, but which have subsequently gone on to arise in diverse areas such as Soergel bimodules \cite{elias_bruhat}, the theory of social choice in economics \cite{gr_bruhat}, and Steenrod operations in algebraic topology \cite{njw-gla}. 
Elements of the two-dimensional Bruhat order are commutation classes of reduced expressions of the longest element $w_0$ in the symmetric group $\mathfrak{S}_n$, and the three-dimensional Bruhat order is defined on equivalence classes of maximal chains of such commutation classes. Some variations of these orders for not necessarily reduced expressions have been applied to establish the diamond lemma for Hecke-type algebras and categories \cite{elias_diamond}, and, in the guise of Demazure weaves \cite[Remark 4.24]{CGGS1}, to produce seeds of cluster structures on braid varieties \cite{CGGLSS} and exact embedded Lagrangian fillings of Legendrian links \cite{casals_gao, CLSBW}.
A similar example is the higher Stasheff--Tamari orders of Kapranov and Voevodsky \cite{kv-poly}, subsequently studied in \cite{er,njw-equal}.
These arise out of the combinatorics of KP solitons, together with the higher Bruhat orders \cite{dm-h}, and also in the representation theory of finite-dimensional algebras \cite{njw-hst}.
In particular, the three-dimensional higher Stasheff--Tamari orders appear as posets of equivalence classes of maximal green sequences of certain finite-dimensional algebras: the path algebras of linearly oriented quivers of type $A$.
In previous work \cite{gw1} we extended this by studying a natural equivalence relation on maximal green sequences of arbitrary finite-dimensional algebras and partial orders on equivalence classes.

We begin this paper by studying preorders on equivalence classes of maximal chains in a general order-theoretic context.
This is in a similar spirit to the maximal chain descent orders of \cite{Lacina}, where the setting is finite bounded posets with a CL-labelling.
Our setting is finite polygonal lattices $L$ with poset-valued edge labellings $\lambda$ which have certain properties we call being ``forcing consistent'', ``polygonal'', and ``polygon-complete''. Polygonal lattices are, very roughly speaking, lattices whose Hasse graphs are glued from planar polygons, each having precisely two distinct maximal chains; see \Cref{subsec:background_polygonal} for precise definition. Examples of polygonal lattices include lattices of regions in simplicial hyperplane arrangements \cite{reading_regions}, weak orders of finite Coxeter groups, Cambrian lattices, lattices of torsion classes of $\tau$-tilting finite algebras \cite{dirrt} and, broadly, various lattices of biconvex sets arising in algebraic combinatorics \cite{g-mc}. 
Forcing-consistency is one of the defining conditions of CN-labellings \cite{reading2003}.
Polygonality is a variation of the notion of EL-labelling, the latter being key in the study of shellings of (order complexes of) posets \cite{bjorner}. 
Given a forcing-consistent polygonal edge labelling $\lambda$, we define a preorder of equivalence classes of maximal chains $\mgea{\lambda}{L}$. We show that given such an edge labelling $\lambda$ and a lattice congruence $\theta$ on $L$, there is a natural induced  edge labelling $\lambda_{\theta}$ of the quotient lattice $L/\theta$ which is also forcing-consistent and polygonal. We then have an induced order-preserving surjective map of preorders $
\mgtmap{q} \colon \mgea{\lambda}{L} \to \mgea{\lambda_{\theta}}{L/\theta}$, see \Cref{thm:contraction}\ref{op:contraction:op_surj}.
When $\lambda$ is moreover polygon-complete, the following theorem holds.

\begin{customthm}{A}[{\Cref{lem:polygon-complete_poset} and \Cref{thm:contraction}\ref{op:contraction:contraction}}]\label{thm:intro:contraction}
Suppose that $L$ is a finite polygonal lattice with $\lambda \colon \covrel{L} \to E$ a forcing-consistent polygon-complete
polygonal edge labelling. Then the preorder $\mgea{\lambda}{L}$ is a partial order.
Further, given a lattice quotient $q \colon L \to L/\theta$, the induced map \[
\mgtmap{q} \colon \mgea{\lambda}{L} \to \mgea{\lambda_{\theta}}{L/\theta}
\]
is a contraction of preordered sets.
\end{customthm}

Here, a ``contraction'' of preordered sets is a type of quotient, which in particular includes all finite lattice congruences.
In general, the preordered sets $\mgea{\lambda}{L}$ and $\mgea{\lambda_{\theta}}{L/\theta}$ need not be lattices.
Moreover, when the codomain $\mgea{\lambda_{\theta}}{L/\theta}$ is also a poset, the map $\mgtmap{q}$ need not be an order congruence of posets, which is the analogue of a lattice congruence for arbitrary finite posets \cite{cs_cong,reading_order}.
We emphasise that $\mgtmap{q}$ 
being well-defined requires $L$ to be a lattice and $\theta$ to be a lattice congruence, 
as an order congruence of posets may send a maximal chain to a non-maximal one.

Contractions of posets first appeared in \cite{stanley_tpp} in the description of faces of the order polytope, but also later appeared independently in the context of toric varieties in \cite{Wagner}.
The connection between these two appearances was made in \cite{thomas_03}, where all three of contractions, order polytopes, and toric varieties were studied.
Contractions of posets were generalised to contractions of preordered sets in \cite{cebrian2022directed}.
An overview of the different types of congruences and quotients of posets can be found in \cite{njw-survey}.

We show how the setting of \Cref{thm:intro:contraction} arises in posets of regions of hyperplane arrangements.
There, natural forcing-consistent polygonal edge labellings are valued in the heap posets of fixed initial maximal chains, and the induced quotient labellings can be described in terms of shards \cite{reading_lcfha}.
Moreover, in this case, the preordered set $\mgea{\lambda}{L}$ is a poset.
We use this construction for hyperplane arrangements to then specialise to the setting of the weak Bruhat order on Coxeter groups, where the map from \Cref{thm:intro:contraction} coincides with a map from the higher Bruhat orders to the higher Stasheff--Tamari orders first defined by Kapranov and Voevodsky \cite{kv-poly}.
Indeed, the one-dimensional higher Bruhat order is the weak Bruhat order on $\mathfrak{S}_n$ and the two-dimensional higher Stasheff--Tamari order is the Tamari lattice.
There is a quotient map from the former to the latter which appears in \cite{bw_coxeter,bw_shell_2,tonks,lr_hopf,lr_order,loday_dialgs}.
As above, one can look at the map of equivalence classes of maximal chains associated with this quotient, as done by Kapranov and Voevodsky in \cite{kv-poly}.
They conjectured this map to be a surjection in all dimensions, but this remains an open problem.
It is known that this map cannot always be an order congruence by \cite{thomas-bst}, but it follows from \Cref{thm:intro:contraction} that in dimension~two it is a contraction (see \Cref{cor:map_f_contraction}), so in particular it has connected fibres.

The quotient from the weak Bruhat order on $\mathfrak{S}_n$ to the Tamari lattice is the prototypical example for Reading's theory of Cambrian lattices \cite{reading_cambrian}, which extends these quotients to other finite Coxeter groups and shows how different choices of Coxeter element $c$ give rise to different quotients.
In the same way as Kapranov and Voevodsky, one can look at the induced map on equivalence classes of maximal chains given by such a quotient.

In the case of Cambrian lattices, we show that the map from \Cref{thm:intro:contraction} is a contraction of posets, and that it has the following nice interpretation.
The edge labelling of the weak Bruhat order for the construction underlying \Cref{thm:intro:contraction} labels an edge by the positive root which gets added to the inversion set.
These roots are ordered according to the heap poset $\heap{\mathbf{w}_0(c)}$ of the $c$-sorting word $\mathbf{w}_0(c)$ of the longest element $w_0$.
We then obtain a nice description of which edge labels survive in the quotient using the following notion.
Given a positive root $\beta$, we say that $\beta$ is \emph{$c$-stable} in a maximal chain $C$ of a finite simply-laced Coxeter group $W$ if and only if all rank-two root subsystems in which $\beta$ occurs as a non-simple root are ordered the same by $C$ as by $\heap{\mathbf{w}_0(c)}$.

\begin{customthm}{B}[{Corollary~\ref{cor:chain_stability}}]\label{thm:intro:stability}
Let $W$ be a finite simply-laced Coxeter group with $c$ a Coxeter element, $q_c \colon W \to W_c$ the associated $c$-Cambrian quotient, and $C$ a maximal chain in the weak Bruhat order on~$W$.
Then a covering relation of $C$ labelled by a root $\beta$ is not contracted by $q_c$ if and only if $\beta$ is $c$-stable in~$C$.
Hence, the maximal chain $\mgtmap{q_c}(C)$ is labelled by the sequence of $c$-stable edge labels of~$C$.
\end{customthm}

This resembles
Rudakov stability conditions on an abelian category \cite{rudakov}.
An object $B$ is stable under such a stability condition if all short exact sequences $0 \to A \to B \to C \to 0$ are ordered $A < B < C$ by the stability condition.
This is analogous to our notion of $c$-stability, with short exact sequences replaced by (non-commutative) rank-two subsystems and the heap poset giving the direction of homomorphisms in the Auslander--Reiten quiver \cite{bedard}, \cite[Theorem~9.3.1]{stump2015cataland}.
The map $\mgtmap{q_c}$ is then akin to sending a Rudakov stability condition to its sequence of stable objects.

Cambrian quotients are categorified in \cite{dirrt} for simply-laced Coxeter groups, building on \cite{mizuno-preproj,it}.
Indeed, the weak Bruhat order $W$ is isomorphic to the lattice of torsion-free classes $\torf \Pi$ of the associated preprojective algebra $\Pi$, while the Cambrian lattice $W_c$ is isomorphic to the lattice of torsion-free classes $\torf \Lambda_{c}$ of the path algebra $\Lambda_{c}$ defined by a Coxeter element $c$; the quotient map of lattices is induced by the canonical quotient map of algebras.

Again, one can look at the induced map on equivalence classes of maximal chains given by this quotient.
Since maximal green sequences are maximal chains in the lattice of torsion-free classes, one obtains a map on equivalence classes of maximal green sequences.
One would like this to be an order-preserving map of the posets of equivalence classes of maximal green sequences from \cite{gw1}, but for the preprojective algebra $\Pi$, these posets $\mgse{\Pi}$ have no relations.
However, as we explain in this paper, there is actually good reason for this: there is no single order on $\mgse{\Pi}$, but actually a different natural order --- given by our general construction of orders on maximal chains from poset-valued edge labellings --- for each choice of Coxeter element, or equivalently each choice of orientation of the Coxeter diagram.
Hence, as an application of \Cref{thm:intro:contraction}, we obtain the following theorem, which was another large part of our original motivation.
In the following theorem, we let $(W, S)$ be a finite Coxeter system with a crystallographic Cartan matrix~$A$ and $c \in W$ a Coxeter element.
As explained in Section~\ref{sect:preproj_mgs_orders:alg_back}, one can construct a preprojective algebra $\Pi$ and hereditary tensor algebra $\Lambda_c$ compatible with this data.

\begin{customthm}{C}[{Corollary~\ref{cor:mgs_contraction}}]\label{thm:intro:mgs_preproj}
    There exists a partial order on $\mgse{\Pi}$ such that the map \[\mgtmap{q_c} \colon \mgse{\Pi} \to \mgse{\Lambda_c}\]
    induced by $q_c$ is a contraction of posets, where $q_c \colon \tors\Pi \to \tors\Lambda$ is the Cambrian quotient.
\end{customthm}

In \cite{gw1}, we proved that the poset $\mgse{\Lambda_c}$ has a natural maximum and a unique minimum.
We do not know if this is the case for the partial orders on the set $\mgse{\Pi}$ since the fibres of the contraction are not necessarily intervals (see Remark~\ref{rem:elias} for further discussion).
Still, we expect such partial orders to have nicer properties compared to other possible partial orders on the set $\mgse{\Pi} \cong \mge{W}$ of equivalence classes of maximal chains in the weak Bruhat order.
As an application of \Cref{thm:intro:mgs_preproj}, we are able to give a new proof of the categorical interpretation of Cambrian congruences from \cite[Theorem~7.2]{dirrt} which also applies in non-simply-laced cases (\Cref{prop:not_simply_laced}).

\subsection*{Organisation of the paper} 
In Section~\ref{sect:lattice_quot_and_max_chains}, we study properties of maps induced by lattice quotients between sets of (equivalence classes of) maximal chains in finite lattices.
In Section~\ref{sect:max_chain_posets}, we introduce preorders induced by edge labellings on equivalence classes of maximal chains in a polygonal lattice and prove \Cref{thm:intro:contraction}.
In Section~\ref{sect:regions}, we apply this general framework to lattices of regions of simplicial hyperplane arrangements.
In Section~\ref{sect:scm}, we specialise to finite Coxeter arrangements and quotients of the corresponding Coxeter groups, and in Section~\ref{sect:cambrian} we treat Cambrian congruences in more detail and prove \Cref{thm:intro:stability}.
 We finish the paper with the algebraic interpretations of our results.
In Section~\ref{sect:preproj_mgs_orders} we interpret our two-dimensional Bruhat and Cambrian posets via maximal green sequences of 
preprojective algebras and hereditary tensor algebras.
Our general results on contractions of posets in this setting specialise to \Cref{thm:intro:mgs_preproj}.
In Section~\ref{sect:hbo_hst} 
 we discuss the relation between our results and the higher Bruhat and Stasheff--Tamari orders.

\subsection*{Acknowledgements}
We thank Ben Elias for explanations of the relevance of higher Bruhat orders in the context of Soergel calculus and Andrew Hubery for a helpful discussion about algebraic statements in non-simply-laced types.

Some preliminary results were first presented by MG at the TRAC online seminar in April 2022, and he thanks the organisers for the opportunity. This work is part of a project that has received funding from the European Research Council (ERC) under the European Union’s Horizon 2020 research and innovation programme (grant agreement No.\ 101001159). 
Parts of this work were done during stays of MG at the University of Stuttgart, and he is very grateful to Steffen Koenig for the hospitality. MG acknowledges support by the 
Deutsche Forschungsgemeinschaft (DFG, German Research Foundation) – SFB 1624 – ``Higher structures, moduli spaces and integrability'' – 506632645.
NJW is currently supported by EPSRC grant EP/W001780/1 and was previously supported by EPSRC grant EP/V050524/1.

\section{Lattice quotients and maximal chains}
\label{sect:lattice_quot_and_max_chains}

We begin by studying quotients of lattices and induced maps on their maximal chains.

\subsection{Background}

We start with general background on preorders, posets, lattices, and their quotients.

\subsubsection{Preorders, posets, and lattices}

A \emph{preorder} $\preord$ on a set $P$ is a reflexive, transitive binary relation.
The pair $(P, \preord)$ is then called a \emph{preordered set}.
A~map $f \colon P \to P'$ of preordered sets is \emph{order-preserving} if $f(p) \preord f(p')$ whenever $p \preord p'$.
A \emph{partial order} $\leqslant$ on a set $P$ is an anti-symmetric preorder; in this case, the pair $(P, \leqslant)$ is called a \emph{partially ordered set} or \emph{poset}.
A symmetric preorder is an equivalence relation.
Indeed, every preorder $\preord$ induces an equivalence relation $\phi_{\preord}$ defined by $a \mathrel{\phi_{\preord}} b$ if and only if $a \preord b$ and $a \preordop b$.
This gives a canonical way of defining a poset from a preordered set.
Indeed, given a preordered set $\preord$ on a set $P$, its \emph{collapse} is the poset $(\coll{P}, \leqslant)$ where $\coll{P}$ is the set of $\phi_{\preord}$-equivalence classes of $P$ with the partial order $\leqslant$ defined by $[x] \leqslant [y]$ if and only if there exist $x' \in [x]$ and $y' \in [y]$ such that $x' \preord y'$.
The collapse construction is functorial: given an order-preserving map $f \colon P \to Q$ between preordered sets, then one can define $\coll{f} \colon \coll{P} \to \coll{Q}$ by $\coll{f}([x]) := [f(x)]$.
In fact, it is the left adjoint to the forgetful functor from posets to preordered sets.

Given a preorder $\preord$ on a set $P$ and $x, y \in P$, we write $x \preordstr y$ if $x \preord y$ and $y \not\preord x$.
If $x \preordstr y$, then we say that $y$ \emph{covers} $x$ if $x \preord z \preord y$ implies that $z = x$ or $z = y$.
In this case, we write $x \preordot y$ and call this a \emph{covering relation}; we similarly write $\lessdot$ for covering relations of partial orders.
We write $\covrel{P}$ for the set of covering relations $(x, y)$ of a preordered set~$P$.

A preordered set $(P, \preord)$ is \emph{connected} if for each pair  $x, y \in P$, there exists a finite sequence $x = a_0, a_1, \ldots, a_n = y$ such that for all $1 \leqslant i \leqslant n$, either $a_{i-1} \preord a_i$ or $a_i \preord a_{i-1}$.
The poset $P$ is a \emph{total order} if either $x \leqslant y$ or $y \leqslant x$ for all $x, y \in P$.
The \emph{Hasse diagram} of a poset $P$ is the directed graph with vertices elements of $P$ and arrows $x \to y$ for covering relations $x \lessdot y$.
We hence sometimes also refer to covering relations as \emph{edges}.

Given a poset $P$, a \emph{minimal element} of $P$ is an element $m$ such that $p \not< m$ for all $p \in P$.
If $P$ has a unique minimal element $m$, then we denote $m$ by $\min P$.
\emph{Maximal elements} of $P$ are defined dually, and if there is a unique maximal element of $P$, then it is denoted $\max P$.
If the poset $P$ has a unique minimal element, then the elements covering the minimal element are called \emph{atoms}.
Dually, if $P$ has a unique maximal element, then the elements covered by the maximal element are called \emph{coatoms}.
A \emph{chain} of $P$ is a subset of $P$ which is totally ordered by $\leqslant$ and a \emph{maximal chain} is a chain which is not contained in a strictly larger chain.
We write $\mg{P}$ for the set of maximal chains of $P$.

Given a poset $P$ and a pair of elements $x, y \in P$, an \emph{upper bound} $u$ for $\{x, y\}$ is an element $u \in P$ such that $x \leqslant u$ and $y \leqslant u$.
A \emph{supremum} for $\{x, y\}$ is an upper bound $u$ such that for any other upper bound $u'$ of $\{x, y\}$, we have that $u \leqslant u'$.
The notions of \emph{lower bound} and \emph{infimum} are defined dually.
It is clear that suprema and infima are unique if they exist.

Recall that a \emph{lattice} is a poset $L$ in which every pair of elements $\{x, y\}$ has both a supremum, denoted $x \vee y$, and an infimum, denoted $x \wedge y$.
Here $x \vee y$ is called the \emph{join} of $x$ and $y$ and $x \wedge y$ is called the \emph{meet} of $x$ and $y$.
Every finite lattice is \emph{bounded}, that is, it has a unique maximal element and a unique minimal element.

\subsubsection{Quotients of preordered sets, posets and lattices}\label{sect:back:poset_quotients}

We consider quotients of posets to begin with.
Given a poset $(P, \leqslant)$ and an equivalence relation $\theta$, the quotient $(P/{\theta}, R)$ is defined to be the set of $\theta$-equivalence classes $[x]$ of $P$ with the relation $R$ such that, given $[x], [y] \in P/{\theta}$, we have that $[x]\mathrel{R}[y]$ if and only if there exist $x' \in [x]$ and $y' \in [y]$ such that $x' \leqslant y'$. However, $R$ is in general only a reflexive relation on $P/{\theta}$. It is not generally transitive  or anti-symmetric. If $(P/{\theta}, R)$ is a poset, then there is a canonical order-preserving map $P \to P/{\theta}$, $x \to [x]$.
Several conditions on the equivalence relation $\theta$ exist in the literature which guarantee that the quotient $(P/\theta, R)$ is a well-defined poset.
See \cite{njw-survey} for a survey.

For lattices, we have the following class of well-behaved equivalence relations.
An equivalence relation $\theta$ on a lattice $L$ is a \emph{lattice congruence} if for any $x, y, z \in L$ with $x \mathrel{\theta} y$, we also have $(x \vee z) \mathrel{\theta} (y \vee z)$ and $(x \wedge z) \mathrel{\theta} (y \wedge z)$.
We have that $L/\theta$ is a lattice, with the canonical map $q\colon L \to L/{\theta}$ a lattice homomorphism, i.e., $q(x \wedge y) = q(x) \wedge q(y)$ and $q(x \vee y) = q(x) \vee q(y)$ for all $x, y \in L$.
We refer to $q$ as a \emph{lattice quotient}.
\textit{A fortiori}, $L/\theta$ is a well-defined poset.

For general finite posets, the following notion is an analogue of lattice congruences.
An equivalence relation $\theta$ on a finite poset $P$ is an \emph{order congruence} \cite{reading_order} if the following hold.
\begin{enumerate}
\item 
Every $\theta$-equivalence class is a (closed) interval, i.e., it has the form $[x, z]\coloneqq \{y \in P \st x \leqslant y \leqslant z\}$ for some $x \leqslant z \in P$.
\label{op:read:int}
\item The projection $\pi_{\downarrow} \colon P \to P$, mapping each element $x$ of $P$ to the minimal element in $[x]$, is order-preserving.\label{op:read:down}
\item The projection $\pi^{\uparrow} \colon P \to P$, mapping each element $x$ of $P$ to the maximal element in $[x]$, is order-preserving.\label{op:read:up}
\end{enumerate}
We refer to the canonical map $P \to P/\theta$ as an \emph{order quotient}.
If the poset $P$ is a lattice, then order congruences of $P$ are precisely lattice congruences. 

The notion of order congruence is quite strong, and many interesting examples of quotients of posets are not order quotients.
We will consider the following weaker notion.
An equivalence relation~$\theta$ on a poset $P$ is a \emph{contraction congruence} if every $\theta$-equivalence class is connected and the transitive closure $\overrightarrow{R}$ of the quotient relation $R$ on $P/{\theta}$ is a partial order.
We say that $(P/\theta, \overrightarrow{R})$ is a \emph{contraction} of~$P$.
It is known \cite[Proposition~6.24]{njw-survey} that a poset $Q$ is a contraction of a finite poset $P$ if and only if there is an order-preserving map $f \colon P \to Q$ such that
\begin{enumerate}
    \item the fibres of $Q$ are connected, and
    \item for every covering relation $y_1 \lessdot y_2$ in $Q$, there exists a covering relation $x_1 \lessdot x_2$ in $P$ such that $f(x_1) = y_1$ and $f(x_2) = y_2$.
\end{enumerate}
Note that a fibre of an order-preserving map $P \to Q$ with $P$ finite is connected if and only if its Hasse diagram is connected: a pair of elements in a fibre are related by a finite sequence of relations if and only they are related by a finite sequence of covering relations.
Note also that for a finite poset every order congruence is a contraction.

In \cite[Definition~5.1]{cebrian2022directed}, the notion of contraction is extended from posets to preordered sets.
Indeed, a map $f \colon P \to Q$ between preordered sets is a \emph{contraction} if
\begin{enumerate}
    \item it is surjective and order-preserving;
    \item for $y \in Q$, the set $f^{-1}([y])$ is connected sub-preordered-set of $P$, where $[y]$ is the $\theta_{\preord}$-equivalence class of $[y]$;
    \item for any covering relation $y \preordot y'$ in $Q$, there exists a covering relation $x \preordot x'$ of $P$ such that $f(x) = y$ and $f(x') = y'$.
\end{enumerate}
By \cite[Lemma~5.3]{cebrian2022directed},
if $f \colon P \to Q$ is a contraction of preordered sets, then $\coll{f} \colon \coll{P} \to \coll{Q}$ is a contraction of posets.

\subsubsection{Polygonal lattices}
\label{subsec:background_polygonal}

In this paper, we will be particularly interested in the following type of lattice, which was introduced in \cite{reading_regions}.
A \emph{polygon} in a poset is an interval $[x, y]$ which has precisely two distinct maximal chains $C_1$ and $C_2$ such that $[x, y] = C_1 \cup C_2$ and $C_1 \cap C_2 = \{x, y\}$.
In this case, we write $[x, y] = \polygon{C_1}{C_2}$.
A lattice $L$ is called \emph{polygonal} if the following two dual conditions hold.
\begin{enumerate}
    \item If distinct elements $y_1$ and $y_2$ both cover an element $x$, then $[x, y_1 \vee y_2]$ is a polygon.
    \item If an element $y$ covers distinct elements $x_1$ and $x_2$, then $[x_1 \wedge x_2, y]$ is a polygon.
\end{enumerate}

Polygonal lattices are important for us, because of the following operation relating a pair of maximal chains.
In Section~\ref{sect:max_chain_posets}, we will use these to define preorders on equivalence classes of maximal chains.

\begin{definition}
Given a polygonal lattice $L$ and two maximal chains $C$ and $C'$ of $L$, we say that $C$ and $C'$ are related by a \emph{polygon move} if there exists a polygon $P = \polygon{P_l}{P_r}$ of $L$ with $C \supseteq P_l$, $C' \supseteq P_r$, and $C \setminus P_l = C' \setminus P_r$.
\end{definition}

We have the following useful facts due to Reading.

\begin{fact}
[{\cite[Lemma~9-6.3, Proposition~9-6.9]{reading_regions}}]\label{prop:polygonal_lattices}
Polygonal lattices have the following properties.
\begin{enumerate}[label=\textup{(}\arabic*\textup{)}]
    \item All maximal chains of a finite polygonal lattice are related by a sequence of polygon moves.\label{op:polygonal_lattices:polygon_moves}
    \item A lattice quotient of a finite polygonal lattice is a polygonal lattice.\label{op:polygonal_lattices:quotient}
\end{enumerate}
\end{fact}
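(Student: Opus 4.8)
For part~\ref{op:polygonal_lattices:polygon_moves} the plan is to induct on $|L|$, using freely that every closed interval of a polygonal lattice is again a polygonal lattice (if $y_1,y_2$ cover $x$ inside $[a,b]$ then $y_1\vee y_2\leqslant b$, so the polygon $[x,y_1\vee y_2]$ of $L$ lies inside $[a,b]$, and dually). Given maximal chains $C,C'$, read them off from $\hat 0$ and let $x$ be the last element on which they agree, so $C$ continues $x\lessdot y_1$ and $C'$ continues $x\lessdot y_2$ with $y_1\neq y_2$ (if there is no such $x$, then $C=C'$). Put $z:=y_1\vee y_2$; by polygonality $[x,z]$ is a polygon whose two sides have $y_1$ and $y_2$ as their respective atoms. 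Since $\hat 0<y_1$, the interval $[y_1,\hat 1]$ is a strictly smaller polygonal lattice, so by induction the maximal chain $C\cap[y_1,\hat 1]$ can be connected, by polygon moves inside $[y_1,\hat 1]$, to a maximal chain running along the side of $[x,z]$ through $y_1$ from $y_1$ up to $z$; these moves take place in polygons contained in $[y_1,\hat 1]$ and hence fix $C\cap[\hat 0,y_1]$. This yields, via polygon moves of $L$, a maximal chain agreeing with $C'$ below $x$ and then following that side from $x$ to $z$; one further polygon move on $[x,z]$ replaces this side by the other and produces a chain agreeing with $C'$ on a strictly longer initial segment. A secondary induction on the length of the common initial segment with $C'$ then finishes the argument.

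For part~\ref{op:polygonal_lattices:quotient}, write $q\colon L\to L':=L/\theta$. The two defining conditions of a polygonal lattice are order-dual, and $(L/\theta)^{\mathrm{op}}=L^{\mathrm{op}}/\theta$ is again a lattice quotient of a polygonal lattice, so it suffices to verify the first condition for $L'$. Suppose $\bar y_1\neq\bar y_2$ both cover $\bar x$ in $L'$ and set $\bar z:=\bar y_1\vee\bar y_2$. The heart of the matter is to lift both covers to covers of a single element of $L$. I would take $x^{\uparrow}$, the top of the $\theta$-class $q^{-1}(\bar x)$ (which is an interval, $\theta$ being a lattice congruence), note that $x^{\uparrow}\vee w_i$, with $w_i$ the bottom of the class $\bar y_i$, lies in the class $\bar y_i$ and strictly above $x^{\uparrow}$, and take the first step $y_i^{*}$ of a saturated chain from $x^{\uparrow}$ up to it; since $x^{\uparrow}$ is maximal in its class, $y_i^{*}$ must already lie in the class $\bar y_i$, giving $y_i^{*}\gtrdot x^{\uparrow}$ with $q(y_i^{*})=\bar y_i$ and $y_1^{*}\neq y_2^{*}$. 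Polygonality of $L$ then makes $P:=[x^{\uparrow},y_1^{*}\vee y_2^{*}]$ a polygon.

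The remaining steps are to check that $q$ carries $P$ onto the interval $[\bar x,\bar z]$ of $L'$, and that a lattice quotient of a polygon is a polygon or a chain. Surjectivity of $q|_P\colon P\to[\bar x,\bar z]$ is a short computation: $\bar u$ in the interval is the image of $(x^{\uparrow}\vee u_\downarrow)\wedge(y_1^{*}\vee y_2^{*})$, where $u_\downarrow$ is any preimage of $\bar u$; thus $[\bar x,\bar z]$ is a lattice quotient of $P$. For the last point: if $P$ has sides $R_1,R_2$ then $Q:=q(P)=q(R_1)\cup q(R_2)$ is a union of two maximal chains, and using that in any polygon the interior points of opposite sides join to $\hat 1$ and meet to $\hat 0$ (because $P$ has only the two maximal chains $R_1,R_2$), one gets $q(R_1)\cap q(R_2)=\{\hat 0_Q,\hat 1_Q\}$ and excludes any third maximal chain in $Q$. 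As $\bar x$ has two distinct covers inside $[\bar x,\bar z]$, this quotient is not a chain, so it is a polygon, which is exactly condition~(1) for $L'$.

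The steps I expect to demand the most care are: in part~\ref{op:polygonal_lattices:polygon_moves}, verifying that the polygon moves produced inductively inside $[y_1,\hat 1]$ genuinely lift to polygon moves of $L$ that leave the bottom part of the chain fixed, and isolating a progress measure that strictly decreases; and in part~\ref{op:polygonal_lattices:quotient}, the lifting of two covers of $\bar x$ to covers of a single element of $L$ — this is precisely where using the top $x^{\uparrow}$ of the congruence class, rather than an arbitrary preimage of $\bar x$, is what makes the construction work.
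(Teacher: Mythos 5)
This is a \emph{Fact} that the paper cites from Reading's ``Lattice Theory of the Poset of Regions'' (Lemma~9-6.3 and Proposition~9-6.9) and does not itself prove, so there is no in-paper argument to compare against; I am assessing your proof on its own.

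Your proof of part~\ref{op:polygonal_lattices:polygon_moves} is correct. It is the natural double induction (outer on $|L|$, inner on the length of the common initial segment with the target chain), it correctly observes that closed intervals of polygonal lattices are polygonal, that the induction in $[y_1,\hat 1]$ produces polygon moves of $L$ fixing everything below $y_1$, and that the final move across $[x,z]$ strictly lengthens the common prefix; the base cases are trivial.

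Your proof of part~\ref{op:polygonal_lattices:quotient} is also essentially correct, and the key maneuver --- lifting the two covers of $\bar x$ to covers of the \emph{top} $x^{\uparrow}$ of the class $q^{-1}(\bar x)$, rather than of an arbitrary preimage --- is exactly the right idea; the verification that the first step $y_i^*$ of a saturated chain from $x^{\uparrow}$ to $x^{\uparrow}\vee w_i$ lands in the class $\bar y_i$ (because $q(y_i^*)$ lies strictly between $\bar x$ and $\bar y_i$ inclusive, and $\bar x\lessdot\bar y_i$) is sound, as is the surjectivity computation $q\bigl((x^{\uparrow}\vee u_{\downarrow})\wedge(y_1^*\vee y_2^*)\bigr)=\bar u$. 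The one spot where you are brisk is the conclusion that the image $Q=q(P)=[\bar x,\bar z]$ has exactly the two maximal chains $q(R_1),q(R_2)$: the join/meet argument you sketch does give $q(R_1)\cap q(R_2)=\{\hat 0_Q,\hat 1_Q\}$, and you can exclude a third maximal chain by observing that the same join/meet computation makes interior points of $q(R_1)$ and $q(R_2)$ incomparable (so a chain cannot mix them), but you should say this. A shorter route is available inside the paper itself: since $q|_P\colon P\to Q$ is a lattice quotient, \Cref{lem:max_chains} gives a surjection $\mg{P}\to\mg{Q}$, and $P$ has only two maximal chains, so $Q$ has at most two; they are distinct because $\bar y_1\neq\bar y_2$, and then the fact that $Q$ is their union with intersection the endpoints follows quickly (any $\bar x<a<\bar z$ in both would satisfy $a\geqslant\bar y_1$ and $a\geqslant\bar y_2$, hence $a\geqslant\bar z$, contradiction). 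Either way, the argument goes through.
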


The case where the polygon move happens across a square is particularly important for us, since it will give us an equivalence relation on maximal chains.

\begin{definition}
Given a polygonal lattice $L$, a \emph{square} $S$ of $S$ is a polygon with four elements.
By the definition of a polygon, we therefore have that $S =\{x, y, y', z\}$ with $x \lessdot y \lessdot z$ and $x \lessdot y' \lessdot z$.
Two distinct maximal chains $C$ and $C'$ are related by a \emph{square move} if they are related by a polygon move, with the polygon being a square.
We say that two maximal chains $C$ and $C'$ are \emph{square-equivalent} if they are related by a sequence of square moves.
We write $\mge{L}$ for the set of square-equivalence classes of maximal chains.
\end{definition}

This notion of square equivalence is used in the iterative construction of the higher Bruhat \cite{ms} or Stasheff--Tamari orders \cite{kv-poly,er,njw-equal} using maximal chains.
It occurs implicitly in other places too, such as considering reduced words for the longest element of a Coxeter group up to commutation \cite[Section~2.1]{stembridge}, as we will see later.
It is one of the possible ways of defining the equivalence relation on maximal green sequences in \cite{gw1}, see Section~\ref{sect:preproj_mgs_orders}.

\subsection{Maximal chains of quotient lattices}

We want to understand the relation between the maximal chains of two lattices when one is a quotient of the other.
The main result we show in this section is that in this situation there is a well-defined surjection from the set of equivalence classes of maximal chains of the original lattice to the set of those of the quotient lattice.
The analogous result does \textit{not} hold for order quotients of posets.
We first note the following straightforward fact.

\begin{lemma}\label{lem:lat_reg}
Let $q\colon L \to M$ be an order quotient of two finite posets $L$ and $M$.
Then, if $q(x) < q(y)$ for $x, y \in L$ and $\hat{y}$ is the maximal element of $q^{-1}(q(y))$, then $x < \hat{y}$.
Dually, $\check{x} < y$ for $\check{x}$ the minimal element of $q^{-1}(q(x))$.
\end{lemma}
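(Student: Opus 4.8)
The plan is to apply directly the two properties in the definition of an order congruence that concern the projections $\pi_{\downarrow}$ and $\pi^{\uparrow}$. Write $\theta$ for the order congruence whose quotient map is $q$, so that $q^{-1}(q(y))$ is the $\theta$-class $[y]$; by the first axiom this class is a closed interval, hence $\hat y$ is its top element and $y \leqslant \hat y$, and dually $\check x \leqslant x$. I would first unwind the hypothesis $q(x) < q(y)$: since an order congruence has a poset quotient, the order on $M$ coincides with the quotient relation, so $q(x) \leqslant q(y)$ means there are $x' \in [x]$ and $y' \in [y]$ with $x' \leqslant y'$ in $L$.

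Next I would feed the inequality $x' \leqslant y'$ through the order-preserving map $\pi^{\uparrow}$. Because $\pi^{\uparrow}$ is constant on $\theta$-classes, $\pi^{\uparrow}(x') = \pi^{\uparrow}(x) = \hat x$ and $\pi^{\uparrow}(y') = \hat y$, whence $\hat x \leqslant \hat y$; combined with $x \leqslant \hat x$ this yields $x \leqslant \hat y$. To upgrade to a strict inequality, observe that $x = \hat y$ would put $x$ in $[y]$ and give $q(x) = q(y)$, contradicting $q(x) < q(y)$; therefore $x < \hat y$. The dual assertion follows by the symmetric argument using $\pi_{\downarrow}$: from $x' \leqslant y'$ one gets $\check x = \pi_{\downarrow}(x') \leqslant \pi_{\downarrow}(y') = \check y \leqslant y$, and $\check x \neq y$ since otherwise $q(x) = q(y)$.

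I do not anticipate any real obstacle; the statement is essentially a direct consequence of the definitions. The only points that need a moment's care are the (immediate) observation that $\pi^{\uparrow}$ and $\pi_{\downarrow}$ take the same value on all members of a $\theta$-class, so that the projections of the witnesses $x'$ and $y'$ agree with those of $x$ and $y$, and the use of the fact that the quotient of an order congruence is a genuine poset, which is what licenses identifying $\leqslant_M$ with the quotient relation when translating $q(x) < q(y)$ into a relation in $L$.
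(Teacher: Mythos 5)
Your proof is correct and follows essentially the same route as the paper: pick witnesses $x' \leqslant y'$ for the relation $q(x) < q(y)$ in the quotient, apply the order-preserving projection $\pi^{\uparrow}$, and combine with $x \leqslant \hat{x}$. The only stylistic difference is that you first derive the non-strict inequality $x \leqslant \hat{y}$ and then rule out equality, whereas the paper works with the strict inequality directly; both versions are valid.
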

\begin{proof}
We only prove the first statement.
Suppose that $q(x) < q(y)$ for $x, y \in L$.
Since $M$ is a quotient poset of $L$, there must exist $x' \in q^{-1}(q(x))$ and $y' \in q^{-1}(q(y))$ such that $x' < y'$.
Recall the map $\pi_{\uparrow}$ from Section~\ref{sect:back:poset_quotients}, which sends each element to the maximal element in its fibre.
Since the map $\pi_{\uparrow}$ is order-preserving, we must have that $\hat{x} = \pi_{\uparrow}(x') < \pi_{\uparrow}(y') = \hat{y}$. Thus, we have $x \leqslant \hat{x} < \hat{y}$.
\end{proof}

This allows us to show that a lattice quotient of finite lattices cannot send a covering relation to a non-trivial non-covering relation.

\begin{lemma}\label{lem:cov_rel}
Let $q \colon L \to M$ be a lattice quotient of two finite lattices $L$ and $M$.
Then, whenever $x \lessdot z$ in $L$, we either have $q(x) \lessdot q(z)$ in $M$ or $q(x) = q(z)$.
\end{lemma}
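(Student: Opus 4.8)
The plan is to argue by contradiction, exploiting that the canonical map $q\colon L\to L/\theta$ is a surjective lattice homomorphism and not merely order-preserving. Since $q$ is order-preserving we have $q(x)\leqslant q(z)$. Assume $q(x)\neq q(z)$, so $q(x)<q(z)$, and suppose for contradiction that $q(x)\lessdot q(z)$ fails. Then there is some $w\in M$ with $q(x)<w<q(z)$, and I would like to manufacture from a lift of $w$ an element of $L$ lying strictly between $x$ and $z$, contradicting $x\lessdot z$.

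The key step is to pick, using surjectivity of $q$, an element $y\in L$ with $q(y)=w$, and then to ``correct'' it to a lift that is sandwiched between $x$ and $z$ by setting $y' := x\vee(y\wedge z)$. By construction $x\leqslant y'$, and since $x\leqslant z$ and $y\wedge z\leqslant z$ we also get $y'\leqslant z$; so $x\leqslant y'\leqslant z$. Applying the lattice homomorphism property gives $q(y')=q(x)\vee\bigl(q(y)\wedge q(z)\bigr)$, and because $q(x)<q(y)=w<q(z)$ this simplifies to $q(x)\vee q(y)=q(y)=w$. Thus $y'$ is a lift of $w$ with $x\leqslant y'\leqslant z$.

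Finally, from $q(x)<q(y')<q(z)$ one reads off $y'\neq x$ and $y'\neq z$, hence $x<y'<z$, which contradicts $x\lessdot z$. Therefore no such $w$ exists and $q(x)\lessdot q(z)$. I expect the only mildly delicate point to be checking that $y'$ is genuinely strictly between $x$ and $z$ rather than collapsing onto an endpoint, but this is forced because $q$ already separates $q(y')=w$ from both $q(x)$ and $q(z)$. As an alternative route, one could instead derive the strict inequalities $\check{x}<y$ and $y<\hat z$ from \Cref{lem:lat_reg} (applied to $q(x)<q(y)$ and $q(y)<q(z)$) and combine these with the projections $\pi_{\downarrow},\pi^{\uparrow}$, but the direct computation with $x\vee(y\wedge z)$ seems shortest.
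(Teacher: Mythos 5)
Your proof is correct, and it takes a genuinely different (and somewhat slicker) route from the paper's. The paper argues the contrapositive: it lifts $q(y)$ to the \emph{maximal} element $\hat y$ of its fibre, invokes \Cref{lem:lat_reg} (which rests on the order-congruence machinery, i.e.\ the fact that $\pi^{\uparrow}$ is order-preserving) to deduce $x < \hat y$, and then checks that $\hat y \wedge z$ lies strictly between $x$ and $z$. You instead take an \emph{arbitrary} lift $y$ of the intermediate element and apply the standard ``sandwiching'' correction $y' := x \vee (y \wedge z)$, exploiting nothing but surjectivity and the fact that $q$ is simultaneously a meet- and join-homomorphism. This makes the argument self-contained: it does not route through \Cref{lem:lat_reg}, requires no structure on the fibres beyond what the homomorphism property gives for free, and verifies $q(y') = w$ by a one-line computation. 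The paper's version, by contrast, reuses a lemma that is stated more generally for order quotients of posets, so it fits the surrounding development where that lemma is needed anyway (e.g.\ in \Cref{lem:max_chains}). Both are valid; yours trades a bit of redundancy with later material for a shorter, more algebraic argument that highlights precisely where the \emph{lattice}-homomorphism hypothesis (as opposed to mere order-preservation) is used.
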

\begin{proof}
We show the contrapositive.
Suppose that there exist $x, z \in L$ such that $x < z$, and $q(x) < q(y) < q(z)$ for some $y \in L$.
We wish to show that $x < z$ is not a covering relation.
Let $\hat{y}$ be the maximal element of $q^{-1}(q(y))$ and consider $\hat{y} \wedge z$.
We cannot have $\hat{y} \geqslant z$, since $q(\hat{y}) = q(y) < q(z)$, so we must have $\hat{y} \wedge z < z$.
We must have $x < \hat{y}$ by \Cref{lem:lat_reg}, which implies that $x \leqslant \hat{y} \wedge z$.
We cannot have $x = \hat{y} \wedge z$, since this would imply $q(x) = q(\hat{y} \wedge z) = q(\hat{y}) \wedge q(z) = q(y) \wedge q(z) = q(y)$.
Hence, we have $x < \hat{y} \wedge z$.
However, we then have $x < \hat{y} \wedge z < z$, which gives us that $x < z$ is not a covering relation.
We conclude that if $x \lessdot z$, then either $q(x) \lessdot q(z)$ or $q(x) = q(z)$.
\end{proof}

From this, we deduce that a finite lattice quotient induces a surjection on the sets of maximal chains.

\begin{lemma}\label{lem:max_chains}
Let $q\colon L \to M$ be a lattice quotient of two finite lattices $L$ and $M$. 
Then there is a well-defined map
\begin{align*}
    \mgmap{q} \colon \mg{L} &\to \mg{M} \\
    \{x_0, x_1, \dots, x_r\} &\mapsto \{q(x_0), q(x_1), \dots, q(x_r)\},
\end{align*}
which is moreover a surjection.
\end{lemma}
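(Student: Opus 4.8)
The plan is to verify two things: first, that $\mgmap{q}$ is well-defined, i.e., that the image of a maximal chain of $L$ really is a maximal chain of $M$ (and not merely a chain); and second, that every maximal chain of $M$ arises this way. Let me take a maximal chain $\{x_0 \lessdot x_1 \lessdot \dots \lessdot x_r\}$ of $L$, so $x_0 = \min L$ and $x_r = \max L$. Since $q$ is a lattice homomorphism it is order-preserving and surjective, so $q(x_0) = \min M$ and $q(x_r) = \max M$, and the images $q(x_0) \leqslant q(x_1) \leqslant \dots \leqslant q(x_r)$ form a chain from bottom to top of $M$. By \Cref{lem:cov_rel}, each step $q(x_{i-1}) \leqslant q(x_i)$ is either an equality or a covering relation. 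Discarding repetitions, we obtain a chain in $M$ in which consecutive elements are related by covering relations and which runs from $\min M$ to $\max M$; such a chain cannot be properly extended, so it is a maximal chain. Hence $\mgmap{q}$ is well-defined as a map $\mg{L} \to \mg{M}$.

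For surjectivity, I would fix a maximal chain $\{y_0 \lessdot y_1 \lessdot \dots \lessdot y_s\}$ of $M$ and lift it. The natural candidates for lifts are the canonical section maps: using $\pi_{\uparrow}$ (sending each element of $M$ to the maximal element of its fibre) one gets a sequence $\hat{y}_0 \leqslant \hat{y}_1 \leqslant \dots \leqslant \hat{y}_s$ in $L$ which is order-preserving because $\pi_{\uparrow}$ is (this is exactly the content of \Cref{lem:lat_reg}'s ingredient), with $q(\hat y_i) = y_i$, and with $\hat y_0 = \min L$, $\hat y_s = \max L$ since $q^{-1}(\min M)$ and $q^{-1}(\max M)$ are the bottom and top intervals. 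This gives a chain in $L$ from $\min L$ to $\max L$ mapping onto the given chain, but it need not be a \emph{maximal} chain of $L$. To fix this, extend it to a maximal chain $C$ of $L$: refine the partial chain $\hat y_0 \leqslant \dots \leqslant \hat y_s$ by inserting elements between consecutive $\hat y_{i-1}$ and $\hat y_i$ until every step is a covering relation. Applying $q$ to $C$, the inserted elements between $\hat y_{i-1}$ and $\hat y_i$ all map into the fibre $q^{-1}([y_{i-1},y_i])$; since $q(\hat y_{i-1}) = y_{i-1} \lessdot y_i = q(\hat y_i)$, by \Cref{lem:cov_rel} each of these intermediate images equals either $y_{i-1}$ or $y_i$. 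Thus $\mgmap{q}(C)$, after deleting repetitions, is precisely $\{y_0, y_1, \dots, y_s\}$, proving surjectivity.

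The one point that needs a little care — and the step I expect to be the mild obstacle — is checking that the intermediate lifted elements behave as claimed: namely that when we refine $\hat y_{i-1} \lessdot \cdots \lessdot \hat y_i$ inside $L$, their images land in $[y_{i-1}, y_i]$. This follows because $q$ is order-preserving: any $w$ with $\hat y_{i-1} \leqslant w \leqslant \hat y_i$ has $y_{i-1} = q(\hat y_{i-1}) \leqslant q(w) \leqslant q(\hat y_i) = y_i$, and since $y_{i-1} \lessdot y_i$ is a covering relation in $M$ we get $q(w) \in \{y_{i-1}, y_i\}$. Combined with \Cref{lem:cov_rel} applied stepwise along $C$, this guarantees no "new" elements of $M$ are introduced by the refinement, so the image chain is exactly the one we wanted. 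No deep new idea is required beyond \Cref{lem:lat_reg} and \Cref{lem:cov_rel}; the argument is essentially a bookkeeping exercise in lifting and refining chains.
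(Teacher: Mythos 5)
Your well-definedness argument is correct and matches the paper's. For surjectivity, you lift the chain $\{y_0 \lessdot \dots \lessdot y_s\}$ via the maxima of the fibres (i.e.\ the section $y \mapsto \max q^{-1}(y)$), where the paper uses the minima; either choice is in principle workable, but your write-up contains a genuine error. The claim that $\hat y_0 = \min L$ is false: $\hat y_0 = \max q^{-1}(\min M)$ is the \emph{top} of the bottom fibre, which equals $\min L$ only when $q^{-1}(\min M)$ is a singleton. (Your claim $\hat y_s = \max L$ is correct, by contrast.) Because of this, the chain you construct by refining $\hat y_0 \leqslant \dots \leqslant \hat y_s$ runs only from $\hat y_0$ to $\max L$ and need not be a maximal chain of $L$: you must also prepend a saturated chain from $\min L$ up to $\hat y_0$. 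This mirrors exactly the issue the paper handles at the opposite end, where $x_{s0} = \min q^{-1}(\max M)$ need not be $\max L$ and the proof explicitly appends a chain from $x_{s0}$ to $\max L$.

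The gap is easily repaired: every element of $L$ below $\hat y_0$ lies in the interval $q^{-1}(y_0) = [\min L, \hat y_0]$, hence maps to $y_0$, so prepending does not change the image chain. But as written your argument relies on the incorrect equality $\hat y_0 = \min L$ to conclude that the refined chain is already maximal, and therefore does not establish surjectivity. Once you add the missing prepended segment (and justify that its image stays at $y_0$), your proof is a valid dual of the paper's.
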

\begin{proof}
Since $q$ is a surjective order-preserving map, we have that it must map the minimum and maximum of $L$ to the respective minimum and maximum of $M$.
From this and \Cref{lem:cov_rel}, it follows that if $\{x_0, x_1, \dots, x_r\}$ is a maximal chain of $L$, then $\{q(x_0), q(x_1), \dots, q(x_r)\}$ is a maximal chain of $M$. 
This establishes that $\mgmap{q}$ is well-defined.

We now show that $\mgmap{q}$ must be a surjection.
Let $C = \{y_0 \lessdot y_1 \lessdot \dots \lessdot y_s\}$ be a maximal chain in $M$.
Let $x_{00}$ be the minimal element of $q^{-1}(y_0)$, which is also therefore the minimal element of~$L$.
If $x_{10}$ is the minimal element of $q^{-1}(y_1)$, then we have that $x_{00} < x_{10}$ using \Cref{lem:lat_reg}.
Hence, we have a chain $x_{00} \lessdot x_{01} \lessdot \dots \lessdot x_{0r_{0}} \lessdot x_{10}$, since $L$ is finite.
Continuing in this way, we obtain a chain \[
x_{00} \lessdot x_{01} \lessdot \dots \lessdot x_{0r_{0}} \lessdot x_{10} \lessdot x_{11} \lessdot \dots \lessdot x_{1r_1} \lessdot x_{20} \lessdot \dots \lessdot x_{s0},
\]
where $q(x_{ij}) = y_i$ and $x_{i0}$ is the minimal element of the fibre of $y_i$.
We may then choose a chain $x_{s0} \lessdot x_{s1} \lessdot \dots \lessdot x_{sr_s}$ from the minimal element of the fibre of $y_s$ to the maximal element.
Since $y_s$ is the maximal element of $M$, we must have that $x_{sr_s}$ is the maximal element of $L$.
Hence \[
x_{00} \lessdot x_{01} \lessdot \dots \lessdot x_{0r_{0}} \lessdot x_{10} \lessdot x_{11} \lessdot \dots \lessdot x_{1r_1} \lessdot x_{20} \lessdot \dots \lessdot x_{s0} \lessdot x_{s1} \lessdot \dots \lessdot x_{sr_s}
\]
is a maximal chain of $L$, and a pre-image of our original maximal chain $C$ of $M$.
We conclude that $\mgmap{q}$ is a surjection.
\end{proof}

The previous two lemmas do not hold for general order congruences of posets.

\begin{example}
An example of an order congruence $\theta$ on a poset $P$ which does not induce a map on maximal chains is as follows. \[
	\begin{tikzpicture}
    
    \node(m1) at (0,0) {$\check{x}_{1}$};
    \node(m2) at (-2,1) {$x_{2}$};
    \node(x1) at (2,1) {$x_{1}$};
    \node(M1) at (-1,2.5) {$\hat{x}_{1}$};
    \node(m3) at (1,2.5) {$\check{x}_{3}$};
    \node(x3) at (2,4) {$x_{3}$};
    \node(M3) at (0,5) {$\hat{x}_{3}$};
    \node(M2) at (-2,4) {$\hat{x}_{2}$};
    
    \draw[->] (m1) -- (m2);
    \draw[->] (m1) -- (x1);
    \draw[->] (x1) -- (M1);
    \draw[->] (m2) -- (M2);
    \draw[->] (x1) -- (x3);
    \draw[->] (m3) -- (x3);
    \draw[->] (M2) -- (M3);
    \draw[->] (x3) -- (M3);
    \draw[->] (M1) -- (M2);
    \draw[->] (m2) -- (m3);
    
    \end{tikzpicture}
\]
The equivalence classes of $\theta$ are $[\check{x}_{1}] = \{\check{x}_{1}, x_{1}, \hat{x}_{1}\}$, $[\check{x}_{2}] = \{\check{x}_{2}, \hat{x}_{2}\}$, and $[\check{x}_{3}] = \{\check{x}_{3}, x_{3}, \hat{x}_{3}\}$.
It is straightforward to check that $\theta$ is indeed an order congruence.
The resulting quotient poset $P/\theta$ is given by $[\check{x}_{1}] < [\check{x}_{2}] < [\check{x}_{3}]$.
Consider the maximal chain $\check{x}_{1} \lessdot x_{1} \lessdot x_{3} \lessdot \hat{x}_{3}$ in the original poset $P$.
This maps to $[\check{x}_1] < [\check{x}_{3}]$, which is not a maximal chain in the quotient poset.
The reason that maximal chains in $P$ do not always map to maximal chains in $P/\theta$ is that \Cref{lem:cov_rel} does not hold: the covering relation $x_1 \lessdot x_3$ of $P$ gets mapped to $[\check{x}_1] < [\check{x}_3]$ in $P/\theta$, which is not a covering relation.
\Cref{lem:cov_rel} does not hold because $P$ is not a lattice, since $x_{3}$ and $\hat{x}_{2}$ do not possess a meet.
\end{example}

The surjection from \Cref{lem:max_chains} descends to a surjection between square-equivalence classes.

\begin{proposition}\label{prop:max_chains_eq}
Let $q\colon L \to M$ be a lattice quotient of two finite lattices $L$ and $M$.
Then there is a well-defined surjection
\begin{align*}
    \mgtmap{q} \colon \mge{L} &\to \mge{M} \\
    [C] &\mapsto [\mgmap{q}(C)].
\end{align*}
\end{proposition}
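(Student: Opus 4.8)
The plan is to get surjectivity for free from \Cref{lem:max_chains} and to reduce well-definedness to a single square move. For surjectivity: given a square-equivalence class $[D] \in \mge{M}$, pick a representative $D \in \mg{M}$; by \Cref{lem:max_chains} there is $C \in \mg{L}$ with $\mgmap{q}(C) = D$, whence $\mgtmap{q}([C]) = [D]$. The real content is that $[C] \mapsto [\mgmap{q}(C)]$ is well defined, i.e.\ that square-equivalent chains of $L$ have square-equivalent images. Since square-equivalence is generated by square moves, it suffices to show: if $C, C' \in \mg{L}$ differ by a single square move, then $\mgmap{q}(C)$ and $\mgmap{q}(C')$ are either equal or differ by a single square move in $M$.

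For this local statement, suppose $C$ and $C'$ differ by a square move across a square $S = \polygon{\{x,y,z\}}{\{x,y',z\}}$ of $L$, so $x \lessdot y \lessdot z$, $x \lessdot y' \lessdot z$, $y \wedge y' = x$, $y \vee y' = z$, and $C$, $C'$ agree off $S$, with $C$ passing through $y$ and $C'$ through $y'$. Applying $q$, the images agree off $q(S)$, so everything is controlled by the four elements $q(x) \leqslant q(y), q(y') \leqslant q(z)$. The key tool is that $q$ is a lattice homomorphism, so $q(x) = q(y) \wedge q(y')$ and $q(z) = q(y) \vee q(y')$. I would then run a short case analysis. If $q(y) = q(y')$, these identities force $q(x) = q(y) = q(y') = q(z)$: the square collapses to a point and $\mgmap{q}(C) = \mgmap{q}(C')$. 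If $q(x)$ equals one of $q(y), q(y')$, then the join identity forces $q(z)$ to equal the other, the contribution of the square degenerates to the single relation between $q(x)$ and $q(z)$ (a covering relation by \Cref{lem:cov_rel}), and again $\mgmap{q}(C) = \mgmap{q}(C')$. In the remaining case $q(x) < q(y), q(y') < q(z)$ strictly with $q(y) \ne q(y')$, and the meet/join identities then also give $q(z) \ne q(y), q(y')$, so $q(S)$ is a genuine four-element diamond.

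To finish this surviving case, \Cref{lem:cov_rel} upgrades the relations to $q(x) \lessdot q(y) \lessdot q(z)$ and $q(x) \lessdot q(y') \lessdot q(z)$. Since $M$ is polygonal by \Cref{prop:polygonal_lattices}\ref{op:polygonal_lattices:quotient} and $q(y), q(y')$ are distinct elements covering $q(x)$ with $q(y) \vee q(y') = q(z)$, the interval $[q(x), q(z)]$ is a polygon; having exhibited its two maximal chains (through $q(y)$ and through $q(y')$), it must be exactly $\{q(x), q(y), q(y'), q(z)\}$, i.e.\ a square. It then remains to check the defining conditions of a square move for $\mgmap{q}(C)$ and $\mgmap{q}(C')$ across this square, the one nontrivial point being $\mgmap{q}(C) \setminus \{q(x), q(y), q(z)\} = \mgmap{q}(C') \setminus \{q(x), q(y'), q(z)\}$. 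This is routine: every element of $C \setminus S = C' \setminus S$ lies below $x$ or above $z$ in $L$ (as $C$ is a chain through $x \lessdot y \lessdot z$), so its image lies below $q(x)$ or above $q(z)$ and in particular differs from $q(y)$ and $q(y')$; hence both sides equal $\{q(c) : c \in C \setminus S\} \setminus \{q(x), q(z)\}$.

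I expect the main obstacle to be the case analysis in the local statement: one has to rule out the possibility that $q$ collapses the square only \emph{partially}, which could in principle turn a square move into a polygon move across a larger polygon, or into a relation that is not a move at all. What rescues the argument is precisely that $q$ preserves meets and joins — forcing any identification of $q(y)$ or $q(y')$ with $q(x)$ or $q(z)$ to propagate until the whole square degenerates — together with the polygonality of $M$, which guarantees that a square surviving the quotient is genuinely a four-element polygon rather than a larger one.
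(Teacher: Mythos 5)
Your proof is correct and follows the same route as the paper: reduce well-definedness to a single square move, then use \Cref{lem:cov_rel} together with the lattice-homomorphism property to show the image is either unchanged or a square move, and get surjectivity from \Cref{lem:max_chains}. The one place you diverge is in justifying that $[q(x),q(z)]$ is a genuine four-element interval in the surviving case: you appeal to polygonality of $M$ via \Cref{prop:polygonal_lattices}\ref{op:polygonal_lattices:quotient}, which tacitly requires $L$ to be polygonal --- a hypothesis not in the proposition's statement, though it is the ambient setting of the paper; the paper's own proof leaves this step implicit under the umbrella of \Cref{lem:cov_rel}. In fact polygonality can be dispensed with: for any $w\in[q(x),q(z)]$, put $\hat w=\max q^{-1}(w)$ (which exists since congruence classes are intervals), and observe that $(\hat w\vee x)\wedge z$ lies in $[x,z]=\{x,y,y',z\}$ while, since $q$ preserves $\vee$ and $\wedge$, $q\bigl((\hat w\vee x)\wedge z\bigr)=(w\vee q(x))\wedge q(z)=w$; hence $w$ is already one of $q(x),q(y),q(y'),q(z)$. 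This makes the proposition hold for arbitrary finite lattices exactly as stated, and it lets you skip the appeal to \Cref{prop:polygonal_lattices}\ref{op:polygonal_lattices:quotient} in your last case.
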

\begin{proof}
Note first that there is a well-defined map $[\mg{q}] \colon \mg{L} \to \mge{M}$ given by $C \mapsto [\mgmap{q}(C)]$.
In order for $\mgtmap{q}$ to be well-defined, it suffices to show that if $C$ and $C'$ are square-equivalent, then $[\mg{q}](C) = [\mg{q}](C')$.
In turn, it suffices to show that if $C$ and $C'$ are related by a square move, then $[\mg{q}](C) = [\mg{q}](C')$.
It follows from \Cref{lem:cov_rel} that if $C$ and $C'$ are related by a square move, then either $\mgmap{q}(C) = \mgmap{q}(C')$ or $\mgmap{q}(C)$ and $\mgmap{q}(C')$ are related by a square move.
Hence, in either case we have $[\mg{q}](C) = [\mg{q}](C')$, and so $\mgtmap{q}$ is well-defined.
Surjectivity follows from \Cref{lem:max_chains}.
\end{proof}

This result will be key in Section~\ref{sect:max_chain_posets}, where we will upgrade this surjection to a contraction of preorders.
Being a contraction requires connected fibres, and so we prove the following three lemmas in preparation.

\begin{lemma}\label{lem:polygon_moves_in_fibres}
Let $q \colon L \to M$ be a lattice quotient of finite lattices, with $L$ polygonal.
Given two maximal chains $C_{M}$ and $C'_{M}$ of $M$ which differ by a polygon move, there are are maximal chains $C_{L}$ and $C'_{L}$ of $L$ which are related by a polygon move and such that $\mgmap{q}(C_{L}) = C_{M}$ and $\mgmap{q}(C'_{L}) = C'_{M}$. 
\end{lemma}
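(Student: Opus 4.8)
The plan is to work directly with the polygon $P_M = \polygon{(P_M)_l}{(P_M)_r}$ in $M$ that witnesses the move $C_M \leftrightarrow C'_M$, say with bottom element $a_M$ and top element $b_M$, and to lift it to a polygon in $L$ using the structure of the fibres of $q$. First I would fix a maximal chain $C_L \in \mgmap{q}^{-1}(C_M)$, whose existence follows from (the proof of) \Cref{lem:max_chains}; moreover, examining that proof, I can arrange $C_L$ so that the portion of $C_L$ mapping into the interval $[a_M, b_M]$ begins at the minimal element $\check a$ of $q^{-1}(a_M)$ and, within each fibre $q^{-1}(y)$ for $y$ on the left side $(P_M)_l$, passes through the minimal element of that fibre before climbing to the minimal element of the next fibre. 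The idea is that this "left-most" lift of the left side of $P_M$, together with a corresponding lift of the right side, will differ by a single polygon move in $L$.

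The key step is to produce the lifted polygon. Let $\check a = \check x_1$ and $\check b$ be the minimal elements of $q^{-1}(a_M)$ and $q^{-1}(b_M)$. Since the segment of $C_M$ inside $[a_M,b_M]$ is $(P_M)_l$ and it is a chain of covering relations in $M$, the construction in \Cref{lem:max_chains} gives a chain $D_l$ in $L$ from $\check a$ to $\check b$ with $\mgmap{q}(D_l)=(P_M)_l$; similarly we get $D_r$ with $\mgmap{q}(D_r)=(P_M)_r$. Now $\check b = \bigvee \{\text{tops of the fibre-minimal lifts of the coatoms of } [a_M,b_M]\}$ — more precisely, the two elements of $L$ immediately above $\check a$ along $D_l$ and along $D_r$ are distinct (they lie in different fibres, those of the two atoms of the polygon $[a_M,b_M]$), and both cover $\check a$; call them $u_l$ and $u_r$. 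By polygonality of $L$ (condition (1)), $[\check a,\, u_l \vee u_r]$ is a polygon of $L$. I then need to check that $\mgmap{q}([\check a, u_l\vee u_r]) = [a_M, b_M]$ and that $u_l \vee u_r = \check b$, so that $D_l$ and $D_r$ are exactly the two maximal chains of this polygon. Applying $q$ to $[\check a, u_l \vee u_r]$ gives an interval of $M$ with distinct atoms $q(u_l)$, $q(u_r)$, which are the two atoms of the polygon $[a_M,b_M]$, so $q(u_l \vee u_r) = q(u_l) \vee q(u_r) = b_M$; combined with minimality in the construction of $D_l, D_r$ one gets that $u_l \vee u_r$ is the fibre-minimal element $\check b$, and that $D_l, D_r$ are the two sides of the $L$-polygon $[\check a, \check b]$. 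Finally, set $C_L := (\text{initial segment of } C_L \text{ below } \check a) \cup D_l \cup (\text{a fixed chain from } \check b \text{ to } \hat 1_L)$ and let $C'_L$ be the same with $D_l$ replaced by $D_r$; then $C_L$ and $C'_L$ are maximal chains of $L$ related by the polygon move across $[\check a,\check b]$, with $\mgmap{q}(C_L) = C_M$ and $\mgmap{q}(C'_L) = C'_M$ by construction.

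The main obstacle I expect is the claim that $u_l \vee u_r$ is exactly $\check b$ and, more generally, that the $L$-polygon $[\check a, u_l\vee u_r]$ sits inside a single fibre-pattern compatibly with $D_l$ and $D_r$ — i.e.\ that no extra elements of $[\check a,\check b]$ escape the union $D_l \cup D_r$. This is where polygonality of $L$ is doing the real work: it forces $[\check a, u_l\vee u_r]$ to have \emph{precisely} two maximal chains, and since $D_l$ and $D_r$ are two distinct maximal chains of $L$ contained in $[\check a, u_l\vee u_r]$ (distinct because they pass through $u_l \ne u_r$ respectively), they must be exactly these two. One then only has to verify $u_l \vee u_r = \check b$, which follows because $q(u_l \vee u_r) = b_M$ forces $u_l\vee u_r \geq \check b$, while $u_l \vee u_r \leq \check b$ holds as both $D_l$ and $D_r$ were built to terminate at $\check b$ inside the fibre-minimal construction (so $u_l, u_r \leq \check b$). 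Everything else is bookkeeping with the construction already carried out in \Cref{lem:max_chains} and \Cref{lem:lat_reg}.
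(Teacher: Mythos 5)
There is a genuine gap. The crucial unsupported assertion is that the elements $u_l$ and $u_r$ immediately above $\check a$ on $D_l$ and $D_r$ lie in the fibres of the two atoms of $P_M$. The construction of \Cref{lem:max_chains}, which you invoke to build $D_l$ and $D_r$, first climbs \emph{within} the fibre $q^{-1}(a_M)$ before reaching the minimum of the next fibre; so when $q^{-1}(a_M)$ is not a singleton, $u_l$ or $u_r$ may lie in $q^{-1}(a_M)$ itself. In that case $q(u_l\vee u_r)\ne b_M$, the polygon $[\check a, u_l\vee u_r]$ need not map onto $P_M$, and the polygon move across it may not change the image under $\mgmap{q}$ at all. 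Concretely, take $L=\{0,1,2\}\times\{0,1\}$ (polygonal, since every rank-two interval is a square) with the lattice congruence identifying $(0,0)\sim(1,0)$ and $(0,1)\sim(1,1)$, so that $M$ is a $2\times 2$ square with $a_M=\{(0,0),(1,0)\}$, $b_M=\{(2,1)\}$ and atoms $\{(0,1),(1,1)\}$, $\{(2,0)\}$. Here $\check a=(0,0)$, $\check b=(2,1)$, $D_l=(0,0)\lessdot(0,1)\lessdot(1,1)\lessdot(2,1)$ and $D_r=(0,0)\lessdot(1,0)\lessdot(2,0)\lessdot(2,1)$ (the only chain mapping to that side, so there is no freedom to rechoose it). Thus $u_r=(1,0)\in q^{-1}(a_M)$ is not in an atom fibre, $u_l\vee u_r=(1,1)\ne\check b$, and both sides of the square $[(0,0),(1,1)]$ collapse under $q$ to the same covering relation $a_M\lessdot\{(0,1),(1,1)\}$, so the polygon move does not flip the image.

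Your plan can be repaired by pinning the polygon at $\hat a=\max q^{-1}(a_M)$ rather than at $\check a$: by \cite[Proposition~2.2]{reading_lcfha} (used in the proof of \Cref{lem:cov_rel_labels_equal}) there are unique elements of the two atom fibres covering $\hat a$, and the polygon they generate does map onto $P_M$; this is essentially the mechanism of the separate \Cref{lem:polygon_preimages}, though one must still handle the bookkeeping of extending its two sides compatibly to full maximal chains of $L$. The paper's proof of the present lemma takes a different route that avoids locating the polygon explicitly: it sets $[x,y]=q^{-1}(P_M)$ (an interval by \cite[Proposition~9-5.1]{reading_regions}), picks any two maximal chains of $[x,y]$ mapping to the two sides of $P_M$, observes that they are connected by polygon moves inside the polygonal interval $[x,y]$ while every intermediate chain maps to one of the two sides of $P_M$, and hence isolates a single polygon move across which the image flips; these two chains are then extended identically outside $[x,y]$.
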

\begin{proof}
Let $P$ be the polygon relating $C_{M}$ and $C'_{M}$.
Then, by \cite[Proposition~9-5.1]{reading_regions}, we have that $q^{-1}(P)$ is an interval, so $q^{-1}(P) = [x, y]$ for some $x, y \in L$.
Let $C_P = C_{M} \cap P$ and $C'_P = C'_{M} \cap P$.
One can then choose maximal chains $C$ and $C'$ of $[x, y]$ such that $q(C) = C_{P}$ and $q(C') = C'_{P}$ by applying \Cref{lem:max_chains} to $q|_{[x, y]}$.
Moreover, since all maximal chains of $[x, y]$ are connected by polygon moves by \Cref{prop:polygonal_lattices}\ref{op:polygonal_lattices:polygon_moves}, we can choose $C$ and $C'$ to be related by a single polygon move, since maximal chains of $[x, y]$ must either map to $C_P$ or $C'_P$.
One can use the technique of \Cref{lem:max_chains} to extend $C$ and $C'$ to respective maximal chains $C_{L}$ and $C'_{L}$ which agree outside of $[x, y]$ and which are such that $\mgmap{q}(C_{L}) = C_{M}$ and $\mgmap{q}(C'_{L}) = C'_{M}$, since $C_{M}$ and $C'_{M}$ agree outside of $P$.
\end{proof}

One could use the previous lemma to prove the following, but we give a more direct proof.

\begin{lemma}\label{lem:polygon_preimages}
Let $L$ be a finite polygonal lattice with $\lambda \colon \covrel{L} \to E$ a forcing-consistent edge labelling and $\theta$ a lattice congruence on $L$, with quotient map $q \colon L \to L/\theta$.
Then, for every polygon $P \subseteq L/\theta$, there is a polygon $P' \subseteq L$ such that $q(P') = P$.
\end{lemma}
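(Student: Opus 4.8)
The plan is to reduce the statement to \Cref{lem:polygon_moves_in_fibres}, which is already available, by exploiting \Cref{prop:polygonal_lattices} to find two maximal chains of $L/\theta$ that witness the polygon $P$ and differ by a polygon move across exactly $P$. More precisely, write $P = \polygon{P_l}{P_r}$ as the union of its two maximal chains $P_l$ and $P_r$ meeting only in the bottom and top of the interval. Since $L/\theta$ is a finite lattice (being a lattice quotient of the finite lattice $L$), I can extend $P_l$ to a maximal chain $C$ of $L/\theta$ and $P_r$ to a maximal chain $C'$ of $L/\theta$ in such a way that $C$ and $C'$ agree outside of $P$; concretely, choose a maximal chain from $\min(L/\theta)$ up to the bottom element $\min P$ of the polygon, and a maximal chain from the top element $\max P$ of the polygon up to $\max(L/\theta)$, and glue these common pieces onto $P_l$ and $P_r$ respectively. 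By construction $C$ and $C'$ differ by a single polygon move, the polygon being precisely $P$.

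Next I would apply \Cref{lem:polygon_moves_in_fibres} to the lattice quotient $q \colon L \to L/\theta$ (note $L$ is polygonal by hypothesis) and the pair $C$, $C'$: this produces maximal chains $C_L$ and $C'_L$ of $L$ that are related by a polygon move, with $\mgmap{q}(C_L) = C$ and $\mgmap{q}(C'_L) = C'$. Let $P'$ be the polygon of $L$ realising this polygon move between $C_L$ and $C'_L$, so that $C_L \supseteq P'_l$, $C'_L \supseteq P'_r$, and $C_L \setminus P'_l = C'_L \setminus P'_r$. It remains to check that $q(P') = P$. Since $q$ maps covering relations to covering relations or collapses them (\Cref{lem:cov_rel}), and since $\mgmap{q}(C_L) = C$ recovers the maximal chain $C$ which contains $P_l$, the image $q(P'_l)$ is a sub-chain of $C$; similarly $q(P'_r) \subseteq C'$. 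The part of $C_L$ outside $P'$ maps into the common part $C \setminus P_l = C' \setminus P_r$, so the part of $C$ (resp. $C'$) that lies in $q(P')$ is exactly $P_l$ (resp. $P_r$), whence $q(P') = P_l \cup P_r = P$. This gives the desired polygon $P'$.

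One small point that needs care: \Cref{lem:polygon_moves_in_fibres} only guarantees $C_L$ and $C'_L$ are related by \emph{a} polygon move, and a priori I should confirm the polygon $P'$ witnessing it is a genuine polygon of $L$ mapping onto $P$ rather than a polygon mapping onto a proper sub-interval. This is handled by the image computation above: $q(P')$ is the interval $[q(\min P'), q(\max P')]$, which by the previous paragraph equals $[\min P, \max P]$ as a set, and an interval of $L/\theta$ containing the two distinct maximal chains $P_l, P_r$ whose union is $P$ must coincide with $P$ as a polygon.

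I expect the main obstacle to be the bookkeeping in the last paragraph — verifying carefully that $q(P') = P$ on the nose (not a sub-polygon, not a larger interval) — rather than any conceptual difficulty, since the structural input (\Cref{lem:polygon_moves_in_fibres}, \Cref{prop:polygonal_lattices}, \Cref{lem:cov_rel}) is all in place. An alternative, even more direct route, as hinted in the text, is to take the preimage $q^{-1}(P)$, show it is an interval $[x,y]$ via \cite[Proposition~9-5.1]{reading_regions}, and then locate a polygon inside $[x,y]$ surjecting onto $P$ by analysing the fibres of the two coatoms/atoms of $P$ under $q$; but the route through \Cref{lem:polygon_moves_in_fibres} reuses existing machinery and seems cleaner.
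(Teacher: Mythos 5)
Your route through \Cref{lem:polygon_moves_in_fibres} is a genuinely different strategy from the paper's, and it is one the paper itself flags as viable (``One could use the previous lemma to prove the following, but we give a more direct proof''). The paper's actual proof does not touch maximal chains at all: it sets $\hat{l} := \max q^{-1}(\check{p})$, uses \Cref{lem:lat_reg} and finiteness to find atoms $x_1, x_2$ of $L$ above $\hat{l}$ landing in the two open sides $\mathring{C}_1, \mathring{C}_2$ of $P$, and then $[\hat{l}, x_1 \vee x_2]$ is a polygon by polygonality of $L$ whose image is $[\check p, q(x_1)\vee q(x_2)] = P$. That argument is shorter and avoids the bookkeeping you flag. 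Your approach, by contrast, reuses \Cref{lem:polygon_moves_in_fibres} as a black box and then tries to identify the witnessing polygon after the fact.

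There is a real gap in your final verification, though, and it is in the direction you did not guard against. Your image computation shows $q(P'_l)$ is a saturated subchain of $C$ containing $\mathring{P}_l$, and likewise for $P'_r$; this gives $q(P') \supseteq P$. You then worry only that $q(P')$ might be a \emph{proper sub-interval} of $P$, which is already excluded, but you never rule out $q(P') \supsetneq P$, i.e.\ $q(\min P') < \check p$ (dually $q(\max P') > \hat p$). Your closing sentence, that an interval containing the two maximal chains $P_l, P_r$ must coincide with $P$, is false as stated: any interval of $L/\theta$ containing $[\check p, \hat p]$ contains both chains. To close the gap you can either (a) use not just the statement but the construction inside the proof of \Cref{lem:polygon_moves_in_fibres}, which places the polygon move \emph{inside} $q^{-1}(P)$, so $q(P') \subseteq P$ immediately; or (b) observe that $P'_l \subseteq C_L$ and $P'_r \subseteq C'_L$ force $q(P') \subseteq C \cup C'$, so if $q(\min P') < \check p$ then some $e$ with $q(\min P') < e \leqslant \check p$ lies in both $q(P'_l)$ and $q(P'_r)$; picking $a \in \mathring{P'}_l$, $b \in \mathring{P'}_r$ with $q(a) = q(b) = e$, the interval $q^{-1}(e)$ is closed under $\wedge, \vee$, so $\min P' = a \wedge b$ and $\max P' = a \vee b$ both lie in $q^{-1}(e)$, contradicting $q(\min P') \ne e$. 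Either fix is short, but it is a missing step, not mere terseness.
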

\begin{proof}
Let $P = \polygon{C_1}{C_2}$ be a polygon of $L/\theta$, with $\check{p} = \min P$ and $\hat{p} = \max P$ and $\mathring{C}_i = C_i \setminus \{\check{p}, \hat{p}\}$ for $i \in \{1, 2\}$.
Denoting $\hat{l} = \max q^{-1}(\check{p})$, we must have $\hat{l} < \max q^{-1}(\mathring{C}_i)$ for $i \in \{1, 2\}$ by \Cref{lem:lat_reg}.
If we have $x$ such that $\hat{l} < x \leqslant \max q^{-1}(\mathring{C}_i)$, then $q(x) \in \mathring{C}_i$: indeed, $\hat{l}$ is the maximal element of $q^{-1}(\check{p})$, 
 $[\check{p}, \max\mathring{C}_i] = \{\check{p}\} \cup \mathring{C}_i$,
and $q$ is order-preserving.
Since $L$ is finite, this implies that there exist $x_i \in q^{-1}(\mathring{C}_i)$ for $i \in \{1, 2\}$ such that $\hat{l} \lessdot x_i$.
Since $\hat{l}$ is a polygonal lattice, we have that $[\hat{l}, x_1 \vee x_2]$ is a polygon.
Moreover, $q([\hat{l}, x_1 \vee x_2]) = [q(\hat{l}), q(x_1 \vee x_2)] = [\check{p}, q(x_1) \vee q(x_2)] = [\check{p}, \hat{p}] = P$, which proves the result.
\end{proof}

\begin{lemma}\label{lem:polygon_connected}
Let $q\colon L \to M$ be a lattice quotient of two finite lattices $L$ and $M$, where $L$ is polygonal.
Then in each fibre of the map $\mgmap{q} \colon \mg{L} \to \mg{M}$ any two elements are connected by a sequence of polygon moves.
\end{lemma}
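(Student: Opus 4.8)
The plan is to reduce the statement to a claim about maximal chains of a single polygonal lattice, namely a fibre over a maximal chain $C_M$ of $M$, and then run an induction on polygon moves in $M$. Fix a fibre $\mgmap{q}^{-1}(C_M)$ and take two maximal chains $C_L, C'_L$ of $L$ mapping to $C_M$. The first step is to note that, since $q$ is order-preserving and $q(C_L) = q(C'_L) = C_M$, both $C_L$ and $C'_L$ are maximal chains of $L$ all of whose elements lie in the union $q^{-1}(C_M) = \bigcup_{y \in C_M} q^{-1}(y)$; moreover each $q^{-1}(y)$ is an interval of $L$ (the $\theta$-class of $y$, closed under meets and joins, hence an interval since $L$ is finite), and $\hat{l}_y \lessdot \check{l}_{y'}$ whenever $y \lessdot y'$ in $C_M$ is impossible in general — rather $\hat{l}_y < \check{l}_{y'}$ by \Cref{lem:lat_reg}. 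So each maximal chain in the fibre decomposes as a concatenation of maximal chains, one inside each interval $q^{-1}(y)$, joined by ``bridging'' covering relations that cross from $q^{-1}(y)$ up into $q^{-1}(y')$.

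The heart of the argument is then to show that any maximal chain in the fibre can be brought, by polygon moves that stay inside the fibre, to a ``standard form'': first traverse $q^{-1}(y_0)$ from $\min L$ to $\hat{l}_{y_0}$, then bridge to $\check{l}_{y_1}$, traverse $q^{-1}(y_1)$, and so on. The key sub-step is that any two maximal chains of a fixed interval $q^{-1}(y) = [\check{l}_y, \hat{l}_y]$ are connected by polygon moves \emph{within that interval} by \Cref{prop:polygonal_lattices}\ref{op:polygonal_lattices:polygon_moves} (an interval of a finite polygonal lattice is again a finite polygonal lattice, and such polygon moves alter no element outside $[\check{l}_y, \hat{l}_y]$, hence fix the $q$-image and stay in the fibre). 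The remaining issue — and the one I expect to be the main obstacle — is handling the bridging covering relations: different maximal chains of the fibre may exit the interval $q^{-1}(y)$ at different maximal elements of sub-intervals and enter $q^{-1}(y')$ at different places, so one must argue that the polygon containing such a bridging edge (which exists because $L$ is polygonal: if the chain continues $u \lessdot v \lessdot w$ with $q(u) = q(v) = y$ and $q(w) = y'$, then $[u \wedge (\text{other cover of } v), \dots]$ or rather $[u, v \vee v']$ is a polygon) has its entire other side also lying in the fibre. This is precisely where \Cref{lem:polygon_moves_in_fibres} and \Cref{lem:polygon_preimages} do the work: any polygon of $L$ whose two bounding chains differ only at elements all lying over $C_M$ has its $q$-image inside $C_M$, so the polygon move is fibre-internal. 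Combining, one walks $C_L$ to standard form and $C'_L$ to standard form; since $C_L, C'_L$ have the same image, their standard forms coincide, so $C_L$ and $C'_L$ are connected by fibre-internal polygon moves.

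Alternatively — and this is probably the cleaner write-up — one can avoid the explicit standard-form bookkeeping by citing \Cref{lem:polygon_moves_in_fibres} directly together with \Cref{prop:polygonal_lattices}\ref{op:polygonal_lattices:polygon_moves}: in $M$ the chains $C_M$ and $C_M$ are trivially connected by the empty sequence of polygon moves, so the statement is only nontrivial because we must connect two \emph{different} lifts of the \emph{same} chain. For that, one observes $q^{-1}(C_M)$ is, up to the bridging edges, a disjoint union of the polygonal lattices $q^{-1}(y)$, and proves the connectivity by induction on $|C_M|$: peel off $q^{-1}(y_0)$, use polygonality there, then by \Cref{lem:lat_reg} the induction hypothesis applies to the quotient lattice structure above $\hat{l}_{y_0}$. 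The subtlety that a polygon move inside $q^{-1}(y_0)$ might change which element bridges to the next interval is resolved by first using polygonality of $L$ to slide the bridging edge to a canonical one (again a fibre-internal polygon move, justified by \Cref{lem:polygon_preimages}), and only then invoking the induction hypothesis. I would present the first approach in full, as it makes the role of each preceding lemma transparent.
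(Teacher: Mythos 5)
Your high-level strategy — decompose each fibre chain into the pieces lying in the intervals $q^{-1}(m_i)$, handle the within-interval variation by polygonality of intervals, and then deal separately with the ``bridging'' covering relations — is the same as the paper's. But the part you yourself flag as the main obstacle is exactly where the proposal goes astray, in two ways.

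First, the lemmas you cite to resolve the bridging issue do not do that work. \Cref{lem:polygon_moves_in_fibres} lifts a polygon move \emph{in $M$} to a polygon move in $L$, and \Cref{lem:polygon_preimages} produces a preimage polygon in $L$ for a polygon of $L/\theta$; neither says anything about moving the exit point of a chain within a single fibre over a fixed $C_M$. The fact that a polygon move confined to $q^{-1}(m_j) \cup q^{-1}(m_{j+1})$ stays in the fibre is a direct (and essentially trivial) observation about $q$ being order-preserving; what actually requires work is \emph{constructing} the right polygon and the right replacement chain, and your sketch of that polygon is not well-formed: in ``$u \lessdot v \lessdot w$ with $q(u) = q(v) = y$, $q(w) = y'$'' the candidate polygon $[u, v \vee v']$ has an undefined $v'$ and, even read charitably, is spanned by two covers of $u$ rather than addressing the bridging edge $v \lessdot w$. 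The paper instead works with the pair of chains simultaneously: it sets $x = \max_j(C_L)$, $x' = \max_j(C'_L)$, $y = \min_{j+1}(C_L)$, $y' = \min_{j+1}(C'_L)$, forms $\overline{x} = x \wedge x'$, $\overline{y} = y \wedge y'$ and $\check x = \min_j(C_L) = \min_j(C'_L)$, chooses a maximal chain of $[\check x, y]$ through $\overline{x}$ and $\overline{y}$, and reads off common exit/entry points $c_j, c_{j+1}$ for both chains. That construction is what certifies both that the intermediate polygon moves are fibre-internal (everything stays inside $q^{-1}(m_j) \cup q^{-1}(m_{j+1})$) and that the two chains end up agreeing there. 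None of this is present in your proposal.

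Second, your ``standard form'' variant (always exit $q^{-1}(y_i)$ at the fibre maximum $\hat l_{y_i}$ and enter $q^{-1}(y_{i+1})$ at the fibre minimum $\check l_{y_{i+1}}$) silently relies on $\hat l_{y_i} \lessdot \check l_{y_{i+1}}$ being a covering relation, and on being able to push any exit point up to $\hat l_{y_i}$ by fibre-internal moves. Both of these are provable, but they require arguments (the first follows from \Cref{lem:lat_reg} and \Cref{lem:cov_rel}, the second from an auxiliary polygon argument like the one above) that the proposal does not supply; the paper deliberately avoids the canonical-form detour by only making $C_L$ and $C'_L$ agree with \emph{each other}, not with a fixed normal form. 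As written, therefore, the proposal identifies the right shape of argument but leaves a genuine gap at its core step.
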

\begin{proof}
Let $C$ be a maximal chain of $M$ given by $m_0 \lessdot m_1 \lessdot \dots \lessdot m_r$.
Since $q$ is a lattice quotient, we have that $q^{-1}(m_i)$ is an interval in $L$.
The fibre $\mgmap{q}^{-1}(C)$ consists of maximal chains in $L$ which are subsets of $\bigcup_{i = 0}^{r} q^{-1}(m_i)$.

Given a maximal chain $C_{L}$ in $\mgmap{q}^{-1}(C)$, we denote by $\min_i(C_{L})$ and $\max_{i}(C_{L})$ the respective minimal and maximal elements of $C_{L} \cap q^{-1}(m_i)$.
Then $[\min_i(C_{L}), \max_i(C_{L})] \subseteq q^{-1}(m_i)$.
By \Cref{prop:polygonal_lattices}\ref{op:polygonal_lattices:polygon_moves} and the straightforward fact that an interval in a polygonal lattice is polygonal, we have that any two maximal chains in $[\min_{i}(C_{L}), \max_{i}(C_{L})]$ are related by a sequence of polygon moves.
Hence, if we have maximal chains $C_{L}$ and $C'_{L}$ in $\mgmap{q}^{-1}(C)$ such that $\min_{i}(C_{L}) = \min_{i}(C'_{L})$ and $\max_{i}(C_{L}) = \max_{i}(C'_{L})$ for all~$i$, then we have that $C_{L}$ and $C'_{L}$ are related by a sequence of polygon moves.

Thus, let $C_{L}, C'_{L} \in \mgmap{q}^{-1}(C)$ and choose the smallest $j$ for which we have either $\max_{j}(C_{L}) \neq \max_{j}(C'_{L})$ or $\min_{j + 1}(C_{L}) \neq \min_{j + 1}(C'_{L})$.
Note that we cannot have $j = r$, since $\max_{r}(C_{L}) = \max_{r}(C'_{L}) = \max L$; for similar reasons, we cannot have $j = -1$.
We will construct $\widehat{C}_L, \widehat{C}'_{L} \in \mgmap{q}^{-1}(C)$ such that $C_L$ and $\widehat{C}_L$ are related by a sequence of polygon moves, $C'_L$ and $\widehat{C}'_L$ are related by a sequence of polygon moves, and $\max_{j}(\widehat{C}_{L}) = \max_{j}(\widehat{C'}_{L})$ and $\min_{j + 1}(\widehat{C}_{L}) = \min_{j + 1}(\widehat{C}_{L})$.
Let $\max_j(C_{L}) = x$, $\max_j(C'_{L}) = x'$, $\min_{j + 1}(C_{L}) = y$, and $\min_{j + 1}(C'_{L}) = y'$.
Note that we must therefore have $x \lessdot y$ and $x' \lessdot y'$.
Let $\overline{x} = x \wedge x'$, $\overline{y} = y \wedge y'$ and $\check{x} := \min_{j}(C_{L}) = \min_{j}(C'_{L})$.

We first construct $\widehat{C}_L$.
We have $[\check{x}, y] \subseteq q^{-1}(m_j) \cup q^{-1}(m_{j + 1})$, since $m_j \lessdot m_{j + 1}$ and $q$ is order-preserving.
We have that $\overline{x} < \overline{y}$ and $\overline{x},\overline{y} \in [\check{x}, y]$, and so there is a maximal chain $\widehat{C}$ in this interval with $\overline{x}, \overline{y} \in \widehat{C}$.
Let $c_j = \max(q^{-1}(m_j) \cap \widehat{C})$ and $c_{j + 1} = \min(q^{-1}(m_{j + 1})\cap\widehat{C})$.
Note then that we must have $\overline{x} \leqslant c_{j} \lessdot c_{j + 1} \leqslant \overline{y}$, since $\overline{x} \in q^{-1}(m_j) \cap \widehat{C}$ and $\overline{y} \in q^{-1}(m_{j + 1}) \cap \widehat{C}$.
Since $C_{L}$ also passes through $\check{x}$ and $y$, we can define $\widehat{C}_{L} := (C_{L} \setminus [\check{x}, y]) \cup \widehat{C}$.
We therefore have that $C_{L}$ and $\widehat{C}_{L}$ are related by polygon moves within the interval $[\check{x}, y]$, noting that they agree outside this interval.

Arguing similarly, we can find a maximal chain $\widehat{C'}$ in $[\check{x}, y']$ which coincides with $\widehat{C}$ on $[\overline{x}, \overline{y}] \subseteq [\check{x}, y']$, and in particular 
also passes through $c_{j}$ and $c_{j + 1}$.
We now define a maximal chain $\widehat{C'}_{L} := (C'_{L} \setminus [\check{x}, y']) \cup \widehat{C'}$. By construction, it is related to $C'_{L}$ by polygon moves, 
and we have $\max_{j}(\widehat{C}_{L}) = \max_{j}(\widehat{C'}_{L}) = c_j$ and $\min_{j + 1}(\widehat{C}_{L}) = \min_{j + 1}(\widehat{C'}_{L}) = c_{j + 1}$.
Continuing inductively in this way, we can suppose that $\widehat{C}_L$ and $\widehat{C'}_L$ are related by polygon moves, and so we have that $C_{L}$ and $C_{L'}$ are related by polygon moves, as desired.
We conclude that any two maximal chains in $\mgmap{q}^{-1}(C)$ are related by a sequence of polygon moves.
\end{proof}

\section{Preorders on equivalence classes of maximal chains via edge labellings}\label{sect:max_chain_posets}

In this section, we study orders on equivalence classes of maximal chains in posets.
This gives a general framework for defining partial orders (in general, preorders) such as we define in Section~\ref{sect:preproj_mgs_orders} in the particular case of maximal green sequences of preprojective algebras.

\subsection{Background}\label{sect:back:posets}

We first give background on a couple of concepts needed to introduce the partial orders on sets of square-equivalence classes of maximal chains.

\subsubsection{Edge labellings}

Edge labellings give us extra data on a polygonal lattice that allow us to define a preorder on its set of equivalence classes of maximal chains.
Recall that the set of covering relations of a poset $P$ is denoted~$\covrel{P}$.
A \emph{set-valued} \emph{edge labelling} of a poset $P$ is a map $\lambda \colon \covrel{P} \to E$, where $E$ is a set.
If $E$ is a poset, then we call $\lambda$ a \emph{poset-valued} \emph{edge labelling}; unless otherwise specified, we will use ``edge labelling'' to refer to poset-valued edge labellings.
A maximal chain $C$ given by $p_1 \lessdot p_2 \lessdot \dots \lessdot p_l$ in $P$ is \emph{ascending} with respect to the edge labelling $\lambda$ if we have $\lambda(p_1, p_2) < \lambda(p_2, p_3) < \dots < \lambda(p_{l - 1}, p_{l})$ in~$E$.
We define \emph{descending} chains similarly.

\subsubsection{Forcing}

Given a lattice $L$, forcing is a particular preorder on the set of covering relations $\covrel{L}$.
It will give us a constraint on the classes of edge labellings we allow so that they behave well with respect to quotient lattices.
If $a \lessdot b$ and $c \lessdot d$ are two covering relations of $L$, we say that $a \lessdot b$ \emph{forces} $c \lessdot d$ if for any lattice congruence $\theta$ such that $a \mathrel{\theta} b$, we have $c \mathrel{\theta} d$ \cite[Section~9-5.4]{reading_regions}.
The equivalence relation induced by the forcing preorder is known as ``forcing-equivalence''.
Explicitly, two covering relations $a \lessdot b$ and $c \lessdot d$ of $L$ are \emph{forcing-equivalent} if for any lattice congruence $\theta$ on $L$, we have $a \mathrel{\theta} b$ if and only if $c \mathrel{\theta} d$.

In the case where $L$ is a finite polygonal lattice, it was shown in \cite[Theorem~9-6.5]{reading_regions} that for any polygon $P$ of $L$ with elements denoted \[
\begin{tikzpicture}[xscale=1.5]

\node (min) at (0,0) {$\check{p}$};

\node (l1) at (-1,1) {$l_1$};
\node (l2) at (-1,2) {$l_2$};
\node (l3) at (-1,3) {$\vdots$};
\node (l4) at (-1,4) {$l_{m}$};

\node (r1) at (1,1) {$r_1$};
\node (r2) at (1,2) {$r_2$};
\node (r3) at (1,3) {$\vdots$};
\node (r4) at (1,4) {$r_{n}$};

\node (max) at (0,5) {$\hat{p}$};

\draw[->] (min) -- (l1);
\draw[->] (l1) -- (l2);
\draw[->] (l2) -- (l3);
\draw[->] (l3) -- (l4);
\draw[->] (l4) -- (max);

\draw[->] (min) -- (r1);
\draw[->] (r1) -- (r2);
\draw[->] (r2) -- (r3);
\draw[->] (r3) -- (r4);
\draw[->] (r4) -- (max);

\end{tikzpicture}
\]
then $\check{p} \lessdot l_1$ and $r_{n} \lessdot \hat{p}$ are forcing-equivalent, $\check{p} \lessdot r_1$ and $l_{m} \lessdot \hat{p}$ are forcing-equivalent, and $\check{p} \lessdot l_1$, $\check{p} \lessdot r_1$, $l_{m} \lessdot \hat{p}$, and $r_{n} \lessdot \hat{p}$ all force $l_i \lessdot l_{i + 1}$ and $r_{i} \lessdot r_{i + 1}$ for all $i$.
Moreover, the forcing preorder is the smallest preorder on $\covrel{L}$ for which these relations hold.

\subsection{Polygonal edge labellings}

In order to get edge labellings which give us partial orders on the set of square-equivalence classes with maximal chains, we need to restrict to certain classes of edge labellings.
First, we will restrict to the following type of edge labellings, which respect the equivalence relation given by forcing.

\begin{definition}
An edge labelling $\lambda \colon \covrel{L} \to E$ of a finite lattice $L$ is \emph{forcing-consistent} if whenever $a \lessdot b$ and $c \lessdot d$ are forcing-equivalent covering relations, we have $\lambda(a, b) = \lambda(c, d)$.
\end{definition}

In the case of a finite polygonal lattice, forcing consistency is equivalent to condition (i) in the definition of a CN-labelling in \cite{reading2003}.
The notion of a forcing-consistent edge labelling is also implicit in \cite[Theorem~3.11(a)]{dirrt}.

\begin{definition}
Given a finite polygonal lattice $L$ and a finite poset $E$, an edge labelling $\lambda \colon \covrel{L} \to E$ is a \emph{polygonal edge labelling} if for every non-square polygon $P$ of $L$, one of the maximal chains of $P$ is ascending with respect to $\lambda$, and the other maximal chain is descending with respect to $\lambda$.
\end{definition}

Polygonal edge labellings are in some ways similar to the EL-labellings from \cite{bjorner}, but one of the key differences is that EL-labellings are required to have unique ascending maximal chains in all intervals, whereas we only require this for intervals which are non-square polygons.
Hence, if $L$ contains a polygon $P$ where the descending chain has at least three covering relations, a polygonal edge labelling $\lambda$ (and, moreover, a linear extension of $\lambda$) is never an EL-labelling.
Indeed, any subchain of length two of the descending chain with respect to $\lambda$ in such a polygon $P$ is itself an interval in $L$, which does have an ascending chain with respect to $\lambda$.

Given a forcing-consistent polygonal edge labelling on a poset $P$, one can define a preorder on the equivalence classes of maximal chains.
This is similar to the notion of a maximal chain descent order from \cite{Lacina}, but there are several differences, namely: maximal chain descent orders are always partial orders; the notion of polygon in \cite{Lacina} is different and the context is finite bounded posets rather than polygonal lattices; in \cite{Lacina} the edge labellings are required to be CL-labellings; and in \cite{Lacina} no equivalence relation is imposed on the maximal chains.

\begin{definition}\label{def:increasing_polygon_move}
Let $L$ be a finite lattice with $\lambda \colon \covrel{L} \to E$ a forcing-consistent polygonal edge labelling.
Given two maximal chains $C, C' \in \mg{L}$, we say that $C'$ is an \emph{increasing polygon move} of $C$ if there is a non-square polygon $P$ of $L$ such that $C \setminus P = C' \setminus P$ and $C$ contains the maximal chain in $P$ which is ascending with respect to $\lambda$, and $C'$ contains the other maximal chain in~$P$.
By extension, we also say that $[C']$ is an increasing polygon move of~$[C]$.

Recall that we write $\mge{L}$ for the set of square-equivalence classes of maximal chains of~$L$.
We define a preorder $\preord_{\lambda}$ on $\mge{L}$ as the smallest reflexive transitive relation $\preord_{\lambda}$ such that \[
[C] \preord_{\lambda} [C']
\]
if $[C']$ is an increasing polygon move of $[C]$.
\end{definition}

We thus denote the preordered set given by $\mge{L}$ with the preorder $\preord_{\lambda}$ by $\mgel{L}$.
If $\mgel{L}$ is such that $[C] \preordot [C']$ whenever $[C']$ is an increasing polygon move of $[C]$, then we say that $\lambda \colon \covrel{L} \to E$ is \emph{polygon-complete}.
The terminology of polygon-completeness comes from \cite[Definition~3.19]{Lacina}.
Note that if the edge labelling is polygon-complete then the preorder $\preord_{\lambda}$ is a partial order.

\begin{lemma}
\label{lem:polygon-complete_poset}
Let $L$ be a finite polygonal lattice with $\lambda \colon \covrel{L} \to E$ a forcing-consistent polygonal edge labelling.
If $\lambda$ is polygon-complete, then $\preord_{\lambda}$ is a partial order.
\end{lemma}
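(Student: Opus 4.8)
The relation $\preord_{\lambda}$ is reflexive and transitive by construction, so the plan is simply to verify antisymmetry; once this is done, $\preord_{\lambda}$ is a partial order by definition. I would argue by contradiction, using the explicit description of $\preord_{\lambda}$ as a reflexive--transitive closure: for $[C],[C'] \in \mge{L}$ one has $[C] \preord_{\lambda} [C']$ if and only if either $[C] = [C']$, or there is a finite sequence $[C] = [D_0], [D_1], \dots, [D_k] = [C']$ with $k \geq 1$ in which each $[D_{i+1}]$ is an increasing polygon move of $[D_i]$ (read at the level of square-equivalence classes, i.e.\ witnessed by some choice of representatives).

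\textbf{The argument.} Suppose $[C] \neq [C']$ while $[C] \preord_{\lambda} [C']$ and $[C'] \preord_{\lambda} [C]$. Since the two classes are distinct, the relation $[C] \preord_{\lambda} [C']$ is not an instance of reflexivity, so there is a sequence $[C] = [D_0], [D_1], \dots, [D_k] = [C']$ as above with $k \geq 1$; in particular $[D_1]$ is an increasing polygon move of $[D_0]$. This is the point where polygon-completeness is used: by hypothesis $[D_0] \preordot [D_1]$, hence in particular $[D_0] \preordstr [D_1]$, and so $[D_1] \not\preord_{\lambda} [D_0]$. On the other hand, since each $[D_{i+1}]$ is an increasing polygon move of $[D_i]$, transitivity applied to the tail $[D_1] \preord_{\lambda} [D_2] \preord_{\lambda} \dots \preord_{\lambda} [D_k] = [C']$, together with the assumption $[C'] \preord_{\lambda} [C] = [D_0]$, gives $[D_1] \preord_{\lambda} [D_0]$ --- a contradiction. (When $k = 1$ the tail is just reflexivity and nothing changes.) Therefore $\preord_{\lambda}$ is antisymmetric, and hence a partial order.

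\textbf{Main obstacle.} I do not anticipate a genuine obstacle here: the argument is essentially formal, and the polygonality of $L$, forcing-consistency, and the polygonal-labelling condition are used only to make $\mge{L}$, increasing polygon moves, and the preorder $\preord_{\lambda}$ well-defined in the first place --- antisymmetry then follows purely from the fact that polygon-completeness forces every increasing polygon move to be a strict (indeed covering) relation of $\preord_{\lambda}$. The only points that demand a little care are the standard unwinding of the reflexive--transitive closure used in the first step and the convention that ``$[D_{i+1}]$ is an increasing polygon move of $[D_i]$'' is to be interpreted as the existence of representatives so related; neither is difficult.
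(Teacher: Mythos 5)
Your proof is correct and takes essentially the same approach as the paper's: both arguments boil down to observing that polygon-completeness forces every increasing polygon move to give a covering (hence strict) relation, which is incompatible with two distinct classes being comparable in both directions. The paper phrases this via the contrapositive (assume a cycle of increasing polygon moves, derive that polygon-completeness fails), while you argue directly from antisymmetry failure, but the content is identical.
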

\begin{proof}
Suppose that $\preord_{\lambda}$ is not a partial order.
Then there exists a sequence of equivalence-classes of maximal chains $[C_1], [C_2], \dots, [C_k]$ such that for all $i$, $[C_{i + 1}]$ is an increasing polygon move of $[C_i]$ and $[C_1]$ is an increasing polygon move of $[C_k]$.
But then we have $[C_1] \preord_{\lambda} [C_k]$ and $[C_k] \preord_{\lambda} [C_1]$, which means that we cannot have either $[C_1] \preordot_{\lambda} [C_k]$ or $[C_k] \preordot_{\lambda} [C_1]$.
Hence, $\lambda$ cannot be polygon-complete.
\end{proof}

\subsection{Edge labelling inherited by quotients of polygonal lattices}

In this section we give a natural construction of an edge labelling on a quotient polygonal lattice from a forcing-consistent edge labelling on the original polygonal lattice.

\begin{lemma}\label{lem:cov_rel_labels_equal}
Let $L$ be a finite polygonal lattice with $\theta$ a lattice congruence and $\lambda \colon \covrel{L} \to E$ a forcing-consistent edge labelling.
Then if $[x] \lessdot [y]$ is a covering relation in $L/\theta$, we have that $\lambda(x', y')$ is constant for all covering relations $x' \lessdot y'$ with $x' \in [x]$ and $y' \in [y]$.
\end{lemma}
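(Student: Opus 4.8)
The plan is to show that any two covering relations $x' \lessdot y'$ and $x'' \lessdot y''$ with $x', x'' \in [x]$ and $y', y'' \in [y]$ are forcing-equivalent in $L$, whence $\lambda(x', y') = \lambda(x'', y'')$ by forcing-consistency of $\lambda$. Since forcing-equivalence is transitive, it suffices to treat two covering relations lying in a single $\theta$-class boundary that are ``close'', and then connect arbitrary pairs by a chain of such elementary steps. So the first step is to set up the structure of the fibres $q^{-1}([x]) = [\check{x}, \hat{x}]$ and $q^{-1}([y]) = [\check{y}, \hat{y}]$, which are intervals since $\theta$ is a lattice congruence; a covering relation $x' \lessdot y'$ with $q(x') = [x]$ and $q(y') = [y]$ exists precisely when $x'$ is a maximal element of $[\check x, \hat x]$ that is covered by a minimal element of $[\check y, \hat y]$.

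The key step is a reduction to polygons. Given two such covering relations $x' \lessdot y'$ and $x'' \lessdot y''$, I would like to connect them by steps that each stay inside a single polygon of $L$. Concretely: if $x' \neq x''$, then $x' \vee x''$ lies in $[x]$ (as $[x]$ is closed under join inside its interval — more precisely $x' \vee x'' \leqslant \hat x$), so passing to $x' \vee x''$ reduces to the case where the two bottom elements are comparable; similarly one handles $y', y''$ via meets, using $y' \wedge y'' \geqslant \check y$. Thus I may assume $x' \leqslant x''$ and $y' \leqslant y''$ (or the mixed variants). When $x' = x''$ but $y' \neq y''$, both $y'$ and $y''$ cover $x'$, so $[x', y' \vee y'']$ is a polygon by polygonality of $L$; the two covering relations $x' \lessdot y'$ and $x' \lessdot y''$ are then the two bottom edges of this polygon, which are forcing-equivalent by Reading's description of forcing in polygonal lattices (\cite[Theorem~9-6.5]{reading_regions}) quoted in \Cref{sect:back:posets} — but I must also check that this polygon lies appropriately over $[x]$ and $[y]$, i.e. that no interior vertex of the polygon is collapsed in a way that breaks the argument; in fact since $q$ is order-preserving and $q(x') = [x]$, $q(y' \vee y'') = [y]$, every interior vertex maps to $[x]$ or $[y]$, so both bottom edges genuinely connect the two classes. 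The case $x' \lessdot x''$ inside $[x]$ with a common or comparable top is handled dually using the second polygonality axiom and the part of Reading's theorem saying the ``sides'' of a polygon are forced by the bottom/top edges, combined with $\theta$-compatibility to stay within the fibre.

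Assembling these: starting from $x' \lessdot y'$, replace the bottom by $x' \vee x''$, then the top by $(x' \vee x'') \vee y''$ or by going up to a common top with $x'' \lessdot y''$, at each stage invoking polygonality of $L$ to produce a polygon and Reading's forcing theorem to transfer the edge label, so that $\lambda(x',y') = \lambda(x' \vee x'', \cdot) = \cdots = \lambda(x'', y'')$. The main obstacle I expect is the bookkeeping in this reduction — checking that each auxiliary element ($x' \vee x''$, $y' \wedge y''$, and so on) really does lie in the prescribed $\theta$-class so that the intermediate covering relations we pass through are themselves of the form ``from $[x]$ to $[y]$'', which is exactly where the hypothesis that $\theta$ is a \emph{lattice} congruence (not merely an order congruence) is used, together with \Cref{lem:cov_rel} to rule out the collapsed or skipped cases. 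Once the fibre structure is pinned down, each individual polygon step is immediate from \cite[Theorem~9-6.5]{reading_regions} and forcing-consistency.
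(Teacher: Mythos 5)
Your high-level strategy — reduce to forcing-equivalence, then invoke forcing-consistency — is the right one and matches the paper. But the central step of your reduction is wrong: you claim that the two covering relations $x' \lessdot y'$ and $x' \lessdot y''$, as the two bottom edges of the polygon $[x', y' \vee y'']$, ``are forcing-equivalent by Reading's description of forcing in polygonal lattices''. They are not. As quoted in \Cref{sect:back:posets}, \cite[Theorem~9-6.5]{reading_regions} says that in a polygon, $\check{p} \lessdot l_1$ is forcing-equivalent to $r_n \lessdot \hat{p}$ and $\check{p} \lessdot r_1$ is forcing-equivalent to $l_m \lessdot \hat{p}$ — that is, each bottom edge is forcing-equivalent to the top edge on the \emph{opposite} side. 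The two bottom edges $\check{p} \lessdot l_1$ and $\check{p} \lessdot r_1$ are not, in general, forcing-equivalent. In fact, in your situation the case $y' \neq y''$ is vacuous: if $x' \in [x]$ were covered by two distinct $y', y'' \in [y]$, then every element of the polygon $[x', y' \vee y'']$ other than $x'$ would lie in $[y]$ (being $\geqslant y'$ or $\geqslant y''$ and $\leqslant y' \vee y'' \in [y]$), so the correct forcing-equivalence would pair the non-contracted edge $x' \lessdot y'$ with a contracted edge inside $[y]$, which is impossible. Your remaining steps also lean on the fact that the side edges of a polygon are \emph{forced} by the atom edges, but forcing is only a preorder: one-directional forcing does not give forcing-equivalence and hence does not give equality of $\lambda$-labels.

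The paper proves the lemma by a different, cleaner induction. It fixes the maximal element $\hat{x}$ of $[x]$ and its unique cover $\widetilde{y}$ in $[y]$ (unique by \cite[Proposition~2.2]{reading_lcfha}) and shows directly that any covering relation $x_0 \lessdot y_0$ from $[x]$ to $[y]$ is forcing-equivalent to $\hat{x} \lessdot \widetilde{y}$. The inductive step picks $x'$ with $x_0 \lessdot x' \leqslant \hat{x}$, forms the polygon $P_0 = [x_0, x' \vee y_0]$ (noting $x' \vee y_0 \in [y]$ since $\theta$ is a lattice congruence), and applies the bottom-edge/opposite-top-edge forcing-equivalence to trade $x_0 \lessdot y_0$ for a forcing-equivalent covering relation $x_1 \lessdot y_1$ with $x_1 \in [x]$, $y_1 \in [y]$, and $x_1$ strictly higher in $[x]$; iterating terminates at $\hat{x} \lessdot \widetilde{y}$. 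If you want to repair your argument, this ``push the bottom up towards $\hat{x}$, using only the bottom-to-opposite-top equivalence'' is the move you need.
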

\begin{proof}
Let $[x] \lessdot [y]$ be a covering relation in $L/\theta$ and let $\hat{x}$ be the maximal element of $[x]$.
It follows from \cite[Proposition~2.2]{reading_lcfha} that there is a unique $\widetilde{y} \in [y]$ which covers $\hat{x}$.
Now let $x_0 \lessdot y_0$ be any covering relation with $x_0 \in [x]$ and $y_0 \in [y]$.
We will show inductively that $x_0 \lessdot y_0$ is forcing-equivalent to $\hat{x} \lessdot \widetilde{y}$.
If $x_0 = \hat{x}$ then this is tautologous, so we may assume $x_0 < \hat{x}$.
Hence, there exists $x'$ with $x_0 \lessdot x' \leqslant \hat{x}$.
We may then take the polygon $P_0 = [x_0, x' \vee y_0]$.
We have that the $\theta$-equivalence class of $\max P_0$ is $[\max P_0] = [x' \vee y_0] = [x'] \vee [y_0] = [y_0]$.
Hence $\max P_0 \in [y]$, so we denote $y_1 := \max P_0$.
We let $x_1$ be the coatom of $P_0$ such that $x' < x_1 \lessdot y_1$.
Since the quotient map is order-preserving, we must have $x_1 \in [x]$ or $x_1 \in [y]$.
The other coatom of $P_0$ is an element of $[y]$, since it lies between $y_0$ and $y_1$.
Since $[y]$ is closed under meets, we must have that $x_1 \in [x]$, as otherwise we would have $x_0 \in [y]$.
Moreover, $x_1 \lessdot y_1$ and $x_0 \lessdot y_0$ are forcing-equivalent by the polygonal characterisation of forcing-equivalence.
Inductively, we obtain that $x_0 \lessdot y_0$ and $\hat{x} \lessdot \widetilde{y}$ are forcing-equivalent.
Since $\lambda$ is forcing-consistent, we obtain that $\lambda(x_0, y_0) = \lambda(\hat{x}, \widetilde{y})$, which establishes the result.
\end{proof}

This allows us to define a quotient edge labelling for forcing-consistent edge labellings of polygonal lattices.

\begin{definition}\label{def:quotient_edge_labelling}
Suppose that $L$ is a finite polygonal lattice with a lattice congruence $\theta$ and a forcing-consistent edge labelling $\lambda \colon \covrel{L} \to E$.
We then define the \emph{quotient edge labelling} $\lambda_\theta \colon \covrel{L/\theta} \to E$ by $\lambda_{\theta}([x], [y]) = \lambda(x', y')$ for any $x' \in [x]$ and $y' \in [y]$ with $x' \lessdot y'$.
\end{definition}

\begin{proposition}\label{prop:qel_fc&polygonal}
Let $L$ be a finite polygonal lattice with $\lambda \colon \covrel{L} \to E$ a forcing-consistent edge labelling and $\theta$ a lattice congruence on $L$, with quotient map $q \colon L \to L/\theta$.
Then the following hold.
\begin{enumerate}
    \item $\lambda_{\theta}$ is also forcing-consistent.\label{op:qel_fc&polygonal:fc}
    \item If $\lambda$ is polygonal, then $\lambda_{\theta}$ is polygonal as well.\label{op:qel_fc&polygonal:polygonal}
\end{enumerate}
\end{proposition}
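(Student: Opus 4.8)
The plan is to verify the two assertions about $\lambda_\theta$ directly from the defining properties of $\lambda$, using the polygonal characterisation of forcing-equivalence recalled in Section~\ref{sect:back:posets} together with the fact (Fact~\ref{prop:polygonal_lattices}\ref{op:polygonal_lattices:quotient}) that $L/\theta$ is again polygonal. For part \ref{op:qel_fc&polygonal:fc}, let $[a]\lessdot[b]$ and $[c]\lessdot[d]$ be forcing-equivalent covering relations in $L/\theta$. I would lift each to a covering relation in $L$: by Lemma~\ref{lem:cov_rel_labels_equal} we may choose any $a'\lessdot b'$ with $a'\in[a]$, $b'\in[b]$ (say $a'=\max[a]$ and $b'$ the unique cover of it in $[b]$, as in that lemma's proof), and similarly $c'\lessdot d'$. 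The key step is to show that $a'\lessdot b'$ and $c'\lessdot d'$ are forcing-equivalent \emph{in $L$}. This should follow from the fact that forcing in the quotient is governed by forcing upstairs: if $\sigma$ is a lattice congruence of $L$ with $a'\mathbin{\sigma}b'$, then the join $\sigma\vee\theta$ (in the congruence lattice of $L$) is a lattice congruence with $[a]\mathbin{(\sigma\vee\theta)/\theta}[b]$, hence $[c]\mathbin{(\sigma\vee\theta)/\theta}[d]$, which pulls back to $c'\mathbin{(\sigma\vee\theta)}d'$; one then needs to descend back to $\sigma$ itself using that $c'\lessdot d'$ is the representative cover from Lemma~\ref{lem:cov_rel_labels_equal}. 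Alternatively, and perhaps more cleanly, I would argue purely combinatorially via the polygon characterisation: forcing-equivalence of covers is generated by the polygon relations (``$\check p\lessdot l_1$ is forcing-equivalent to $r_n\lessdot\hat p$'', etc.), and by Lemma~\ref{lem:polygon_preimages} every polygon of $L/\theta$ lifts to a polygon of $L$; tracing the chain of polygon relations witnessing $[a]\lessdot[b]\sim[c]\lessdot[d]$ and lifting each polygon gives a chain of polygon relations in $L$ witnessing $a'\lessdot b'\sim c'\lessdot d'$. Then forcing-consistency of $\lambda$ gives $\lambda(a',b')=\lambda(c',d')$, i.e.\ $\lambda_\theta([a],[b])=\lambda_\theta([c],[d])$.

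For part \ref{op:qel_fc&polygonal:polygonal}, let $\overline P$ be a non-square polygon of $L/\theta$. By Lemma~\ref{lem:polygon_preimages}, there is a polygon $P\subseteq L$ with $q(P)=\overline P$. The covering relations of $\overline P$ are exactly the $q$-images of the covering relations of $P$: indeed $q$ restricted to $P=[\hat l,x_1\vee x_2]$ (in the notation of Lemma~\ref{lem:polygon_preimages}) is a surjective order-preserving map between these polygonal intervals, and by Lemma~\ref{lem:cov_rel} it sends covers to covers or identifies them; since $\overline P$ is non-square, no two adjacent elements of $P$ on either side can be identified on that side (or $\overline P$ would be too small), so $q$ maps the two maximal chains $P_l, P_r$ of $P$ onto the two maximal chains of $\overline P$. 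One subtlety: $P$ itself could \emph{a priori} be a square even though $\overline P$ is not — but that is impossible since a surjection of the above type cannot enlarge a polygon; so $P$ is a non-square polygon. Now apply polygonality of $\lambda$ to $P$: one maximal chain of $P$, say $P_l$ with covers labelled $e_1<e_2<\dots<e_m$, is ascending, and $P_r$, labelled $f_1>f_2>\dots>f_n$, is descending. By Definition~\ref{def:quotient_edge_labelling}, $\lambda_\theta$ labels the corresponding covers of $q(P_l)$ and $q(P_r)$ by exactly these same elements $e_i$ and $f_j$ (in the same order, since $q$ preserves the order of covers along a chain). Hence $q(P_l)$ is ascending and $q(P_r)$ is descending with respect to $\lambda_\theta$, which is precisely the polygonal condition for $\overline P$.

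The main obstacle I expect is part \ref{op:qel_fc&polygonal:fc}: making rigorous the claim that forcing-equivalence in $L/\theta$ pulls back to forcing-equivalence between the chosen lifted covers in $L$. The cleanest route is the polygon-relation argument above, but it requires care to check that the lift of a polygon $\overline P$ supplied by Lemma~\ref{lem:polygon_preimages} has its boundary covers forcing-equivalent (in $L$) to the prescribed lifts of the boundary covers of $\overline P$ — here one uses Lemma~\ref{lem:cov_rel_labels_equal} again to see that the choice of representative cover in a fixed $\theta$-class of edges does not matter, so that all such lifts are mutually forcing-equivalent. Part \ref{op:qel_fc&polygonal:polygonal} is comparatively routine once Lemma~\ref{lem:polygon_preimages} is invoked, the only real point being the observation that $q$ cannot turn a non-square polygon downstairs into a square upstairs.
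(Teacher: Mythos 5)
Your overall strategy — lift polygons of $L/\theta$ to $L$ via Lemma~\ref{lem:polygon_preimages} and transfer the property of $\lambda$ — matches the paper's, but both parts contain gaps. For part~(1) you set out to show that forcing-equivalent covers of $L/\theta$ lift to forcing-equivalent covers of $L$, which is stronger than needed and creates exactly the compatibility problem you flag: two polygons of $L/\theta$ sharing a cover need not lift to two polygons of $L$ sharing a cover, so the chain of polygon relations does not lift as a chain. Your congruence-lattice variant has a real hole as well: $c' \mathrel{(\sigma\vee\theta)} d'$ does not give $c' \mathrel{\sigma} d'$, and there is no evident way to ``descend'' from $\sigma\vee\theta$ to $\sigma$. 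The paper sidesteps all of this by observing that it suffices to verify forcing-consistency \emph{within each single polygon} $P$ of $L/\theta$ (since the forcing preorder is generated by polygon relations): one lifts $P$ to $P'$ by Lemma~\ref{lem:polygon_preimages}, checks (via Lemma~\ref{lem:cov_rel} and the meet condition $c_1'\wedge c_2'=\hat l$) that the atom and coatom covers of $P'$ map onto those of $P$, so that $\lambda_\theta$ on the boundary covers of $P$ is literally $\lambda$ on the boundary covers of $P'$, and then forcing-consistency of $\lambda$ inside $P'$ gives the equality of labels in $P$. No chaining of lifts is involved.

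For part~(2), the claim that ``no two adjacent elements of $P$ on either side can be identified on that side (or $\overline P$ would be too small)'' is simply false, and the resulting assertion that the covers of $\overline P$ are \emph{exactly} the $q$-images of the covers of $P$ (same lists $e_1<\dots<e_m$, $f_1>\dots>f_n$) is wrong. Take $L$ the weak order on $\mathfrak{S}_3$ (a hexagon) and $\theta$ the $c$-Cambrian congruence for $c=s_1s_2$: then $L/\theta$ is a pentagon, the lift $P'$ is the full hexagon, and the cover $s_2\lessdot s_2s_1$ of $P'$ is collapsed, while $\overline P$ is still a non-square polygon. So in general the label sequences of the chains of $\overline P$ are only \emph{subsequences} of those of $P'$. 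This does not break the conclusion — deleting terms from a strictly ascending (resp.\ descending) sequence leaves it strictly ascending (resp.\ descending) — and that is exactly the observation the paper makes, but your proof as written relies on a step that fails. (Your side remark that $P'$ cannot be a square when $\overline P$ is not is fine; it follows directly from surjectivity of $q\colon P'\to\overline P$.)
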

\begin{proof}
It suffices to check forcing-consistency and polygonality within each polygon $P$ of $L/\theta$.
Hence, let $P$ be a polygon of $L/\theta$, with $P'$ a polygon of $L$ such that $q(P') = P$, which exists by \Cref{lem:polygon_preimages}.
Let the minima and maxima of $P$ and $P'$ be $\check{p}$, $\hat{p}$, $\check{p}'$, and $\hat{p}'$ respectively, and let the respective atoms and coatoms of $P$ be $a_1$, $a_2$, $c_1$, $c_2$.
We let the respective atoms and coatoms of $P'$ be $a'_1$, $a'_2$, $c'_1$, and $c'_2$, such that $q(a'_i) = a_i$ and $q(c'_i) = c_i$ for $i \in \{1, 2\}$.
Hence $\lambda_{\theta}(\check{p}, a_i) = \lambda(\check{p}', a'_i)$ and $\lambda_{\theta}(c_i, \hat{p}) = \lambda(c'_i, \hat{p}')$ and so forcing-consistency of $\lambda$ within the polygon $P'$ ensures that $\lambda_{\theta}$ is forcing-consistent within the polygon~$P$.
For polygonality, it suffices to observe that the sequences of edge labels of the maximal chains of $P$ are obtained from those of $P'$ by removing the edge labels of the edges that are contracted by~$\theta$.
Then since one maximal chain of $P'$ is ascending and the other descending, it follows that the same is true for the maximal chains of $P$.
\end{proof}

Given this set-up, we can now prove one of the main theorems of the paper.

\begin{theorem}\label{thm:contraction}
Suppose that $L$ is a finite polygonal lattice with $\lambda \colon \covrel{L} \to E$ a forcing-consistent polygonal edge labelling.
\begin{enumerate}[label=\textup{(}\arabic*\textup{)}]
    \item Denoting the quotient $q \colon L \to L/\theta$, the induced map \[
    \mgtmap{q} \colon \mgea{\lambda}{L} \to \mgea{\lambda_{\theta}}{L/\theta}
    \] is an order-preserving surjection of preordered sets.\label{op:contraction:op_surj}
    \item If $\lambda$ is polygon-complete, then $\mgtmap{q}$ is a contraction of preordered sets.\label{op:contraction:contraction}
\end{enumerate}
\end{theorem}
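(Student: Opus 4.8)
The plan is to reduce both statements to the combinatorics of polygon moves, using the results of \Cref{sect:lattice_quot_and_max_chains} together with \Cref{prop:max_chains_eq} and \Cref{prop:qel_fc&polygonal}. For part~\ref{op:contraction:op_surj}, the map $\mgtmap{q}$ is already a well-defined surjection of sets by \Cref{prop:max_chains_eq}, so only order-preservation with respect to $\preord_{\lambda}$ and $\preord_{\lambda_{\theta}}$ is at issue; since $\preord_{\lambda}$ is generated by increasing polygon moves, it suffices to treat a single such move. So suppose $[C']$ is an increasing polygon move of $[C]$ across a non-square polygon $P = \polygon{P_l}{P_r}$ of $L$, with $C$ containing the ascending chain $P_l$ and $C'$ the descending chain $P_r$. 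I would first note that $q$ restricts to a surjection of the interval $P = [x,y]$ onto the interval $[q(x),q(y)]$, as $q$ is a lattice homomorphism, and then run a short case analysis on the shape of $q(P)$: if $q(P)$ is a point or a chain, then $q(P_l) = q(P_r)$ and hence $\mgmap{q}(C) = \mgmap{q}(C')$; if $q(P)$ is a square, then $\mgmap{q}(C)$ and $\mgmap{q}(C')$ are related by a square move, so $\mgtmap{q}([C]) = \mgtmap{q}([C'])$; and if $q(P)$ is a non-square polygon, then, because by \Cref{def:quotient_edge_labelling} and \Cref{lem:cov_rel} the $\lambda_{\theta}$-labels along $q(P_l)$ (resp.\ $q(P_r)$) form a subsequence of the $\lambda$-labels along $P_l$ (resp.\ $P_r$), the chain $q(P_l)$ is ascending and $q(P_r)$ descending; as $\lambda_{\theta}$ is polygonal by \Cref{prop:qel_fc&polygonal}\ref{op:qel_fc&polygonal:polygonal}, these are exactly the ascending and descending chains of $q(P)$, so $\mgmap{q}(C')$ is an increasing polygon move of $\mgmap{q}(C)$. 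In every case $\mgtmap{q}([C]) \preord_{\lambda_{\theta}} \mgtmap{q}([C'])$, which gives part~\ref{op:contraction:op_surj}.

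For part~\ref{op:contraction:contraction}, assuming $\lambda$ polygon-complete, I would check the three defining conditions of a contraction of preordered sets; condition~(1) is part~\ref{op:contraction:op_surj}. For condition~(2), connectedness of fibres, I would fix $[D]$ and show the preimage of its symmetric-equivalence class is connected by combining two ingredients. First, the preimage of a single square-class $[E]$ is connected: any two of its elements have representatives mapping to square-equivalent maximal chains of $L/\theta$, and \Cref{lem:polygon_connected} (to move inside a fixed fibre $\mgmap{q}^{-1}(B)$) together with \Cref{lem:polygon_moves_in_fibres} (to cross one square move of $L/\theta$) produces a path of polygon moves in $L$, each of which, after passing to square-classes, is either an identification or a $\preord_{\lambda}$-relation in one of the two directions --- in every case fine for connectedness. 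Second, consecutive square-classes in the symmetric class are linked: they are joined by a chain of increasing polygon moves which, by a routine covering-and-transitivity argument, can be taken to stay inside the symmetric class, and \Cref{lem:polygon_moves_in_fibres} lifts each of them to a polygon move in $L$ relating an element of one fibre to an element of the next.

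For condition~(3), I would first observe that a covering relation $[D] \preordot_{\lambda_{\theta}} [D']$ is necessarily realised by a single increasing polygon move: writing $[D] \preord_{\lambda_{\theta}} [D']$ as a chain of increasing polygon moves of minimal length, any intermediate term would lie strictly between $[D]$ and $[D']$, contradicting the covering property unless the chain has length one. Choosing representatives witnessing this move across a non-square polygon $Q$ of $L/\theta$, I would invoke (the proof of) \Cref{lem:polygon_moves_in_fibres} to lift it to a polygon move in $L$ across a polygon $P$ with $q(P) = Q$; since $Q$ is not a square and a square can only map to a square, a chain, or a point, $P$ is itself a non-square polygon. Matching $\lambda$- and $\lambda_{\theta}$-labels as in part~\ref{op:contraction:op_surj} shows that the ascending maximal chain of $P$ maps onto the ascending maximal chain of $Q$, which pins down the direction of the lift: it is an increasing polygon move of $L$, hence by polygon-completeness a covering relation $[C] \preordot_{\lambda} [C']$ with $\mgtmap{q}([C]) = [D]$ and $\mgtmap{q}([C']) = [D']$. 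This establishes condition~(3) and finishes the proof.

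The step I expect to be the main obstacle is the bookkeeping around images of polygons that recurs in part~\ref{op:contraction:op_surj} and in condition~(3): one must control $q(P)$ for a non-square polygon $P$ and, when $q(P)$ is again a non-square polygon, verify that the ascending maximal chain of $P$ maps onto the ascending --- and not the descending --- maximal chain of $q(P)$. This is precisely where polygonality of the quotient labelling $\lambda_{\theta}$ and the subsequence property of \Cref{def:quotient_edge_labelling} do the real work. By contrast, the connectedness of fibres is essentially formal once \Cref{lem:polygon_connected} and \Cref{lem:polygon_moves_in_fibres} are available, since there the direction of a polygon move is immaterial.
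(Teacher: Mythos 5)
Your proposal is correct and follows essentially the same route as the paper's proof: both obtain well-definedness and surjectivity from \Cref{prop:max_chains_eq}, reduce order-preservation in part~(1) to a single increasing polygon move, and for part~(2) combine \Cref{lem:polygon_moves_in_fibres} and \Cref{lem:polygon_connected} with polygon-completeness to check the three defining conditions of a contraction of preordered sets.

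Two small points of comparison. First, your explicit ``chain'' case in part~(1) is a genuine improvement in care: the paper's printed argument asserts, once $q(a)\neq q(b)$ and the interior images of the two sides are disjoint, that $q(P)$ has two distinct maximal chains; this silently omits the possibility that both sides collapse onto $\{q(a), q(b)\}$, which does occur (a hexagon can quotient to a two-element chain under the congruence generated by contracting one atom edge). The theorem is unharmed because there $\mgmap{q}(C) = \mgmap{q}(C')$, which is exactly what your case analysis observes. Second, in condition~(3) you determine the direction of the lifted polygon move by rerunning the ascending-chain-matching argument from part~(1), whereas the paper applies polygon-completeness to get one of $[C'_1] \preordot_\lambda [C'_2]$ or $[C'_2] \preordot_\lambda [C'_1]$ and then rules out the wrong direction using the order-preservation already established in part~(1). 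The paper's route is slicker; yours also works but implicitly uses that the lifted polygon surjects onto $Q$ and that its ascending chain maps onto the ascending chain of $Q$, neither of which is literally in the statement of \Cref{lem:polygon_moves_in_fibres} --- as you flag by invoking its proof rather than its statement.
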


\begin{proof}[Proof of \Cref{thm:contraction}, Claim~\ref{op:contraction:op_surj}]
Note that the fact that $\mgtmap{q}$ is a well-defined surjection follows from Corollary~\ref{prop:max_chains_eq}.
We thus only have to show that $\mgtmap{q}$ is order-preserving.
It suffices to show that if there are two maximal chains $C$ and $C'$ of $L$ such that $C'$ is an increasing polygon move of $C$, then $\mgtmap{q}([C]) \preord_{\lambda_{\theta}} \mgtmap{q}([C'])$.
Let the polygon relating $C$ and $C'$ be \[P = \polygon{\{a, p_1, p_2, \dots, p_l, b\}}{\{a, p'_1, p'_2, \dots, p'_{l'}, b\}}\] with $C \cap P = \{a, p_1, p_2, \dots, p_l, b\}$ and $C' \cap P := \{a, p'_1, p'_2, \dots, p'_{l'}, b\}$.
If $q(a) = q(b)$, then $\mgtmap{q}([C]) = \mgtmap{q}([C'])$, so we may assume that $q(a) < q(b)$.
Moreover, if $q(p_i) = q(p'_j)$ for any $i, j$, then we must have $q(a) = q(p_i \wedge p'_j) = q(p_i) \wedge q(p'_j) = q(p_i) \vee q(p'_j) = q(p_i \vee p'_j) = q(b)$.
Hence, we may also assume that $q(p_i) \neq q(p'_j)$ for any $i, j$.
Thus, we have that $q(P)$ is a subposet of $L/\theta$ which has precisely two maximal chains, which intersect only at $q(a)$ and $q(b)$.
By \Cref{prop:polygonal_lattices}\ref{op:polygonal_lattices:quotient}, $L/\theta$ is also polygonal and so $q(P)$ is a polygon.

By assumption, $C \cap P$ is the unique ascending chain in $P$ with respect to the polygonal edge labelling.
By construction of $\lambda_{\theta}$, it is straightforward to see that $q(C \cap P)$ is the unique ascending chain of $q(P)$.
Hence, we have that $q(C')$ is indeed an increasing polygon move of $q(C)$ across the polygon $q(P)$, as desired.
\end{proof}

\begin{proof}[Proof of \Cref{thm:contraction}, Claim~\ref{op:contraction:contraction}]
We first show that if $[C_{1}] \preordot_{\lambda_{\theta}} [C_{2}]$ in $\mgel{L/\theta}$, then there exist $[C'_{1}] \preordot_{\lambda} [C'_{2}]$ in $L$ such that $\mgtmap{q}([C'_{1}]) = [C_{1}]$ and $\mgtmap{q}([C'_{2}]) = [C_{2}]$.
Let $[C_{1}], [C_{2}] \in \mge{L/\theta}$ such that $[C_{1}] \preordot_{\lambda_{\theta}} [C_{2}]$.
Note that, by definition, this means that $[C_{2}] \not\preord_{\lambda_{\theta}} [C_{1}]$.
Since the preorder $\preord_{\lambda_{\theta}}$ is the smallest preorder such that $[C] \preord_{\lambda_{\theta}} [C']$ when $[C']$ is an increasing polygon move of $[C]$, we have that if $[C_{1}] \preordot_{\lambda_{\theta}} [C_{2}]$, then $[C_2]$ must be an increasing polygon move of~$[C_1]$.
By \Cref{lem:polygon_moves_in_fibres}, we can find maximal chains $C'_{1}$ and $C'_{2}$ which differ by a polygon move such that $\mgmap{q}(C'_{1}) = C_{1}$ and $\mgmap{q}(C'_{2}) = C_{2}$.
Moreover $C'_{1}$ and $C'_{2}$ must differ by a non-square polygon move since this is true of $C_{1}$ and $C_{2}$.
Hence, we either have $[C'_{1}] \preordot_{\lambda} [C'_{2}]$ or $[C'_{2}] \preordot_{\lambda} [C'_{1}]$, since $L$ was assumed to be polygon-complete.
However, since $\mgtmap{q}$ is order-preserving, only the former is possible, as desired.

Now we show that the preimages of the $\phi_{\preord_{\lambda}}$-equivalence classes of $\mgtmap{q}$ are connected.
Let $C \in \mg{L/\theta}$.
We first wish to show that the Hasse diagram of the fibre $\mgtmap{q}^{-1}([C])$ is connected.
For this it suffices to show that any two elements of the fibre $\mgtmap{q}^{-1}([C])$ are related by a sequence of polygon moves.
In fact, because we know from \Cref{lem:polygon_connected} that any two elements of the fibre $\mgmap{q}^{-1}(C)$ are related by a sequence of polygon moves, it suffices to show that if $\{C_1, C_2\} \subseteq [C]$ with $C_1$ and $C_2$ related by a square move, then there are elements in $\mgmap{q}^{-1}(C_1)$ and $\mgmap{q}^{-1}(C_2)$ which are related by a polygon move.
This then follows from \Cref{lem:polygon_moves_in_fibres}.

Now we show that the pre-images of the $\phi_{\preord_{\lambda_{\theta}}}$-equivalence classes are likewise connected.
Two elements $[C]$ and $[C']$ are $\phi_{\preord_{\lambda_{\theta}}}$-equivalent if there is a sequence of increasing polygon moves from $[C]$ to $[C']$ and also a sequence of increasing polygon moves from $[C']$ to $[C]$.
Hence, it suffices to show that if $[C']$ is an increasing polygon move of $[C]$ then there are elements of $\mgtmap{q}^{-1}([C])$ and $\mgtmap{q}^{-1}([C'])$ which are related by a polygon move and we already showed this.
\end{proof}

\section{Lattices of regions of simplicial hyperplane arrangements}
\label{sect:regions}

In this section, we show how our framework from the previous section applies very naturally to the lattice of regions of a hyperplane arrangement, where the hyperplanes give us an edge labelling with the properties we want.

\subsection{Background}

Our main reference for lattices of regions of hyperplane arrangements is \cite{reading_regions}.

\subsubsection{Hyperplane arrangements and the poset of regions}

Given a field $K$, a (\emph{linear}) \emph{hyperplane} $H$ in a finite-dimensional $K$-vector space $V$ is a vector subspace of codimension~$1$.
A \emph{hyperplane arrangement} in $V$ is a finite set of hyperplanes in $V$.
In this paper, we will always consider hyperplane arrangements where $K = \mathbb{R}$.

Given a hyperplane arrangement $\mathcal{H}$ in $\mathbb{R}^{n}$, its \emph{complement} is $\mathbb{R}^{n} \setminus \bigcup_{H \in \mathcal{H}} H$, and the closures of the connected components of its complement are called \emph{regions}.

A \emph{cone} in $\mathbb{R}^{n}$ is a subset $R \subseteq \mathbb{R}^{n}$ such that if $x, y \in R$ and $\lambda, \mu \in \mathbb{R}_{>0}$, then $\lambda x + \mu y \in R$.
The regions of a hyperplane arrangement $\mathcal{H}$ are cones.
A \emph{face} $F$ of a closed polyhedral cone $R$ is a subset of $R$ such that there is a linear functional $\psi \in (\mathbb{R}^{\ast})^n$ such that $F = \{x \in R \st \psi(x) \geqslant \psi(x') \, \forall x' \in R\}$.
A cone $R$ is \emph{polyhedral} if there is a finite subset $B$ of $\mathbb{R}^{n}$ such that $R = \{\sum_{b \in B} \lambda_{b} b \in \mathbb{R}^{n} \st \lambda_{b} \in \mathbb{R}_{\geqslant 0}\}$, and is called \emph{simplicial} if $B$ is in fact a basis of $\mathbb{R}^n$.
A hyperplane arrangement $\mathcal{H}$ is \emph{simplicial} if every region $R$ of $\mathcal{H}$ is a simplicial cone.

Let $\mathcal{H}$ be a hyperplane arrangement in $\mathbb{R}^{n}$ and fix a region $B$.
Given a region $A$ of $\mathcal{H}$, we write $S(A)$ for the \emph{separating set} of hyperplanes $H$ of $\mathcal{H}$ such that the interiors $\mathring{A}$ and $\mathring{B}$ lie in different connected components of $\mathbb{R}^n \setminus H$.
This then induces a poset $L(\mathcal{H}, B)$ on the set of regions of $\mathcal{H}$ via $A \leqslant A'$ if and only if $S(A) \subseteq S(A')$.

\begin{fact}
[{\cite[Corollary~9-3.4, Theorem~9-6.10]{reading_regions}}]
If $\mathcal{H}$ is a simplicial hyperplane arrangement, then $L(\mathcal{H}, B)$ is a polygonal lattice for any region $B$ of $\mathcal{H}$.
\end{fact}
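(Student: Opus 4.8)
The statement is classical --- the lattice part is due to Bj\"orner--Edelman--Ziegler and the polygonality to Reading --- so I would organise a proof around the standard combinatorial model of the poset of regions together with a reduction to rank-two subarrangements, rather than attempt anything genuinely new. Recall that a region $A$ is determined by its separating set $S(A) \subseteq \mathcal{H}$, that $A \leqslant A'$ iff $S(A) \subseteq S(A')$, and that the subsets of $\mathcal{H}$ that occur as separating sets --- the \emph{biconvex} sets --- are characterised locally: $U$ is biconvex iff for every rank-two subarrangement $\mathcal{H}_0$, that is, the set of hyperplanes through a fixed codimension-two flat, the trace $U \cap \mathcal{H}_0$ is the separating set of a region of $\mathcal{H}_0$ relative to the sector containing $B$. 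Since $B$, with $S(B) = \emptyset$, is the minimum and its antipode $-B$, with $S(-B) = \mathcal{H}$, is the maximum, every pair of regions $A_1, A_2$ has common upper and lower bounds; and as $A \mapsto -A$ is an anti-automorphism of the poset it is enough to show joins exist. If $A_1 \vee A_2$ exists, its separating set is forced to be the intersection $U$ of the separating sets of all common upper bounds of $A_1$ and $A_2$; conversely, if this $U$ is biconvex, the corresponding region is the join. So the whole content is that $U$ is biconvex --- which is false for general arrangements and true here because $\mathcal{H}$, hence the base region $B$, is simplicial. This is the Bj\"orner--Edelman--Ziegler theorem, whose proof controls the closure of $S(A_1) \cup S(A_2)$ facet by facet on the simplicial cone $B$, and I would cite it rather than reprove it.

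For polygonality I must verify two dual conditions; I would treat the condition that $[x, y_1 \vee y_2]$ is a polygon whenever $y_1, y_2$ both cover $x$, the other following from the anti-automorphism $A \mapsto -A$. Covers in $L(\mathcal{H}, B)$ correspond to crossing a single hyperplane, so writing $x = X$, the regions $Y_1, Y_2$ are obtained from $X$ by crossing two distinct facet hyperplanes $H_1, H_2$ of the simplicial cone $X$, with $S(Y_i) = S(X) \cup \{H_i\}$. Since two facets of a simplicial cone meet in a codimension-two face, $H_1 \cap H_2$ is a codimension-two flat; let $\mathcal{H}_0$ be the rank-two subarrangement of all hyperplanes of $\mathcal{H}$ through it. The key elementary computation is that the poset of regions of a rank-two arrangement with $m$ hyperplanes consists of a minimum, a maximum, and two chains of $m$ edges joining them and meeting only at those two elements --- that is, it \emph{is} a polygon. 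I would then show that $S(Y_1 \vee Y_2) = S(X) \cup \mathcal{H}_0$ and that every region $Z$ with $X < Z < Y_1 \vee Y_2$ satisfies $S(Z) = S(X) \cup U_0$ with $U_0$ a proper, nonempty biconvex trace on $\mathcal{H}_0$; combined with the description of covers, this identifies $[X, Y_1 \vee Y_2]$ with the poset of regions of $\mathcal{H}_0$, hence exhibits it as a polygon. Both facts use simpliciality, in order to guarantee that the local picture of the arrangement near the codimension-two face $X \cap H_1 \cap H_2$ is a pure rank-two arrangement, so that completing $\{H_1, H_2\}$ to a biconvex set never forces in a hyperplane outside $\mathcal{H}_0$; in the cited reference this is packaged via the theory of shards.

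The principal obstacle is the lattice property itself, and specifically the step that the intersection of all biconvex sets containing $S(A_1) \cup S(A_2)$ is again biconvex --- equivalently, that the closure of a union of separating sets is well defined and yields a least, not merely some, upper bound. This is exactly where simpliciality is indispensable: dropping it, the poset of regions can fail to be a lattice at all. For the polygonality half the delicate point is the same in spirit --- showing that the interval $[X, Y_1 \vee Y_2]$ does not leak outside the rank-two subarrangement $\mathcal{H}_0$, which again rests on $X$ being a simplicial region. In practice I would not rederive either of these from scratch: I would invoke Bj\"orner--Edelman--Ziegler for the lattice property and Reading's shard machinery for polygonality, treating the rank-two computation and the reduction to it as the part worth spelling out in detail.
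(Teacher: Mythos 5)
The paper does not prove this statement at all: it records it as a citable Fact, attributing the lattice property and polygonality to Reading's survey chapter (which in turn draws on Bj\"orner--Edelman--Ziegler for the lattice part), exactly the sources you propose to invoke. Your sketch of the underlying argument --- local biconvexity of separating sets for the lattice property, and identification of the interval above two covers with the poset of regions of the rank-two subarrangement through the codimension-two face of a simplicial cone for polygonality --- is an accurate account of what those references actually do, so there is nothing to flag.
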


\subsubsection{Congruences on the lattice of regions}\label{sect:hyp:back:shards}

Suppose now that $\mathcal{H}$ is a simplicial hyperplane arrangement with $B$ a fixed region, so that $L(\mathcal{H}, B)$ is a polygonal lattice.
Reading describes lattice congruences on the lattice of regions $L(\mathcal{H}, B)$ in terms of the theory of ``shards'', which were introduced in \cite{reading_npsio} and are defined as follows.

Let $U$ be an $(n - 2)$-dimensional subspace of $\mathbb{R}^n$ and write $\mathcal{H}_{U} := \{H \in \mathcal{H} \st U \subset H\}$.
If $|\mathcal{H}_U| \geqslant 2$, then we call $\mathcal{H}_U$ a \emph{rank-two subarrangement} of $\mathcal{H}$.
We write $B_U$ for the region of $\mathcal{H}_U$ containing $B$.
The hyperplanes of $\mathcal{H}_U$ containing facets of $B_U$ are known as the \emph{basic hyperplanes of~$\mathcal{H}_U$}, recalling that a \emph{facet} is a face of codimension one.
Given two hyperplanes $H_1$ and $H_2$ of~$\mathcal{H}$, there is a unique rank-two subarrangement $\mathcal{H}_{H_1 \cap H_2}$ containing them.
We then say that $H_1$ \emph{cuts} $H_2$ if $H_1$ is basic in $\mathcal{H}_{H_1 \cap H_2}$, but $H_2$ is not basic in~$\mathcal{H}_{H_1 \cap H_2}$.
Given a hyperplane $H$ of~$\mathcal{H}$, the \emph{shards} of $H$ are the closures of the connected components of $H \setminus \bigcup_{H' \text{ cuts } H} (H' \cap H)$.
Every shard $\Sigma$ is therefore contained in a unique hyperplane, denoted $H_{\Sigma}$.

Shards of $(\mathcal{H}, B)$ determine lattice congruences on $L(\mathcal{H}, B)$.
Indeed, let $\theta$ be a lattice congruence on $L(\mathcal{H}, B)$.
Further, let $\Sigma$ be a shard of $\mathcal{H}$, with $R$ and $R'$ adjacent regions such that $R \cap R' \subseteq \Sigma$.
We say that $\theta$ \emph{removes the shard} $\Sigma$ if $R \mathrel{\theta} R'$.
It is a fact that a lattice congruence $\theta$ on $L(\mathcal{H}, B)$ is determined by the shards it removes \cite[Proposition~9-7.15]{reading_regions}.
Moreover, if $[R]$ is a $\theta$-equivalence class, then $\bigcup_{R' \in [R]} R'$ is a closed polyhedral cone, which we call a \emph{$\theta$-cone}, and we have that the $\theta$-cones are the closures of the connected components of $\mathbb{R}^{n} \setminus \bigcup \Sigma$, where the union is over all shards $\Sigma$ not removed by $\theta$ \cite[Proposition~9-8.3]{reading_regions}.

\subsection{Maximal chains in the lattice of regions}

Let $\mathcal{H}$ be a simplicial hyperplane arrangement with $B$ a region of $\mathcal{H}$ so that $L(\mathcal{H}, B)$ is the lattice of regions with minimum $B$.
We first define a natural edge labelling of $L(\mathcal{H}, B)$ by $\mathcal{H}$ as a set.
We will then see how to put poset structures on $\mathcal{H}$ to turn this into a poset-valued edge labelling.
We will then show that these edge labellings are polygonal, forcing-consistent, and polygon-complete.

\begin{definition}
Define a set-valued edge labelling $\lambda \colon \covrel{L(\mathcal{H}, B)} \to \mathcal{H}$ by sending $R \lessdot R'$ to the unique hyperplane $H \in \mathcal{H}$ such that $H \supseteq R \cap R'$.
\end{definition}

We now want to upgrade this to a poset-valued edge labelling.
We say that a rank-two subarrangement of $\mathcal{H}$ is \emph{commutative} if it has only two hyperplanes, and that otherwise it is \emph{non-commutative}.

\begin{definition}
Let $C$ be a maximal chain in $L(\mathcal{H}, B)$ with $(H_1, H_2, \dots, H_l)$ the associated sequence of hyperplanes given by the edge labels.
Note that $\mathcal{H} = \{H_1, H_2, \dots, H_l\}$.
Define the \emph{heap poset} $\heap{C}$ to be the smallest partial order on $\mathcal{H}$ containing the relations given by $H_i \leqslant_C H_j$ when $i < j$ and the rank-two subarrangement generated by $H_i$ and $H_j$ is non-commutative.
\end{definition}

This definition in essence coincides with the original definition of the heap poset in \cite[p.550]{Viennot} in the general context of words in monoids.
The following lemma generalises \cite[Proposition~3.1.5]{gl}, where the context is simply-laced Coxeter groups.
We will discuss the context of Coxeter groups in Section~\ref{sect:scm}.

\begin{lemma}\label{lem:heap->equiv}
Two maximal chains $C$ and $C'$ in $L(\mathcal{H}, B)$ are square-equivalent if and only if $\heap{C} = \heap{C'}$.
\end{lemma}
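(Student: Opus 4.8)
The plan is to show both implications by relating square moves on maximal chains to the combinatorics of the heap poset, using the structure of polygons in $L(\mathcal{H}, B)$.

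For the forward implication ($\Rightarrow$), it suffices to treat the case where $C$ and $C'$ differ by a single square move and then induct. A square move swaps a subchain $R \lessdot R_1 \lessdot R''$ for $R \lessdot R_2 \lessdot R''$ inside a square $[R, R'']$. The key point is that the square $[R, R'']$ is a rank-two subarrangement with exactly two hyperplanes, i.e. a commutative one: a square polygon in the lattice of regions corresponds precisely to crossing two hyperplanes $H$ and $H'$ whose rank-two subarrangement $\mathcal{H}_{H \cap H'}$ has only these two members. Hence the edge labels of $C$ and $C'$ differ only by transposing two adjacent labels $H_i, H_j$ whose generated rank-two subarrangement is commutative. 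By the definition of the heap poset, such a transposition does not change any of the defining relations $H_a \leqslant_C H_b$ (those only involve non-commutative pairs), so $\heap{C} = \heap{C'}$.

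For the reverse implication ($\Leftarrow$), I would argue as follows. Suppose $\heap{C} = \heap{C'} =: P$. By \Cref{prop:polygonal_lattices}\ref{op:polygonal_lattices:polygon_moves}, $C$ and $C'$ are related by \emph{some} sequence of polygon moves; I want to upgrade this to a sequence of \emph{square} moves. First observe that the sequence of hyperplanes $(H_1, \dots, H_l)$ labelling a maximal chain $C$ is always a \emph{linear extension} of $\heap{C}$: if $H_i \leqslant_C H_j$ is a defining relation then $i < j$ by construction, and the heap order is generated by these. Conversely, I claim every linear extension of $P$ is realised by a maximal chain that is square-equivalent to $C$: given a linear extension, one reaches it from the linear extension induced by $C$ by repeatedly transposing adjacent incomparable elements of $P$, and each such transposition of a commutative pair corresponds to a square move (one must check that the corresponding square indeed exists in the lattice, which follows from polygonality of $L(\mathcal{H},B)$ together with the fact that the rank-two subarrangement is commutative, so the interval they span is a square). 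Since $C'$ realises a linear extension of $P = \heap{C}$, it follows that $C$ and $C'$ are square-equivalent. Alternatively, and perhaps more cleanly, one shows both $C$ and $C'$ are square-equivalent to a fixed canonical maximal chain associated to any chosen linear extension of $P$, and concludes by transitivity.

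The main obstacle I anticipate is the realisation step in the reverse direction: verifying that transposing two adjacent \emph{incomparable} elements of the heap poset $P$ along a maximal chain always corresponds to a genuine square move in $L(\mathcal{H}, B)$, i.e. that the relevant four-element interval is actually a square and not some larger polygon. This requires knowing that incomparability in $\heap{C}$ is exactly equivalent to the two hyperplanes generating a commutative rank-two subarrangement when they label \emph{adjacent} edges of the chain — the subtlety being that two hyperplanes may generate a non-commutative subarrangement yet still be incomparable in $P$ if there is no chain edge where they are adjacent. One resolves this by noting that if $H_i, H_j$ are incomparable in $\heap{C}$ and adjacent in the edge sequence of some chain square-equivalent to $C$, then they cannot generate a non-commutative subarrangement (else the defining relation would force comparability), so the swap is across a commutative rank-two subarrangement and hence a square. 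Making this adjacency-and-incomparability bookkeeping precise, together with the standard fact that any two linear extensions of a finite poset are connected by adjacent transpositions of incomparable elements, is the technical heart of the argument.
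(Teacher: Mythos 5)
Your proof is correct and uses essentially the same core idea as the paper: a square move transposes two adjacent commuting labels (hence preserves the heap), and conversely, labels that are adjacent in the edge sequence and incomparable in the heap must generate a commutative rank-two subarrangement and so give a genuine square move. The paper organises the reverse direction as a direct induction (bubble the last hyperplane of $C$ to the end of $C'$'s sequence, then recurse), whereas you invoke connectedness of linear extensions under adjacent transpositions of incomparable elements — these are the same argument in different bookkeeping, and the realisation step you flag as subtle is likewise left implicit in the paper's proof.
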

\begin{proof}
The ``only if'' direction is clear, since a square move cannot change the heap poset.
For the converse, let $(H_1, H_2, \dots, H_l)$ and $(H'_1, H'_2, \dots, H'_l)$ be the respective sequences of hyperplanes given by the edge labels of $C$ and $C'$.
Note that we have $\{H_1, H_2, \dots, H_l\} = \{H'_1, H'_2, \dots, H'_l\}$.
Let $\pi$ be the permutation of $[l] := \{1, 2, \dots, l\}$ such that $H_i = H'_{\pi(i)}$ for all $i \in [l]$.
Since $\heap{C} = \heap{C'}$, we have that if $\pi(i) > \pi(l)$, then $H'_{\pi(i)}$ and $H'_{\pi(l)}$ generate a commutative rank-two subarrangement; otherwise, we would have $H'_{\pi(l)} <_{C'} H'_{\pi(i)}$ but $H'_{\pi(l)} = H_l \not<_{C} H_{i} = H'_{\pi(i)}$.
Hence, using square moves repeatedly, one can find a new maximal chain $C''$ whose sequence of hyperplanes is \[(H'_1, H'_2, \dots, \widehat{H'_{\pi(l)}}, \dots, H'_l, H'_{\pi(l)}).\]
Inductively, we can then find a sequence of square moves relating $C''$ and $C$, which completes the proof.
\end{proof}

\begin{definition}
Given a maximal chain $C$ of $L(\mathcal{H}, B)$, we define a poset-valued edge labelling $\lambda^C \colon \covrel{L(\mathcal{H}, B)} \to \heap{C}$ by $\lambda^C = \lambda$.
\end{definition}

The following lemma is known from work of Reading \cite{reading_lcfha}, but the details of the proof will be useful later.

\begin{lemma}\label{lem:polygon->rank2subarr}
Given a polygon $P = \polygon{C_1}{C_2}$ of $L(\mathcal{H}, B)$, there is a rank-two subarrangement $\mathcal{H}'$ of $\mathcal{H}$ such that $\lambda(\covrel{C_i}) = \mathcal{H}'$ for both $i \in \{1, 2\}$.
Moreover $P$ is a square if and only if $\mathcal{H}'$ is commutative.
\end{lemma}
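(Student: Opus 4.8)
The plan is to analyse a polygon $P = \polygon{C_1}{C_2}$ of $L(\mathcal{H}, B)$ by localising the hyperplane arrangement. Since $P$ is an interval in the lattice of regions, $\min P$ and $\max P$ are regions, and every region in $P$ is separated from $\min P$ by exactly those hyperplanes one crosses along either $C_1$ or $C_2$; the separating set of $\max P$ differs from that of $\min P$ by the set of edge labels occurring on the chains. First I would show that the two sequences of edge labels $\lambda(\covrel{C_1})$ and $\lambda(\covrel{C_2})$ are equal as sets: this follows because $S(\max P) \setminus S(\min P)$ is computed along any maximal chain of the interval, so both chains cross precisely the hyperplanes in this set, each exactly once (a maximal chain in a lattice of regions crosses each separating hyperplane once and no others). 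Call this common set $\mathcal{H}'$.

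**Identifying $\mathcal{H}'$ as a rank-two subarrangement.** Next I would argue that $\mathcal{H}'$ is a rank-two subarrangement, i.e. that $|\mathcal{H}'| \geqslant 2$ and all hyperplanes in $\mathcal{H}'$ contain a common $(n-2)$-dimensional subspace $U$. The cleanest route is to pass to the rank-two localisation: let $U = \bigcap_{H \in \mathcal{H}'} H$ and consider $\mathcal{H}_U = \{H \in \mathcal{H} : U \subseteq H\}$. One shows $\mathcal{H}' = \mathcal{H}_U$ and that $\dim U = n-2$ by a dimension/facet count inside the interval $P = [\check p, \hat p]$: the polygon has exactly two maximal chains meeting only at the endpoints, which forces the crossed hyperplanes to be "coherently nested" around a codimension-two flat. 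Here I would invoke the structure of polygons in $L(\mathcal{H}, B)$ from \cite{reading_regions} (the polygonal lattice structure comes precisely from rank-two subarrangements; see Reading's description of shards and the discussion in \Cref{sect:hyp:back:shards}), together with the fact that the interval between $\check p$ and $\hat p$ in the lattice of regions of a rank-two arrangement $\mathcal{H}_U$ with $|\mathcal{H}_U| = k$ is itself a polygon (a $2k$-gon in the Hasse diagram when $\hat p = -\check p$, or a smaller polygon in general). Since $P$ has exactly two maximal chains, the set $\mathcal{H}'$ must be exactly the set of hyperplanes of one such rank-two interval, giving $\lambda(\covrel{C_i}) = \mathcal{H}_U = \mathcal{H}'$ for $i \in \{1,2\}$.

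**The square/commutative dichotomy.** Finally, for the last sentence: $P$ is a square iff it has exactly four elements iff each of $C_1, C_2$ has length two iff $|\mathcal{H}'| = 2$. By definition $\mathcal{H}'$ is commutative precisely when $|\mathcal{H}'| = 2$, so the equivalence is immediate once the first parts are established. I would also double-check the converse direction of the length count: if $\mathcal{H}'$ is commutative, then the rank-two interval on two hyperplanes is the Boolean lattice on two atoms, i.e. a square, and since $P$ is this interval, $P$ is a square.

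**Main obstacle.** The delicate point is the middle step — showing $\mathcal{H}'$ is genuinely a rank-two subarrangement and that the polygon $P$ is \emph{exactly} the corresponding rank-two interval, rather than merely containing such an interval. This requires knowing that in a simplicial arrangement the polygons of $L(\mathcal{H}, B)$ are classified by rank-two subarrangements (and the choice of a suitable sub-interval thereof), which is the content of Reading's analysis via basic hyperplanes and shards; I would extract this from \cite[Section~9-6, 9-7]{reading_regions} rather than reprove it. Everything else (the set equality of edge labels, the element count, and the commutativity dichotomy) is routine given that input.
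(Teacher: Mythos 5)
Your outline is correct, and two of the three steps match the paper's reasoning: both chains of the polygon $[\check p,\hat p]$ cross precisely the hyperplanes in $S(\hat p)\setminus S(\check p)$, so the sets of edge labels agree; and once $\mathcal{H}'$ is known to be a rank-two subarrangement, the square/commutative dichotomy is just a count of covering relations. The divergence is in the middle step, where you propose to extract the classification of polygons by rank-two subarrangements from Reading's book treatment of shards and basic hyperplanes, while being candid that you are not pinning down a single citable statement and that this is the delicate point. The paper's route is sharper: it observes that $P\setminus\{\min P,\max P\}$ is homotopy equivalent to $S^0$ and then applies \cite[Theorem~5.1]{reading_lcfha}, a topological characterisation of intervals in the poset of regions which produces an $(n-2)$-dimensional face $F$ such that the elements of $P$ are \emph{exactly} the regions containing $F$. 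From this, the edge labels of $P$ are exactly the hyperplanes through $F$ — i.e.\ the rank-two subarrangement $\mathcal{H}_{\operatorname{span}(F)}$ — with no further work. So your argument is morally the same but your dimension/facet count and appeal to ``coherently nested'' hyperplanes is replaced in the paper by a single precise tool; it is worth knowing that \cite[Theorem~5.1]{reading_lcfha} exists, as it gives exactly the ``polygon $\Leftrightarrow$ codimension-two face'' dictionary you need and also furnishes the cone $F$ that the paper reuses in the proof of the subsequent \Cref{prop:hyp_polygonal&forc_cons}.
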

\begin{proof}
Note then that $P \setminus \{\min P, \max P\}$ is homotopy equivalent to two points, that is, a 0-dimensional sphere.
It then follows from \cite[Theorem~5.1]{reading_lcfha} that there is an $(n - 2)$-dimensional cone $F$ such that the regions which are the elements of $P$ are precisely the regions of $\mathcal{H}$ which contain $F$.
It follows that the hyperplanes labelling the covering relations of $P$ are precisely the hyperplanes of $\mathcal{H}$ containing $F$.
Hence, the hyperplanes labelling the covering relations of $P$ form a rank-two subarrangement $\mathcal{H}'$ of $\mathcal{H}$ and for each maximal chain $C_i$ of $P$ we have $\lambda(\covrel{C_i}) = \mathcal{H}'$.
The final statement follows easily by noting that the number of covering relations in $P$ is twice the number of hyperplanes in $\mathcal{H}'$.
\end{proof}

We now show that the edge-labelling $\lambda^{C}$ has the desired properties.

\begin{proposition}\label{prop:hyp_polygonal&forc_cons}
The edge labelling $\lambda^{C} \colon L(\mathcal{H}, B) \to \heap{C}$ is polygonal and forcing-consistent.
\end{proposition}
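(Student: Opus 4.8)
The goal is to verify that $\lambda^C$ is both polygonal and forcing-consistent. Since $\lambda^C$ is simply the set-valued labelling $\lambda$ viewed through the partial order $\heap{C}$, both properties are statements about how $\lambda$ behaves on the covering relations of polygons of $L(\mathcal{H}, B)$, so the strategy is to reduce everything to a single polygon $P = \polygon{C_1}{C_2}$ and invoke \Cref{lem:polygon->rank2subarr}. That lemma tells us the edge labels appearing on $C_1$ and on $C_2$ are exactly the hyperplanes of a rank-two subarrangement $\mathcal{H}'$, and that $P$ is a square precisely when $\mathcal{H}'$ is commutative (i.e.\ $|\mathcal{H}'| = 2$).

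\textbf{Forcing-consistency.} By \cite[Theorem~9-6.5]{reading_regions} (recalled in the excerpt), the forcing-equivalence classes of covering relations are generated by identifying, within each polygon, $\check{p} \lessdot l_1$ with $r_n \lessdot \hat{p}$ and $\check{p} \lessdot r_1$ with $l_m \lessdot \hat{p}$. So it suffices to check that in a polygon $P$, the first covering relation $\check{p} \lessdot l_1$ of one maximal chain and the last covering relation $r_n \lessdot \hat{p}$ of the other have the same $\lambda$-label, and symmetrically. But a rank-two subarrangement $\mathcal{H}'$ with $k$ hyperplanes has exactly $2k$ regions arranged in a cycle, and traversing the cycle from $B_U$ one way versus the other, the first hyperplane crossed on one side equals the last hyperplane crossed on the other side — this is the elementary geometry of a rank-two arrangement, and it is exactly the content we need. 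I would spell this out: label the hyperplanes of $\mathcal{H}'$ cyclically as they are met going around; then the two maximal chains of $P$ cross them in opposite cyclic orders, forcing $\lambda(\check{p}, l_1) = \lambda(r_n, \hat{p})$ and $\lambda(\check{p}, r_1) = \lambda(l_m, \hat{p})$. Hence $\lambda$, and therefore $\lambda^C$, respects forcing-equivalence.

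\textbf{Polygonality.} Let $P$ be a non-square polygon; by \Cref{lem:polygon->rank2subarr} its label set is a non-commutative rank-two subarrangement $\mathcal{H}' = \{K_1, \dots, K_k\}$ with $k \geq 3$, and the basic hyperplanes of $\mathcal{H}'_U$ are two of them, say $K_1$ and $K_k$. Going around the cycle of $2k$ regions of $\mathcal{H}'$ starting from $B_U$, one maximal chain of $P$ crosses the hyperplanes in the cyclic order $(K_1, K_2, \dots, K_k)$ and the other in the reverse order $(K_k, K_{k-1}, \dots, K_1)$; I would fix the indexing of the $K_i$ precisely so that this holds. Now in $\heap{C}$, any two hyperplanes of a \emph{non-commutative} rank-two subarrangement are comparable (they generate the same non-commutative subarrangement $\mathcal{H}'$, so for the reference chain $C$ they get totally ordered, because whichever comes first in $C$ is below the other in $\heap{C}$). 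Moreover the order induced on $\{K_1, \dots, K_k\}$ by \emph{any} maximal chain crossing them in one of the two cyclic orders above is the total order $K_1 <_C \cdots <_C K_k$ or its reverse — this uses that $\heap{C}$ is generated by the non-commutative rank-two relations and that the two cyclic traversal orders of a rank-two subarrangement are each other's reverses. Consequently one maximal chain of $P$ is ascending with respect to $\lambda^C$ and the other is descending, which is exactly polygonality.

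\textbf{Main obstacle.} The routine-looking but genuinely load-bearing step is the rank-two combinatorics: that the two maximal chains of a polygon $P$ cross the hyperplanes of $\mathcal{H}'$ in mutually reverse cyclic orders, and that the restriction of $\heap{C}$ to a non-commutative rank-two subarrangement is a \emph{total} order which is monotone along either cyclic traversal. This is where one must be careful to set up the cyclic labelling of $\mathcal{H}'$ correctly and to use that $\heap{C}$ records the non-commutative crossings of the fixed chain $C$ consistently with those inside $P$ (which follows from forcing-consistency of $\lambda$, already established, together with the fact that all maximal chains — in particular $C$ — traverse any given rank-two subarrangement in one of its two cyclic orders). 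Everything else is bookkeeping. I expect the proof in the paper to phrase this via the cone $F$ from the proof of \Cref{lem:polygon->rank2subarr} and the structure of $L(\mathcal{H}', B_U)$ as a $2k$-cycle.
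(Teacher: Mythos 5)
Your proof is correct and takes essentially the same route as the paper's: reduce to a single polygon $P$ via \Cref{lem:polygon->rank2subarr}, then use the planar geometry of the rank-two subarrangement $\mathcal{H}'$ — that its $2k$ regions form a cycle, that the two chains of $P$ cross $\mathcal{H}'$ in reverse orders, and that $C$'s crossing order of $\mathcal{H}'$ (which is precisely the restriction of $\heap{C}$ to $\mathcal{H}'$, since any two hyperplanes of $\mathcal{H}'$ generate it) must coincide with one of these two. The one slip is the parenthetical in your ``main obstacle'' paragraph: the claim that $C$ traverses $\mathcal{H}'$ in one of the two orders exhibited by $P$'s chains does not follow from forcing-consistency of $\lambda$, which constrains only the labelling, not the order in which $C$ meets $\mathcal{H}'$. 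It is a purely geometric fact, obtained (as you anticipate and as the paper does) by projecting $C$ onto the orthogonal complement of the span of the cone $F$ from the proof of \Cref{lem:polygon->rank2subarr} and observing that $\min P$ and $\max P$ project to $B_U$ and $-B_U$ respectively, because the two chains of $P$ add exactly $\mathcal{H}'$ to the separating set, forcing $S(\min P) \cap \mathcal{H}' = \emptyset$.
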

\begin{proof}
We use the notation of the proof of \Cref{lem:polygon->rank2subarr}.
By projecting onto the orthogonal complement of the span of $F$, we obtain an arrangement of 1-dimensional hyperplanes in $\mathbb{R}^{2}$.
It can then be seen that the two maximal chains of $P$ are labelled by the hyperplanes of $\mathcal{H}'$ in one order, and the hyperplanes of $\mathcal{H}'$ in the opposite order.
From this it follows that forcing-equivalent edges within $P$ must be labelled by the same hyperplane.
(Not all edges labelled by the same hyperplane are necessarily forcing-equivalent, however.)
Since forcing-equivalence is determined by forcing-equivalence within polygons, it follows that $\lambda^{C}$ is forcing-consistent.
In order to show polygonality, note that $C$ must pass through the hyperplanes of $\mathcal{H}'$ in the same order as one of the chains of~$P$.
One can for instance see this by realising $C$ as a path through the regions of $\mathcal{H}$ and then projecting this path onto the orthogonal complement of the span of~$F$.
It follows that one maximal chain of $P$ is ascending with respect to $\lambda^C$ and the other is descending, provided $P$ is non-square.
\end{proof}

\begin{lemma}\label{lem:hyp_partial_order}
The edge labelling $\lambda^{C} \colon L(\mathcal{H}, B) \to \heap{C}$ is polygon-complete and the preordered set $\mgea{\lambda^{C}}{L(\mathcal{H}, B)}$ is a poset.
\end{lemma}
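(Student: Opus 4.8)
The plan is to equip $\mge{L(\mathcal{H},B)}$ with an integer-valued grading which increases by exactly one under every increasing polygon move; both assertions of the lemma will then follow by a purely formal argument. For a maximal chain $C_0$ of $L(\mathcal{H},B)$, recall that every hyperplane of $\mathcal{H}$ occurs exactly once among the edge labels of $C_0$. Two preliminary facts will be used repeatedly: (i) two distinct rank-two subarrangements share at most one hyperplane, since the rank-two subarrangement through a given pair of hyperplanes is unique; and (ii) if $\mathcal{H}'$ is a non-commutative rank-two subarrangement, then any two of its hyperplanes generate $\mathcal{H}'$, so $\heap{C}$ restricts to a \emph{total} order on $\mathcal{H}'$, namely the order in which the fixed chain $C$ crosses the hyperplanes of $\mathcal{H}'$. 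Using the path-projection argument already employed in the proof of \Cref{prop:hyp_polygonal&forc_cons} — realise $C_0$ as a path through the regions of $\mathcal{H}$ and project it onto the orthogonal complement of $\bigcap \mathcal{H}'$ — any maximal chain $C_0$ crosses the hyperplanes of $\mathcal{H}'$ either in this total order or in its reverse. Call $\mathcal{H}'$ \emph{inverted in $C_0$} in the latter case, and define $N(C_0)$ to be the number of non-commutative rank-two subarrangements of $\mathcal{H}$ that are inverted in $C_0$; since $\mathcal{H}$ is finite, this is a well-defined non-negative integer.

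Next I would verify that $N$ is constant on square-equivalence classes and compute its behaviour under an increasing polygon move. A square move transposes two consecutive labels $H, H'$ which generate a commutative rank-two subarrangement; by fact (ii) these cannot both lie in any non-commutative $\mathcal{H}'$, so no rank-two subarrangement changes its inverted status, and hence $N$ descends to a map on $\mge{L(\mathcal{H},B)}$. Now let $[C']$ be an increasing polygon move of $[C_0]$ across a non-square polygon $P$. By \Cref{lem:polygon->rank2subarr} the edge labels occurring inside $P$ form a non-commutative rank-two subarrangement $\mathcal{H}'$, and these labels occupy a block of consecutive positions in $C_0$; the move reverses exactly this block. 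Before the move the block is read in $\heap{C}$-increasing order (as $C_0$ contains the ascending chain of $P$, which by (ii) crosses $\mathcal{H}'$ in the heap order), so $\mathcal{H}'$ passes from non-inverted to inverted. By fact (i), every other non-commutative subarrangement $\mathcal{H}''$ meets $\mathcal{H}'$ in at most one hyperplane, so reversing the block leaves the crossing order of the hyperplanes of $\mathcal{H}''$ unchanged and hence does not alter whether $\mathcal{H}''$ is inverted. Therefore $N([C']) = N([C_0]) + 1$.

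From this the two conclusions are immediate. If $[C_1] \preord_{\lambda} [C_2]$, then by definition this is witnessed by a finite sequence of $p \geqslant 0$ increasing polygon moves, so $N([C_2]) = N([C_1]) + p$; hence $[C_1] \preord_{\lambda} [C_2]$ and $[C_2] \preord_{\lambda} [C_1]$ force $p = 0$ and $[C_1] = [C_2]$, so $\preord_{\lambda}$ is antisymmetric and $\mgea{\lambda^{C}}{L(\mathcal{H},B)}$ is a poset. (Alternatively, once polygon-completeness is established, the poset statement follows from \Cref{lem:polygon-complete_poset} together with \Cref{prop:hyp_polygonal&forc_cons}.) For polygon-completeness, suppose $[C']$ is an increasing polygon move of $[C_0]$; then $N([C']) = N([C_0]) + 1$, so $[C_0] \preordstr_{\lambda} [C']$, and any $[D]$ with $[C_0] \preordstr_{\lambda} [D] \preordstr_{\lambda} [C']$ would satisfy $N([D]) \geqslant N([C_0]) + 1$ and $N([C']) \geqslant N([D]) + 1$, i.e. $N([C']) \geqslant N([C_0]) + 2$, a contradiction. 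Hence $[C_0] \preordot_{\lambda} [C']$, so $\lambda^{C}$ is polygon-complete.

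I expect the delicate point to be the ``changes by exactly one'' step: one must simultaneously ensure that the reversed block creates precisely one new inversion and that no other rank-two subarrangement is affected, which is exactly where facts (i) and (ii) and the consecutiveness of the block (via \Cref{lem:polygon->rank2subarr}) enter. Some care is also needed in checking that $N$ is genuinely well-defined, that is, that each maximal chain crosses each non-commutative rank-two subarrangement in exactly one of two possible orders.
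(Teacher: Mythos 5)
Your proof is correct, and at its heart it uses the same idea as the paper: an increasing polygon move across a non-square polygon reverses the order in which the chain crosses exactly one non-commutative rank-two subarrangement, and this inversion is irreversible. The paper states this informally and derives anti-symmetry and polygon-completeness directly from it; you go further and package the idea as an explicit integer-valued rank function $N$ counting inverted subarrangements, verifying that square moves fix $N$ and increasing polygon moves increase $N$ by exactly one. This is a genuinely more rigorous version of the argument, because the ``changes by exactly one'' step — justified by the consecutiveness of the block (via \Cref{lem:polygon->rank2subarr}), the uniqueness of the rank-two subarrangement through a pair of hyperplanes, and the observation that two hyperplanes of a non-commutative subarrangement determine it — is left implicit in the paper. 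Your formalisation also makes the monotonicity explicit, which the paper glosses over with ``it cannot be put back in the same order.'' Interestingly, your $N$ is exactly the hyperplane-arrangement analogue of the inversion set $\inva{\mathbf{w}_0}{C}$ that the paper only introduces later in the Coxeter setting (Section~\ref{sect:scm}), so your approach also reveals that the counting-inversions technique already works at this greater level of generality.
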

\begin{proof}
We first show that $\preord_{\lambda^C}$ is anti-symmetric on $\mge{L(\mathcal{H}, B)}$, and so a partial order.
It suffices to show that, given a maximal chain $C'$ of $L(\mathcal{H}, B)$, we cannot return to $C'$ using increasing polygon moves and square moves, with at least one increasing polygon move.
But this is clear, since the set of edge labels of every maximal chain is $\mathcal{H}$, and once the order on a non-commutative rank-two subarrangement has been reversed, it cannot be put back in the same order as $C$ by increasing polygon moves and square moves.

Now we show that increasing polygon moves always give covering relations in the partial order.
Suppose that $C_1$ and $C_2$ are maximal chains of $L(\mathcal{H}, B)$ such that $C_2$ is an increasing polygon move of $C_1$ via a non-square polygon $P$.
Then, by \Cref{lem:polygon->rank2subarr}, we have that there is a non-commutative rank-two subarrangement $\mathcal{H}'$ of $\mathcal{H}$ such that $C_1$ orders $\mathcal{H}'$ one way and $C_2$ orders $\mathcal{H}'$ the opposite way.
Since these are the only two possible ways of ordering $\mathcal{H}'$, there can be no maximal chain $C_{1.5}$ such that $[C_1] \preordstr_{\lambda^{C}} [C_{1.5}] \preordstr_{\lambda^{C}} [C_{2}]$, 
since every non-commutative rank-two subarrangment apart from $\mathcal{H}'$ is ordered the same by $C_1$ and $C_2$.
Hence, $[C_1] \preordot_{\lambda^{C}} [C_2]$. 
\end{proof}

Given a lattice congruence $\theta$ on $L(\mathcal{H}, B)$, the set-valued edge labelling $\lambda_{\theta}$ has a natural interpretation in terms of shards.

\begin{proposition}
Let $\theta$ be a lattice congruence on $L(\mathcal{H}, B)$, with $\mathcal{F}_{\theta}$ the corresponding fan.
If $N_1 \lessdot N_2$ is a covering relation in $L(\mathcal{H}, B)$ with $N_1$ and $N_2$ maximal cones of $\mathcal{F}_{\theta}$ separated by a shard~$\Sigma$, then we have $\lambda_{\theta}(N_1, N_2) = H$, where $H$ is the unique hyperplane of $\mathcal{H}$ with $H \supseteq \Sigma$.
\end{proposition}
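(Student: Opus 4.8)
The plan is to unwind the definition of $\lambda_\theta$ from \Cref{def:quotient_edge_labelling} and translate it into the shard picture. Recall that $\lambda_\theta(N_1, N_2) = \lambda(x', y')$ for \emph{any} covering relation $x' \lessdot y'$ in $L(\mathcal{H}, B)$ with $x' \in N_1$ and $y' \in N_2$, where here we are identifying the $\theta$-cones $N_1, N_2$ with the corresponding $\theta$-equivalence classes of regions. So the task reduces to: (i) exhibit a single covering relation $R \lessdot R'$ in $L(\mathcal{H}, B)$ with $R$ in the class $N_1$, $R'$ in the class $N_2$, and $R \cap R'$ lying on the separating shard $\Sigma$; and (ii) check that for this covering relation the set-valued label $\lambda(R, R')$ is exactly the hyperplane $H \supseteq \Sigma$. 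Step (ii) is immediate from the definition of $\lambda$ once (i) is in place, since $R \cap R' \subseteq \Sigma \subseteq H$ forces $\lambda(R, R') = H$ by uniqueness of the hyperplane containing a facet.

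For step (i), I would argue as follows. Since $N_1 \lessdot N_2$ is a covering relation of maximal cones of $\mathcal{F}_\theta$ and $\mathcal{F}_\theta$ coarsens the arrangement fan, the wall $N_1 \cap N_2$ is a codimension-one cone contained in the shard $\Sigma$ (this is essentially the description, recalled in \Cref{sect:hyp:back:shards}, of $\theta$-cones as the closures of connected components of $\mathbb{R}^n \setminus \bigcup \Sigma$, the union over shards not removed by $\theta$; the walls between adjacent $\theta$-cones are precisely built from the surviving shards). Pick a relatively-interior point $v$ of the facet $N_1 \cap N_2$ lying in the relative interior of $\Sigma$, and perturb slightly to a generic point on $H$ near $v$: the two regions of $\mathcal{H}$ adjacent along $H$ at that point, call them $R$ and $R'$, satisfy $R \cap R' \subseteq H$, and $R \subseteq N_1$, $R' \subseteq N_2$ because $v$ was interior to the $\theta$-cone walls. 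Order them so that $R < R'$ in $L(\mathcal{H}, B)$; then $R \lessdot R'$ is a covering relation (regions sharing a facet always give a covering relation in the poset of regions), $R \cap R' \subseteq \Sigma \cap H = \Sigma$, and it maps to the covering relation $N_1 \lessdot N_2$ under the quotient. This is exactly the data needed.

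Putting it together: $\lambda_\theta(N_1, N_2) = \lambda(R, R')$ by \Cref{def:quotient_edge_labelling} (and this is well-defined by \Cref{lem:cov_rel_labels_equal}), and $\lambda(R, R') = H$ since $H$ is the unique hyperplane containing $R \cap R'$. Hence $\lambda_\theta(N_1, N_2) = H$.

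The main obstacle is the geometric bookkeeping in step (i): one must be careful that a genuine covering relation $R \lessdot R'$ in the poset of regions exists \emph{lying on $\Sigma$} and projecting to the given covering $N_1 \lessdot N_2$ — i.e.\ that $R$ and $R'$ really fall into the $\theta$-classes $N_1$ and $N_2$ respectively and not into some other adjacent class. This is handled by choosing the basepoint $v$ in the relative interior of the facet $N_1 \cap N_2$ rather than on its boundary, so that a small neighbourhood of $v$ meets only the cones $N_1$ and $N_2$ among the $\theta$-cones, while locally the arrangement $\mathcal{H}$ near $v$ consists only of the single hyperplane $H = H_\Sigma$ (all other hyperplanes through the affine span of $\Sigma$ have been "removed" in passing to $\mathcal{F}_\theta$, precisely because $\Sigma$ survives as a wall); such a $v$ exists because $\Sigma$ is full-dimensional inside $H$ and $N_1 \cap N_2$ is an $(n-1)$-dimensional portion of it. Everything else is a direct appeal to the definitions recalled in the background subsection together with \Cref{lem:cov_rel_labels_equal}.
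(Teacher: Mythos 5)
Your proof is correct and has the same overall shape as the paper's: find a covering relation $R \lessdot R'$ of regions with $R$ in the class of $N_1$ and $R'$ in the class of $N_2$, use \Cref{def:quotient_edge_labelling} (and the well-definedness from \Cref{lem:cov_rel_labels_equal}) to identify $\lambda_\theta(N_1, N_2)$ with the hyperplane label of that edge, and then identify that hyperplane with $H_\Sigma$. The difference is in how the covering pair is obtained. The paper simply cites \cite[Proposition~2.2]{reading_lcfha} for the existence of $R_1 \lessdot R_2$ with $R_1 \subseteq N_1$, $R_2 \subseteq N_2$, and then separately argues that $H = H_\Sigma$ by noting that $R_1 \cap R_2 \subseteq N_1 \cap N_2 \subseteq \Sigma$, so $H$ and $\Sigma$ share a codimension-one cone and must span the same hyperplane. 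You instead construct the covering pair by hand, picking a generic point $v'$ in the relative interior of the wall $N_1 \cap N_2 \subseteq \Sigma$ and taking the two regions of $\mathcal{H}$ flanking $H$ there; this gets $R \cap R' \subseteq H_\Sigma$ for free, avoiding the dimension-counting step at the end. The geometric construction is fine and arguably more self-contained, but the citation route is shorter.

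One small imprecision in your justification: the parenthetical claim that ``locally the arrangement $\mathcal{H}$ near $v$ consists only of the single hyperplane $H$ (all other hyperplanes through the affine span of $\Sigma$ have been removed in passing to $\mathcal{F}_\theta$)'' conflates the fan $\mathcal{F}_\theta$ with the arrangement $\mathcal{H}$. Passing to $\mathcal{F}_\theta$ removes shards, not hyperplanes of $\mathcal{H}$; other hyperplanes $H' \in \mathcal{H}$ can still cross the relative interior of $\Sigma$. What actually saves you is exactly what you wrote earlier --- choosing the base point generically in the $(n-1)$-dimensional cone $N_1 \cap N_2$ so that it misses every hyperplane of $\mathcal{H}$ other than $H$; the removal-of-shards remark is a red herring. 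With that corrected, the argument stands.
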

\begin{proof}
We have by \cite[Proposition~2.2]{reading_lcfha} that there must be at least one pair of regions $R_1$ and $R_2$ of $\mathcal{H}$ such that $R_1 \subseteq N_1$, $R_2 \subseteq R_2$, and $R_1 \lessdot R_2$.
Recalling Section~\ref{sect:hyp:back:shards}, we then have that $N_1 = \bigcup_{R'_1 \in [R_1]} R'_1$ and $N_2 = \bigcup_{R'_2 \in [R_2]} R'_2$.
Moreover, by \Cref{lem:cov_rel_labels_equal}, there is a hyperplane $H$ such that for all $R'_1 \in [R_1]$ and $R'_2 \in [R_2]$ with $R'_1 \lessdot R'_2$, we have that $H \supseteq R'_1 \cap R'_2$.
By construction of the quotient edge labelling, we have that $\lambda_{\theta}(N_1, N_2) = H$. 
Moreover, we have that $H$ is the unique hyperplane with $H \supseteq \Sigma$ as $H \cap \Sigma$ has codimension one, since we have both $H \supseteq R_1 \cap R_2$ and $\Sigma \supseteq R_1 \cap R_2$.
\end{proof}

Applying \Cref{thm:contraction} to \Cref{prop:hyp_polygonal&forc_cons}, we obtain the following result.

\begin{theorem}
Let $\mathcal{H}$ be a simplicial hyperplane arrangement with $B$ a region of $\mathcal{H}$ and $C$ a maximal chain of $L(\mathcal{H}, B)$.
Given a lattice quotient $q \colon L(\mathcal{H}, B) \to L(\mathcal{H}, B)/\theta$, the induced map \[
\mgtmap{q} \colon \mgea{\lambda^{C}}{L(\mathcal{H}, B)} \to \mgea{\lambda^{C}_{\theta}}{L(\mathcal{H}, B)/\theta}
\]
is a contraction of preordered sets.
\end{theorem}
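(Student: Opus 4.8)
The plan is to derive this theorem as an immediate consequence of \Cref{thm:contraction}\ref{op:contraction:contraction}, once the hypotheses of that result have been checked for the lattice of regions equipped with the edge labelling $\lambda^{C}$. So the first step is simply to assemble the ingredients: since $\mathcal{H}$ is a finite simplicial hyperplane arrangement, the poset $L(\mathcal{H}, B)$ has finitely many elements (one per region) and, by the fact of Reading cited above, it is a polygonal lattice for any region $B$. Next, \Cref{prop:hyp_polygonal&forc_cons} establishes that $\lambda^{C} \colon \covrel{L(\mathcal{H}, B)} \to \heap{C}$ is a forcing-consistent polygonal edge labelling, and \Cref{lem:hyp_partial_order} establishes that it is moreover polygon-complete (and, as a bonus, that $\mgea{\lambda^{C}}{L(\mathcal{H}, B)}$ is in fact a poset, not merely a preordered set).

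Having done this, I would invoke forcing-consistency of $\lambda^{C}$ together with \Cref{lem:cov_rel_labels_equal} to see that, for any lattice congruence $\theta$ on $L(\mathcal{H}, B)$ with quotient map $q \colon L(\mathcal{H}, B) \to L(\mathcal{H}, B)/\theta$, the quotient edge labelling $\lambda^{C}_{\theta}$ of \Cref{def:quotient_edge_labelling} is well-defined. Since all the hypotheses of \Cref{thm:contraction} now hold, applying part~\ref{op:contraction:contraction} of that theorem yields directly that the induced map
\[
\mgtmap{q} \colon \mgea{\lambda^{C}}{L(\mathcal{H}, B)} \to \mgea{\lambda^{C}_{\theta}}{L(\mathcal{H}, B)/\theta}
\]
is a contraction of preordered sets. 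No additional work is needed beyond this citation.

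The point I want to emphasise in the write-up is that the genuine content underlying this statement has already been extracted in the preparatory lemmas, so the theorem itself is essentially a corollary. If one were to identify a ``main obstacle'', it is the verification of polygonality, forcing-consistency, and polygon-completeness of $\lambda^{C}$ in \Cref{prop:hyp_polygonal&forc_cons} and \Cref{lem:hyp_partial_order}: these rest on the identification, in \Cref{lem:polygon->rank2subarr}, of the polygons of $L(\mathcal{H}, B)$ with rank-two subarrangements $\mathcal{H}'$ (via the codimension-two cone $F$ whose containing regions form the polygon), and on projecting onto the orthogonal complement of the span of $F$ to reduce to a planar arrangement, where the two maximal chains of the polygon traverse the hyperplanes of $\mathcal{H}'$ in opposite orders. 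Once that geometric analysis is in place, the present theorem follows formally, and I would keep the proof to one or two sentences: cite the fact that $L(\mathcal{H}, B)$ is a finite polygonal lattice, cite \Cref{prop:hyp_polygonal&forc_cons} and \Cref{lem:hyp_partial_order} for the properties of $\lambda^{C}$, and conclude by \Cref{thm:contraction}\ref{op:contraction:contraction}.
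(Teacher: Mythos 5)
Your proposal is correct and follows the paper's approach exactly: the paper presents this theorem as an immediate consequence, prefacing it with ``Applying Theorem~\ref{thm:contraction} to Proposition~\ref{prop:hyp_polygonal&forc_cons}, we obtain the following result'' and giving no separate proof. If anything, your write-up is slightly more careful than the paper's one-line attribution, since you explicitly note that polygon-completeness comes from Lemma~\ref{lem:hyp_partial_order} (a hypothesis needed for part~\ref{op:contraction:contraction} of Theorem~\ref{thm:contraction} that the paper's citation to Proposition~\ref{prop:hyp_polygonal&forc_cons} alone does not cover), and that finiteness of $L(\mathcal{H},B)$ follows from $\mathcal{H}$ being a finite arrangement.
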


\section{Weak Bruhat order on finite Coxeter groups}\label{sect:scm}

In this section, we apply the results of the previous section to the particular case of simplicial hyperplane arrangements given by finite Coxeter groups.
Here the lattice of regions of the hyperplane arrangement coincides with the so-called ``weak Bruhat order'' on the Coxeter group.
In this situation, we can give particular characterisations of the partial order on equivalence classes of maximal chains.
This set-up will also be applied later to the particular case of Cambrian lattices and their representation-theoretic incarnation.

\subsection{Background}

Our main reference on Coxeter groups is \cite{rs_infinite}, whose conventions we mostly follow.

\subsubsection{Coxeter groups}

A \emph{Coxeter group} $W$ is a group with a presentation given by a set of generators $S$ subject to the relations $(st)^{m(s,t)} = e$ for $s, t \in S$.
We write $e$ for the identity element of $W$.
Here $m(s, s) = 1$ and $2 \leqslant m(s, t)$ for $s \neq t$.
Often one also allows $m(s, t) = \infty$, meaning that no relation of the form $(st)^{m}$ holds, but we will ignore this case since we only ever work with finite Coxeter groups.
The pair $(W, S)$ is called a \emph{Coxeter system} and the elements of $S$ are called \emph{simple reflections}.
The number $|S|$ of simple reflections is called the \emph{rank} of~$W$.
The associated \emph{Coxeter diagram} $\Delta$ of $(W, S)$ has $S$ as vertices with an edge between $s, t \in S$ when $m(s, t) > 2$.
In the case where $m(s, t) > 3$, the associated edge is labelled by $m(s, t)$.
The Coxeter system $(W, S)$ is called \emph{simply-laced} if one always has $m(s, t) \leqslant 3$.

\subsubsection{Weak Bruhat order}

Every element $w \in W$ can be written, usually not uniquely, as a word in the generators $S$.
The smallest length of a word for $w$ is called the \emph{length} of $w$ and is denoted~$\ell(w)$.
A word for $w$ of length $\ell(w)$ is called \emph{reduced}.
We write reduced words for elements of $W$ in bold, so that we may write that $\mathbf{w}$ is a reduced word for $w \in W$.
The (\emph{right}) \emph{weak Bruhat order} is the partial order on $W$ whose covering relations are given by $w \lessdot ws$ whenever $\ell(w) < \ell(ws)$, where $s \in S$.
Equivalently, $wt \lessdot w$ whenever $\ell(wt) < \ell(w)$ for $t \in S$.
When $W$ is finite, as we will be assuming in this paper, this is a lattice.
The maximal element of $W$ in the weak Bruhat order is denoted~$w_{0}$, which thus has the longest length of all elements of $W$.
Note that reduced words for $w_0$ therefore correspond to maximal chains in the weak Bruhat order. 
We say that two reduced words for $w \in W$ are related by a \emph{commutation move} if they are of the form $\mathbf{u}st\mathbf{v}$ and $\mathbf{u}ts\mathbf{v}$ with $s, t \in S$ with $m(s, t) = 2$.
This corresponds to a square move of maximal chains.

\subsubsection{Cartan matrix and reflection representation}

We obtain a reflection representation for $W$ via a choice of Cartan matrix, in the usual way.
Indeed, a (\emph{generalised}) \emph{Cartan matrix} for a Coxeter system $(W, S)$ is a matrix $A$ with rows and columns indexed by $S$ whose entries are real numbers $a_{ss'}$ such that
\begin{enumerate}
    \item $a_{ss} = 2$ for all $s \in S$;
    \item $a_{ss'} \leqslant 0$ with $a_{ss'}a_{s's} = 4 \cos^{2}\bigl(\frac{\pi}{m(s,s')} \bigr)$ for $s \neq s'$;
    \item $a_{ss'} = 0$ if and only if $a_{s's} = 0$.
\end{enumerate}
If the entries of $A$ are integers then we say that $A$ is \emph{crystallographic}, and we say that $W$ itself is \emph{crystallographic} if it has a crystallographic Cartan matrix.
We will require that the Cartan matrix $A$ is \emph{symmetrisable}, meaning that there exists a function $\delta\colon S \to \mathbb{R}_{>0}$ such that $\delta(s)a_{ss'} = \delta(s')a_{s's}$ and $\delta(s) = \delta(s')$ if $s$ and $s'$ are conjugate.
This second condition is from \cite[Section~2.2]{rs_infinite} and is automatic if $A$ is crystallographic \cite[Corollary~2.5]{rs_infinite} or symmetric.

Let $V$ be a real vector space of dimension~$|S|$ with basis denoted $\{\alpha_s \st s \in S\}$.
We define a bilinear form on $V$ by $\sbf{\delta(s)^{-1}\alpha_s}{\alpha_{s'}} = a_{ss'}$.
It can be verified that this form is symmetric.
The reflection representation of $W$ on $V$ is then defined so that $s \in S$ acts by $s(\alpha_{s'}) = \alpha_{s'} - \sbf{\delta(s)^{-1}\alpha_s}{\alpha_{s'}}\alpha_s = \alpha_{s'} - a_{ss'}\alpha_s$.
Although $\delta(s)^{-1}\alpha_s$ is used to define the form $\sbf{-}{-}$, usually the sign of the form is what is important to us, meaning that we can consider $\sbf{\alpha_{s}}{\alpha_{s'}}$ instead of $\sbf{\delta(s)^{-1}\alpha_s}{\alpha_{s'}}$.

\subsubsection{Coxeter arrangement}

The reflection representation of a Coxeter group gives rise to a particular hyperplane arrangement.
The set of \emph{reflections} of $(W, S)$ is defined to be $R = \bigcup_{w \in W} wSw^{-1}$.
For each reflection $r \in R$, $r$ fixes a hyperplane $H_r$ in~$V$.
We let $\mathcal{H}$ be the hyperplane arrangement $\{H_r \st r \in R\}$, known as a \emph{Coxeter arrangement} when $W$ is finite.
If the group $W$ is finite, then it acts transitively on the set of regions of $\mathcal{H}$ \cite[Proposition~10-\nobreak2.2]{reading_fcgwo}.
We fix a region $B$ of $\mathcal{H}$.
We have that $w \mapsto wB$ gives a bijection between $W$ and the regions of~$\mathcal{H}$ \cite[Proposition~10-2.5]{reading_fcgwo}.
The lattice $L(\mathcal{H}, B)$ is then isomorphic to the weak Bruhat order on $W$ \cite[Theorem~10-3.1]{reading_fcgwo}.

\subsubsection{Root systems}

There is a bijection between reflections in $R$ and hyperplanes in $\mathcal{H}$ via $r \mapsto H_r$.
There is another participant in this bijection, namely the positive roots of the associated root system \cite[Proposition~4.4.5]{bb_coxeter}, which is the dual object to the hyperplane arrangement.
For certain purposes in this and future sections it will be more convenient for us to work with the root system than the hyperplane arrangement.

The \emph{root system} of $(W, S)$ is the set $\Phi := \{w(\alpha_s)\}_{w \in W, s \in S}$.
The elements of $\Phi$ are called \emph{roots}, with the elements $\{\alpha_{s}\}_{s \in S}$ called \emph{simple roots}.
Every root $\beta \in \Phi$ can be written as $\beta = \sum_{s \in S}c_{s}\alpha_s$ where $c_{\alpha} \in \mathbb{R}$ and all the non-zero coefficients $c_{\alpha}$ have the same sign. 
One can therefore specify a root $\beta \in \Phi$ via the associated vector of coefficients $(c_{s})_{s \in S}$.
A root $\beta \in \Phi$ is \emph{positive} (respectively, \emph{negative}) if $\beta = \sum_{s \in S}c_{s}\alpha_{s}$ where $c_{s} \geqslant 0$ (respectively, $c_{s} \leqslant 0$) for all $s \in S$.
We denote by $\Phi^{+}$ (respectively, $\Phi^{-}$) the set of \emph{positive} (respectively, negative) roots.
We have that $\Phi = \Phi^{+} \sqcup \Phi^{-}$ \cite[(4.24)]{bb_coxeter}.

The bijection between positive roots $\Phi^{+}$ and reflections $R$ is given by sending a root $\beta$ to the unique reflection $r_{\beta}$ such that $r_{\beta}(\beta) = - \beta$.
The hyperplane corresponding to $\beta$ is then $H_{r_{\beta}}$.

For $w \in W$, we define the \emph{inversion set} $\inv{w}$ by \[\inv{w} := \Phi^{+} \cap w(\Phi^{-}).\]
That is, the inversion set of $w$ is the set of positive roots which are sent to negative roots by $w^{-1}$.
We have that $v \leqslant w$ in the weak Bruhat order if and only if $\inv{v} \subseteq \inv{w}$.
We have $\inv{w} = \Phi^{+}$ if and only if $w$ is the longest element $w_{0}$.
One difference between our conventions and those of \cite{rs_infinite} is that we take inversions to be positive roots, whereas in \cite{rs_infinite} the corresponding reflections are used.

\begin{fact}
[{\cite[Proposition~2]{pilkington}}]
\label{lem:inversion_biclosed}
Given a subset $I \subseteq \Phi^{+}$, we have that $I = \inv{w}$ for $w \in W$ if and only if the following two conditions hold.
\begin{enumerate}
    \item If $\alpha, \beta \in I$ and $\lambda\mu + \alpha\beta \in \Phi^{+}$ for $\alpha, \beta \in \mathbb{R}_{\geqslant 0}$ then $\lambda\alpha + \mu\beta \in I$.
    \item If $\lambda\alpha + \mu\beta \in I$ for $\lambda, \mu \in \mathbb{R}_{\geqslant 0}$, where $\alpha, \beta \in \Phi^{+}$, then either $\alpha \in I$ or $\beta \in I$.
\end{enumerate}
\end{fact}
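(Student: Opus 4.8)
The plan is to prove the two implications separately. The forward implication ($I = \inv{w}$ implies (1) and (2)) is a short computation in the reflection representation: a positive root $\beta$ lies in $\inv{w}$ exactly when $w^{-1}(\beta)$ has all coordinates $\leqslant 0$ in the basis of simple roots. So if $\alpha, \beta \in I = \inv{w}$ and $\gamma := \lambda\alpha + \mu\beta \in \Phi^{+}$ with $\lambda, \mu \geqslant 0$, then $w^{-1}(\gamma) = \lambda w^{-1}(\alpha) + \mu w^{-1}(\beta)$ is a nonzero root with all coordinates $\leqslant 0$, hence negative, so $\gamma \in \inv{w}$; this gives (1). For (2), if $\gamma = \lambda\alpha + \mu\beta \in I$ with $\alpha, \beta \in \Phi^{+}$, $\lambda, \mu \geqslant 0$, but $\alpha, \beta \notin I$, then (since $\alpha,\beta \in \Phi^{+}$) we get $w^{-1}(\alpha), w^{-1}(\beta) \in \Phi^{+}$, so $w^{-1}(\gamma)$ is a nonnegative combination of positive roots, hence positive, contradicting $\gamma \in \inv{w}$.

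For the converse I would induct on $|I|$, the base case being $I = \emptyset = \inv{e}$. Suppose $I \neq \emptyset$ satisfies (1) and (2); the first step is to extract a simple root from $I$. Here the geometric input is that the simple roots are exactly the positive roots $\beta$ admitting no expression $\beta = \lambda\gamma + \mu\delta$ with $\lambda, \mu > 0$ and $\gamma, \delta \in \Phi^{+} \setminus \{\beta\}$: the simple roots span the extreme rays of the cone $\sum_{s \in S}\mathbb{R}_{\geqslant 0}\alpha_{s} \supseteq \Phi^{+}$, while a non-simple positive root $\beta$ decomposes because one can pick $s$ in its support with $\sbf{\beta}{\alpha_{s}} > 0$ (using that $\sbf{\beta}{\beta} > 0$) and then $\beta = s(\beta) + k\alpha_{s}$ with $k > 0$ and $s(\beta) \in \Phi^{+}$. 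Fixing a linear functional $f$ with $f(\alpha_{s}) > 0$ for all $s$ (so $f > 0$ on $\Phi^{+}$), one starts from any element of $I$ and, while it is non-simple, decomposes it and applies condition (2) to pass to a summand lying in $I$ of strictly smaller $f$-value; since $f$ has finitely many values on $\Phi^{+}$ this halts at a simple root $\alpha_{s} \in I$.

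For the induction step, put $I' := s(I \setminus \{\alpha_{s}\})$; since $s$ permutes $\Phi^{+} \setminus \{\alpha_{s}\}$ we have $I' \subseteq \Phi^{+}$ and $|I'| = |I| - 1$. One then checks that $I'$ still satisfies (1) and (2) --- applying the linear involution $s$ transports each relation, the only subtlety being a relation of $I'$ in which $\alpha_{s}$ itself occurs, which one resolves by dividing out the relevant positive coefficient and invoking condition (1) for $I$ with $\alpha_{s}$ and the $s$-image of the remaining datum as the two members of $I$. By induction $I' = \inv{w'}$ for some $w' \in W$; since $\alpha_{s} \notin I' = \inv{w'}$ we have $\ell(sw') = \ell(w') + 1$, whence the standard identity $\inv{sw'} = \{\alpha_{s}\} \sqcup s(\inv{w'})$ yields $\inv{sw'} = \{\alpha_{s}\} \sqcup s(s(I \setminus \{\alpha_{s}\})) = I$, so $w := sw'$ works. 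The step I expect to be most delicate is the simple-root extraction: the decomposition of non-simple positive roots is routine and can be taken to be an honest sum in crystallographic types, but in the non-crystallographic finite types $H_{3}$, $H_{4}$, $I_{2}(m)$ it genuinely needs the $\lambda, \mu > 0$ formulation, and the reflection argument above is what makes it uniform. The other ingredients --- that $s$ permutes $\Phi^{+} \setminus \{\alpha_{s}\}$, that $\ell(sw') > \ell(w') \iff \alpha_{s} \notin \inv{w'}$, and the identity $\inv{sw'} = \{\alpha_{s}\} \sqcup s(\inv{w'})$ for length-additive products --- are standard facts about finite Coxeter groups that I would simply cite.
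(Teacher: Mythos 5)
The paper does not supply its own proof of this statement: it is recorded as a \textbf{Fact} with a citation to Pilkington, so there is nothing internal to compare against. Your argument is a correct, self-contained proof and it follows the standard route one would expect.

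The forward direction is right: transporting $I=\inv{w}$ through $w^{-1}$ turns both conditions into the observation that a root which is a nonnegative combination of negative (resp.\ positive) roots must be negative (resp.\ positive). For the converse, the induction on $|I|$, the extraction of a simple root $\alpha_s \in I$, the passage to $I' := s(I\setminus\{\alpha_s\})$, and the finish via $\inv{sw'} = \{\alpha_s\}\sqcup s(\inv{w'})$ are all correct. Two places merit a word of caution though neither is a gap. First, in the simple-root extraction, when you decompose a non-simple $\beta\in I$ as $\beta = s(\beta) + k\alpha_s$ and apply condition (2), the resulting summand in $I$ need not have smaller $f$-value if it is $\alpha_s$ itself --- but then you are finished, so termination only needs $f(s(\beta)) < f(\beta)$, which holds since $k > 0$ and $f(\alpha_s)>0$; it would be cleaner to phrase it that way. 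Second, in verifying condition (2) for $I'$ in the case where one of $\alpha',\beta'$ equals $\alpha_s$, one must also rule out both being $\alpha_s$ and the coefficient $\mu$ being zero; both are excluded because $\alpha_s \notin I'$ (which follows from $-\alpha_s\notin I$) together with reducedness of $\Phi$. Your sketch (``dividing out the relevant positive coefficient and invoking condition (1)'') is exactly the right move there. Overall the proposal is sound; written out fully it would serve as a proof of the cited fact.
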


\subsubsection{Rank-two subsystems}

There is an analogue for root systems of a rank-two subarrangement of $\mathcal{H}$.
A \emph{root subsystem} $\Psi$ of $\Phi$ is a subset of $\Phi$ such that for $\beta \in \Psi$, we have $r_{\beta}(\Psi) = \Psi$.
The \emph{rank} of a root subsystem is the dimension of its span. 
A rank-two root subsystem is \emph{commutative} if the generators $\alpha$ and $\beta$ satisfy $\sbf{\alpha}{\beta} = 0$; in this case, it is straightforward to see that the rank-two subsystem is $\{\alpha, \beta, -\alpha, -\beta\}$.
Otherwise, the rank-two subsystem is \emph{non-commutative}.
We will additionally always require rank-two subsystems not to be strictly contained in larger rank-two subsystems.
(Note that it is possible for one rank-two subsystem to strictly contain another: the rank-two root system of type $B_2$ contains rank-two subsystems of type $A_1 \times A_1$ and the rank-two root system of type $G_2$ contains rank-two subsystems of type $A_2$ and type $A_1 \times A_1$.)
In simply-laced type, all non-commutative rank-two subsystems are of type~$A_2$ and have positive roots of the form $\{\alpha, \alpha + \beta, \beta\}$.

Similarly, there is an analogue of rank-two subsystems and subarrangements for reflections.
A \emph{reflection subgroup} of $W$ is a subgroup generated by reflections; such a subgroup is called \emph{dihedral} if it is generated by two reflections.
In \cite{rs_infinite}, dihedral subgroups are called ``generalised rank two parabolic subgroups''.
We similarly require dihedral subgroups not to be strictly contained in larger dihedral subgroups.
The bijections between roots in $\Phi$, hyperplanes in $\mathcal{H}$, and reflections in $R$ induce bijections between rank-two root subsystems of $\Phi$, rank-two subarrangements of $\mathcal{H}$, and dihedral subgroups of~$W$.

\subsection{Polygonal edge labellings of the weak Bruhat order}\label{sect:cox:edge_label}

Let $(W, S)$ be a Coxeter system with $W$ finite, so that consequently the weak Bruhat order on $W$ is a lattice.
In this section, we apply the edge labellings from the previous section and use the bilinear form $\sbf{-}{-}$ to deduce an interpretation of the partial order on equivalence classes of maximal chains which is similar to the definition of the higher Bruhat orders of Manin and Schechtman.

First, we rephrase the set-up of the previous section in the setting of Coxeter groups.
We use positive roots instead of hyperplanes as edge labels and recall that $L(\mathcal{H}, B)$ is isomorphic to $W$ endowed with the weak Bruhat order.
Hence, we have a set-valued edge labelling $\lambda \colon \covrel{W} \to \Phi^{+}$ of the weak Bruhat order on $W$.

A maximal chain $C$ in $L(\mathcal{H}, B)$ corresponds to a reduced expression $\mathbf{w}_0$ of $w_0$.
Hence, we have the heap poset $\heap{\mathbf{w}_0}$ as a partial order on $\Phi^{+}$.
We furthermore have a poset-valued edge labelling $\lambda^{\mathbf{w}_0} \colon \covrel{W} \to \heap{\mathbf{w}_0}$.

\begin{remark}\label{rmk:combinatorial_ar_quiver}
The Hasse diagram of this poset was studied in \cite{oh-suh, oh-suh_2} under the name ``combinatorial Auslander--Reiten quiver'' in the context of the representation theory of Khovanov--Lauda--Rouquier and quantum affine algebras.
These combinatorial Auslander--Reiten quivers were recently categorified in \cite{canesin}.
For some choices of $\mathbf{w}_0$, combinatorial Auslander--Reiten quivers coincide with Auslander--Reiten quivers in the conventional sense \cite[Theorem~9.3.1]{stump2015cataland}, \cite{bedard}; see also \Cref{rmk:c_sort_hbo}.
\end{remark}

\begin{remark}
Dyer \cite[(4.2)]{dyer1993hecke} defines an edge labelling of the strong Bruhat order on $W$ using reflections, which are in bijection with positive roots.
Our edge labelling is the restriction of this edge labelling to the weak Bruhat order.
\end{remark}

We have the following consequences of the results of the previous section.

\begin{theorem}\label{thm:coxeter}
Let $(W, S)$ be a finite Coxeter system.
\begin{enumerate}[label=\textup{(}\arabic*\textup{)}]
    \item If $\mathbf{w}_0$ and $\mathbf{w}'_0$ are two reduced words for $w_0$, then $\heap{\mathbf{w}_0} = \heap{\mathbf{w}'_0}$ if and only if $\mathbf{w}_0$ and $\mathbf{w}'_0$ are related by a sequence of commutation moves.\label{op:coxeter:heap_vs_commutation}
    \item Given a polygon $P$ in $W$, there is a rank-two root subsystem $\Psi$ of $\Phi$ such that for each maximal chain $C$ of $P$, we have $\lambda(\covrel{C}) = \Psi^{+}$.
    Moreover, $P$ is a square if and only if $\Psi$ is commutative.\label{op:coxeter:polygons}
    \item For every reduced word $\mathbf{w}_0$ for $w_0$, the edge labelling $\lambda^{\mathbf{w}_0} \colon \covrel{W} \to \heap{\mathbf{w}_0}$ is polygonal, forcing-consistent, and polygon-complete.\label{op:coxeter:edge_labelling_properties}
    \item For every reduced word $\mathbf{w}_0$ for $w_0$, $\mgea{\lambda^{\mathbf{w}_0}}{W}$ is a partial order.\label{op:coxeter:partial_order}
    \item Given a reduced word $\mathbf{w}_0$ for $w_0$ and a lattice congruence $\theta$ on $W$ with quotient map $q \colon W \to W/\theta$, the induced map \[\mgtmap{q} \colon \mgea{\lambda^{\mathbf{w}_0}}{W} \to \mgea{\lambda^{\mathbf{w}_0}_{\theta}}{W/\theta}\] is a contraction of preordered sets.\label{op:coxeter:contraction}
\end{enumerate}
\end{theorem}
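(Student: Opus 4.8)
The plan is to derive all five claims from the general machinery of Sections~\ref{sect:max_chain_posets} and~\ref{sect:regions}, applied to the Coxeter arrangement $\mathcal{H}$ of $(W,S)$, by transporting statements along the standard dictionary: the isomorphism $L(\mathcal{H},B)\cong W$ of the lattice of regions with the weak Bruhat order; the bijection between hyperplanes of $\mathcal{H}$, reflections of $W$, and positive roots of $\Phi$; the induced bijection between rank-two subarrangements of $\mathcal{H}$, dihedral subgroups of $W$, and rank-two root subsystems of $\Phi$; and the correspondence between square moves of maximal chains and commutation moves of reduced words for $w_0$.

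First I would fix a reduced word $\mathbf{w}_0$ for $w_0$; under $L(\mathcal{H},B)\cong W$ this is the same datum as a maximal chain $C$ of $L(\mathcal{H},B)$. I would check that the combinatorially defined heap poset $\heap{\mathbf{w}_0}$ on $\Phi^{+}$ is identified with $\heap{C}$ on $\mathcal{H}$: the edge labelling $\lambda$ sends a covering relation $w\lessdot ws$ to the positive root added to the inversion set, so the two generating sequences of labels agree under the root--hyperplane bijection, and ``non-commutative rank-two subsystem'' matches ``non-commutative rank-two subarrangement'' because in both cases commutativity means the rank-two object contains exactly two hyperplanes, equivalently $\sbf{\alpha}{\beta}=0$ for the generating roots. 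With this identification, Claim~\ref{op:coxeter:heap_vs_commutation} is \Cref{lem:heap->equiv}, since a commutation move $\mathbf{u}st\mathbf{v}\leftrightarrow\mathbf{u}ts\mathbf{v}$ with $m(s,t)=2$ is precisely a square move and vice versa.

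Claim~\ref{op:coxeter:polygons} is \Cref{lem:polygon->rank2subarr} read through the bijection between rank-two subarrangements and rank-two root subsystems, the final assertion about squares following because a square is a polygon whose rank-two subarrangement has exactly two hyperplanes, i.e.\ whose rank-two subsystem is commutative. Claim~\ref{op:coxeter:edge_labelling_properties} is then immediate: polygonality and forcing-consistency of $\lambda^{\mathbf{w}_0}=\lambda^{C}$ come from \Cref{prop:hyp_polygonal&forc_cons}, and polygon-completeness from \Cref{lem:hyp_partial_order}. Claim~\ref{op:coxeter:partial_order} follows from Claim~\ref{op:coxeter:edge_labelling_properties} together with \Cref{lem:polygon-complete_poset} (it is also part of \Cref{lem:hyp_partial_order}). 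Finally, Claim~\ref{op:coxeter:contraction} follows by feeding the forcing-consistent polygon-complete polygonal edge labelling $\lambda^{\mathbf{w}_0}$ and the lattice congruence $\theta$ into \Cref{thm:contraction}\ref{op:contraction:contraction}.

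The main obstacle is not any new argument but the bookkeeping: verifying that the notion of ``commutative'' agrees across roots, hyperplanes, and dihedral subgroups, and that the combinatorial heap of a reduced word coincides with the hyperplane-arrangement heap of the corresponding maximal chain. The one point genuinely needing care is that the rank-two subsystems used in \Cref{lem:polygon->rank2subarr} are exactly those ``not strictly contained in a larger rank-two subsystem'', matching the convention adopted for rank-two root subsystems and dihedral subgroups, which is what makes the polygon correspondence bijective in types $B_2$ and $G_2$.
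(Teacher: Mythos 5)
Your proposal is correct and takes essentially the same approach as the paper, which presents the theorem simply as ``consequences of the results of the previous section'' and relies on exactly this translation: $L(\mathcal{H},B)\cong W$, roots $\leftrightarrow$ hyperplanes, rank-two subsystems $\leftrightarrow$ rank-two subarrangements, commutation moves $\leftrightarrow$ square moves, then invoking \Cref{lem:heap->equiv}, \Cref{lem:polygon->rank2subarr}, \Cref{prop:hyp_polygonal&forc_cons}, \Cref{lem:hyp_partial_order}, and \Cref{thm:contraction}. Your remark about the maximality convention for rank-two subsystems in types $B_2$ and $G_2$ is the right subtlety to flag.
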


Indeed, given $v \lessdot w$ in the weak Bruhat order, we have that $\lambda^{\mathbf{w}_0}(v, w) = \beta$ for $\beta$ the unique root such that $\inv{w} \setminus \inv{v} = \{\beta\}$, as follows from the correspondence between roots and hyperplanes.
This means that applying the edge labelling to a maximal chain $C$ of $W$ gives a sequence of roots.
We call these \emph{root sequences}.
These were given the following description in \cite[Proposition~(2.13)]{dyer1993hecke} and \cite{papi}, which also follows from \Cref{lem:inversion_biclosed}.

\begin{fact}
[{\cite[(2.2), Proposition~(2.13)]{dyer1993hecke}
}]\label{lem:admissible_orders}
A total order $<$ on $\Phi^{+}$ is the root sequence of a maximal chain of $W$ if and only if for all $\alpha, \beta \in \Phi^{+}$ with $\lambda\alpha + \mu\beta \in \Phi^{+}$ for some $\lambda, \mu \in \mathbb{R}_{\geqslant 0}$ and $\alpha < \beta$, we have $\alpha < \lambda\alpha + \mu\beta < \beta$.
\end{fact}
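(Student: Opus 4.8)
This is stated as a \emph{Fact} from the literature, but I will indicate how to obtain it, following the remark in the text that it follows from \Cref{lem:inversion_biclosed}. The plan is to reinterpret ``root sequence of a maximal chain'' in terms of prefixes of the total order and then to invoke the biclosedness criterion for inversion sets.

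First I would record the translation. A maximal chain $C$ of $W$ is a saturated chain $e = v_0 \lessdot v_1 \lessdot \cdots \lessdot v_N = w_0$ with $N = |\Phi^{+}| = \ell(w_0)$; each cover $v_{k-1} \lessdot v_k$ satisfies $\inv{v_k} = \inv{v_{k-1}} \sqcup \{\beta_k\}$ for a single positive root $\beta_k$, and $(\beta_1, \dots, \beta_N)$ is by definition the root sequence of $C$. Telescoping, together with $\inv{e} = \emptyset$, shows that the prefixes $I_k := \{\beta_1, \dots, \beta_k\}$ are exactly the inversion sets $\inv{v_k}$, so every $I_k$ is biclosed by \Cref{lem:inversion_biclosed}. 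Conversely, any total order $<$ on $\Phi^{+}$, written $\beta_1 < \beta_2 < \cdots < \beta_N$, determines prefixes $I_k = \{\beta_1, \dots, \beta_k\}$; and if every $I_k$ is biclosed then $I_k = \inv{v_k}$ for a unique $v_k \in W$ by \Cref{lem:inversion_biclosed}, from which $|I_k| = k$ gives $\ell(v_k) = k$, hence $v_0 = e$, $v_N = w_0$, and $v_{k-1} \lessdot v_k$ for each $k$ (only one root is added at each step, so nothing can lie strictly in between), so that $v_0 \lessdot \cdots \lessdot v_N$ is a maximal chain whose root sequence is $<$. It therefore suffices to show that \emph{all prefixes $I_k$ are biclosed if and only if $<$ satisfies the convexity condition of the statement}, where as usual $\lambda\alpha + \mu\beta$ is read as a root distinct from both $\alpha$ and $\beta$, so that $\lambda, \mu > 0$.

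For the ``only if'' direction I would argue as follows. Assume each $I_k$ is biclosed, take $\gamma = \lambda\alpha + \mu\beta \in \Phi^{+}$ with $\alpha < \beta$, and let $a < b$ and $c$ denote the positions of $\alpha, \beta, \gamma$ in $<$; I must rule out $c > b$ and $c < a$. If $c > b$, then $\alpha, \beta \in I_{c-1}$ and closedness (condition~(1) of \Cref{lem:inversion_biclosed}) forces $\gamma \in I_{c-1}$, contradicting $\gamma = \beta_c \notin I_{c-1}$. If $c < a$, then $\gamma \in I_{a-1}$ while $\alpha, \beta \notin I_{a-1}$, contradicting co-closedness (condition~(2)) applied to $\gamma = \lambda\alpha + \mu\beta$. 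Hence $a < c < b$. For the ``if'' direction: assume the convexity condition and fix $k$. Given $\alpha, \beta \in I_k$ with $\alpha < \beta$ and $\gamma = \lambda\alpha + \mu\beta \in \Phi^{+}$, convexity gives $\gamma < \beta$, so $\gamma \in I_k$ since $I_k$ is a lower set for $<$ --- this is closedness. Given $\gamma = \lambda\alpha + \mu\beta \in I_k$ with $\alpha, \beta \in \Phi^{+}$ and, say, $\alpha < \beta$, convexity gives $\alpha < \gamma$, so $\alpha \in I_k$ --- this is co-closedness.

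I do not expect a real obstacle here: the argument is a clean translation through \Cref{lem:inversion_biclosed}. The only points needing a little care are the conventional exclusion of the degenerate case $\gamma \in \{\alpha, \beta\}$ (where $\lambda = 0$ or $\mu = 0$) in the convexity condition, and the use of the standard facts $\ell(w) = |\inv{w}|$ and the identification of maximal chains of the weak order with maximal chains of the poset of inversion sets, both implicit in the surrounding text.
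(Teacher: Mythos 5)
Your argument is correct and follows exactly the route the paper indicates: the statement is quoted as a Fact from Dyer's work, with the remark that it "also follows from \Cref{lem:inversion_biclosed}", and your proof is precisely that derivation, identifying prefixes of the total order with inversion sets and checking that biclosedness of all prefixes is equivalent to the convexity condition. The handling of the degenerate case $\lambda=0$ or $\mu=0$ and the use of $\ell(w)=|\inv{w}|$ are the right points to flag, and both are handled correctly.
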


In the Coxeter setting, due to the presence of the bilinear form $\sbf{-}{-}$, we can interpret the poset of equivalence classes of maximal chains using the following notion.
We write $\rtss{\Phi}$ for the set of rank-two root subsystems of $\Phi$.
The following notion of inversion generalises that of the two-dimensional higher Bruhat orders.

\begin{definition}
Given a maximal chain $C$ of $W$, with associated root sequence $(\beta_1, \beta_2, \dots, \beta_l)$, we write $\inva{\mathbf{w}_0}{C} \subseteq \rtss{\Phi}$ for the set of non-commutative rank-two root subsystems whose positive roots are ordered differently by $(\beta_1, \beta_2, \dots, \beta_l)$ and $\heap{\mathbf{w}_0}$; we refer to such rank-two subsystems as \emph{inverted} and to $\inva{\mathbf{w}_0}{C}$ as the \emph{inversion~set}.
\end{definition}

We now prove that it satisfies similar properties to the usual notion of inversion for the two-dimensional Bruhat orders.
The following proposition is essentially a special case of \cite[Lemma~3.4(a)]{wellman}.
We will later apply it in Section~\ref{sect:preproj_mgs_orders}.

\begin{proposition}\label{prop:inv<=>equiv}
Given $C, C' \in \mg{W}$, we have that $\inva{\mathbf{w}_0}{C} = \inva{\mathbf{w}_0}{C'}$ if and only if $C$ and $C'$ are square-equivalent.
\end{proposition}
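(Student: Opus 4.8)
The plan is to deduce the proposition from \Cref{lem:heap->equiv}, which asserts that $C$ and $C'$ are square-equivalent if and only if $\heap{C} = \heap{C'}$. So it suffices to show that $\inva{\mathbf{w}_0}{C} = \inva{\mathbf{w}_0}{C'}$ if and only if $\heap{C} = \heap{C'}$. Call two distinct positive roots $\beta, \beta'$ \emph{linked} if the unique maximal rank-two root subsystem $\Psi(\beta, \beta') = \Phi \cap \mathrm{span}(\beta, \beta')$ containing them is non-commutative; whether $\beta$ and $\beta'$ are linked does not depend on any chain. By definition, $\heap{C}$ is the smallest partial order on $\Phi^{+}$ containing the relations $\beta \leqslant_C \beta'$ over all linked pairs ordered $\beta$ before $\beta'$ in the root sequence of $C$; since this partial order is contained in the total order given by that root sequence, it follows that $\heap{C} = \heap{C'}$ if and only if $C$ and $C'$ order every linked pair the same way. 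Thus it remains to show that $\inva{\mathbf{w}_0}{C} = \inva{\mathbf{w}_0}{C'}$ if and only if $C$ and $C'$ order every linked pair the same way.

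The crux is the following ``two orders'' observation: for every non-commutative $\Psi \in \rtss{\Phi}$, any maximal chain $C$ of $W$ orders the set $\Psi^{+}$ in one of exactly two ways, these two ways are mutually reverse, and one of them is the order induced by $\heap{\mathbf{w}_0}$. To see this, note that because $\Psi = \Phi \cap \mathrm{span}(\Psi)$, restricting an admissible order on $\Phi^{+}$ (in the sense of \Cref{lem:admissible_orders}) to $\Psi^{+}$ again satisfies the convexity condition of \Cref{lem:admissible_orders}, now for the rank-two root system $\Psi$; hence it is the root sequence of a maximal chain of the dihedral reflection subgroup $W_{\Psi}$ with root system $\Psi$. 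The weak Bruhat order on the dihedral group $W_{\Psi}$ is a single polygon, so it has exactly two maximal chains, giving exactly two admissible orders on $\Psi^{+}$; and since the reverse of an admissible order is again admissible (a direct check from \Cref{lem:admissible_orders}), these two orders are mutually reverse, hence differ on every pair in $\Psi^{+}$. Finally, any two elements of $\Psi^{+}$ form a linked pair, since $\Psi(\beta, \beta') = \Psi$ for $\beta, \beta' \in \Psi^{+}$; so $\heap{\mathbf{w}_0}$ restricts to the total order on $\Psi^{+}$ given by the root sequence of $\mathbf{w}_0$, which is one of the two admissible orders. (Alternatively, one may invoke the classical theory of reflection orderings, see \cite{dyer1993hecke}.)

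Granting this, $\Psi \in \inva{\mathbf{w}_0}{C}$ if and only if $C$ orders $\Psi^{+}$ differently from $\mathbf{w}_0$, which by the ``two orders'' observation holds if and only if $C$ orders some (equivalently, every) linked pair inside $\Psi^{+}$ oppositely to $\mathbf{w}_0$. Since each linked pair $\beta, \beta'$ lies in $\Psi(\beta, \beta')^{+}$ for a unique non-commutative rank-two subsystem, it follows that $\inva{\mathbf{w}_0}{C} = \inva{\mathbf{w}_0}{C'}$ if and only if, for every linked pair, $C$ orders it oppositely to $\mathbf{w}_0$ exactly when $C'$ does; and, as the order of a linked pair in a chain is one of only two possibilities (one being that of $\mathbf{w}_0$), this is equivalent to $C$ and $C'$ ordering every linked pair the same way. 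Combining with the first paragraph, this is equivalent to $\heap{C} = \heap{C'}$, hence to $C$ and $C'$ being square-equivalent, which is what we wanted.

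I expect the main obstacle to be the ``two orders'' observation, and specifically making it robust in the presence of the paper's convention that rank-two subsystems are taken maximal and of non-commutative, possibly non-crystallographic rank-two subsystems: the content is that ``$\Psi$ is inverted in $C$'' is genuinely an all-or-nothing property of $\Psi$ that can be tested on any single linked pair inside $\Psi^{+}$. The rest of the argument is bookkeeping with the heap poset and \Cref{lem:heap->equiv}.
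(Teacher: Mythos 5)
Your proof is correct and follows essentially the same approach as the paper, which deduces the statement from the heap-poset characterisation of square-equivalence (\Cref{thm:coxeter}\ref{op:coxeter:heap_vs_commutation}, which is the Coxeter restatement of \Cref{lem:heap->equiv}) together with the observation that $\inva{\mathbf{w}_0}{C}$ determines $\heap{C}$. The paper treats that observation as immediate, while you spell it out carefully via the ``two orders'' argument and the precise notion of linked pairs; this added rigor is welcome, since in non-simply-laced types orthogonal roots can lie in the same non-commutative rank-two subsystem (e.g.\ the two long roots in $B_2$), so the paper's shorthand ``non-orthogonal roots'' should really be read as your ``linked pairs''.
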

\begin{proof}
This follows immediately from \Cref{thm:coxeter}\ref{op:coxeter:heap_vs_commutation}, since knowing $\inva{\mathbf{w}_0}{C}$ determines the heap poset of $C$, as it determines the partial order between all non-orthogonal roots.
\end{proof}

Hence, we can write $\inva{\mathbf{w}_0}{[C]} := \inva{\mathbf{w}_0}{C}$.
The next result is sketched in \cite[Lemma~3.4(b)]{wellman} and its proof; we provide a detailed proof below.

\begin{proposition}\label{prop:inv_cov_rels}
Given $[C], [C'] \in \mge{W}$, we have that $[C] \lessdot [C']$ if and only if $\inva{\mathbf{w}_0}{[C']} = \inva{\mathbf{w}_0}{[C]} \cup \{\Psi\}$ for some $\Psi \in \rtss{\Phi} \setminus \inva{\mathbf{w}_0}{[C]}$.
\end{proposition}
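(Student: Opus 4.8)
The plan is to translate the covering-relation statement into the language of increasing polygon moves, using the characterisation of the partial order $\mgea{\lambda^{\mathbf{w}_0}}{W}$ already established. By \Cref{thm:coxeter}\ref{op:coxeter:edge_labelling_properties} the edge labelling $\lambda^{\mathbf{w}_0}$ is polygon-complete, so by \Cref{lem:polygon-complete_poset} the relation $\preord_{\lambda^{\mathbf{w}_0}}$ is a partial order, and by the proof of \Cref{lem:hyp_partial_order} (specialised to the Coxeter arrangement) every covering relation $[C] \lessdot [C']$ in $\mge{W}$ is precisely an increasing polygon move. So the first step is: $[C] \lessdot [C']$ if and only if there is a non-square polygon $P$ of $W$ with $C \cap P$ ascending, $C' \cap P$ descending (with respect to $\lambda^{\mathbf{w}_0}$), and $C \setminus P = C' \setminus P$.

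Next I would use \Cref{thm:coxeter}\ref{op:coxeter:polygons}: the polygon $P$ corresponds to a non-commutative rank-two root subsystem $\Psi$, with $\lambda(\covrel{C_i}) = \Psi^+$ for both maximal chains of $P$. The key observation is then that the two maximal chains of $P$ are exactly the two possible total orders on $\Psi^+$ compatible with being a root sequence (by \Cref{lem:admissible_orders}, a non-commutative rank-two system has exactly two admissible orders, the two "convex" ones running in opposite directions). Since $C$ and $C'$ agree outside $P$, their root sequences agree on $\Phi^+ \setminus \Psi^+$ and differ only in the relative order of the roots of $\Psi^+$. Hence for every non-commutative rank-two subsystem $\Psi' \neq \Psi$, the restriction of the root sequence of $C$ to $(\Psi')^+$ equals that of $C'$, so $\Psi' \in \inva{\mathbf{w}_0}{[C]}$ if and only if $\Psi' \in \inva{\mathbf{w}_0}{[C']}$. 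Moreover, since one of $C, C'$ orders $\Psi^+$ ascending with respect to $\heap{\mathbf{w}_0}$ and the other descending, exactly one of $[C], [C']$ has $\Psi$ inverted; because $C' \cap P$ is the descending chain, it is $C'$, so $\inva{\mathbf{w}_0}{[C']} = \inva{\mathbf{w}_0}{[C]} \cup \{\Psi\}$ with $\Psi \notin \inva{\mathbf{w}_0}{[C]}$. (Here I should double-check the orientation: "ascending with respect to $\lambda^{\mathbf{w}_0}$" means increasing in $\heap{\mathbf{w}_0}$, so an ascending chain through $\Psi^+$ is the one \emph{not} inverting $\Psi$, and the increasing polygon move goes from non-inverted to inverted — consistent with $[C] \lessdot [C']$.)

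For the converse, suppose $\inva{\mathbf{w}_0}{[C']} = \inva{\mathbf{w}_0}{[C]} \cup \{\Psi\}$ with $\Psi \notin \inva{\mathbf{w}_0}{[C]}$. I would use \Cref{prop:polygonal_lattices}\ref{op:polygonal_lattices:polygon_moves}: $C$ and $C'$ are connected by a sequence of polygon moves; by \Cref{prop:inv<=>equiv} we may assume none of these is a square move, i.e. each is a non-square polygon move changing exactly one inverted rank-two subsystem. Since the symmetric difference of the inversion sets is the singleton $\{\Psi\}$, a counting/parity argument on how many times each subsystem is toggled forces that $\Psi$ is toggled an odd number of times and every other subsystem an even number; combined with the fact (from \Cref{lem:admissible_orders}) that each subsystem has only two admissible orders, a minimal such sequence must consist of a single polygon move across the polygon associated to $\Psi$. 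This gives an increasing polygon move (in the correct direction, since $\Psi$ passes from non-inverted in $C$ to inverted in $C'$), hence $[C] \lessdot [C']$ by polygon-completeness. The main obstacle I anticipate is making the parity/minimality argument in the converse fully rigorous — in particular ruling out that one "must" pass through intermediate configurations inverting and then un-inverting some other $\Psi'$; the cleanest route is probably to invoke \Cref{prop:inv_cov_rels}'s forward direction together with anti-symmetry of the order to show that any chain of increasing polygon moves from $[C]$ strictly enlarges the inversion set at each step, so a single step already reaching $\inva{\mathbf{w}_0}{[C]} \cup \{\Psi\}$ exhausts the room, forcing a covering relation. Alternatively, one can cite \cite[Lemma~3.4(b)]{wellman} for the combinatorial heart and supply the order-theoretic wrapper.
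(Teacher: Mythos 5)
Your forward ("only if") direction is correct and essentially the paper's argument: polygon-completeness reduces covers to increasing polygon moves, and \Cref{thm:coxeter}\ref{op:coxeter:polygons} identifies the polygon with a unique non-commutative rank-two subsystem, so exactly one $\Psi$ gets inverted. That part is fine.

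The converse is where there is a real gap, and you sense it yourself but neither of your proposed fixes closes it. The parity argument tells you only that $\Psi$ is toggled an odd number of times and other subsystems an even number of times along some polygon-move path from $C$ to $C'$; it does \emph{not} rule out that every such path necessarily inverts and then un-inverts other subsystems (so that the minimal path has length $>1$). "Each subsystem has only two admissible orders" gives no control over which \emph{global} sequences of moves are realizable as root sequences, which is the whole difficulty. Your alternative argument — "any chain of increasing polygon moves from $[C]$ strictly enlarges the inversion set, so one step exhausts the room" — presupposes that $[C] < [C']$ in $\mgea{\lambda^{\mathbf{w}_0}}{W}$, i.e.\ that some chain of increasing polygon moves from $[C]$ to $[C']$ exists. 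But that is precisely what needs to be shown: from the hypothesis on inversion sets alone, it is not obvious that there is \emph{any} increasing polygon move available across the polygon associated to $\Psi$, because in a given representative of $[C]$ the roots of $\Psi^+$ need not be consecutive in the root sequence. The paper's proof supplies exactly this missing step by a root-theoretic argument: writing $\Psi^+$ as $\{\lambda\alpha+\mu\beta\}$, it takes the $<_C$-minimal $\gamma \notin \Psi^+$ sitting between $\alpha$ and $\beta$ in the root sequence, uses the bilinear form and the hypothesis that $\Psi$ is the \emph{only} inverted difference to deduce that $\gamma$ must be orthogonal to the root it sits next to, hence can be commuted past it by a square move, and iterates until all of $\Psi^+$ is consecutive; only then can one perform the desired increasing polygon move. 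This consecutivity argument is the heart of the "if" direction and is absent from your proposal. Citing Wellman's lemma is not a satisfactory replacement here, since the paper explicitly notes that \cite[Lemma~3.4(b)]{wellman} is only sketched and that the purpose of this proposition is to supply the detailed proof.
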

\begin{proof}
We show the ``only if'' direction first.
We have that $[C] \lessdot [C']$ if and only if there exist $\widehat{C} \in [C]$ and $\widehat{C}' \in [C']$ such that $\widehat{C}'$ is an increasing polygon move of $\widehat{C}$ by polygon-completeness from \Cref{thm:coxeter}\ref{op:coxeter:edge_labelling_properties}.
By polygonality of the edge labelling $\lambda^{\mathbf{w}_0}$ from \Cref{thm:coxeter}\ref{op:coxeter:edge_labelling_properties}, and by the relation between polygons and rank-two subsystems in \Cref{thm:coxeter}\ref{op:coxeter:polygons}, we have that there is exactly one rank-two subsystem which is inverted by the increasing polygon move, which establishes this direction of the proposition.

We now show the ``if'' direction.
Suppose that $\inva{\mathbf{w}_0}{[C']} = \inva{\mathbf{w}_0}{[C]} \cup \{\Psi\}$ for some $\Psi \in \rtss{\Phi} \setminus \inva{\mathbf{w}_0}{[C]}$.
We use the interpretation of the maximal chains $C$ and $C'$ as root sequences.
We let the simple roots of the rank-two root subsystem $\Psi$ be $\alpha$ and $\beta$, so that the positive roots of $\Psi$ are of the form $\lambda\alpha + \mu\beta$ for $\lambda, \mu \in \mathbb{R}_{\geqslant 0}$.
We wish to show that there is a root sequence in the square-equivalence class of $C$ where all of the positive roots of $\Psi$ are consecutive.
Letting $<_C$ denote the total order on $\Phi^+$ given by the edge labels of $C$, we suppose that $\lambda\alpha + \mu\beta <_C \lambda'\alpha + \mu'\beta$ if and only if $\mu/\lambda < \mu'/\lambda'$, which is one of two possible orders by \Cref{lem:admissible_orders}.
Suppose that the positive roots of $\Psi$ are not consecutive in $C$.
Hence, let $\gamma \in \Phi^{+}$ be the $<_C$-minimal root such that $\alpha <_C \gamma <_C \beta$ but with $\gamma \notin \Psi^{+}$.
Let the two roots of $\Psi^{+}$ which $\gamma$ lies immediately between be $\lambda\alpha + \mu\beta$ and $\lambda'\alpha + \mu'\beta$.
In particular, the root immediately preceding $\gamma$ in the root sequence $C$ is $\lambda\alpha + \mu\beta$.

Suppose for contradiction that $\gamma$ and $\lambda\alpha + \mu\beta$ are not orthogonal.
This implies that they generate a non-commutative rank-two root subsystem $\Xi_{\gamma, \lambda\alpha + \mu\beta}$, since adjacent roots in a non-commutative rank-two subsystem cannot be orthogonal.
Note then that
\begin{align*}
\sbf{\gamma}{\lambda\alpha + \mu\beta} &= \sbfb{\gamma}{\frac{\lambda}{\lambda'}(\lambda'\alpha + \mu'\beta) + \bigl(\mu - \mu'\frac{\lambda}{\lambda'}\bigr)\beta} \\
&= \frac{\lambda}{\lambda'}\sbf{\gamma}{\lambda'\alpha + \mu'\beta} + \bigl(\mu - \mu'\frac{\lambda}{\lambda'}\bigr)\sbf{\gamma}{\beta}.
\end{align*}
Since by assumption $\sbf{\gamma}{\lambda\alpha + \mu\beta} \neq 0$, we must have that at least one of $\sbf{\gamma}{\lambda'\alpha + \mu'\beta}$ and $\sbf{\gamma}{\beta}$ are non-zero.
Hence, let $\delta \in \{\lambda'\alpha + \mu'\beta, \beta\}$ be a root such that $\sbf{\gamma}{\delta} \neq 0$.
Then $\gamma$ and $\delta$ also generate a non-commutative rank-two subsystem, which we denote $\Xi_{\gamma, \delta}$.
By assumption, we have that $\Psi$ is the only non-commutative rank-two subsystem which is ordered differently in $C$ and $C'$.
Hence, in $C'$ we must have that $\Xi_{\gamma, \lambda\alpha + \mu\beta}$ and $\Xi_{\gamma, \delta}$ are ordered as they are in~$C$, since these root subsystems cannot equal $\Psi$, since $\gamma \notin \Psi$.
This implies that in $C'$ we have $\lambda\alpha + \mu\beta <_C \gamma <_C \delta$.
However, this is a contradiction, since we must have $\lambda\alpha + \mu\beta >_C \delta$ in $C'$, as $\Psi$ is inverted in $C'$.
We conclude that $\gamma$ and $\lambda\alpha + \mu\beta$ are in fact orthogonal, so that we can swap their order in the root sequence by a square move.
Note that it follows from \Cref{thm:coxeter}\ref{op:coxeter:polygons} and \Cref{lem:admissible_orders} that maximal chains $C$ and $C'$ are related by a square move if and only if the corresponding root sequences are related by swapping adjacent orthogonal roots, noting that such roots generate a commutative rank-two subsystem.

Inductively, we can continue to move $\gamma$ further back in the root sequence until $\gamma <_C \alpha$.
We then have that there is one fewer root in $\Phi^{+} \setminus \Psi^{+}$ lying in between $\alpha$ and $\beta$.
Inductively, we can continue this process until there are no roots in $\Phi^{+} \setminus \Psi^{+}$ in between $\alpha$ and $\beta$, and so the elements of $\Psi^{+}$ are all consecutive.
We can then replace the order on $\Psi^{+}$ with the opposite order by \Cref{lem:admissible_orders}, obtaining a new root sequence; this corresponds to an increasing polygon move from $C$ giving a maximal chain $\widehat{C}'$ such that $\invc{\widehat{C}'} = \invc{C} \cup \{\Psi\}$.
By \Cref{prop:inv<=>equiv}, we have that $[\widehat{C}'] = [C']$, so that $[C] \lessdot [C']$, as desired.
\end{proof}

\section{Cambrian lattices}\label{sect:cambrian}

We now specialise the setting of the previous section to a particular class of congruences, known as \emph{Cambrian congruences}, which were introduced in \cite{reading_cambrian}.
One reason that these congruences are interesting is that they have representation-theoretic interpretations, which we will consider in Section~\ref{sect:preproj_mgs_orders}.
The main result we prove in this section is an interpretation of the maps on maximal chains which resembles taking the stable objects of a Rudakov stability condition \cite{rudakov}.

\subsection{Background}

Cambrian congruences are determined by a choice of Coxeter element $c$ using the notion of $c$-sortable elements of~$W$.

\subsubsection{$c$-sortability}

A \emph{Coxeter element} $c$ of $W$ is an element given by a word $s_{\pi(1)}s_{\pi(2)} \dots s_{\pi(n)}$ where $\pi$ is a rank-$n$ permutation.
Coxeter elements correspond to orientations of the Coxeter diagram: a word for a Coxeter element gives a total order on the vertices of the Coxeter diagram, which gives an orientation of each edge.

Fix such a Coxeter element $c$ given by a word $s_{\pi(1)} s_{\pi(2)} \dots s_{\pi(n)}$.
We write \[c^{\infty} = s_{\pi(1)} s_{\pi(2)} \dots s_{\pi(n)} s_{\pi(1)} s_{\pi(2)} \dots s_{\pi(n)} \dots.\]
Given an element $w \in W$, the \emph{$c$-sorting word} $\mathbf{w}(c)$ for $w$ is the lexicographically first subword of $c^{\infty}$ which is a reduced word for $w$.
The $c$-sorting word may be written $\mathbf{w}(c) = c_{K_1}c_{K_2} \dots c_{K_p}$ where $c_{K}$ denotes the subword of $c$ only using the generators $K \subseteq S$.
The element $w$ is called \emph{$c$-sortable} if its $c$-sorting word $\mathbf{w}(c) = c_{K_1}c_{K_2} \dots c_{K_p}$ is such that $K_1 \supseteq K_2 \supseteq \dots \supseteq K_p$.
Note that the property of being $c$-sortable only depends upon $c$ and not upon the word $s_{\pi(1)} s_{\pi(2)} \dots s_{\pi(n)}$ which represents it.
The \emph{$c$-Cambrian lattice} $W_c$ is the sublattice of $W$ given by the $c$-sortable elements.
It can also be realised as a quotient lattice of $W$ by sending $w \in W$ to the maximal $c$-sortable element beneath $w$, which we denote by $\pi_{\downarrow}^{c}(w)$.
We denote the corresponding lattice congruence by $\theta_{c}$.
The map $\pi_{\downarrow}^{c}$ is, of course, the map sending an element of $W$ to the minimal element in its $\theta_{c}$-equivalence class, as in Section~\ref{sect:back:poset_quotients}.

\subsubsection{$c$-alignment}

It is known that $c$-sortability can be characterised using the alternative notion of $c$-alignment, which will be important for us.
This notion was originally introduced in \cite{reading_clusters}, and was later refined in \cite{rs_infinite}.

Choose a reduced word $\mathbf{c} = s_{\pi(1)} s_{\pi(2)} \dots s_{\pi(n)}$ for the Coxeter element $c$.
The \emph{Euler form} $\ef{-}{-}$ is defined by \[
\ef{\alpha_{s_{\pi(i)}}}{\alpha_{s_{\pi(j)}}} = \left\{
    \begin{array}{ll}
        \sbf{\alpha_{s_{\pi(i)}}}{\alpha_{s_{\pi(j)}}} & \text{if } i > j \\
         1 & \text{if } i = j, \text{ or} \\
        0 & \text{if } i < j.
    \end{array}
\right.
\]

We have that the symmetrisation of the Euler form $\ef{\alpha}{\beta} + \ef{\beta}{\alpha}$ is equal to $\sbf{\alpha}{\beta}$.
The \emph{skew-symmetrised Euler form} $\asef{-}{-}$ \cite[p.19]{rs_infinite} is defined by $\asef{\alpha}{\beta} = \ef{\alpha}{\beta} - \ef{\beta}{\alpha}$.

Let $w \in W$ and let $\Psi$ be a non-commutative rank-two subsystem of $\Phi$.
Label the roots of ${\Psi}^+$ $\beta_1, \beta_2, \dots, \beta_m$ such that $\asef{\beta_i}{\beta_j} > 0$ for $i < j$ (see \cite[Proposition~4.1]{rs_infinite}).
We say that $w$ is \emph{$c$-aligned with respect to $\Psi$} if 
$\inv{w} \cap {\Psi}^+$ is either empty, the singleton $\beta_m$, or an initial segment of $\beta_1, \beta_2, \dots, \beta_m$ \cite[Remark~4.5]{rs_infinite}. Note that,
by \Cref{lem:inversion_biclosed}, 
$w$ is not $c$-aligned with respect to $\Psi$ if an only if  $\inv{w} \cap {\Psi}^+ = \{\beta_j, \beta_{j+1}, \ldots, \beta_m\}$, for some $1 < j < m$.
We say that $w$ is \emph{$c$-aligned} if it is $c$-aligned with respect to all non-commutative rank-two subsystems $\Psi$ of $\Phi$ \cite[p.23 and Theorem~4.3]{rs_infinite}.
We have that $w$ is $c$-aligned if and only if it is $c$-sortable \cite[Theorem~4.3]{rs_infinite}.

\subsection{Contraction of edge labels}

We first show that $c$-alignment can be understood using ordering in the heap poset.

\begin{lemma} \label{lem:rank_2_heap}
Let $\beta_1, \beta_2, \dots, \beta_m$ be the positive roots of a non-commutative rank-two subsystem $\Psi$ of $\Phi$.
Then $\asef{\beta_i}{\beta_j} > 0$ if and only if $\beta_i < \beta_j$ in $\heap{\mathbf{w}_0(c)}$.
\end{lemma}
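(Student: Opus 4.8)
The plan is to reduce the statement to a direct comparison between the two orders on $\Psi^+$: the one coming from the skew-symmetrized Euler form $\asef{-}{-}$, and the one induced from $\heap{\mathbf{w}_0(c)}$. Since $\Psi$ is non-commutative, all its positive roots are pairwise non-orthogonal, so any two of them generate $\Psi$ itself (using our convention that rank-two subsystems are maximal). Consequently, in the heap poset $\heap{\mathbf{w}_0(c)}$ restricted to $\Psi^+$, every pair of roots is comparable, i.e.\ the heap poset induces a \emph{total} order on $\Psi^+$. So the claim is really that this total order is the one recorded by $\asef{-}{-}$.

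First I would observe that it suffices to prove the ``$\Leftarrow$'' direction for \emph{consecutive} elements $\beta_i < \beta_j$ in the heap-induced total order: by antisymmetry of $\asef{-}{-}$ on $\Psi^+$ (which holds since the values $\asef{\beta_i}{\beta_j}$ are nonzero and of a consistent sign pattern, per \cite[Proposition~4.1]{rs_infinite}) and transitivity of the total order, establishing the equivalence on covering pairs propagates to all pairs. So the heart of the matter is: if $\beta_i$ and $\beta_j$ are adjacent in the heap order on $\Psi^+$, then $\asef{\beta_i}{\beta_j}>0 \iff \beta_i <_{\heap{\mathbf{w}_0(c)}} \beta_j$.

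Next I would use the $c$-sorting word structure. The key point is that $c$-sortability is equivalent to $c$-alignment with respect to all non-commutative rank-two subsystems (\cite[Theorem~4.3]{rs_infinite}), and in particular $w_0$ itself is $c$-sortable, so $w_0$ is $c$-aligned with respect to $\Psi$. But $\inv{w_0} = \Phi^+ \supseteq \Psi^+$, so the alignment condition on $w_0$ is vacuous as a containment statement; instead I would extract information from the $c$-sorting word $\mathbf{w}_0(c)$ by localizing to the dihedral reflection subgroup $W_\Psi$ generated by $\Psi$. Using \Cref{thm:coxeter}\ref{op:coxeter:polygons} and the relationship between polygons in $W$ and rank-two subsystems, together with how the heap poset of $\mathbf{w}_0(c)$ restricts, one sees that the order in which the roots of $\Psi^+$ appear along $\mathbf{w}_0(c)$ is precisely the order $\beta_1, \beta_2, \dots, \beta_m$ read off from a $c$-sorting word of the longest element of the rank-two group $W_\Psi$ (relative to the induced Coxeter element). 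I would then compare this to the labelling $\asef{\beta_i}{\beta_j}>0$ for $i<j$: this is exactly the ordering convention used in \cite[Proposition~4.1, Remark~4.5]{rs_infinite} to define $c$-alignment, and it agrees with the order of appearance in the $c^\infty$-subword. Since the heap order on $\Psi^+$ is by definition the order of first appearance along any reduced word compatible with it, the two orders coincide.

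\textbf{The main obstacle} I anticipate is making rigorous the localization step — identifying the restriction of $\heap{\mathbf{w}_0(c)}$ to $\Psi^+$ with the heap of a $c$-sorting word in the rank-two dihedral subgroup, and checking that the relevant ``induced Coxeter element'' on $W_\Psi$ is the one that makes the $\asef{-}{-}$-convention match. This requires care because $\mathbf{w}_0(c)$ is not literally a reduced word in $W_\Psi$; one must argue that the subword of $\mathbf{w}_0(c)$ recording crossings of the hyperplanes in $\Psi$ behaves like a $c'$-sorting word for the appropriate $c'$. An alternative, possibly cleaner route is to argue purely representation-theoretically via the identification of $\heap{\mathbf{w}_0(c)}$ with (a combinatorial model of) the Auslander--Reiten quiver of the associated hereditary algebra (\cite{bedard}, \cite[Theorem~9.3.1]{stump2015cataland}, and \Cref{rmk:combinatorial_ar_quiver}), under which $\asef{-}{-}$ is the (skew-symmetrized) Euler form on the Grothendieck group and positivity of $\asef{\beta_i}{\beta_j}$ translates to the existence of a path $\beta_i \to \beta_j$ in the AR quiver; I would mention this as a remark but carry out the direct combinatorial argument as the main proof.
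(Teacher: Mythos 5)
There is a genuine gap. The heart of your argument is the ``localization'' step: you claim that restricting the $c$-sorting word $\mathbf{w}_0(c)$ to the rank-two reflection subgroup $W_\Psi$ produces a $c'$-sorting word for an induced Coxeter element $c'$ of $W_\Psi$, from which the ordering of $\Psi^+$ would be transparent. You explicitly flag this as the main obstacle and do not carry it out. This is not a mere technicality: $W_\Psi$ is in general not a standard parabolic, and the compatibility of sorting-word restriction with reflection subgroups is precisely the content of \cite[Proposition~3.11]{rs_infinite}, which the paper identifies as one source for this lemma but declines to rederive. Without that result in hand, your sketch is circular or at least incomplete. A secondary issue is your opening observation that, since $\Psi$ is non-commutative, ``all its positive roots are pairwise non-orthogonal''. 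This is false outside simply-laced types: in $B_2$ the long simple root and the highest root are orthogonal, and $G_2$ also contains orthogonal pairs in $\Psi^+$. The conclusion you want (the heap is a total order on $\Psi^+$) can be rescued via transitivity, but the stated justification is wrong and the derived claim ``any two of them generate $\Psi$'' fails.

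The paper's proof avoids all of this by a much shorter argument using only the equivalence of $c$-sortability and $c$-alignment (which you yourself cite): every element of the maximal chain defined by $\mathbf{w}_0(c)$ is $c$-sortable, because prefixes of a sorting word are sorting words of sortable elements, hence every such $w$ is $c$-aligned. Then $\beta_i < \beta_j$ in $\heap{\mathbf{w}_0(c)}$ exactly when some $w$ in this chain has $\beta_i \in \inv{w}$ and $\beta_j \notin \inv{w}$; $c$-alignment forces $\inv{w} \cap \Psi^+$ to be an initial segment of the $\asef$-order (the only other possibility allowed by alignment, the singleton $\{\beta_m\}$, is ruled out because it would break alignment for later elements of the chain), and this immediately gives the equivalence. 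If you want to salvage your approach, I would drop the localization to $W_\Psi$ and instead use the intermediate elements of the $\mathbf{w}_0(c)$ chain directly, as the paper does; your observation that the alignment condition on $w_0$ itself is ``vacuous'' is correct, and the way out is to look at proper prefixes rather than to localize.
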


This follows from the $w_0$ case of \cite[Proposition~3.11]{rs_infinite}, which is then used in the proof of the equivalence of $c$-sortability and $c$-alignedness \cite[Theorem~4.3]{rs_infinite}.
However, it also can be deduced from the equivalence of $c$-sortability and $c$-alignedness in the following way.

\begin{proof}
Note that the $c$-sorting word $\mathbf{w}_0(c)$ gives a maximal chain of $W$, and that it is clear from the definition of $c$-sortability that every element of this maximal chain is $c$-sortable and hence $c$-aligned.
We have that $\beta_i < \beta_j$ in $\heap{\mathbf{w}_0(c)}$ if and only if there is an element $w$ of this maximal chain such that $\beta_i \in \inv{w}$ but $\beta_j \notin \inv{w}$.
Since $w$ must be $c$-aligned, this is true if and only if $\asef{\beta_i}{\beta_j} > 0$.
This is because $\inv{w}$ must contain an initial segment of the total order $<$ on the roots of ${\Psi}^{+}$ given by $\beta < \beta'$ if and only if $\asef{\beta}{\beta'} > 0$.
Note that $\inv{w}$ cannot just contain the final root of $\Psi^{+}$ under the order~$<$, since this would mean that later elements in the maximal chain determined by $\mathbf{w}_0(c)$ would fail to be $c$-aligned.
\end{proof}

We can describe the map $\mgtmap{q_c}$ in terms of edge labels using the following notion.

\begin{definition}
We say that a covering relation $wt \lessdot w$ is \emph{$c$-stable} if $w$ is $c$-aligned with respect to all rank-two subsystems $\Psi$ containing $\beta \coloneqq \lambda^{\mathbf{w}_0(c)}(wt, w)$.
\end{definition}

The main result of this section is then as follows.
Recall that a covering relation $x \lessdot y$ is \emph{contracted} by a congruence $\theta$ if $x \mathrel{\theta} y$.

\begin{theorem}\label{thm:c-stable}
Suppose that $W$ is a finite Coxeter group of simply-laced type, with $c$ a Coxeter element. 
A covering relation $wt \lessdot w$ is contracted by the $c$-Cambrian congruence $\theta_c$ if and only if it is not $c$-stable. 
\end{theorem}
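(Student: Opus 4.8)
The plan is to connect the edge label $\beta = \lambda^{\mathbf{w}_0(c)}(wt, w)$ being contracted by $\theta_c$ to the failure of $c$-stability at $w$, using the shard/forcing machinery together with the characterisation of $c$-sortable elements as $c$-aligned ones. Recall that $\theta_c$ contracts $wt \lessdot w$ precisely when $wt$ and $w$ have the same image under $\pi_{\downarrow}^c$, equivalently when $wt$ and $w$ lie in the same $\theta_c$-cone, equivalently when the shard $\Sigma$ separating the regions $wtB$ and $wB$ is removed by $\theta_c$. On the root-system side, this shard sits inside the hyperplane $H_\beta$ corresponding to $\beta$, so the question becomes: \emph{which shards of $H_\beta$ survive in the Cambrian fan?}

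\textbf{Main steps.} First I would recall Reading's description of which shards are removed by a Cambrian congruence: by the theory of shards and rank-two subarrangements (Section~\ref{sect:hyp:back:shards}), a shard $\Sigma \subseteq H_\beta$ is removed by $\theta_c$ exactly when it is ``cut out'' by some non-commutative rank-two subsystem $\Psi \ni \beta$ with respect to which the relevant region fails $c$-alignment; the key input is that $\theta_c$ is generated (in the shard poset) by removing shards corresponding to the non-initial, non-final positions in non-commutative rank-two subsystems. Second, I would translate the condition ``$wt \lessdot w$ is contracted'' into: there exists a non-commutative rank-two subsystem $\Psi$ containing $\beta$ such that the local picture around the covering relation $wt \lessdot w$, restricted to $\Psi$, violates the $c$-alignment condition — i.e., $\inv{w} \cap \Psi^+ = \{\beta_j, \beta_{j+1}, \dots, \beta_m\}$ for some $1 < j < m$ with $\beta = \beta_j$ (here I use the explicit non-alignment criterion stated just before Lemma~\ref{lem:rank_2_heap}, together with Lemma~\ref{lem:rank_2_heap} identifying the order on $\Psi^+$ with the heap order). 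In simply-laced type every non-commutative rank-two subsystem is of type $A_2$ with positive roots $\{\alpha, \alpha+\beta', \beta'\}$, so $m = 3$ and the only way to fail alignment is $\inv{w} \cap \Psi^+ = \{\beta_2, \beta_3\}$, i.e. the middle root $\beta_2 = \alpha + \beta'$ is the newly-added one and is \emph{not} $c$-stable because $\Psi$ witnesses non-alignment. Third, conversely, if $w$ is $c$-aligned with respect to every rank-two $\Psi \ni \beta$, I would show the covering relation is not contracted: a covering relation $x \lessdot y$ labelled $\beta$ is forcing-equivalent (by the polygonal characterisation of forcing recalled in Section~\ref{sect:max_chain_posets}) to the ``generating'' covering relation of each polygon through it, and $\theta_c$ contracts $x \lessdot y$ only if it contracts one of these generators, each of which lives in a rank-two subsystem; the $c$-aligned hypothesis rules all of these out. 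This is where Lemma~\ref{lem:cov_rel_labels_equal} and the polygonal forcing picture do the real work.

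\textbf{The hard part} will be pinning down the precise local combinatorics: the statement ``$w$ is $c$-aligned with respect to all $\Psi \ni \beta$'' is a condition on $w$, whereas ``$wt \lessdot w$ is contracted'' is a condition on the \emph{edge}, and one must check these match up exactly — in particular that when $w$ \emph{fails} alignment with respect to some $\Psi \ni \beta$, that failure is genuinely ``caused'' by the edge $wt \lessdot w$ (so that the separating shard is the removed one) rather than by a different edge incident to $w$. The simply-laced hypothesis is essential here: it forces $m=3$, so $\beta$ can only fail $c$-stability by being the \emph{middle} root $\beta_2$ of some $\Psi$, and then $wt \lessdot w$ adding $\beta_2$ while $\beta_3 \in \inv{wt}$ already and $\beta_1 \notin \inv{w}$ is exactly the configuration producing the non-$c$-aligned inversion set $\{\beta_2, \beta_3\}$; I would need to verify carefully that this configuration is both necessary and sufficient, using Fact~\ref{lem:inversion_biclosed} to control which subsets of $\Psi^+$ can appear in $\inv{wt}$ and $\inv{w}$. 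Once that local analysis is done, the shard-removal dictionary of Reading and the forcing-equivalence of edges within polygons assemble into the claimed equivalence.
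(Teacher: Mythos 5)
The key missing ingredient in your proposal is the asserted shard characterization (your ``first step''): the claim that a shard $\Sigma \subseteq H_\beta$ is removed by $\theta_c$ exactly when it is ``cut out'' by some non-commutative rank-two subsystem $\Psi \ni \beta$ with respect to which the relevant region fails $c$-alignment. This is not a citable fact in the form you need; once translated back from shards to covering relations via \Cref{lem:contraction_criterion}, it is essentially equivalent to the theorem itself. Reading's machinery tells you that $\theta_c$ is determined by its removed shards and that removal propagates via forcing through polygons, but it does not hand you a pointwise rank-two criterion. The issue is that forcing is a preorder, not local: a contraction generated far away in the arrangement can propagate through a chain of polygons to $wt \lessdot w$, and nothing in the alignment hypothesis at $w$ obviously controls those remote polygons. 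Your ``third step'' has the same problem: it is not true that $\theta_c$ contracts $wt\lessdot w$ only if it contracts a generator of a polygon through $wt\lessdot w$, since (a) the middle edges of a polygon are merely forced by, not forcing-equivalent to, the atom/coatom edges, and (b) the forcing chain from the actual Cambrian generators need not pass through rank-two subarrangements containing $\beta$, nor through elements whose alignment you control.

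The paper sidesteps this entirely. It reduces the theorem via \Cref{lem:contraction_criterion} to showing that $wt \lessdot w$ is $c$-stable iff $\beta \in \inv{\pi^c_{\downarrow}(w)}$, then proves the two implications by different methods. The direction ``$\beta \in \inv{\pi^c_{\downarrow}(w)} \Rightarrow c$-stable'' (\Cref{lem:cstable}) is direct, using only that $\pi^c_{\downarrow}(w)$ is $c$-aligned and lies below $w$, and holds for all finite $W$. The hard converse (\Cref{lem:difficult direction}) is a double induction on $\ell(w)$ and the rank, descending through the recursion $\pi^c_{\downarrow}(w) = s\pi^{scs}_{\downarrow}(sw)$ if $w \geqslant s$ and $\pi^c_{\downarrow}(w) = \pi^{sc}_{\downarrow}(w_{S\setminus s})$ if $w \not\geqslant s$, for $s$ initial in $c$. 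The case $w \not\geqslant s$ requires showing $\beta$ is not supported on $\alpha_s$, which is the root-theoretic \Cref{lem:root-theoretic_proof} using the canonical decomposition of $\beta - \alpha_s$ and the bound $\sbf{\gamma_i}{\beta} \leqslant 1$. That is where the simply-laced hypothesis does substantive work --- far beyond the observation that non-commutative rank-two subsystems are of type $A_2$, which is all you use it for. Your observation that $\beta$ must be the \emph{middle} root of any $\Psi \ni \beta$ witnessing non-alignment in simply-laced type is correct and a good sanity check, but it does not bridge the gap between the local rank-two combinatorics at $w$ and the global question of which shards $\theta_c$ removes.
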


We break down the proof of this theorem into a series of lemmas, the first in fact applying for all finite $W$ and
showing the ``if'' direction.

\begin{lemma}\label{lem:cstable}
Suppose that $W$ is a finite Coxeter group, with $c$ a Coxeter element.
A covering relation $wt \lessdot w$ with edge label $\beta \coloneqq \lambda^{\mathbf{w}_0(c)}(wt, w)$ is $c$-stable if  $\beta \in \inv{\pi^{c}_{\downarrow}(w)}$.  
\end{lemma}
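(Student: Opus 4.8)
The plan is to argue by contradiction, using that $\pi \coloneqq \pi^{c}_{\downarrow}(w)$ lies weakly below $w$ in the weak order and is $c$-sortable, hence $c$-aligned by \cite[Theorem~4.3]{rs_infinite}. First I would record the two consequences $\inv{\pi} \subseteq \inv{w}$ (since $v \leqslant w$ in the weak order iff $\inv{v} \subseteq \inv{w}$), and that for every non-commutative rank-two subsystem $\Psi$ the set $\inv{\pi} \cap \Psi^{+}$ is $\varnothing$, the singleton $\{\beta_m\}$, or a nonempty initial segment $\{\beta_1, \dots, \beta_k\}$, where $\Psi^{+} = \{\beta_1, \dots, \beta_m\}$ is ordered so that $\asef{\beta_i}{\beta_j} > 0$ for $i < j$. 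The goal is to show that $w$ is $c$-aligned with respect to every non-commutative rank-two subsystem $\Psi$ with $\beta \in \Psi^{+}$ (commutative $\Psi$ imposing no condition). So suppose it is not: then for some such $\Psi$, by the characterisation of non-$c$-alignment recalled in the background (which follows from \Cref{lem:inversion_biclosed}), we have $\inv{w} \cap \Psi^{+} = \{\beta_j, \beta_{j+1}, \dots, \beta_m\}$ for some $1 < j < m$.

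The second step is to pin down which root of $\Psi^{+}$ the label $\beta$ is. Since $\beta \in \Psi^{+}$ and $\beta \in \inv{\pi} \subseteq \inv{w}$, we get $\beta \in \inv{w} \cap \Psi^{+} = \{\beta_j, \dots, \beta_m\}$, so $\beta = \beta_i$ with $i \geqslant j \geqslant 2$. Moreover $\inv{\pi} \cap \Psi^{+} \subseteq \inv{w} \cap \Psi^{+} = \{\beta_j, \dots, \beta_m\}$ is nonempty (it contains $\beta$), and by $c$-alignment of $\pi$ with respect to $\Psi$ it is $\varnothing$, $\{\beta_m\}$, or a nonempty initial segment $\{\beta_1, \dots, \beta_k\}$; since $j \geqslant 2$ the last possibility is impossible, so $\inv{\pi} \cap \Psi^{+} = \{\beta_m\}$ and hence $\beta = \beta_m$.

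Finally I would exploit that $\beta = \beta_m$ is the edge label of the covering relation $wt \lessdot w$, so $\inv{w} = \inv{wt} \sqcup \{\beta_m\}$ and therefore $\inv{wt} \cap \Psi^{+} = \{\beta_j, \dots, \beta_{m-1}\}$ with $1 < j \leqslant m-1$. On the other hand $\inv{wt} \cap \Psi^{+}$, being the intersection of the inversion set of an element of $W$ with a rank-two subsystem, is a biclosed subset of $\Psi^{+}$, hence (again by \Cref{lem:inversion_biclosed} together with the classification recalled in the background) is $\varnothing$, an initial segment $\{\beta_1, \dots, \beta_k\}$, or a final segment $\{\beta_j, \dots, \beta_m\}$ of the order on $\Psi^{+}$; but $\{\beta_j, \dots, \beta_{m-1}\}$ is nonempty and omits both $\beta_1$ and $\beta_m$, so it is of none of these forms --- a contradiction. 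Thus no such $\Psi$ exists, and $wt \lessdot w$ is $c$-stable. The one point I would be careful to state explicitly before using it twice is the structural fact that, for any $x \in W$, the set $\inv{x} \cap \Psi^{+}$ is exactly one of $\varnothing$, an initial segment, a final segment, or all of $\Psi^{+}$ in this distinguished ordering of $\Psi^{+}$ --- this is the only mildly delicate ingredient, though it is already implicit in the paper, being precisely what underlies the "not $c$-aligned" characterisation.
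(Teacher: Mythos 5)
Your proof is correct and takes essentially the same approach as the paper: both rest on $c$-alignment of $\pi^c_{\downarrow}(w)$, the inclusion $\inv{\pi^c_{\downarrow}(w)} \subseteq \inv{w}$, and the biclosedness of $\inv{w}\cap\Psi^{+}$ and $\inv{wt}\cap\Psi^{+}$ within the rank-two subsystem. The only difference is organisational — you argue by contradiction from an assumed failure of alignment and pin down $\beta = \beta_m$, whereas the paper directly splits into the two cases for $\inv{\pi^c_{\downarrow}(w)}\cap\Psi^{+}$ — so the substance is the same.
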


\begin{proof}
Let $\Psi$ be a non-commutative rank-two subsystem containing $\beta$.
We write $(\beta_1, \beta_2, \dots, \beta_m)$ for the list of all positive roots of $\Psi$, ordered as in $\mathbf{w}_0(c)$.
Since $\pi^{c}_{\downarrow}(w)$ is $c$-aligned and $\beta \in \inv{\pi^{c}_{\downarrow}(w)}$, we must have that $\inv{\pi^{c}_{\downarrow}(w)}$ contains an initial segment of $(\beta_1, \beta_2, \dots, \beta_m)$, or $\{\beta\} = \{\beta_m\} = \Psi^{+} \cap \inv{\pi^{c}_{\downarrow}(w)}$.
In the first case, since $\pi^{c}_{\downarrow}(w) \leqslant w$ in $W$, we have that $\inv{w}$ must also contain an initial segment of $(\beta_1, \beta_2, \dots, \beta_m)$.
In the second case, since $\beta$ labels the covering relation $wt \lessdot w$, we have that $\inv{w} \cap \Psi^{+} = \{\beta\} = \{\beta_m\}$ too.
Hence $w$ is $c$-aligned with respect to all non-commutative rank-two subsystems containing $\beta$, and so $\beta$ is $c$-stable.
\end{proof}

If $\alpha$ is a simple root, and $\beta$ is a positive root with a positive coefficient of $\alpha$ in its expansion in terms of simple roots, then we say that $\beta$ \emph{is supported on $\alpha$}.
The following lemma is part of the inductive step for the proof of the other direction of \Cref{thm:c-stable}.
Note that the previous lemma applies to not necessarily simply-laced $W$, whereas our proof of this lemma requires $W$ to be simply-laced.

\begin{lemma}\label{lem:root-theoretic_proof}
Suppose that $W$ is a finite simply-laced Coxeter group, with $c \in W$ a Coxeter element.
Let $s$ be an initial letter of $c$, with corresponding simple root $\alpha_s$.
Let $\beta$ be a positive root with $\beta = \lambda^{\mathbf{w}_0(c)}(wt, w)$ for $wt \lessdot w$ such that $\beta$ is supported on $\alpha_s$.
Then, if $wt \lessdot w$ is $c$-stable, then $\alpha_s \in \inv{w}$.
\end{lemma}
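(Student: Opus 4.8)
The plan is to argue by contradiction: assume $wt \lessdot w$ is $c$-stable but $\alpha_s \notin \inv{w}$. If $\beta = \alpha_s$ we are done, so we may assume $\beta$ is a non-simple positive root supported on $\alpha_s$; since $W$ is simply-laced and $\beta \ne \pm\alpha_s$, this forces $\sbf{\alpha_s}{\beta} \in \{-1, 0, 1\}$. Two facts will be used throughout. First, because $s$ is an initial letter of $c$, the root $\alpha_s$ is the very first entry of the root sequence of $\mathbf{w}_0(c)$, hence a minimal element of $\heap{\mathbf{w}_0(c)}$; in fact $\alpha_s < \gamma$ in $\heap{\mathbf{w}_0(c)}$ for \emph{every} positive root $\gamma \ne \alpha_s$ supported on $\alpha_s$. (Algebraically this says that the simple projective $S(s)=P(s)$ at the source $s$ of $\Lambda_c$ is a submodule of every $\Lambda_c$-module whose dimension vector is supported at $s$; cf.\ \Cref{rmk:combinatorial_ar_quiver}. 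One can also prove it directly from \Cref{lem:rank_2_heap} by induction on the height of $\gamma$.) Second, whenever a positive root $\gamma$ is written as a sum $\gamma = \gamma_1 + \gamma_2$ of two positive roots, the set $\{\gamma_1, \gamma_2\}$ spans a non-commutative rank-two subsystem with positive roots $\{\gamma_1, \gamma, \gamma_2\}$ (of type $A_2$ in simply-laced type), inside which $\gamma$ is the non-simple root and therefore, by \Cref{lem:rank_2_heap}, the $\heap{\mathbf{w}_0(c)}$-middle of the three.

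The core of the argument is a \emph{decomposition principle} for $\beta$. Suppose $\beta = \gamma_1 + \gamma_2$ with $\gamma_1, \gamma_2 \in \Phi^{+}$ and, say, $\gamma_1 < \gamma_2$ in $\heap{\mathbf{w}_0(c)}$, so that $\gamma_1 < \beta < \gamma_2$ there. The rank-two subsystem $\Psi$ with $\Psi^{+} = \{\gamma_1, \beta, \gamma_2\}$ contains $\beta$, so $c$-stability of $wt \lessdot w$ forces $w$ to be $c$-aligned with respect to $\Psi$; since $\beta \in \inv{w}$ is the $\heap{\mathbf{w}_0(c)}$-middle element of $\Psi^{+}$, the only $c$-aligned possibilities are $\inv{w} \cap \Psi^{+} = \{\gamma_1, \beta\}$ or $\inv{w} \cap \Psi^{+} = \Psi^{+}$, and in either case $\gamma_1 \in \inv{w}$. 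On the other hand, $\beta = \lambda^{\mathbf{w}_0(c)}(wt, w)$ means $\inv{wt} = \inv{w} \setminus \{\beta\}$ is again an inversion set; its closedness (\Cref{lem:inversion_biclosed}) rules out $\gamma_1$ and $\gamma_2$ both lying in $\inv{w}$, whence $\gamma_2 \notin \inv{w}$. Thus, for \emph{every} decomposition $\beta = \gamma_1 + \gamma_2$ into positive roots, the $\heap{\mathbf{w}_0(c)}$-smaller summand lies in $\inv{w}$ and the larger one does not.

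If $\sbf{\alpha_s}{\beta} = 1$, then $\beta - \alpha_s = s(\beta) \in \Phi^{+}$, so $\beta = \alpha_s + (\beta - \alpha_s)$ is a decomposition; as $\alpha_s$ is $\heap{\mathbf{w}_0(c)}$-minimal it is the smaller summand, and the decomposition principle gives $\alpha_s \in \inv{w}$, a contradiction. There remains the case $\sbf{\alpha_s}{\beta} \leqslant 0$, where $\beta - \alpha_s$ is not a root and $\alpha_s$ never occurs as a summand of $\beta$. Here the plan is to descend. Consider the set of positive roots in $\inv{w}$ supported on $\alpha_s$; it is nonempty (it contains $\beta$) and contains $\alpha_s$ precisely when $\alpha_s \in \inv{w}$. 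Let $\mu$ be a $\heap{\mathbf{w}_0(c)}$-minimal element of this set. If $\mu \ne \alpha_s$ then $\mu$ is non-simple, so $\mu = \delta_1 + \delta_2$ with $\delta_1, \delta_2 \in \Phi^{+}$, and, $\mu$ being supported on $\alpha_s$, at least one summand is; using the source property of $\alpha_s$ one shows that a summand supported on $\alpha_s$, say $\delta_1$, is the $\heap{\mathbf{w}_0(c)}$-smaller one, and that the other summand $\delta_2$ --- a positive root lying $\heap{\mathbf{w}_0(c)}$-above $\mu$ --- may be excluded from $\inv{w}$ by the same mechanism as in the decomposition principle. Convexity of $\inv{w}$ (\Cref{lem:inversion_biclosed}) then pushes $\delta_1$ into $\inv{w}$, contradicting the minimality of $\mu$. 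Hence $\mu = \alpha_s \in \inv{w}$, the desired contradiction.

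I expect the main obstacle to be making this last descent rigorous. Away from $\beta$ one no longer has $c$-alignment available --- $c$-stability only controls the rank-two subsystems through $\beta$ --- so the exclusion of $\delta_2$ and the resulting inclusion of $\delta_1$ must be engineered purely from the biconvexity of $\inv{w}$, the source property of $\alpha_s$, and the outputs of the decomposition principle applied to $\beta$ itself. The bookkeeping is most delicate for roots $\mu$ whose $\alpha_s$-coefficient is at least $2$, where both summands of a decomposition can be supported on $\alpha_s$; there one must choose the decomposition --- and, if necessary, iterate the descent --- with care. It is precisely the fact that $\alpha_s$ lies strictly below every $\alpha_s$-supported root in $\heap{\mathbf{w}_0(c)}$ that guarantees the descent terminates exactly at $\alpha_s$.
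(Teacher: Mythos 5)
Your ``decomposition principle'' for $\beta$ is correct, and when $\beta - \alpha_s$ is itself a root it reproduces the paper's opening reduction: the $A_2$-subsystem with positive roots $\{\alpha_s, \beta, \beta - \alpha_s\}$ passes through $\beta$, and $c$-alignment together with $\beta \in \inv{w}$ and biclosedness of $\inv{wt}$ (\Cref{lem:inversion_biclosed}) forces $\alpha_s \in \inv{w}$. Your source-property observation that $\alpha_s$ lies $\heap{\mathbf{w}_0(c)}$-below every other positive root supported on $\alpha_s$ is also fine.

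The gap, however, is exactly where you already suspect it: the descent step. Once you move from $\beta$ to a different root $\mu \in \inv{w}$ supported on $\alpha_s$, $c$-stability of $wt \lessdot w$ gives no alignment constraint whatsoever --- it concerns only rank-two subsystems containing $\beta$, and $\{\delta_1, \mu, \delta_2\}$ does not contain $\beta$ in general. Biclosedness applied to $\mu = \delta_1 + \delta_2 \in \inv{w}$ then yields only the disjunction ``$\delta_1 \in \inv{w}$ or $\delta_2 \in \inv{w}$'', and nothing decides it in favour of $\delta_1$; the sentence that $\delta_2$ ``may be excluded from $\inv{w}$ by the same mechanism as in the decomposition principle'' is precisely the unjustified step. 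If $\delta_2 \in \inv{w}$ (perfectly compatible with all your hypotheses, since $\delta_2 > \mu$ in the heap and need not be supported on $\alpha_s$), the descent stalls and no contradiction with the minimality of $\mu$ is obtained. The paper's proof never decomposes any root other than $\beta$: after the easy case it takes Kac's canonical decomposition $\beta - \alpha_s = \gamma_1 + \dots + \gamma_l$, uses the simply-laced bounds on $\sbf{-}{-}$ together with $\ef{\gamma_i}{\gamma_j} \geqslant 0$ to pin down $\sbf{\gamma_i}{\beta} = 1$, $\sbf{\gamma_i}{\alpha_s} = -1$, $\sbf{\gamma_i}{\gamma_j} = 0$, and $l \in \{2,3\}$, and then works only with the decompositions $\beta = \gamma_{jk\alpha} + \gamma_i$, all of which pass through $\beta$ so that $c$-alignment is available. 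Each subsequent biclosedness step (to $\gamma_{ij\alpha} = \gamma_{i\alpha} + \gamma_j$ and finally to $\gamma_{i\alpha} = \alpha_s + \gamma_i$) is then ``forced'', with one summand already known not to lie in $\inv{w}$, so each returns a definite conclusion rather than a disjunction. To repair your descent you would need to engineer the decomposition of $\mu$ so that the larger summand is already controlled by data at $\beta$, which is in effect what the paper's auxiliary-root bookkeeping accomplishes.
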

\begin{proof}
Note first that if we are in simply-laced type then there is a unique crystallographic Cartan matrix which is symmetric, and so we can choose $\delta(s) = 1$ for all $s \in S$.

Suppose that the covering relation $wt \lessdot w$ is $c$-stable with edge label~$\beta$.
The case where $\beta = \alpha_s$ is immediate, so we can assume $\beta \neq \alpha_s$.
If $\beta - \alpha_s$ is a root, then it is a positive root, since $\beta$ is supported on $\alpha_s$, and so we are also done, since $\{\alpha_s, \beta, \beta - \alpha_s\}$ form the positive roots of a rank-two subsystem.
Indeed, since $\alpha_s$ is initial in $c$, we conclude that if $\beta$ is $c$-stable, then we must have that $\alpha_s \in \inv{w}$.

Hence, we may assume that $\beta - \alpha_s$ is not a root, and so we have the canonical decomposition $\beta - \alpha_s = \sum_{i = 1}^{l} \gamma_i$ where $\gamma_i \in \Phi^{+}$ \cite{kac_ii}. 
By \cite[Proposition~3(b)]{kac_ii}, we have that $\ef{\gamma_i}{\gamma_j} \geqslant 0$ for all $i \neq j$.
Hence, we have that $\sbf{\gamma_i}{\gamma_j} = \ef{\gamma_i}{\gamma_j} + \ef{\gamma_j}{\gamma_i} \geqslant 0$ for all $i \neq j$. 
Thus, for all $i$,
\begin{align*}
    \sbf{\gamma_i}{\beta} &= \sbf{\gamma_i}{\alpha_s} + \sbf{\gamma_i}{\gamma_i} + \sum_{j \neq i} \sbf{\gamma_i}{\gamma_j} \\
    &= \sbf{\gamma_i}{\alpha_s} + 2 + \sum_{j \neq i} \sbf{\gamma_i}{\gamma_j}.
\end{align*}
However, since we are in simply-laced type, we have that $\sbf{\gamma_i}{\beta} \leqslant 1$ and $\sbf{\gamma_i}{\alpha_s} \geqslant -1$ 
\cite[Section~9.4]{humphreys}, and so we must have $\sbf{\gamma_i}{\beta} = 1$, $\sbf{\gamma_i}{\alpha_s} = -1$, and $\sbf{\gamma_i}{\gamma_j} = 0$ for all $i$ and~$j$.

We moreover have
\begin{align*}
    \sbf{\alpha_s}{\beta} &= \sbf{\alpha_s}{\alpha_s} + \sum_{i = 1}^{l} \sbf{\gamma_i}{\alpha_s} \\
    &= 2 - l.
\end{align*}
If we have $\sbf{\alpha_s}{\beta} = 1$, then $s(\beta) = \beta - \sbf{\alpha_s}{\beta}\alpha_s = \beta - \alpha_s$ is a root, and we have already dealt with this case.
Hence, we may assume that $\sbf{\alpha_s}{\beta} \in \{0, -1\}$, in which case $l \in \{2, 3\}$.

We just write down the $l = 3$ case, since the $l = 2$ case is similar but more straightforward.
Since $\sbf{\gamma_i}{\alpha_s} = -1$, we have roots $\gamma_{i\alpha} := \gamma_i + \alpha_s$, and since $\sbf{\gamma_i}{\beta} = 1$, we have the root $\gamma_{12\alpha} := \beta - \gamma_3$, with $\gamma_{13\alpha}$ and $\gamma_{23\alpha}$ defined similarly.

We have that \[
\asef{\gamma_1}{\gamma_{23\alpha}} = \asef{\gamma_1}{\gamma_2} + \asef{\gamma_1}{\gamma_3} + \asef{\gamma_1}{\alpha_s}.
\]
We have that $\asef{\gamma_1}{\gamma_2} = \asef{\gamma_1}{\gamma_3} = 0$ since $\sbf{\gamma_1}{\gamma_2} = \sbf{\gamma_1}{\gamma_3} = 0$.
Hence, $\asef{\gamma_1}{\gamma_{23\alpha}} = \asef{\gamma_1}{\alpha_s}$, and similarly $\asef{\gamma_{23\alpha}}{\gamma_1} = \asef{\alpha_s}{\gamma_1}$.
We then have $\asef{\alpha_s}{\gamma_1} \geqslant 0$ by \cite[Lemma~3.9]{rs_infinite}, since $s$ is initial in~$c$.
Hence, we have that $\asef{\gamma_{23\alpha}}{\gamma_1} \geqslant 0$ and similarly, we have that $\asef{\gamma_{13\alpha}}{\gamma_2}, \asef{\gamma_{12\alpha}}{\gamma_3} \geqslant 0$.

By considering the rank-two subsystems with positive roots $\{\gamma_{jk\alpha}, \beta, \gamma_i\}$, we must have that $\{\gamma_{23\alpha},$ $\gamma_{13\alpha}, \gamma_{12\alpha}\} \subseteq \inv{w}$ and $\{\gamma_1, \gamma_2, \gamma_3\} \cap \inv{w} = \emptyset$, since in each case $\inv{w}$ must contain the initial segment $(\gamma_{jk\alpha}, \beta)$ of the tuple $(\gamma_{jk\alpha}, \beta, \gamma_i)$ in order to be $c$-aligned with respect to the rank-two subsystem.
Note that, since $\lambda^{\mathbf{w}_0(c)}(wt, w) = \beta$, we cannot have that $\inv{w} \supseteq \{\gamma_{jk\alpha}, \beta, \gamma_i\}$.
Since $\gamma_{ij\alpha} = \gamma_{i\alpha} + \gamma_{j}$ for all $i, j$, by \Cref{lem:inversion_biclosed}, we must have that $\{\gamma_{1\alpha}, \gamma_{2\alpha}, \gamma_{3\alpha}\} \subseteq \inv{w}$.
Similarly, one can apply \Cref{lem:inversion_biclosed} again using $\{\alpha_s, \gamma_{i\alpha}, \gamma_i\}$ to obtain that $\alpha_s \in \inv{w}$, as desired.
\end{proof}

We need the following lemma, which for instance can be found in \cite[Section~2.1]{rs_infinite} or \cite[Lemma~2.5]{enomoto_bruhat}.

\begin{lemma}\label{lem:inversions_after_s}
Let $s \in S$ be a simple reflection with associated simple root $\alpha_{s}$.
For $w \in W$, we have \[
\inv{sw} = \left\{
    \begin{array}{ll}
        \{\gamma \in \Phi^{+} \st s(\gamma) \in \inv{w}\} & \text{if } w \geqslant s \\
        \{\gamma \in \Phi^{+} \st s(\gamma) \in \inv{w}\} \cup \{\alpha_s\} & \text{if } w \not\geqslant s
    \end{array}
\right.
\]
\end{lemma}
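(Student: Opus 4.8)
The plan is to unwind the definition $\inv{v} = \Phi^{+} \cap v(\Phi^{-}) = \{\beta \in \Phi^{+} : v^{-1}(\beta) \in \Phi^{-}\}$ in the case $v = sw$, using two elementary facts about a simple reflection $s$: it sends $\alpha_s$ to $-\alpha_s$ and permutes $\Phi^{+} \setminus \{\alpha_s\}$ bijectively; and, classically, $\ell(sw) > \ell(w)$ if and only if $w^{-1}(\alpha_s) \in \Phi^{+}$. Since the covering relations of the right weak Bruhat order have the form $w \lessdot ws$, its atoms are the simple reflections, and $s \leqslant w$ holds exactly when $w$ admits a reduced word beginning with $s$, i.e. when $\ell(sw) = \ell(w) - 1$. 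Hence $w \geqslant s \iff \ell(sw) < \ell(w) \iff w^{-1}(\alpha_s) \in \Phi^{-}$, equivalently $\alpha_s \in \inv{w}$.

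First I would write $(sw)^{-1} = w^{-1}s$ and split a positive root $\beta$ into the cases $\beta = \alpha_s$ and $\beta \neq \alpha_s$. If $\beta = \alpha_s$, then $s(\alpha_s) = -\alpha_s$, so $\alpha_s \in \inv{sw}$ iff $w^{-1}(-\alpha_s) \in \Phi^{-}$, that is, iff $w^{-1}(\alpha_s) \in \Phi^{+}$, which by the discussion above is equivalent to $w \not\geqslant s$; this produces exactly the extra singleton $\{\alpha_s\}$ in the second line of the formula. If $\beta \neq \alpha_s$, then $\gamma \coloneqq s(\beta)$ again lies in $\Phi^{+} \setminus \{\alpha_s\}$, and $\beta \in \inv{sw}$ iff $w^{-1}(\gamma) \in \Phi^{-}$, i.e. iff $s(\beta) = \gamma \in \inv{w}$. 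Thus the roots of $\inv{sw}$ other than $\alpha_s$ are precisely $\{\beta \in \Phi^{+} \setminus \{\alpha_s\} : s(\beta) \in \inv{w}\}$.

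Finally, since $s(\alpha_s) = -\alpha_s \notin \Phi^{+} \supseteq \inv{w}$, the root $\alpha_s$ itself never satisfies $s(\alpha_s) \in \inv{w}$, so the displayed set is unchanged if the restriction $\beta \neq \alpha_s$ is dropped; combining the two cases yields the asserted formula. The argument is entirely routine, and I do not expect any real obstacle: the only point needing care is the bookkeeping of conventions, namely that ``$w \geqslant s$'' refers to the \emph{right} weak order and therefore translates into the root-theoretic condition $w^{-1}(\alpha_s) \in \Phi^{-}$ (rather than $w(\alpha_s) \in \Phi^{-}$), equivalently $\alpha_s \in \inv{w}$. As indicated in the statement this lemma is classical (see \cite[Section~2.1]{rs_infinite} and \cite[Lemma~2.5]{enomoto_bruhat}), so one may alternatively just cite it; but the direct verification above is short enough to reproduce.
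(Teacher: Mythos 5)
Your proof is correct, and it is a clean direct verification. The paper itself does not supply a proof of this lemma; it simply refers to \cite[Section~2.1]{rs_infinite} and \cite[Lemma~2.5]{enomoto_bruhat}, so there is no internal argument to compare against. Your argument is the standard one: write $(sw)^{-1} = w^{-1}s$, use that $s$ fixes $\Phi^{+}\setminus\{\alpha_s\}$ setwise while sending $\alpha_s$ to $-\alpha_s$, and observe that in the right weak order $s \leqslant w$ is equivalent to $\alpha_s \in \inv{w}$ (since $\inv{s} = \{\alpha_s\}$ and weak order is inclusion of inversion sets), hence to $w^{-1}(\alpha_s) \in \Phi^{-}$. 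All steps check out, including the final remark that $\alpha_s$ never belongs to $\{\gamma \in \Phi^{+} \st s(\gamma) \in \inv{w}\}$, so the set-builder restriction to $\beta \neq \alpha_s$ can be dropped harmlessly. The only stylistic quibble is that you could shorten the detour through reduced words: once you note $s \leqslant w \iff \inv{s} \subseteq \inv{w} \iff \alpha_s \in \inv{w}$, you do not need to pass through lengths at all.
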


We can now prove the converse of \Cref{lem:cstable}, for which we use the following.
A \emph{standard parabolic subgroup} of $W$ is a subgroup $W_J$ generated by a subset $J \subseteq S$.
Given $w \in W$, there is then a unique factorisation $w = w_{J} \cdot \prescript{J}{}{w}$ maximising $\ell(w_J)$ subject to the condition that $w_{J} \in W_J$ and $\ell(w_J) + \ell(\prescript{J}{}{w}) = \ell(w)$.
We will use the case where $J = S \setminus s$.

\begin{lemma}\label{lem:difficult direction}
Suppose that $W$ is a finite simply-laced Coxeter group, with $c$ a Coxeter element.
Let $\beta$ be a positive root with $\beta = \lambda^{\mathbf{w}_0(c)}(wt, w)$ for $wt \lessdot w$ a covering relation in~$W$.
If $wt \lessdot w$ is $c$-stable,
then $\beta \in \inv{\pi^{c}_{\downarrow}(w)}$.
\end{lemma}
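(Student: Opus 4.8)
The plan is to induct on the length $\ell(w)$, peeling off an initial letter of $c$ using the factorisation $w = w_J \cdot \prescript{J}{}{w}$ with $J = S \setminus s$, where $s$ is an initial letter of the Coxeter element $c$. The key dichotomy is whether or not $\alpha_s \in \inv{w}$. First I would treat the base case and the easy branch. If $\beta$ is \emph{not} supported on $\alpha_s$, then the covering relation $wt \lessdot w$ takes place inside the parabolic $W_J$ (more precisely, $\beta$ lies in the sub-root system $\Phi_J$ spanned by $S \setminus s$), and $sc$ restricted to $W_J$ is a Coxeter element of $W_J$; here one appeals to the standard compatibility of $c$-sortability with parabolic subgroups, so by induction $\beta \in \inv{\pi^{sc}_{\downarrow}(\prescript{J}{}{w})} \subseteq \inv{\pi^{c}_{\downarrow}(w)}$, using that $\pi^c_\downarrow$ agrees with the parabolic Cambrian projection on the parabolic part. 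If $\beta$ \emph{is} supported on $\alpha_s$, then \Cref{lem:root-theoretic_proof} gives us that $\alpha_s \in \inv{w}$, since $wt \lessdot w$ is assumed $c$-stable.

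The heart of the argument is then the case $\alpha_s \in \inv{w}$, where I would pass from $w$ to $sw$. By \Cref{lem:inversions_after_s}, since $w \geqslant s$ we have $\inv{sw} = \{\gamma \in \Phi^+ \st s(\gamma) \in \inv{w}\}$, so $\ell(sw) < \ell(w)$ and the covering relation $wt \lessdot w$ pulls back to a covering relation $sw \cdot (swsw^{-1}s) \lessdot sw$ — i.e.\ there is a covering relation in $W$ below $sw$ with edge label $s(\beta)$ (note $s(\beta)$ is a positive root because $\beta \neq \alpha_s$ lies above the covering relation, so $\beta \in \inv{w}$ forces $s(\beta) \in \inv{sw} \subseteq \Phi^+$, using $\alpha_s \in \inv w$ again to see $\beta\ne\alpha_s$; one should double check $\beta\ne\alpha_s$ cleanly since $\alpha_s$ is already an inversion). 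One must check that this edge label is indeed $\lambda^{\mathbf{w}_0(sc)}(\,\cdot\,,\,\cdot\,)$ for the Coxeter element $sc$, which amounts to the fact that $\mathbf{w}_0(sc)$ is obtained from $\mathbf{w}_0(c)$ by deleting the initial $s$ and applying $s$, so $\heap{\mathbf{w}_0(sc)}$ is the corresponding shift of $\heap{\mathbf{w}_0(c)}$; I would want a small lemma (or a citation to \cite{reading_clusters,rs_infinite}) recording this behaviour of $c$-sorting words under $c \mapsto sc$. I would also need that $c$-stability of $wt \lessdot w$ with respect to $c$ transfers to $c$-stability of the pulled-back covering relation with respect to $sc$: this follows because $s$ acts on rank-two subsystems and on $c$-alignment, carrying $c$-aligned elements below $w$ to $sc$-aligned elements below $sw$, so being $c$-aligned with respect to every $\Psi \ni \beta$ becomes being $sc$-aligned with respect to every $s(\Psi) \ni s(\beta)$. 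Granting these, induction gives $s(\beta) \in \inv{\pi^{sc}_{\downarrow}(sw)}$, and then translating back via $\pi^{c}_{\downarrow}(w) = s \cdot \pi^{sc}_{\downarrow}(sw)$ (the standard recursion for the Cambrian projection when $\alpha_s \in \inv w$, see \cite[Section~2]{rs_infinite} or the characterisation via $c$-sorting words) and \Cref{lem:inversions_after_s} yields $\beta \in \inv{\pi^{c}_{\downarrow}(w)}$, as desired.

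I expect the main obstacle to be the bookkeeping for the recursion $c \mapsto sc$: assembling the precise statements that (i) $\mathbf{w}_0(sc) = s^{-1} \cdot (\text{rest of }\mathbf{w}_0(c))$ up to the $s$-action and hence the heap poset transforms correctly, (ii) the Cambrian projection satisfies $\pi^c_\downarrow(w) = s\cdot\pi^{sc}_\downarrow(sw)$ when $\alpha_s\in\inv w$, and (iii) $c$-stability is preserved under this passage. Each of these is "well known" in the $c$-sortable/$c$-aligned literature (Reading--Speyer \cite{rs_infinite}, Reading \cite{reading_clusters}), but they must be stated carefully and in the language of edge labellings used here; the root-supported-on-$\alpha_s$ branch via \Cref{lem:root-theoretic_proof} is what makes the induction actually close, so verifying its hypotheses apply (in particular that we may always choose $s$ initial in $c$ so that either $\beta$ is supported on $\alpha_s$ and we invoke \Cref{lem:root-theoretic_proof}, or $\beta \in \Phi_J$ and we recurse into the parabolic) is the conceptual crux. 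A final detail to nail down is the interaction of $\pi^c_\downarrow$ with the parabolic factorisation $w = w_J \cdot \prescript{J}{}{w}$ in the branch where $\alpha_s \notin \inv w$, which is where $w \not\geqslant s$ and $\pi^c_\downarrow(w)$ itself lies in $W_J$.
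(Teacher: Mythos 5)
Your proposal follows essentially the same strategy as the paper's proof: induct on $\ell(w)$ (together with the rank of $W$), peel off an initial letter $s$ of $c$, invoke \Cref{lem:root-theoretic_proof} to conclude $\alpha_s \in \inv{w}$ when $\beta$ is supported on $\alpha_s$, pass to $sw$ when $w \geq s$, and restrict to the parabolic $W_{S\setminus s}$ when $w \not\geq s$. However, there are two slips in the details that would break the induction if taken literally.

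First, the recursion you invoke when $\alpha_s\in\inv w$ uses the wrong Coxeter element: by \cite[(3.1), Proposition~3.2]{reading_sortable} the correct identity is $\pi_\downarrow^c(w) = s\,\pi_\downarrow^{scs}(sw)$ with the \emph{conjugate} $scs$, which is again a Coxeter element of all of $W$ with $s$ now final. The element $sc$ you wrote (obtained by deleting the initial $s$) is a Coxeter element of the proper parabolic $W_{S\setminus s}$, so $\pi_\downarrow^{sc}(sw)$ does not even type-check for a general $sw \in W$; it only enters in the other branch, $\pi_\downarrow^{c}(w) = \pi_\downarrow^{sc}(w_{S\setminus s})$ when $w \not\geq s$. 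The stability-transfer argument you sketch via $s$-transport of rank-two subsystems then likewise needs to produce $scs$-alignment of $sw$, not $sc$-alignment. Second, in the $w\not\geq s$ branch the reduction is not quite ``the covering relation takes place inside $W_J$'': when $w\not\geq s$ it is not true that $w \in W_J$, so $wt\lessdot w$ need not be a covering relation of $W_J$. What is true (and what the paper uses) is that $\beta$ not being supported on $\alpha_s$ gives $\beta\in\Phi_{S\setminus s}^+$, and then $\inv{w_{S\setminus s}} = \inv{w}\cap\Phi_{S\setminus s}^+$ shows that $\beta$ labels the \emph{projected} covering relation $(wt)_{S\setminus s} \lessdot w_{S\setminus s}$, so the induction applies to $w_{S\setminus s}$ (the parabolic factor $w_J$, not $\prescript{J}{}{w}$ as you wrote), with $sc$-stability inherited from $c$-stability of the original edge. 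With these corrections, your outline matches the paper's proof; your worry about $\beta=\alpha_s$ is handled trivially, since $\alpha_s \in \inv{s\,\pi_\downarrow^{scs}(sw)}$ automatically.
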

\begin{proof}
We use induction on $\ell(w)$ and the rank of $W$.
We cannot have $\ell(w) = 0$, since then $w$ covers nothing.
If $\ell(w) = 1$, then $\pi^{c}_{\downarrow}(w) = w$, as $w$ is clearly $c$-sortable, and so the claim holds in this case.
It also follows from this reasoning that the claim holds when the rank of $W$ is one.

We now suppose for induction that the claim is true for all covering relations $w'u \lessdot w'$ with $\ell(w') < \ell(w)$ and for $W'$ with rank smaller than that of $W$.
Let $s$ be an initial letter of $c$.
In order to carry out the induction, we use the fact that \[
\pi_{\downarrow}^{c}(w) = \left\{
    \begin{array}{ll}
        s\pi_{\downarrow}^{scs}(sw) & \text{if } w \geqslant s, \\
        \pi_{\downarrow}^{sc}(w_{S \setminus s}) & \text{if } w \not\geqslant s, 
    \end{array}
\right.
\]
by \cite[(3.1), Proposition~3.2]{reading_sortable}.

We suppose that the covering relation $wt \lessdot w$ is $c$-stable and first consider the case where $w \geqslant s$.
We need to show that $\beta \in \inv{\pi_{\downarrow}^{c}(w)} = \inv{s\pi_{\downarrow}^{scs}(sw)}$.
We have that $\inv{s\pi_{\downarrow}^{scs}(sw)} = \{\gamma \in \Phi^{+} \st s(\gamma) \in \inv{\pi_{\downarrow}^{scs}(sw)}\} \cup \{\alpha_s\}$ by \Cref{lem:inversions_after_s}.
Hence, it suffices to show that if $\beta \neq \alpha_{s}$, then we have that $s(\beta) \in \inv{\pi_{\downarrow}^{scs}(sw)}$.

We assume that $\beta \neq \alpha_s$. We have that $s(\beta)$ is the edge label of $swt \lessdot sw$ by \Cref{lem:inversions_after_s}.
We claim that $s(\beta)$ is an $scs$-stable edge label here.
Let $\Psi$ be a non-commutative rank-two root subsystem of $\Phi$ such that $s(\beta) \in \Psi^{+}$. 
There are two cases: either $\alpha_s \notin \Psi^{+}$ or $\alpha_s \in \Psi^+$.
\begin{itemize}
    \item If $\alpha_s \notin \Psi^{+}$, then $s(\Psi^{+})$ is the set of positive roots of the non-commutative rank-two subsystem $s(\Psi)$.
This is because $s$ is an isometry of the root system, and the only positive root it sends to a negative root is $\alpha_{s}$.
Let $s(\Psi^+) = \{s(\gamma_1), s(\gamma_2), s(\gamma_3)\}$ such that $\asef{s(\gamma_i)}{s(\gamma_j)} > 0$ for $i < j$.
By \cite[Lemma~3.8]{rs_infinite}, we have that $\asefa{scs}{s(\gamma_i)}{s(\gamma_j)} > 0$ for $i < j$.
Since
$wt \lessdot w$ is $c$-stable,
we have that $w$ is $c$-aligned with respect to $s(\Psi)$.
Hence, $sw$ is $scs$-aligned with respect to $\Psi$ by \Cref{lem:inversions_after_s}.

\item If
$\alpha_s \in \Psi^{+}$, then $s(\Psi) = \Psi$.
Since $\alpha_s$ is a simple root of $\Phi$, it must be a simple root of~$\Psi$.
First note that if $s(\beta)$ is a simple root of $\Psi$, then it is immediate that $sw$ is $scs$-aligned with respect to $\Psi$, since $s(\beta)$ labels the covering relation $swt \lessdot sw$ by \Cref{lem:inversions_after_s}.

Now assume that $s(\beta)$ is not a simple root of $\Psi$, so that we have $\Psi^{+} = \{\alpha_{s}, \alpha_{s} + \beta, \beta\}$ with $s(\beta) = \beta + \alpha_s$.
Because $\{\alpha_s, \beta\} \subseteq \inv{w}$, we must have that $\inv{w} \supseteq \Psi^{+}$ by \Cref{lem:inversion_biclosed}.
Hence, $\inv{sw} \cap \Psi^{+} = \{\beta, \alpha_s + \beta\}$ by \Cref{lem:inversions_after_s}.
Since $s$ is initial in $c$, we have that $\asef{\alpha_s}{\beta} > 0$ by \cite[Lemma~3.9]{rs_infinite}.
Then, by \cite[Lemma~3.8]{rs_infinite}, we have that $\asef{\alpha_s}{\beta} = \asefa{scs}{s(\alpha_s)}{s(\beta)} = \asefa{scs}{-\alpha_s}{\beta + \alpha_s} = -\asefa{scs}{\alpha_s}{\beta + \alpha_s} = -\asefa{scs}{\alpha_s}{\beta} = \asefa{scs}{\beta}{\alpha_s}$.
Hence, $\asefa{scs}{\beta}{\alpha_s} > 0$, and so $\mathbf{w}_0(scs)$ orders $\Psi^{+}$ as $(\beta, \alpha_s + \beta, \alpha_s)$.
Hence, $\inv{sw}$ contains an initial segment of this tuple with respect to the ordering given to it by $\mathbf{w}_0(scs)$.
Thus, $sw$ is $scs$-aligned with respect to $\Psi$.
\end{itemize} 
We thus have that $sw$ is $scs$-aligned with respect to any $\Psi$ containing $s(\beta)$.
Therefore, $swt \lessdot sw$ is $scs$-stable.
Since $\ell(sw) < \ell(w)$, by the induction hypothesis, we infer that $s(\beta) \in \inv{\pi_{\downarrow}^{scs}(sw)}$, which was what we wanted to show.

Now we consider the case where $w \not\geqslant s$.
In this case, we have $\alpha_s \notin \inv{w}$.
Since $s$ is initial in~$c$, we have by \Cref{lem:root-theoretic_proof} that $\beta$ cannot be supported on $\alpha_s$.
Hence, let $\Phi_{S \setminus s}$ be the corank-one root subsystem generated by all of the simple roots excluding $\alpha_s$.
We then have that $\beta \in \Phi_{S \setminus s}^+$.
We moreover have that $\inv{w_{S \setminus s}} = \inv{w} \cap \Phi_{S \setminus s}^+$; see \cite[p.708]{rs_infinite}.
It follows that $\beta \in \inv{w_{S \setminus s}}$.
Moreover, $\beta$ labels the covering relation $(wt)_{S \setminus s} \lessdot w_{S \setminus s}$ by this reasoning too.
Furthermore, $\beta$ is an $sc$-stable edge label of $(wt)_{S \setminus s} \lessdot w_{S \setminus s}$ since $\inv{w_{S \setminus s}} = \inv{w} \cap \Phi_{S \setminus s}^+$ and because $w$ is $c$-aligned with respect to all rank-two subsystems $\Psi$ containing $\beta$.
By the induction hypothesis, we then have that $\beta \in \inv{\pi_{\downarrow}^{sc}(w_{S \setminus s})}$, and thus we have $\beta \in \inv{\pi_{\downarrow}^{c}(w)} =  \inv{\pi_{\downarrow}^{sc}(w_{S \setminus s})}$, as desired.

By induction, we conclude that if $wt \lessdot w$ is $c$-stable, then $\beta \in \inv{\pi_{\downarrow}^{c}(w)}$.
\end{proof}

We can finally prove \Cref{thm:c-stable} after noting the following lemma.

\begin{lemma}\label{lem:contraction_criterion}
Let $\beta$ be the label of a covering relation $wt \lessdot w$.
We have that $wt \lessdot w$ is not contracted by the $c$-Cambrian congruence if and only if $\beta \in \inv{\pi_{\downarrow}^{c}(w)}$.
\end{lemma}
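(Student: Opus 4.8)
The strategy is to translate the statement entirely into the language of inversion sets. Recall that $\theta_c$ is the $c$-Cambrian congruence, that $\pi_{\downarrow}^{c}$ sends each $x \in W$ to the maximal $c$-sortable element beneath $x$, and that this is precisely the minimal element of the $\theta_c$-class of $x$ (see Section~\ref{sect:back:poset_quotients}); consequently $x \mathrel{\theta_c} y$ if and only if $\pi_{\downarrow}^{c}(x) = \pi_{\downarrow}^{c}(y)$. Thus the covering relation $wt \lessdot w$ is contracted by $\theta_c$ exactly when $\pi_{\downarrow}^{c}(wt) = \pi_{\downarrow}^{c}(w)$, and it suffices to show that this equality holds if and only if $\beta \notin \inv{\pi_{\downarrow}^{c}(w)}$. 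The only input about the edge labelling I would use is that, since $wt \lessdot w$ in the weak Bruhat order and $\lambda^{\mathbf{w}_0(c)}$ labels a covering relation by the unique root entering the inversion set, we have $\inv{w} = \inv{wt} \sqcup \{\beta\}$. Combined with the characterisation $v \leqslant w \iff \inv{v} \subseteq \inv{w}$, and the elementary facts that $\pi_{\downarrow}^{c}(x) \leqslant x$ and that $\pi_{\downarrow}^{c}$ is order-preserving, this gives everything.

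For the first direction, suppose $\beta \in \inv{\pi_{\downarrow}^{c}(w)}$. Since $\pi_{\downarrow}^{c}(wt) \leqslant wt$, we get $\inv{\pi_{\downarrow}^{c}(wt)} \subseteq \inv{wt}$, which does not contain $\beta$; hence $\inv{\pi_{\downarrow}^{c}(wt)} \neq \inv{\pi_{\downarrow}^{c}(w)}$, so $\pi_{\downarrow}^{c}(wt) \neq \pi_{\downarrow}^{c}(w)$ and the relation is not contracted. For the converse, suppose $\beta \notin \inv{\pi_{\downarrow}^{c}(w)}$. Then $\inv{\pi_{\downarrow}^{c}(w)} \subseteq \inv{w} \setminus \{\beta\} = \inv{wt}$, so $\pi_{\downarrow}^{c}(w) \leqslant wt$; since $\pi_{\downarrow}^{c}(w)$ is $c$-sortable and $\pi_{\downarrow}^{c}(wt)$ is by definition the maximal $c$-sortable element below $wt$, this forces $\pi_{\downarrow}^{c}(w) \leqslant \pi_{\downarrow}^{c}(wt)$. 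On the other hand $wt \leqslant w$ and $\pi_{\downarrow}^{c}$ is order-preserving, so $\pi_{\downarrow}^{c}(wt) \leqslant \pi_{\downarrow}^{c}(w)$, and hence $\pi_{\downarrow}^{c}(wt) = \pi_{\downarrow}^{c}(w)$, i.e.\ the relation is contracted.

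\textbf{Main obstacle.} There is essentially no genuine difficulty here: the argument is purely formal once the description of $\theta_c$-classes via $\pi_{\downarrow}^{c}$ and the inversion-set bookkeeping for the covering relation are in place. The two points requiring a little care are keeping the two directions aligned with the negated phrasing (``not contracted''), and correctly invoking the one non-trivial inference — that $\pi_{\downarrow}^{c}(w) \leqslant wt$ together with $c$-sortability of $\pi_{\downarrow}^{c}(w)$ yields $\pi_{\downarrow}^{c}(w) \leqslant \pi_{\downarrow}^{c}(wt)$ — from the defining property of $\pi_{\downarrow}^{c}$ as taking the maximal $c$-sortable element beneath its argument.
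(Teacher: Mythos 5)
Your proof is correct and follows essentially the same argument as the paper's: both directions are handled by translating everything into inversion sets, using $\inv{w} = \inv{wt} \sqcup \{\beta\}$, the containment $\inv{\pi_{\downarrow}^{c}(x)} \subseteq \inv{x}$, and (for the converse) the observation that $\pi_{\downarrow}^{c}(w) \leqslant wt$ together with maximality of $\pi_{\downarrow}^{c}(wt)$ among $c$-sortables below $wt$ and order-preservation of $\pi_{\downarrow}^{c}$ forces $\pi_{\downarrow}^{c}(w) = \pi_{\downarrow}^{c}(wt)$. The only cosmetic difference is that the paper upgrades ``not contracted'' to the covering relation $q_c(wt) \lessdot q_c(w)$ in the forward direction, while you only note the images differ, which suffices.
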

\begin{proof}
Suppose that $\beta \in \inv{\pi_{\downarrow}^{c}(w)}$.
We have that $\beta \notin \inv{wt} \supseteq \inv{\pi_{\downarrow}^{c}(wt)}$, so that $\inv{\pi_{\downarrow}^{c}(w)}$ strictly contains $\inv{\pi_{\downarrow}^{c}(wt)}$ and so $q_c(wt) \lessdot q_c(w)$.
Hence, if $\beta \in \inv{\pi_{\downarrow}^{c}(w)}$, then the covering relation $wt \lessdot w$ is not contracted.

Suppose conversely that $\beta \notin \inv{\pi_{\downarrow}^{c}(w)}$.
Then $\inv{\pi_{\downarrow}^{c}(w)} \subseteq \inv{wt}$.
This implies that $\pi_{\downarrow}^{c}(w) \leqslant wt$, and so we have $\pi_{\downarrow}^{c}(w) \leqslant \pi_{\downarrow}^{c}(wt)$, since $\pi_{\downarrow}^{c}(wt)$ is defined to be the maximal $c$-sortable element less than~$wt$.
But then $\pi_{\downarrow}^{c}(w) = \pi_{\downarrow}^{c}(wt)$ since $\pi_{\downarrow}^{c}$ is order-preserving.
\end{proof}

\begin{proof}[{Proof of \Cref{thm:c-stable}}]
Let $\beta$ be the label of a covering relation $wt \lessdot w$, where $\ell(wt) < \ell(w)$.
We have that $wt \lessdot w$ is not contracted by the $c$-Cambrian congruence if and only if $\beta \in \inv{\pi_{\downarrow}^{c}(w)}$ by \Cref{lem:contraction_criterion}.
Hence, by \Cref{lem:cstable} and \Cref{lem:difficult direction}, we have that this is the case if and only if 
$wt \lessdot w$ is $c$-stable.
\end{proof}

From the definition of a $c$-stable covering relation, we can now define a $c$-stable root in a root sequence labelling the covering relations of a maximal chain.

\begin{definition}
Let $C \in \mg{W}$ be a maximal chain given by $e = u_1 \lessdot \ u_2 \lessdot \ldots \lessdot u_m = w_0$.
We know that every positive root $\beta$ is the label of an exactly one covering relation $u_i \lessdot u_{i+1}$ in~$C$. 
We say that that $\beta$ is \emph{$c$-stable in $C$} if the relation $u_i \lessdot u_{i+1}$ is $c$-stable.
\end{definition}

We obtain the following nice characterisation of being $c$-stable in a maximal chain.

\begin{proposition}
Let $W$ be a finite Coxeter group and let $C$ be a maximal chain of $W$.
Then a positive root $\beta$ is $c$-stable in $C$ if and only if for every rank-two subsystem $\Psi$ with $\beta$ a non-simple root of $\Psi$, we have that $\Psi \notin \inva{\mathbf{w}_0(c)}{C}$.
\end{proposition}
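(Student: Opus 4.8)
The plan is to unwind the definition of ``$c$-stable in $C$'' and then treat, one at a time, each rank-two subsystem containing $\beta$, using \Cref{lem:rank_2_heap} to identify the heap order on it and \Cref{lem:admissible_orders} to identify the order it receives from $C$. First I would fix the unique covering relation $u_i \lessdot u_{i+1}$ of $C$ whose edge label is $\beta$ and set $w := u_{i+1}$, so that by definition $\beta$ is $c$-stable in $C$ precisely when $w$ is $c$-aligned with respect to every non-commutative rank-two subsystem $\Psi$ of $\Phi$ containing $\beta$ (commutative rank-two subsystems contribute nothing, as their positive roots are simple). The elementary observation driving everything is that, since $C$ is a maximal chain, $\inv{w}$ is an initial segment of the total order $<_C$ on $\Phi^{+}$ given by the root sequence of $C$, and $\beta$ is its $<_C$-maximum, because $\inv{w} \setminus \inv{u_i} = \{\beta\}$. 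Hence for any non-commutative $\Psi$ with $\beta \in \Psi^{+}$, the set $\inv{w} \cap \Psi^{+}$ is a nonempty initial segment of $<_C$ restricted to $\Psi^{+}$ whose $<_C$-maximum is $\beta$.

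Next I would fix such a $\Psi$ and write $\Psi^{+} = \{\beta_1, \dots, \beta_m\}$ with $\asef{\beta_i}{\beta_j} > 0$ for $i < j$; by \Cref{lem:rank_2_heap} this is the linear order that $\heap{\mathbf{w}_0(c)}$ induces on $\Psi^{+}$, and (as is standard, these being the extreme rays of the cone spanned by $\Psi^{+}$) $\beta_1$ and $\beta_m$ are exactly the two simple roots of $\Psi$. By \Cref{lem:admissible_orders}, exactly as in the proof of \Cref{prop:inv_cov_rels}, the order $<_C$ restricted to $\Psi^{+}$ is either this order or its reverse, and by definition $\Psi \in \inva{\mathbf{w}_0(c)}{C}$ iff it is the reverse. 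If $\Psi \notin \inva{\mathbf{w}_0(c)}{C}$, then $\inv{w} \cap \Psi^{+} = \{\beta_1, \dots, \beta_k\}$ for some $k$, an initial segment of $(\beta_1, \dots, \beta_m)$, so $w$ is $c$-aligned with respect to $\Psi$ no matter where $\beta$ lies. If instead $\Psi \in \inva{\mathbf{w}_0(c)}{C}$, then $\inv{w} \cap \Psi^{+} = \{\beta_j, \beta_{j+1}, \dots, \beta_m\}$ with $\beta = \beta_j$; comparing with the description of non-$c$-aligned intersections recorded after the definition of $c$-alignment ($\inv{w} \cap \Psi^{+} = \{\beta_j, \dots, \beta_m\}$ with $1 < j < m$), $w$ is $c$-aligned with respect to $\Psi$ iff $j \in \{1, m\}$, i.e. iff $\beta = \beta_j$ is a simple root of $\Psi$.

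Combining the two cases: $w$ fails to be $c$-aligned with respect to some rank-two subsystem containing $\beta$ if and only if there exists $\Psi \in \inva{\mathbf{w}_0(c)}{C}$ in which $\beta$ is a non-simple root. This is the contrapositive of the claimed equivalence, so the proposition follows. I do not expect a genuine obstacle here; the argument is careful bookkeeping, and the two points needing attention are (i) making precise that $\beta$ is the newest inversion of $w$, so that it is the $<_C$-maximum of $\inv{w} \cap \Psi^{+}$ and therefore forces the normal form $\{\beta_j, \dots, \beta_m\}$ in the inverted case, and (ii) matching that normal form against the list of $c$-aligned intersections to see that exactly the simple $\beta$ survive.
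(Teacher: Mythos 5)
Your proposal is correct and takes essentially the same route as the paper: fix the covering relation labelled by $\beta$, observe that $\inv{w}$ is the initial segment of $<_C$ with $<_C$-maximum $\beta$, identify the heap order on $\Psi^{+}$ via \Cref{lem:rank_2_heap}, and compare the two possible restrictions of $<_C$ to $\Psi^{+}$ against the definition of $c$-alignment. The only cosmetic difference is that you package the two directions into a single case analysis and take a contrapositive, while the paper argues the ``if'' and ``only if'' directions separately; the underlying bookkeeping is identical.
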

\begin{proof}
First suppose that $\beta \in \Phi^{+}$ is $c$-stable in $C$ and let $\Psi$ be a rank-two subsystem with $\beta$ a non-simple root of $\Psi$.
Note that $\Psi$ is necessarily non-commutative, since all of the positive roots of a commutative rank-two subsystem are simple.
Since the covering relation $wt \lessdot w$ of $C$ that $\beta$ labels is $c$-stable, we must have that $w$ is $c$-aligned with respect to $\Psi$.
Therefore, $\inv{w}$ contains an initial segment of $\Psi^+$ with respect to the ordering given by $\heap{\mathbf{w}_0(c)}$, since $\beta$ is not a simple root of~$\Psi^{+}$.
Consequently, $C$ must order $\Psi^{+}$ in the same way that $\heap{\mathbf{w}_0(c)}$ does, and so $\Psi \notin \inva{\mathbf{w}_0(c)}{C}$.

Conversely, suppose that for every rank-two subsystem $\Psi$ with $\beta$ a non-simple root of $\Psi$, we have that $\Psi \notin \inva{\mathbf{w}_0(c)}{C}$.
Let $wt \lessdot w$ be the covering relation of $C$ labelled by $\beta$ and let $\Psi$ be a rank-two root subsystem with $\beta \in \Psi^+$.
If $\beta$ is a simple root of $\Psi^{+}$, then $w$ is automatically $c$-aligned with respect to $\Psi^{+}$.
If $\beta$ is not a simple root of $\Psi^{+}$, then since $\Psi \notin \inva{\mathbf{w}_0(c)}{C}$, $w$ must also be $c$-aligned with respect to $\Psi$.
We conclude that $wt \lessdot w$ is $c$-stable, as desired.
\end{proof}

We then have the following corollaries of \Cref{thm:c-stable}.

\begin{corollary}\label{cor:chain_stability}
Let $W$ be a finite simply-laced Coxeter group with $c$ a Coxeter element, $q_c \colon W \to W_c$ the associated $c$-Cambrian quotient, and $C$ a maximal chain in the weak Bruhat order on $W$.
Then a covering relation of $C$ labelled by a root $\beta$ is not contracted by $q_c$ if and only if $\beta$ is $c$-stable in~$C$.
Hence, the maximal chain $\mgmap{q_c}(C)$ is labelled by the sequence of $c$-stable edge labels of~$C$.
\end{corollary}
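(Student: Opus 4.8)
The plan is to deduce this corollary directly from \Cref{thm:c-stable}, using the description of the quotient edge labelling from \Cref{def:quotient_edge_labelling} and the structural lemmas on maximal chains of quotient lattices for the final clause. First I would unwind the definitions. Write $C = (e = u_1 \lessdot u_2 \lessdot \dots \lessdot u_m = w_0)$; by the remark preceding the definition of a $c$-stable root, there is a unique index $i$ with $\lambda^{\mathbf{w}_0(c)}(u_i, u_{i+1}) = \beta$, so ``the covering relation of $C$ labelled by $\beta$'' is well-defined and equals $u_i \lessdot u_{i+1}$. Since $q_c \colon W \to W_c$ is by construction the quotient map of the $c$-Cambrian congruence $\theta_c$, this covering relation is contracted by $q_c$ precisely when $u_i \mathrel{\theta_c} u_{i+1}$, i.e.\ when it is contracted by $\theta_c$ in the sense of \Cref{thm:c-stable}. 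By \Cref{thm:c-stable} (valid since $W$ is simply-laced), this happens if and only if $u_i \lessdot u_{i+1}$ is \emph{not} $c$-stable, which by the definition of a $c$-stable root in a chain is exactly the statement that $\beta$ is not $c$-stable in $C$. This proves the first assertion.

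For the ``Hence'' clause I would describe $\mgmap{q_c}(C)$ explicitly. Applying $q_c$ yields the chain $q_c(u_1) \leqslant q_c(u_2) \leqslant \dots \leqslant q_c(u_m)$ in $W_c$; by \Cref{lem:cov_rel} each step is either a covering relation or an equality (the latter exactly when $u_i \lessdot u_{i+1}$ is contracted), and by \Cref{lem:max_chains} the sequence of distinct values is precisely the maximal chain $\mgmap{q_c}(C)$. Thus the covering relations of $\mgmap{q_c}(C)$ are in order-preserving bijection with the non-contracted covering relations of $C$. Finally, if $u_i \lessdot u_{i+1}$ is not contracted, then $q_c(u_i) \lessdot q_c(u_{i+1})$, and by \Cref{def:quotient_edge_labelling} (together with \Cref{lem:cov_rel_labels_equal}) the quotient edge labelling satisfies $\lambda^{\mathbf{w}_0(c)}_{\theta_c}(q_c(u_i), q_c(u_{i+1})) = \lambda^{\mathbf{w}_0(c)}(u_i, u_{i+1}) = \beta$. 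Combining this with the first assertion, the sequence of edge labels of $\mgmap{q_c}(C)$ is exactly the subsequence of $c$-stable edge labels of $C$, listed in the order in which they occur along $C$.

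I do not anticipate any real obstacle: all the substance is already contained in \Cref{thm:c-stable}. The only points requiring a little care are purely bookkeeping --- that the quotient edge label of a surviving covering relation is literally the original root $\beta$ rather than some other label (immediate from the construction of $\lambda_{\theta}$), and that the surviving labels appear in $\mgmap{q_c}(C)$ in the same relative order as in $C$ (immediate because $q_c$ is order-preserving, so the distinct values $q_c(u_i)$ inherit the linear order of the $u_i$).
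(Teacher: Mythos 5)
Your proof is correct and takes the same route the paper implicitly uses: the corollary is stated without a separate proof precisely because the first assertion is a direct restatement of \Cref{thm:c-stable} combined with the definition of a $c$-stable root in a chain, and the ``Hence'' clause is the bookkeeping you spell out via \Cref{lem:cov_rel}, \Cref{lem:max_chains}, and the quotient edge labelling of \Cref{def:quotient_edge_labelling}. Your version is, if anything, slightly more explicit than the paper's treatment, which essentially carries out this same bookkeeping inside the proof of the subsequent \Cref{cor:stability}.
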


Let $\overset\longrightarrow\Stab(C)$ be the sequence of $c$-stable roots of $C$, ordered in the way they are added to inversion sets along $C$, and let $\Stab(C)$ its underlying unordered set.

\begin{corollary}\label{cor:stability}
For a pair of chains $C, C' \in \mg{W}$, we have $\mgmap{q_c}(C) = \mgmap{q_c}(C')$ if and only if  $\overset\longrightarrow\Stab(C) = \overset\longrightarrow\Stab(C)$.
\end{corollary}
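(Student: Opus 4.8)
The plan is to reduce Corollary~\ref{cor:stability} to Corollary~\ref{cor:chain_stability} together with the fact that a maximal chain of the Cambrian lattice $W_c$ is determined by its sequence of edge labels. Corollary~\ref{cor:chain_stability} tells us that, with respect to the quotient edge labelling $\lambda^{\mathbf{w}_0(c)}_{\theta_c}$, the maximal chain $\mgmap{q_c}(C)$ of $W_c$ has edge-label sequence exactly $\overset\longrightarrow\Stab(C)$, and likewise $\mgmap{q_c}(C')$ has edge-label sequence $\overset\longrightarrow\Stab(C')$. Hence the forward implication is immediate: if $\mgmap{q_c}(C) = \mgmap{q_c}(C')$, then the two chains read off the same edge labels in the same order, so $\overset\longrightarrow\Stab(C) = \overset\longrightarrow\Stab(C')$.

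For the converse I would first establish the following uniqueness statement: for any $v \in W_c$ and any positive root $\gamma$, there is at most one covering relation $v \lessdot v'$ in $W_c$ with $\lambda^{\mathbf{w}_0(c)}_{\theta_c}(v, v') = \gamma$. Writing $v$ as the $\theta_c$-equivalence class $[x]$ with maximum $\hat x$, the argument in the proof of Lemma~\ref{lem:cov_rel_labels_equal} shows that for any cover $[x] \lessdot [y]$ there is a unique element $\widetilde y \in [y]$ covering $\hat x$ in $W$, and that $\lambda^{\mathbf{w}_0(c)}_{\theta_c}([x], [y]) = \lambda^{\mathbf{w}_0(c)}(\hat x, \widetilde y)$. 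Since the weak-order edge labelling satisfies $\inv{\widetilde y} = \inv{\hat x} \cup \{\gamma\}$ whenever $\lambda^{\mathbf{w}_0(c)}(\hat x, \widetilde y) = \gamma$, and since elements of $W$ are determined by their inversion sets, the element $\widetilde y$ is determined by $v$ and $\gamma$; therefore so is the class $[y] = v'$. Consequently a maximal chain $v_0 \lessdot v_1 \lessdot \dots \lessdot v_r$ of $W_c$ is recovered from its edge-label sequence $(\gamma_1, \dots, \gamma_r)$ by setting $v_0 = \min W_c$ and letting $v_k$ be the unique cover of $v_{k-1}$ labelled $\gamma_k$. Now if $\overset\longrightarrow\Stab(C) = \overset\longrightarrow\Stab(C')$, then $\mgmap{q_c}(C)$ and $\mgmap{q_c}(C')$ are maximal chains of $W_c$ with the same edge-label sequence, so by this reconstruction they coincide.

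There is no real obstacle here beyond carefully extracting the uniqueness statement from the construction of the quotient edge labelling; everything else is formal. The two points worth checking are that $\overset\longrightarrow\Stab$ really is the \emph{ordered} edge-label sequence of $\mgmap{q_c}(C)$ rather than merely its underlying set --- which is exactly what Corollary~\ref{cor:chain_stability} provides --- and the degenerate case where $\overset\longrightarrow\Stab(C)$ is empty, in which case $W_c$ has a single element and both maximal chains are trivially equal.
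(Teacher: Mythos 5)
Your proof is correct and follows essentially the same approach as the paper: both argue that $\mgmap{q_c}(C)$ and $\overset\longrightarrow\Stab(C)$ determine each other uniquely, reading off the edge labels via Corollary~\ref{cor:chain_stability} (equivalently Theorem~\ref{thm:c-stable}) in one direction and reconstructing the chain from its edge-label sequence in the other. The only difference is that you supply the uniqueness argument for the reconstruction step (via Lemma~\ref{lem:cov_rel_labels_equal} and the injectivity of $w \mapsto \inv{w}$) that the paper asserts without detail.
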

\begin{proof}
For $C \in \mg{W}$ given by $e = u_1 \lessdot u_2 \lessdot \ldots \lessdot u_m = w_0$, the inversion sets in $\mgmap{q_c}(C)$ ordered by inclusion consist of $\inv{\pi^{c}_{\downarrow}(u_i)}$.
By \Cref{thm:c-stable}, we have that 
the edge $u_i \lessdot u_{i + 1}$ is not contracted by $\theta_c$
if and only if
the relation $u_i \lessdot u_{i+1}$
is $c$-stable.
Hence, applying the quotient edge labelling $\lambda_{\theta_c}$ to $\mgmap{q_c}(C)$ gives the sequence of $c$-stable roots of $C$ precisely in the same order they appear in $\overset\longrightarrow\Stab(C)$.
Conversely, given the sequence $\overset\longrightarrow\Stab(C)$, one can reconstruct the maximal chain $\mgmap{q_c}(C)$ using the edge labelling $\lambda_c$.
Thus, $\mgmap{q_c}(C)$ and $\overset\longrightarrow\Stab(C)$ determine each other uniquely.
\end{proof}

We note that in type $A$ with linear orientation, i.e., for $c = s_1 s_2 \cdots s_{n-1}$ in $S_n$, $\overset\longrightarrow\Stab(C) = \overset\longrightarrow\Stab(C')$ if and only if the underlying unordered sets of $c$-stable roots $\Stab(C)$ and $\Stab(C')$ coincide.
This is not true in general, a counterexample existing already in type $A_3$ with bipartite orientation, i.e., in $S_4$ with $c = s_2 s_1 s_3$.
In the next section, we will interpret posets of maximal chains $ \mgea{\lambda_c}{W_c}$  via maximal green sequences of finite-dimensional algebras.
The latter can be described by a certain sequence of bricks, but we showed in our previous work \cite[Example~3.25]{gw1} that it is not always defined uniquely by the underlying set of this sequence, although it is defined uniquely in the case of Nakayama algebras, including the path algebra of the linearly oriented $A_n$ quiver.

\subsection{Contraction of posets}

We can also apply the results of this section to prove that in the Cambrian case, the preorders on maximal chains are in fact partial orders.

\begin{proposition}\label{prop:cambrian_partial_order}
Let $W$ be a finite Coxeter group, with $c$ a Coxeter element, so that we have the quotient edge-labelling $\lambda_{\theta_c}^{\mathbf{w}_0(c)} \colon \covrel{W_c} \to \heap{\mathbf{w}_0(c)}$ induced by the Cambrian congruence.
Then the preorder $\mgea{\lambda_{\theta_c}^{\mathbf{w}_{0}(c)}}{W_c}$ is a partial order.
\end{proposition}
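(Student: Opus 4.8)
The plan is to construct a monovariant on $\mgea{\lambda_{\theta_c}^{\mathbf{w}_0(c)}}{W_c}$ that strictly increases under increasing polygon moves; anti-symmetry of $\preord_{\lambda_{\theta_c}^{\mathbf{w}_0(c)}}$ is then automatic. The monovariant is obtained by transporting the inversion set $\inva{\mathbf{w}_0(c)}{-}$ (cf.\ \Cref{prop:inv<=>equiv} and \Cref{prop:inv_cov_rels}) through the contraction $\mgtmap{q_c}$, after discarding the rank-two subsystems that get collapsed, or turned into squares, in $W_c$. Call a non-commutative rank-two root subsystem $\Psi$ of $\Phi$ \emph{$c$-relevant} if the polygon $P_\Psi$ of $W$ whose covering relations carry the labels $\Psi^{+}$ (see \Cref{thm:coxeter}\ref{op:coxeter:polygons}) is mapped by $q_c$ onto a \emph{non-square} polygon of $W_c$, and write $\mathcal{R}_c$ for the set of $c$-relevant subsystems; the polygon $P_\Psi$ is well defined because a non-commutative rank-two subsystem is recovered from the codimension-two subspace cut out by its hyperplanes, which in turn determines the polygon as in the proof of \Cref{lem:polygon->rank2subarr}. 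For a maximal chain $\widetilde{C}$ of $W$ I set $\mathcal{I}(\widetilde{C}) := \inva{\mathbf{w}_0(c)}{\widetilde{C}} \cap \mathcal{R}_c$.

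The first step is to check that $\mathcal{I}$ is constant on the fibres of $\mgtmap{q_c} \colon \mge{W} \to \mge{W_c}$, so that it descends to a function $\mathcal{I} \colon \mge{W_c} \to 2^{\mathcal{R}_c}$. By the connectedness argument in the proof of \Cref{thm:contraction}\ref{op:contraction:contraction} --- which rests on \Cref{lem:polygon_connected} and \Cref{lem:polygon_moves_in_fibres} --- any two elements of a fibre are joined by a sequence of moves, each of which is either a polygon move between maximal chains of $W$ lying over a common maximal chain of $W_c$, or a polygon move of $W$ lifting a square move of $W_c$; in the former case the polygon of $W$ has a non-polygonal image in $W_c$, in the latter its image is a square, so in either case the subsystem of that polygon is not $c$-relevant. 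Since a polygon move alters $\inva{\mathbf{w}_0(c)}{-}$ only in the subsystem of that very polygon, while a square move of $W$ leaves $\inva{\mathbf{w}_0(c)}{-}$ unchanged by \Cref{prop:inv<=>equiv}, the intersection with $\mathcal{R}_c$ is constant along the whole sequence.

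The second step is to show that an increasing polygon move in $W_c$ strictly enlarges $\mathcal{I}$. If $[C']$ is an increasing polygon move of $[C]$ across a non-square polygon $P$ of $W_c$, then \Cref{lem:polygon_moves_in_fibres} provides maximal chains $\widetilde{C}$ and $\widetilde{C}'$ of $W$, containing respectively the ascending and the descending chain of a polygon $P'$ of $W$ with $q_c(P') = P$, and with $\mgtmap{q_c}([\widetilde{C}]) = [C]$ and $\mgtmap{q_c}([\widetilde{C}']) = [C']$; since $P$ is non-square so is $P'$, and its subsystem $\Psi$ satisfies $q_c(P_\Psi) = q_c(P') = P$, so $\Psi \in \mathcal{R}_c$. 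As $\widetilde{C}$ contains the ascending chain of $P'$, it orders $\Psi^{+}$ in the heap order, whence $\Psi \notin \inva{\mathbf{w}_0(c)}{\widetilde{C}}$; moreover $\widetilde{C}'$ is an increasing polygon move of $\widetilde{C}$, so by polygon-completeness of $\lambda^{\mathbf{w}_0(c)}$ (\Cref{thm:coxeter}\ref{op:coxeter:edge_labelling_properties}) together with \Cref{prop:inv_cov_rels} one has $\inva{\mathbf{w}_0(c)}{\widetilde{C}'} = \inva{\mathbf{w}_0(c)}{\widetilde{C}} \sqcup \{\Psi\}$. Intersecting with $\mathcal{R}_c$ and using the first step yields $\mathcal{I}([C']) = \mathcal{I}([C]) \sqcup \{\Psi\}$.

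Finally, if $[C] \preord_{\lambda_{\theta_c}^{\mathbf{w}_0(c)}} [C']$ and $[C'] \preord_{\lambda_{\theta_c}^{\mathbf{w}_0(c)}} [C]$, then each relation is witnessed by a (possibly empty) sequence of increasing polygon moves, so the second step gives $\mathcal{I}([C]) \subseteq \mathcal{I}([C']) \subseteq \mathcal{I}([C])$; equality forces both sequences to be empty, hence $[C] = [C']$, and $\preord_{\lambda_{\theta_c}^{\mathbf{w}_0(c)}}$ is a partial order. (The same estimate excludes any class strictly between $[C]$ and an increasing polygon move of it, so $\lambda_{\theta_c}^{\mathbf{w}_0(c)}$ is in fact polygon-complete and the statement also follows from \Cref{lem:polygon-complete_poset}.) I expect the delicate point to be the fibrewise constancy of $\mathcal{I}$ in the first step: one must follow precisely how polygons of $W$ are deformed by $q_c$ --- in particular that a four-element polygon of $W_c$ can be the image of a strictly larger polygon of $W$, which is exactly the reason such subsystems must be excluded from $\mathcal{R}_c$ --- and verify that only $c$-irrelevant subsystems are ever toggled within a fibre; the remaining steps are routine given the results already established for $W$.
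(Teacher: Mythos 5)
Your definition of $\mathcal{R}_c$ is where the argument has a genuine gap: a non-commutative rank-two subsystem $\Psi$ does \emph{not} determine a unique polygon of $W$ carrying the labels $\Psi^{+}$, so ``$P_\Psi$'' and hence $\mathcal{R}_c$ and $\mathcal{I}$ are not well-defined. Your justification reverses the direction of \Cref{lem:polygon->rank2subarr}: that lemma and its proof produce, from a polygon, an $(n-2)$-dimensional cone $F$ and then a rank-two subarrangement, but a rank-two subarrangement determines only the codimension-two subspace $U$ spanned by it, and $U$ contains many codimension-two cones of the arrangement, each giving its own polygon of $W$. Whether $q_c$ sends such a polygon to a non-square polygon of $W_c$ genuinely depends on the polygon and not just on $\Psi$. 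Concretely, take $W = \mathfrak{S}_4$ with $c = s_1 s_2 s_3$ and let $\Psi$ be the rank-two subsystem with $\Psi^{+} = \{\alpha_1{+}\alpha_2,\ \alpha_1{+}\alpha_2{+}\alpha_3,\ \alpha_3\}$. There are two polygons of $W$ whose edge labels form $\Psi^{+}$: the interval $[s_1,\ s_1 s_2 s_3 s_2]$ and the interval from $s_2 s_3$ to $w_0 s_3$. The first maps under $q_c$ to a pentagon of $W_c$ (only $s_1 s_3 s_2$ in it fails to be $c$-sortable), so by your definition $\Psi$ would be $c$-relevant; but the second collapses to the single covering relation $s_2 s_3 \lessdot s_2 s_3 s_2$ of $W_c$, its edges labelled $\alpha_1{+}\alpha_2$ and $\alpha_1{+}\alpha_2{+}\alpha_3$ being contracted. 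In particular, a polygon move across the second polygon toggles $\Psi$ in $\inva{\mathbf{w}_0(c)}{-}$ while staying inside a single fibre of $\mgmap{q_c}$, so $\mathcal{I}$ cannot be constant on fibres if $\Psi \in \mathcal{R}_c$; and were one instead to drop $\Psi$ from $\mathcal{R}_c$, the increasing polygon move across the first polygon would leave $\mathcal{I}$ unchanged, breaking your second step. This is exactly the ``delicate point'' you flag at the end, and it really does fail.

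The paper's proof avoids the problem by computing its monovariant downstairs in $W_c$ rather than pulling data back from $W$: it uses the total number of covering relations of the maximal chain. By \Cref{lem:length_two_max_chain}, every non-square polygon of $W_c$ has a maximal chain of length two and this short chain is the descending one for $\lambda^{\mathbf{w}_0(c)}_{\theta_c}$; hence an increasing polygon move in $W_c$ strictly shortens the maximal chain, one can never return, and anti-symmetry follows. If you want the polygon-completeness your closing parenthesis aims at, the same length argument gives it directly (no intermediate class can exist because lengths would have to strictly decrease twice across a single polygon move), which then yields the statement via \Cref{lem:polygon-complete_poset}.
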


For this, we first need the following result.

\begin{lemma}\label{lem:length_two_max_chain}
If $P$ is a polygon in $W_c$, then $P$ has a maximal chain of length two.
Moreover, if $P$ is a non-square polygon, then this maximal chain is descending with respect to the edge labelling $\lambda_c$.
\end{lemma}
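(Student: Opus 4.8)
The plan is to realise the polygon $P \subseteq W_c$ as the image of a polygon in the weak Bruhat order $W$ and then track exactly which edges of its two maximal chains survive the Cambrian congruence $\theta_c$. Since $\lambda^{\mathbf{w}_0(c)}$ is forcing-consistent (\Cref{thm:coxeter}\ref{op:coxeter:edge_labelling_properties}), \Cref{lem:polygon_preimages} provides a polygon $P'$ of $W$ with $q_c(P') = P$; by \Cref{thm:coxeter}\ref{op:coxeter:polygons} the edge labels of $P'$ are the positive roots $\Psi^+$ of a rank-two subsystem $\Psi$, with $P'$ a square exactly when $\Psi$ is commutative, and both maximal chains of $P'$ have length $m \coloneqq \lvert \Psi^+\rvert$. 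Because $\lambda^{\mathbf{w}_0(c)}$ is polygonal, when $\Psi$ is non-commutative I can order $\Psi^+ = \{\beta_1, \dots, \beta_m\}$ so that the ascending maximal chain $C_1$ of $P'$ has label sequence $(\beta_1, \dots, \beta_m)$ and the descending one $C_2$ has label sequence $(\beta_m, \dots, \beta_1)$; as the labels of an ascending chain form a chain in $\heap{\mathbf{w}_0(c)}$, this gives $\beta_1 < \beta_2 < \dots < \beta_m$ in $\heap{\mathbf{w}_0(c)}$, which by \Cref{lem:rank_2_heap} is precisely the ordering of $\Psi^+$ used to define $c$-alignedness with respect to~$\Psi$.

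The key computation runs along $C_2 \colon \check{p} = q_0 \lessdot q_1 \lessdot \dots \lessdot q_m = \hat{p}$. Reading off labels gives $\inv{q_k} \cap \Psi^+ = \{\beta_{m+1-k}, \dots, \beta_m\}$, a final segment of the above order; for $1 < k < m$ this is non-empty, is not the singleton $\{\beta_m\}$, and is not an initial segment, so $q_k$ fails to be $c$-aligned with respect to~$\Psi$. Since $\Psi$ contains the edge label $\beta_{m+1-k} = \lambda^{\mathbf{w}_0(c)}(q_{k-1}, q_k)$, the covering relation $q_{k-1} \lessdot q_k$ is not $c$-stable, hence is contracted by $\theta_c$; here I only use the implication ``not $c$-stable $\Rightarrow$ contracted'', which is the combination of \Cref{lem:cstable} and \Cref{lem:contraction_criterion} and therefore holds for all finite $W$, not just the simply-laced case of \Cref{thm:c-stable}. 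Thus $q_c$ identifies $q_1, \dots, q_{m-1}$ to a single element $z$, so $q_c(C_2) \subseteq \{\min P, z, \max P\}$ has length at most two; but $q_c(C_2)$ is a saturated chain from $\min P$ to $\max P$, hence one of the two maximal chains of the polygon $P$, and so has length at least two. Therefore $q_c(C_2)$ has length exactly two, which is the first assertion. (When $\Psi$ is commutative, i.e.\ $m = 2$, there are no intermediate edges and $q_c(C_2)$ already has length $\leqslant 2$ and $\geqslant 2$, so the argument is uniform in $m$.)

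For the ``moreover'', suppose $P$ is non-square. Restricting $q_c$ to the interval $P'$ yields a lattice quotient $P' \to P$, so by \Cref{lem:max_chains} the two maximal chains of $P$ are $q_c(C_1)$ and $q_c(C_2)$; were $P'$ a square, both would have length two and $P$ would be a square — contradiction. Hence $\Psi$ is non-commutative, $m \geqslant 3$, and $q_c(C_2)$ is the length-two chain $\min P \lessdot z \lessdot \max P$ with $z = q_c(q_1) = \dots = q_c(q_{m-1})$. Its two edges are represented in $W$ by $q_0 \lessdot q_1$ and $q_{m-1} \lessdot q_m$, so by \Cref{def:quotient_edge_labelling} and \Cref{lem:cov_rel_labels_equal} their $\lambda_c$-labels are $\beta_m$ and $\beta_1$ respectively. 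Since $\beta_1 < \beta_m$ in $\heap{\mathbf{w}_0(c)}$, the label sequence $(\beta_m, \beta_1)$ is strictly decreasing, so $q_c(C_2)$ is descending with respect to $\lambda_c$, as required.

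The main obstacle is book-keeping rather than a single hard input: one must correctly match the order in which the roots of $\Psi^+$ are adjoined to inversion sets along $C_2$ with the $\asef{-}{-}$-order governing $c$-alignedness (this is exactly where polygonality of $\lambda^{\mathbf{w}_0(c)}$ and \Cref{lem:rank_2_heap} enter), and one must be careful to invoke only the direction of \Cref{thm:c-stable} that is valid for arbitrary finite Coxeter groups, since \Cref{prop:cambrian_partial_order} — the intended application — is not restricted to simply-laced type.
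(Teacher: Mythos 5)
Your proof is correct and follows essentially the same route as the paper: lift $P$ to a polygon $P'\subseteq W$ via \Cref{lem:polygon_preimages}, identify the labels of $P'$ with the positive roots of a rank-two subsystem $\Psi$ via \Cref{thm:coxeter}\ref{op:coxeter:polygons}, observe that the interior edges of the chain $C_2$ of $P'$ that reverses the heap order are all non-$c$-stable and hence contracted, and read off the surviving edge labels $(\beta_m,\beta_1)$ for the ``moreover''. Your version is a bit more careful than the paper's at a few points — you spell out the inversion-set computation along $C_2$, you note explicitly that the implication ``not $c$-stable $\Rightarrow$ contracted'' comes from \Cref{lem:cstable} \emph{together with} \Cref{lem:contraction_criterion} (the paper only cites the former) so that no simply-laced hypothesis is needed, and you justify why $P'$ is non-square when $P$ is — but none of these differences alters the underlying strategy.
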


\begin{proof}
Suppose that $P$ is a polygon in $W_c$.
The result is clear if $P$ is square, so we may assume that $P$ is not square.
By \Cref{lem:polygon_preimages}, there exists a polygon $P'$ of $W$ such that $q_c(P') = P$.
We denote the elements of $P'$ as follows: \[
\begin{tikzpicture}[xscale=1.5]

\node (min) at (0,0) {$\check{p}.$};

\node (l1) at (-1,1) {$l_1$};
\node (l2) at (-1,2) {$l_2$};
\node (l3) at (-1,3) {$\vdots$};
\node (l4) at (-1,4) {$l_{m}$};

\node (r1) at (1,1) {$r_1$};
\node (r2) at (1,2) {$r_2$};
\node (r3) at (1,3) {$\vdots$};
\node (r4) at (1,4) {$r_{n}$};

\node (max) at (0,5) {$\hat{p}$};

\draw[->] (min) -- (l1);
\draw[->] (l1) -- (l2);
\draw[->] (l2) -- (l3);
\draw[->] (l3) -- (l4);
\draw[->] (l4) -- (max);

\draw[->] (min) -- (r1);
\draw[->] (r1) -- (r2);
\draw[->] (r2) -- (r3);
\draw[->] (r3) -- (r4);
\draw[->] (r4) -- (max);

\node at (-1.5,2.5) {$C_1$};
\node at (1.5,2.5) {$C_2$};

\end{tikzpicture}
\]

By \Cref{thm:coxeter}\ref{op:coxeter:polygons}, there is a rank-two subsystem $\Psi$ of $\Phi$ such that for each maximal chain $C_1$ and $C_2$ of $P'$, we have $\lambda_c(C_i) = \Psi^+$.
Since $C_1$ and $C_2$ give the two opposite total orders of $\Psi^{+}$, one of these orders will agree with the induced order from $\heap{\mathbf{w}_0(c)}$, and the other will not.
Suppose without loss of generality that $C_1$ agrees with $\heap{\mathbf{w}_0(c)}$ and $C_2$ does not. 
Then, all roots labelling the covering relations of $C_2$ must be non-$c$-stable in $C_2$, apart from perhaps the first and last one. Hence, by \Cref{lem:cstable}, we have that the image $q_c(C_2)$ of $C_2$ in $W_c$ has length at most two. Since $P$ is a polygon, we must have that $q_c(C_2)$ actually has length exactly two. Moreover, since $C_2$ ordered $\Phi^{+}$ the opposite way to $\heap{\mathbf{w}_0(c)}$, we must have that $q_c(C_2)$ is descending with respect to the labelling~$\lambda_c$.
\end{proof}

The proof of \Cref{prop:cambrian_partial_order} is now straightforward.

\begin{proof}[{Proof of \Cref{prop:cambrian_partial_order}}]
As in the proof of \Cref{lem:hyp_partial_order}, it suffices to show that, given a maximal chain $C$ of $W_c$, we cannot return to $C$ using increasing polygon moves and square moves, with at least one increasing polygon move.
But this is clear, since, by \Cref{lem:length_two_max_chain}, we have that an increasing polygon move in $W_c$ strictly decreases the number of covering relations in the maximal chain.
\end{proof}

We then have the following result.

\begin{theorem} \label{thm:cambrian_dim2_contraction}
Let $W$ be a finite Coxeter group, with $c$ a Coxeter element and $q_c \colon W \to W_c$ the Cambrian quotient.
Then the induced map \[
\mgtmap{q_c} \colon \mgea{\lambda^{\mathbf{w}_0(c)}}{W} \to \mgea{\lambda^{\mathbf{w}_0(c)}_{\theta_c}}{W_c}
\]
is a contraction of posets.
\end{theorem}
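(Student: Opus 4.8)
The plan is to obtain this as an almost immediate consequence of the general contraction theorem, specialised to the Cambrian situation, once we know that \emph{both} ends of the map are genuine posets. First I would note that $\mathbf{w}_0(c)$ is a reduced word for $w_0$ and that $\theta_c$ is a lattice congruence on $W$ with quotient lattice $W_c$, so that \Cref{thm:coxeter}\ref{op:coxeter:contraction} applies with $\mathbf{w}_0 = \mathbf{w}_0(c)$ and $\theta = \theta_c$: the induced map
\[
\mgtmap{q_c} \colon \mgea{\lambda^{\mathbf{w}_0(c)}}{W} \to \mgea{\lambda^{\mathbf{w}_0(c)}_{\theta_c}}{W_c}
\]
is a contraction of preordered sets. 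This already gives surjectivity, order-preservation, connectedness of the preimages of the $\phi_{\preord}$-equivalence classes, and the lifting of covering relations from the quotient.

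The second step is to upgrade ``contraction of preordered sets'' to ``contraction of posets''. By \Cref{thm:coxeter}\ref{op:coxeter:partial_order} the domain $\mgea{\lambda^{\mathbf{w}_0(c)}}{W}$ is a poset, and by \Cref{prop:cambrian_partial_order} (which rests on \Cref{lem:length_two_max_chain}, and hence on the $c$-stability analysis of the preceding subsection) the codomain $\mgea{\lambda^{\mathbf{w}_0(c)}_{\theta_c}}{W_c}$ is a poset as well. Since the collapse of a poset is canonically the poset itself---each $\phi_{\preord}$-equivalence class being a singleton by antisymmetry---the collapse map $\coll{\mgtmap{q_c}}$ is identified with $\mgtmap{q_c}$. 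By \cite[Lemma~5.3]{cebrian2022directed}, the collapse of a contraction of preordered sets is a contraction of posets; applying this to $\mgtmap{q_c}$ yields that $\mgtmap{q_c}$ itself is a contraction of posets, as claimed.

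There is essentially no remaining obstacle: the substantive content has been front-loaded into \Cref{thm:contraction} (the general contraction theorem), \Cref{thm:coxeter}\ref{op:coxeter:edge_labelling_properties} (forcing-consistency, polygonality and polygon-completeness of $\lambda^{\mathbf{w}_0(c)}$), and \Cref{prop:cambrian_partial_order} (ruling out directed cycles of increasing polygon moves in $W_c$, the genuinely Cambrian-specific input). The one point requiring care is the interface between preordered-set and poset contractions: one should not conclude that $\mgtmap{q_c}$ is a poset contraction merely because both ends happen to be posets, but rather pass through the collapse functor as above (equivalently, one could read off the three defining conditions of a poset contraction in the sense of \cite[Proposition~6.24]{njw-survey} directly from the three conditions for a preordered-set contraction, using that both preorders are antisymmetric). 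I would carry out the argument via the collapse lemma for brevity and mention the alternative only in passing.
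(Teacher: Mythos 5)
Your argument is exactly the paper's: it invokes \Cref{thm:coxeter}\ref{op:coxeter:contraction} to get a contraction of preordered sets, then applies \cite[Lemma~5.3]{cebrian2022directed} and the fact that the collapse functor is the identity on posets. You are merely more explicit than the paper about which prior results (\Cref{thm:coxeter}\ref{op:coxeter:partial_order} and \Cref{prop:cambrian_partial_order}) guarantee that both ends are posets, which is exactly the right thing to spell out.
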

\begin{proof}
This follows from \Cref{thm:coxeter}\ref{op:coxeter:contraction} using \cite[Lemma~5.3]{cebrian2022directed}, which states that applying the collapse functor to a contraction of preordered sets gives a contraction of posets, and from the fact that applying the collapse functor to a poset returns the same poset.
\end{proof}

\section{Maximal green sequences of preprojective and hereditary algebras}\label{sect:preproj_mgs_orders}

In this section, we consider the algebraic interpretation of Cambrian congruences in terms of representation theory of finite-dimensional algebras.
This algebraic interpretation was carried out in works including \cite{it,airt,birs,mizuno-preproj,dirrt,mizuno2020torsion}. 
The partial orders on equivalence classes of maximal chains in this context relate back to the work on maximal green sequences in \cite{gw1}.

\subsection{Background}

We write $\Lambda$ for a finite-dimensional algebra over a field $K$ and write $\modules \Lambda$ for the category of finite-dimensional left $\Lambda$-modules.

\subsubsection{Torsion classes and maximal green sequences}

In order to interpret Cambrian congruences algebraically, we need the following concept from representation theory.
Given a finite-dimensional algebra $\Lambda$ over a field $K$, a \emph{torsion-free class} is a full subcategory of $\modules \Lambda$ which is closed under submodules and extensions \cite[Theorem~2.3]{dickson}.
Every torsion class fits into a unique \emph{torsion pair} of subcategories, the other partner in the pair being a \emph{torsion class}.
Torsion-free classes in $\modules \Lambda$ form a complete lattice under inclusion, denoted $\torf \Lambda$ \cite[Proposition~2.3(a)]{irtt}.
Torsion classes form an anti-isomorphic lattice $\tors \Lambda$, with the canonical anti-isomorphisms interchanging the torsion class with the torsion-free class in each torsion pair.

\subsubsection{Algebraic interpretation of Cambrian lattices and congruences}\label{sect:preproj_mgs_orders:alg_back}

We refer to \cite[Section~7]{aoki2022fans} for detailed presentation on tensor algebras and preprojective algebras of $K$-species, see also \cite[Section~4.2]{dst}, \cite{Kul}.
Let $(W, S)$ be a finite crystallographic Coxeter system with associated Coxeter diagram $\Delta$.
We call a tuple $(D_s, {}_{s}M_{s'})_{s, s' \in S}$ a \emph{$K$-species} if $D_i$ is a finite-dimensional division $K$-algebra and ${}_{s}M_{s'}$ is a finitely generated $D_s \otimes_{K} D_{s'}^{\mathrm{op}}$-module.
We refer to \cite[Definition~7.2(b)]{aoki2022fans} for the definition of the Cartan matrix $A$ of a $K$-species $(D_s, {}_{s}{M}_{s'})_{s, s' \in S}$; in particular, if $a_{ss'} = 0$, then ${}_{s}{M}_{s'} = {}_{s'}M_{s} = 0$.

We now fix a symmetrisable crystallographic Cartan matrix $A$ for $(W, S)$ and choose a $K$-species $(D_s, {}_{s}{M}_{s'})_{s, s' \in S}$ with Cartan matrix $A$.
Given a Coxeter element $c \in W$, we say that the species $(D_s, {}_{s}M_{s'})_{s, s' \in S}$ is \emph{oriented according to $c$} if ${}_{s}M_{s'} \neq 0$ 
if and only if $a_{ss'} \neq 0$ and $s$ precedes $s'$ in reduced expressions of~$c$.
Coxeter elements correspond to orientations of the Coxeter diagram and $K$-species of the same orientation have non-zero bimodules for arrows in the forwards direction.
By replacing a bimodule by its opposite, one can reverse the direction of the arrows and swap the order of generators in reduced expressions for the Coxeter element.

To this data, one can associate a \emph{preprojective algebra} $\Pi$ \cite[Definition~7.3]{aoki2022fans}, \cite{gp_preproj,dr_preproj}, which is a finite-dimensional $K$-algebra.
It is invariant under changing the Coxeter element $c$ by replacing bimodules by their opposites.
We obtain further a hereditary tensor algebra $\Lambda_c$ \cite[Section~7.1]{aoki2022fans}, which does depend on the Coxeter element~$c$.
When $(W, S)$ is simply-laced, $c$ determines a quiver $Q$ as an orientation of $\Delta$, and $\Lambda_c$ is isomorphic to the path algebra $KQ$.

\begin{fact}
\label{thm:tors=weak}
Let $(W, S)$ be a finite crystallographic Coxeter system with $c$ a Coxeter element and let $\Pi$ and $\Lambda_c$ be as above.
\begin{enumerate}[label=\textup{(}\arabic*\textup{)}]
    \item \cite[Theorem~2.30]{mizuno-preproj}, \cite[Theorem~7.9]{aoki2022fans}
    There is an order isomorphism $\phi\colon W \to \torf \Pi$ between the weak Bruhat order on $W$ and $\torf \Pi$.\label{op:tors=weak:pi_bij}
    \item \cite[Theorem~4.3, Section~4.4]{it}  
    There is an isomorphism of posets $
    \phi_c \colon W_c \to 
    \torf \Lambda_c$ between the $c$-Cambrian lattice $W_c$ and $\torf \Lambda_c$.\label{op:tors=weak:lambda_bij}
    
    \item \cite[Theorem~1.3]{mizuno2020torsion} \cite[Theorem~7.2]{dirrt} 
    Assume that $W$ is simply-laced.
    There is an ideal $I$ of $\Pi$ such that $\Lambda_c \cong \Pi/I$.
    The maps $\tors \Pi \to \tors \Lambda_c$, $\mathcal{T} \mapsto \mathcal{T} \cap \modules \Lambda_c$ and  $\torf \Pi \to \torf \Lambda_c$, $\mathcal{F} \mapsto \mathcal{F} \cap \modules \Lambda_c$
    are then lattice quotients fitting into a commutative diagram \[
    \begin{tikzcd}
        W \ar[r,"\phi"] \ar[d,"{q_c}"] 
        & \torf \Pi \ar[d] \\
        W_c \ar[r,"{\phi_c}"] 
        & \torf \Lambda_c ,
    \end{tikzcd}
    \]
    with horizontal maps being the above isomorphisms.\label{op:tors=weak:commutativity}
\end{enumerate}
\end{fact}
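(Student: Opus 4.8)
Since Fact~\ref{thm:tors=weak} compiles results established in the cited literature, my plan is to recall the relevant constructions and then point to where the genuine content lies. For part~\ref{op:tors=weak:pi_bij} I would invoke two facts: first, that the preprojective algebra $\Pi$ of a Dynkin-type species is $\tau$-tilting finite, so that every torsion class in $\mod\Pi$ is functorially finite and hence, by Adachi--Iyama--Reiten, corresponds to a support $\tau$-tilting module, whence $\tors\Pi\cong\sttilt\Pi$; and second, Mizuno's bijection, which sends a reduced word $s_{i_1}\cdots s_{i_\ell}$ for $w\in W$ to the $\Pi$-module obtained by successively factoring out the ideals generated by $1-e_{i_j}$, and which is well defined (independent of the chosen reduced word) and intertwines the weak order on $W$ with inclusion of the associated torsion(-free) classes. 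Composing with the torsion/torsion-free anti-equivalence and fixing the orientation of the weak order accordingly yields the stated order isomorphism $\phi\colon W\to\torf\Pi$; the species version is precisely \cite[Theorem~7.9]{aoki2022fans}, building on \cite[Theorem~2.30]{mizuno-preproj}. For part~\ref{op:tors=weak:lambda_bij} I would use that $\Lambda_c$ is hereditary and representation-finite, so again all torsion classes are functorially finite, and cite Ingalls--Thomas \cite[Theorem~4.3, Section~4.4]{it}, who identify the torsion-free classes of $\Lambda_c$ with the $c$-sortable elements of $W$ in a way that carries inclusion to the Cambrian lattice order on $W_c$.

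For part~\ref{op:tors=weak:commutativity}, working in simply-laced type, I would first present $\Pi$ via the double quiver $\overline{Q}$ of the $c$-oriented quiver $Q$ together with the preprojective relations; then $\Lambda_c\cong KQ$ is the quotient of $\Pi$ by the ideal $I$ generated by the arrows of $\overline{Q}$ not lying in $Q$, which gives $\Lambda_c\cong\Pi/I$. Next I would use the general principle that for any surjection of finite-dimensional algebras $\Lambda\twoheadrightarrow\Lambda/J$, restriction of scalars identifies $\mod(\Lambda/J)$ with a full extension-closed subcategory of $\mod\Lambda$ and the assignment $\mathcal F\mapsto\mathcal F\cap\mod(\Lambda/J)$ is a lattice quotient $\torf\Lambda\to\torf(\Lambda/J)$, with the dual statement for $\tors$; this is exactly \cite[Theorem~7.2]{dirrt} (see also \cite[Theorem~1.3]{mizuno2020torsion}).

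It then remains to check that the square commutes, i.e.\ that $\phi_c\circ q_c=(-\cap\mod\Lambda_c)\circ\phi$. Since $q_c$ sends $w$ to $\pi_{\downarrow}^{c}(w)$, the maximal $c$-sortable element below $w$, and $\phi,\phi_c$ are order isomorphisms, it suffices to show that the copy $\torf\Lambda_c\subseteq\torf\Pi$ obtained by restriction of scalars coincides with the image $\phi(W_c)$ of the sublattice of $c$-sortable elements, and that under $\phi$ the retraction $-\cap\mod\Lambda_c$ matches the order-theoretic retraction $\pi_{\downarrow}^{c}$. Both of these follow once the two bijections of parts~\ref{op:tors=weak:pi_bij} and~\ref{op:tors=weak:lambda_bij} are matched on simple modules and on the labelling of the arrows of $\overline{Q}$ by simples; alternatively one can verify them on atoms of the two lattices and extend by the lattice-quotient property.

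I expect the only genuine obstacle to be the commutativity in part~\ref{op:tors=weak:commutativity}: everything else is either a direct quotation of the cited theorems or a routine reconciliation of left/right and torsion/torsion-free conventions, whereas here one must confirm that the representation-theoretic operation $\mathcal F\mapsto\mathcal F\cap\mod\Lambda_c$ genuinely reproduces the combinatorially defined Cambrian quotient $q_c$, and not merely some lattice quotient of $\torf\Pi$.
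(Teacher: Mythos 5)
Your proposal is correct and takes essentially the same approach as the paper: this is stated as a \textbf{Fact}, so the paper itself gives no proof but cites \cite{mizuno-preproj,aoki2022fans} for part~(1), \cite{it} for part~(2), and \cite{mizuno2020torsion,dirrt} for part~(3), exactly as you do, supplemented only by a remark reconciling left/right and torsion/torsion-free conventions. Your closing observation that the genuine content is the identification of the algebraic quotient $\mathcal F\mapsto\mathcal F\cap\mod\Lambda_c$ with the combinatorially defined Cambrian quotient $q_c$ matches the paper's perspective (and indeed the paper later reproves this identification in the non-simply-laced case via its own contraction machinery in \Cref{prop:not_simply_laced}).
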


The statement of \ref{op:tors=weak:pi_bij} in \cite{mizuno-preproj,aoki2022fans} is in terms of support $\tau$-tilting modules, but this can be translated into torsion-free classes by \cite{air,dij}.
We note that in \cite{mizuno2020torsion} part \ref{op:tors=weak:commutativity} is proved more generally for infinite (but still crystallographic and simply-laced)~$W$.

We were not able to find in the literature the analogue of part \ref{op:tors=weak:commutativity} for $W$ non-simply-laced. While it is plausible that the arguments of \cite{mizuno2020torsion} or \cite{dirrt} might generalise with some care, we present in \Cref{subseq:mgs_results} a different argument using our results about contractions of posets. We prove the statement of \ref{op:tors=weak:commutativity} for $W$ possibly non-simply-laced in \Cref{prop:not_simply_laced}

\begin{remark}
As ever, there are many possible choices of different conventions.
Our conventions are based on those of \cite{enomoto_bruhat}.
Note that, in particular, \cite{mizuno-preproj} uses the left weak Bruhat order and right modules, whereas we use the right weak Bruhat order and left modules.
\cite{it} uses torsion classes rather than torsion-free classes, because they are using quiver representations, which are equivalent to right modules.
\end{remark}

\begin{remark}
Aside from the framework of \cite{gp_preproj,dr_preproj}, there is another framework from Geiss, Leclerc, and Schroer \cite{gls_symm_i} where the hereditary algebras $\Lambda_c$ are replaced by $1$-Iwanaga--Gorenstein algebras.
Several parts of \Cref{thm:tors=weak} also hold in this framework.
Indeed, the equivalent of \ref{op:tors=weak:pi_bij} is shown in \cite{fg,murakami}, and the equivalent of \ref{op:tors=weak:lambda_bij} is shown in \cite{gyoda}.
\end{remark}

\subsubsection{Algebraic interpretation of edge labelling}\label{sect:mgs:back:alg_edge_label}

In this subsection, we explain how the edge labelling of $W$ from Subsection~\ref{sect:cox:edge_label} relates to the algebraic interpretation of $W$ we have been considering.
Recall that a module $B \in \modules \Lambda$ is a \emph{brick} if its endomorphism algebra $\End_{\Lambda}B$ is a division ring.

\begin{fact}[{\cite[Theorem~2.2.6]{bcz}, \cite[Theorem~3.3(b)]{dirrt}}]\label{thm:brick_labels}
We have that $\mathcal{F} \supseteq \mathcal{U}$ is a covering relation in $\torf \Lambda$ if and only if $\mathcal{T} \cap \hol{\mathcal{U}} = \Filt (B)$ for a brick $B$.
Moreover, this brick $B$ is unique up to isomorphism.
\end{fact}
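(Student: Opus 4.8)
The statement is quoted from \cite{bcz,dirrt}, so the plan is to recall the structure of the proof rather than to reconstruct every detail; I read the displayed condition as $\mathcal{F}\cap\hol{\mathcal{U}}=\Filt(B)$, the intersection of the torsion-free class $\mathcal{F}$ with the torsion class $\hol{\mathcal{U}}$ paired with $\mathcal{U}$. Write $(\mathcal{T},\mathcal{F})$ and $(\mathcal{S},\mathcal{U})$ for the two torsion pairs, so $\hol{\mathcal{U}}=\mathcal{S}$, and note that $\mathcal{U}\subseteq\mathcal{F}$ is equivalent to $\mathcal{T}\subseteq\mathcal{S}$, with covering relations matching up under the anti-isomorphism $\torf\Lambda\cong\tors\Lambda$. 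Set $\mathcal{W}:=\mathcal{F}\cap\mathcal{S}=\mathcal{F}\cap\hol{\mathcal{U}}$.

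The first step is to record the closure properties of $\mathcal{W}$: it is extension-closed, being an intersection of two extension-closed subcategories, and it is nonzero whenever $\mathcal{T}\subsetneq\mathcal{S}$, since any $M\in\mathcal{S}\setminus\mathcal{T}$ has nonzero $(\mathcal{T},\mathcal{F})$-torsion-free quotient $M/tM$, which lies in $\mathcal{F}$ by construction and in $\mathcal{S}$ because $\mathcal{S}$ is closed under quotients. The crucial input --- and the step I expect to be the main obstacle --- is that $\mathcal{W}$ is in fact a \emph{wide} (abelian, extension-closed) subcategory of $\mod\Lambda$ and that $\mathcal{T}'\mapsto\mathcal{T}'\cap\mathcal{F}$ is an isomorphism of lattices from the interval $[\mathcal{T},\mathcal{S}]\subseteq\tors\Lambda$ onto $\tors(\mathcal{W})$, sending $\mathcal{T}$ to $0$ and $\mathcal{S}$ to $\mathcal{W}$. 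This is the interval/wide-subcategory correspondence of \cite[Section~3]{dirrt}, which I would either cite outright or reprove by the bijection given there; everything else is formal.

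Granting this, $\mathcal{U}\lessdot\mathcal{F}$ in $\torf\Lambda$ is equivalent to $[\mathcal{T},\mathcal{S}]$ having exactly two elements, hence to $\tors(\mathcal{W})=\{0,\mathcal{W}\}$, and it remains to see that a length abelian category $\mathcal{W}$ with only the two trivial torsion classes equals $\Filt(B)$ for a brick $B$ unique up to isomorphism. I would first show $\mathcal{W}$ has a unique simple object up to isomorphism: given two non-isomorphic simples $B_1,B_2$, the full subcategory of objects all of whose $\mathcal{W}$-composition factors are $\cong B_1$ is a torsion class of $\mathcal{W}$ strictly between $0$ and $\mathcal{W}$, a contradiction. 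Calling this simple $B$, Schur's lemma gives that $\End_\Lambda(B)=\End_\mathcal{W}(B)$ is a division ring, so $B$ is a brick; every object of $\mathcal{W}$ has all $\mathcal{W}$-composition factors $\cong B$, hence by exactness of $\mathcal{W}\hookrightarrow\mod\Lambda$ admits a filtration by copies of $B$, giving $\mathcal{W}\subseteq\Filt(B)$, while $\Filt(B)\subseteq\mathcal{W}$ by extension-closure; so $\mathcal{W}=\Filt(B)$, with $B$ unique since it is the unique simple.

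For the reverse implication, suppose $\mathcal{U}\subseteq\mathcal{F}$ with $\mathcal{F}\cap\hol{\mathcal{U}}=\Filt(B)$ for a brick $B$. Then $\mathcal{F}\neq\mathcal{U}$, as otherwise $\mathcal{F}\cap\hol{\mathcal{U}}=\mathcal{U}\cap\hol{\mathcal{U}}=0$; and $\tors(\Filt(B))=\{0,\Filt(B)\}$ because $\Filt(B)$ is a length category with single simple $B$, so transporting back through the lattice isomorphism above shows $[\mathcal{T},\mathcal{S}]$ has two elements and hence $\mathcal{U}\lessdot\mathcal{F}$. An alternative to invoking the wide-subcategory correspondence would be the more hands-on approach of \cite{bcz}, which produces $B$ directly as a module minimal with respect to lying in $\mathcal{S}\setminus\mathcal{T}$ and checks by elementary arguments that such a module is a brick and that any two such are isomorphic.
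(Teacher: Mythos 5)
This statement is labelled a \emph{Fact} in the paper precisely because it is quoted from the literature (\cite[Theorem~2.2.6]{bcz}, \cite[Theorem~3.3(b)]{dirrt}) without proof, so there is no in-paper argument to compare against. Your reconstruction is correct and matches the structure of the cited sources. A few observations.

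You correctly decode the statement: the displayed ``$\mathcal{T}$'' is a typo for ``$\mathcal{F}$'' (the paper itself switches notation in the paragraph immediately following the Fact, and even swaps $\hol{\mathcal{U}}$ for $\hor{\mathcal{U}}$ while defining the latter as the right perp, which is the convention appropriate to torsion classes, not torsion-free ones). Your reading, $\mathcal{F}\cap\hol{\mathcal{U}}$ with $\hol{\mathcal{U}}$ the torsion class paired to the torsion-free class $\mathcal{U}$, is the intended one.

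Your argument is sound. The ``only if'' direction rests on (a) $\mathcal{W}=\mathcal{F}\cap\hol{\mathcal{U}}$ being extension-closed and nonzero via $M\mapsto M/tM$ for $M\in\hol{\mathcal{U}}\setminus\hol{\mathcal{F}}$, (b) the interval/wide-subcategory correspondence from \cite[Section~3]{dirrt} reducing the claim to showing that a length abelian category with only the trivial torsion classes is $\Filt(B)$ for a unique simple $B$, and (c) the elementary observation that the objects with a single fixed composition factor form a torsion class of $\mathcal{W}$. Each step is fine; the exactness of $\mathcal{W}\hookrightarrow\modules\Lambda$ that you invoke to turn $\mathcal{W}$-composition series into $\modules\Lambda$-filtrations is automatic since a wide subcategory has the same kernels and cokernels as the ambient category. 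The ``if'' direction also goes through, reading the correspondence backwards from $\tors(\Filt(B))=\{0,\Filt(B)\}$.

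One honest caveat you already flag but is worth stressing: you label ``$\mathcal{W}$ is wide and $[\hol{\mathcal{F}},\hol{\mathcal{U}}]\cong\tors(\mathcal{W})$'' as the crucial input and defer it to \cite[Section~3]{dirrt}. That step does essentially all the work, and it is not formal; in particular, not every interval $[\mathcal{T},\mathcal{S}]$ in $\tors\Lambda$ has $\mathcal{S}\cap\mathcal{T}^{\perp}$ wide, so one needs the specific results for covering relations (or, for the converse, the implication that if $\mathcal{S}\cap\mathcal{T}^\perp$ is wide then the interval is order-isomorphic to its lattice of torsion classes). Your alternative via \cite{bcz} --- take a nonzero object of $\mathcal{W}$ minimal with respect to dimension, check directly that it is a brick $B$ and that $\mathcal{W}=\Filt(B)$ --- is closer in spirit to the original argument and avoids invoking the full wide-interval machinery; it is probably the cleaner self-contained route if you wanted to expand the sketch into a complete proof.
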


Here $\Filt$ and $\hor{\mathcal{U}}$ are defined as follows. We have
\[\hor{\mathcal{U}} := \{M \in \modules \Lambda \st \Hom_{\Lambda}(U, M) = 0 \text{ for all } U \in \mathcal{U}\}.\]
Given a full subcategory $\mathcal{C}$ of $\modules \Lambda$, $\Filt(\mathcal{C})$ is the full subcategory of $\modules \Lambda$ consisting of modules $M$ with a finite filtration \[M = M_0 \supset M_1 \supset \dots \supset M_{l - 1} \supset M_l = 0\] such that $M_{i - 1}/M_{i} \in \additive \mathcal{C}$ for all $1 \leqslant i \leqslant l$.
Here $\additive \mathcal{C}$ is the full subcategory of $\modules \Lambda$ consisting of direct summands of finite direct sums of objects in $\mathcal{C}$.

Hence, we have a set-valued edge labelling $\lambda_{\mathsf{brick}} \colon \covrel{\torf \Lambda} \to \brick \Lambda$ for any finite-dimensional algebra $\Lambda$ given by sending $\mathcal{U} \lessdot \mathcal{T}$ to the unique brick $B$ such that $\mathcal{T} \cap \hor{\mathcal{U}} = \Filt (B)$.
This is called the \emph{brick labelling}.
By \cite[Theorem~3.11]{dirrt}, the brick labelling is forcing-consistent.

Consider now the brick labelling of $\torf \Pi$. 
We have a commutative diagram \[
\begin{tikzcd}
    \covrel{\torf \Pi} \ar[r,"{\phi^{-1}}"] \ar[d,"{\lambda_{\mathsf{brick}}}"] & \covrel{W} \ar[d,"\lambda"] \\
    \brick \Pi \ar[r,"\dimu"] & \Phi^{+}.
\end{tikzcd}
\]
This is shown in \cite[Proposition~4.3]{enomoto_bruhat} for simply-laced types and the arguments generalise to non-simply-laced types using \cite[Remark~1.7, Proposition~4.5]{dst}.
Here $\dimu$ denotes the dimension vector of a species representation \cite[p.391]{dlab-ring}, which generalises that of a quiver representation \cite[Definition~III.3.1]{ass}.
Dimension vectors $\dimu B$ of indecomposable $\Pi$-modules $B$ give positive roots $\dimu B = \beta \in \Phi^{+}$ by interpreting $\dimu B$ as the vector of coefficients of simple roots in the representation of $\beta$ as a positive sum of simple roots.
It is worth also recalling Gabriel's Theorem at this juncture, which gives that $B \mapsto \dimu B$ gives a bijection between indecomposable $\Lambda_c$-modules and positive roots \cite{bgp,gabriel,dlab-ring}.

\subsubsection{Maximal green sequences}

A \emph{maximal green sequence} is a finite maximal chain in $\torf \Lambda$.
Hence, when $\torf \Lambda$ is finite, as it will be for the cases we consider in this paper, maximal green sequences will be precisely maximal chains in $\torf \Lambda$.
Maximal green sequences are usually defined to be finite maximal chains in $\tors \Lambda$, but our definition is equivalent due to the canonical anti-isomorphisms $\torf \Lambda \longleftrightarrow \tors \Lambda$.
These anti-isomorphisms also preserve edge labels, so working with torsion-free classes is entirely equivalent to working with torsion classes.
We denote the set of maximal green sequences of $\Lambda$ by~$\mgs{\Lambda}$.
In this subsection, we explain the partial orders on equivalence classes of maximal green sequences that were introduced in \cite{gw1}, where more extensive background on maximal green sequences can be found.
Maximal green sequences were originally introduced by Keller using a combinatorial definition in terms of quivers \cite{kel-green}, although they were already implicit in the physics literature \cite{ccv}, and were later interpreted in terms of torsion classes by Nagao \cite{nagao}.

The brick labelling gives us the following notion of maximal green sequence, which will be useful in this paper.

\begin{definition}
A sequence of bricks \[B_1, B_2, \dots, B_r\] is then called \emph{backwards $\Hom$-orthogonal} if $\Hom_{\Lambda}(B_j, B_i) = 0$ for $i < j$.
A \emph{maximal} backwards $\Hom$-orthogonal sequence of bricks is one which cannot be extended to a longer such sequence.
\end{definition}

It is shown in \cite{dem-kel} that maximal green sequences are equivalent to finite maximal backwards $\Hom$-orthogonal sequences of bricks by sending a maximal green sequence to its sequence of brick labels, generalising \cite{igusa_mgs}.

Reduced words for the longest element $w_{0}$ of $W$ are equivalent to maximal chains in the weak Bruhat order on $W$, and hence are equivalent to maximal green sequences of $\Pi$ by \Cref{thm:tors=weak}\ref{op:tors=weak:pi_bij}.
For $W$ simply-laced, Enomoto \cite[Proposition~4.3]{enomoto_bruhat} shows that if $B_1, B_2, \dots, B_r$ is a maximal green sequence of $\Pi$, then $\dimu B_1, \dimu B_2, \dots, \dimu B_r$ is a root sequence for $w_0$ (see also \cite{airt}); his arguments generalise to 
the non-simply-laced types by using \cite[Remark~1.7, Proposition~4.5]{dst}.

As shown in \cite{gw1}, the set of maximal green sequences of $\Lambda$ forms a poset when subject to the square-equivalence relation.

\begin{definition}\label{def:mgs_poset}
Two maximal green sequences $\mathcal{G}$ and $\mathcal{G}$ are \emph{equivalent} if they are equivalent as maximal chains of $\torf \Lambda$, that is, related by a sequence of square moves.
In \cite{gw1}, several other interpretations of this equivalence relation were given.

The partial order from \cite{gw1} is defined as follows.
We say that a maximal green sequence $\mathcal{G}$ is an \emph{increasing elementary polygonal deformation} 
of a maximal green sequence $\mathcal{G}'$ if $\mathcal{G}'$ is \[B_1, B_2, \dots, B_r,\] while $\mathcal{G}$ is \[B_1, \dots, B_{i - 1}, B_{i + 1}, B'_1, B'_2, \dots, B'_l, B_i, \dots, B_{i + 2}, \dots, B_r,\] where $l \geqslant 1$.
Note that an increasing elementary polygonal deformation decreases the length of a maximal green sequence.
By extension, we also say that $[\mathcal{G}]$ is an increasing elementary polygonal deformation of $[\mathcal{G}']$ here.
The partial order is then defined by having covering relations $[\mathcal{G}] \lessdot [\mathcal{G}']$.
Two other partial orders were defined in \cite{gw1}, conjecturally equivalent to this one.
We denote the poset of equivalence classes of maximal green sequences of $\Lambda$ by~$\mgse{\Lambda}$.
\end{definition}

In \Cref{prop:mgs_poset_iso}, we will relate increasing polygonal deformations with the increasing polygon moves of \Cref{def:increasing_polygon_move}.

\begin{remark}
We choose to work within the framework of \cite{gp_preproj,dr_preproj} rather than that of \cite{gls_symm_i}. Some relevant results established in the former setting (in the Dynkin case) use the fact that the classes of bricks and indecomposable $\tau$-rigid modules coincide. This is not the case in the latter setting, where the distinction between algebraic realisations of roots and scaled coroots is more apparent, see \cite{geiss2020rigid}. 
\end{remark}

\subsection{Results}
\label{subseq:mgs_results}

We now explain the applications of the results of the previous sections to the partial orders on the equivalence classes of maximal green sequences of $\Pi$ and $\Lambda_c$ from \Cref{def:mgs_poset}.

\subsubsection{Posets of maximal green sequences}

We first show that the partial order on the equivalence classes of maximal green sequences of $\Lambda_c$ given by the edge labelling $\lambda_c$ coincides with the partial order defined in \Cref{def:mgs_poset}.

\begin{proposition}\label{prop:mgs_poset_iso}
The isomorphism of posets $\torf \Lambda_c \cong W_c$ induces an isomorphism of posets $\mgse{\Lambda_c} \cong \mgea{\lambda_c}{W_c}$.
\end{proposition}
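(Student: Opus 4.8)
The plan is to show that the poset isomorphism $\phi_c \colon W_c \to \torf\Lambda_c$ from \Cref{thm:tors=weak}\ref{op:tors=weak:lambda_bij} carries the edge labelling $\lambda_c = \lambda_{\theta_c}^{\mathbf{w}_0(c)}$ on $\covrel{W_c}$ to the brick labelling $\lambda_{\mathsf{brick}}$ on $\covrel{\torf\Lambda_c}$, up to the identification of positive roots with dimension vectors of indecomposable $\Lambda_c$-modules furnished by Gabriel's theorem. Since square-equivalence of maximal chains is defined purely in terms of the lattice (square moves across squares), any poset isomorphism $W_c \cong \torf\Lambda_c$ automatically induces a bijection $\mge{W_c} \cong \mge{\torf\Lambda_c} = \mgs{\Lambda_c}/\!\!\sim$ on square-equivalence classes. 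What must be checked is that this bijection is an isomorphism of the \emph{preordered} sets $\mgea{\lambda_c}{W_c}$ and $\mgse{\Lambda_c}$, i.e. that increasing polygon moves on one side correspond exactly to increasing elementary polygonal deformations on the other, and then invoke \Cref{prop:cambrian_partial_order} (or \Cref{lem:polygon-complete_poset}) to know both preorders are genuine partial orders so that an order-preserving order-reflecting bijection is an isomorphism.

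First I would establish the compatibility of labellings. The brick labelling of $\torf\Lambda_c$ sends a covering relation $\mathcal{U}\lessdot\mathcal{T}$ to the unique brick $B$ with $\mathcal{T}\cap\hol{\mathcal{U}} = \Filt(B)$ (\Cref{thm:brick_labels}). On the $W$-side we already have, from Section~\ref{sect:mgs:back:alg_edge_label}, the commutative square relating $\lambda_{\mathsf{brick}}$ on $\torf\Pi$, $\lambda$ on $\covrel W$, and $\dimu$. Using the commutative diagram of \Cref{thm:tors=weak}\ref{op:tors=weak:commutativity} (in the simply-laced case) together with the fact that the Cambrian quotient of lattices is induced by the algebra quotient $\Pi\to\Lambda_c$, and that for a covering relation of $\torf\Lambda_c$ its brick label is the brick label of any covering relation of $\torf\Pi$ lying above it that is not contracted (this is exactly how the quotient edge labelling $\lambda_\theta$ of \Cref{def:quotient_edge_labelling} is built, and it matches the construction of $\lambda_{\mathsf{brick}}$ on a quotient lattice via \Cref{lem:cov_rel_labels_equal} and \cite[Theorem~3.11]{dirrt}), one deduces that $\phi_c$ intertwines $\lambda_c$ with $\lambda_{\mathsf{brick}}$ as posets, the target posets $\heap{\mathbf{w}_0(c)}$ and $\brick\Lambda_c$ being identified via $\dimu$. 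For the non-simply-laced case one uses \cite[Remark~1.7, Proposition~4.5]{dst} and \cite[Proposition~4.3]{enomoto_bruhat} in the same way as in Section~\ref{sect:mgs:back:alg_edge_label}.

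Next I would match up the two notions of ``increasing move''. A polygon $P$ in $W_c$ has, by \Cref{lem:length_two_max_chain}, a length-two maximal chain which (when $P$ is non-square) is the descending one with respect to $\lambda_c$; transported through $\phi_c$, a polygon in $\torf\Lambda_c$ thus has a descending chain $\mathcal{U}\lessdot\mathcal{T}\lessdot\mathcal{W}$ of length two and an ascending chain of length $l+1\geqslant 2$ through bricks $B, B'_1,\dots,B'_{l}$. An increasing polygon move replaces the ascending chain $\ldots, B_{i-1}, (\text{ascending chain of }P), B_{i}, \ldots$ in a maximal chain by the descending chain; reading off the brick-label sequences, the sequence $B_1,\dots,B_{i-1},B,B'_1,\dots,B'_l,B_i,\dots,B_r$ is replaced by $B_1,\dots,B_{i-1},B''_1,B''_2,B_i,\dots,B_r$ where $B''_1,B''_2$ are the two labels of the length-two chain. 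Comparing with \Cref{def:mgs_poset}, this is precisely the shape of an increasing elementary polygonal deformation (with the roles of $\mathcal{G}$, $\mathcal{G}'$ as there), once one checks the brick sequences are backwards $\Hom$-orthogonal, which holds because the brick sequence of any maximal chain of $\torf\Lambda_c$ is such a sequence by \cite{dem-kel}. Conversely an elementary polygonal deformation of maximal green sequences is, by \cite{dem-kel} and \Cref{thm:brick_labels}, realised by a polygon move of maximal chains, and since it strictly shortens the chain and respects the labelling it must be an \emph{increasing} polygon move in $\mgea{\lambda_c}{W_c}$. Hence the induced bijection $\mgea{\lambda_c}{W_c}\to\mgse{\Lambda_c}$ sends covering relations to covering relations in both directions, and as both sides are posets (\Cref{prop:cambrian_partial_order} and \cite[Theorem~(...)]{gw1}) it is an isomorphism of posets.

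The main obstacle I anticipate is the bookkeeping in the labelling-compatibility step: one needs to be genuinely careful that the quotient edge labelling $\lambda_{\theta_c}^{\mathbf{w}_0(c)}$ on $W_c$ agrees, under $\phi_c$, with the ``intrinsic'' brick labelling of $\torf\Lambda_c$ (the latter defined directly on $\torf\Lambda_c$, not as a quotient), and that the identification of label posets $\heap{\mathbf{w}_0(c)} \cong \brick\Lambda_c$ is the right one and is order-compatible — in particular that the partial order on $\brick\Lambda_c$ coming from the heap (i.e. from homomorphism directions in the AR quiver, cf. \Cref{rmk:combinatorial_ar_quiver}) is the one making $\lambda_{\mathsf{brick}}$ a \emph{poset}-valued labelling inducing the same increasing-move preorder. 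The simply-laced case follows cleanly from \Cref{thm:tors=weak}\ref{op:tors=weak:commutativity} and the diagram of Section~\ref{sect:mgs:back:alg_edge_label}; the non-simply-laced case requires threading the species-theoretic references \cite{dst, enomoto_bruhat, aoki2022fans} through the same argument, and is the part most likely to need extra care in the writeup.
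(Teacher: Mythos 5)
Your proof is correct and follows essentially the same route as the paper: both reduce to showing that increasing polygon moves in $\mgea{\lambda_c}{W_c}$ correspond to increasing elementary polygonal deformations in $\mgse{\Lambda_c}$, and both lean on \Cref{lem:length_two_max_chain} to observe that the descending chain of any non-square polygon in $W_c$ has length two, so that ``replacing the ascending chain by the descending chain'' coincides with ``replacing the longer side by the length-two side''.

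The one genuine difference is that you devote a substantial first step to verifying that $\phi_c$ intertwines $\lambda_c$ with the brick labelling $\lambda_{\mathsf{brick}}$, identifying $\heap{\mathbf{w}_0(c)}$ with $\brick\Lambda_c$ as posets. This is not needed. The order $\mgse{\Lambda_c}$ from \Cref{def:mgs_poset} is defined purely combinatorially by elementary polygonal deformations of brick \emph{sequences} — i.e.\ by comparing chain lengths in polygons of the lattice $\torf\Lambda_c$ — and makes no reference to a poset structure on $\brick\Lambda_c$. Similarly, $\mgea{\lambda_c}{W_c}$ is built from polygon moves in $W_c$. Since any poset isomorphism $W_c \cong \torf\Lambda_c$ already carries polygons to polygons, the only thing that needs checking is that ``ascending w.r.t.\ $\lambda_c$'' picks out the longer side of each non-square polygon, which is exactly what \Cref{lem:length_two_max_chain} gives. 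The paper therefore never verifies labelling compatibility here (this kind of compatibility is instead implicit in \Cref{sect:mgs:back:alg_edge_label} and used where it is actually needed, e.g.\ in \Cref{prop:preproj_bricks} and \Cref{prop:not_simply_laced}). Your closing worry — that this compatibility step is the delicate part, especially in non-simply-laced type — is thus a worry about a step you did not actually need to make. One further small point: where you write the ascending chain has length $l+1 \geqslant 2$, for a non-square polygon this should be $\geqslant 3$ (two length-two sides would make it a square); this does not affect the argument. Where the paper cites \cite[Lemma~4.2]{gw1} to translate the polygon move into an elementary polygonal deformation, you substitute \cite{dem-kel} and \Cref{thm:brick_labels}; that is a harmless alternative route to the same translation.
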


\begin{proof}
Since $\torf \Lambda_c \cong W_c$ by \Cref{thm:tors=weak}\ref{op:tors=weak:lambda_bij}, it follows that there is a bijection between their sets of square-equivalence classes of maximal chains, $\mge{\torf \Lambda_c} \cong \mgse{\Lambda_c}$ and $\mge{W_c}$.
Hence, all we need to do is show that the relations on these sets coincide.
The relations on the former are generated by increasing polygon moves and the relations on the latter are generated by increasing elementary polygonal deformations.
Hence, it suffices to show that increasing elementary polygonal deformations coincide with increasing polygon moves for maximal green sequences of $\Lambda_c$.

We first show that every increasing polygon move in $\mgea{\lambda_c}{W_c}$ gives an increasing elementary polygonal deformation in $\mgse{\Lambda_c}$.
An increasing polygon move is given by replacing a chain in a non-square polygon $P$ which is ascending with respect to $\heap{\mathbf{w}_0(c)}$ by the other chain in the polygon, which is in fact descending.
By \Cref{lem:length_two_max_chain}, we have that the descending chain of $P$ has length two, and so by \cite[Lemma~4.2]{gw1}, we have that this gives an increasing polygonal deformation in $\mgse{\Lambda_c}$.

By running this logic in reverse, we see that every increasing elementary polygonal deformation in $\mgse{\Lambda_c}$ gives an increasing polygon move in $\mgea{\lambda_c}{W_c}$.
Indeed, increasing elementary polygonal deformations in $\mgse{\Lambda_c}$ are given by replacing a chain of a non-square polygon of length $\geqslant 2$ with a chain of length two.
By \Cref{lem:length_two_max_chain} that the chain of length two must be the descending chain with respect to the edge labelling, and so we have an increasing polygon move in $\mgea{\lambda_c}{W_c}$.
\end{proof}

\begin{corollary} \label{cor:minima_and_maxima}
The poset $\mgea{\lambda_c}{W_c}$ has a unique minimum given by $\mathbf{w}_0(c)$. It also has a maximum given by $\mathbf{w}_0(c^{-1})$. If $W$ is of type $A$ and $c$ induces the linear orientation, then this maximum is unique.
\end{corollary}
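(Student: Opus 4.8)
The plan is to bootstrap from \Cref{prop:mgs_poset_iso}, which identifies $\mgea{\lambda_c}{W_c}$ with the poset $\mgse{\Lambda_c}$ of equivalence classes of maximal green sequences of $\Lambda_c$, and then transport along this isomorphism the results of \cite{gw1}: there it is shown that $\mgse{\Lambda_c}$ has a unique minimum and a maximum, and that when $\Lambda_c$ is a Nakayama algebra --- in particular the path algebra of linearly oriented $A_n$, i.e.\ type~$A$ with the linear orientation --- the maximal green sequence realising the maximum is determined by its underlying set of brick labels (cf.\ the remark after \Cref{cor:stability}). Granting these facts, what remains is to identify the minimum of $\mgea{\lambda_c}{W_c}$ with $[\mathbf{w}_0(c)]$ and its maximum with $\mgtmap{q_c}([\mathbf{w}_0(c^{-1})])$, the image under the Cambrian quotient of the maximal chain of $W$ given by the reduced word $\mathbf{w}_0(c^{-1})$.

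First I would treat the minimum. As recalled in the proof of \Cref{lem:rank_2_heap}, the $c$-sorting word $\mathbf{w}_0(c)$ is a maximal chain of $W$ all of whose elements are $c$-sortable; since these elements already form a chain from $e$ to $w_0$ inside the sublattice $W_c$, with each edge a cover in $W$ and hence a cover in $W_c$, it is a maximal chain of $W_c$, and its $\lambda_c$-root sequence orders the positive roots (that survive $q_c$) exactly as $\heap{\mathbf{w}_0(c)}$ does --- and so does that of any square-equivalent chain, these having the same heap poset. Hence no representative of $[\mathbf{w}_0(c)]$ contains the chain of a non-square polygon of $W_c$ that is descending for $\lambda_c$ (such a chain would reverse the order of a non-commutative rank-two subsystem), so $[\mathbf{w}_0(c)]$ is never an increasing polygon move of another class and is therefore a minimal element; being a minimal element in a poset with a unique minimum, it is that minimum.

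For the maximum the crucial input is the standard fact that reversing the Coxeter element reverses the heap poset, $\heap{\mathbf{w}_0(c^{-1})} = \heap{\mathbf{w}_0(c)}^{\mathrm{op}}$, which I would either read off from the identification of $\heap{\mathbf{w}_0(c)}$ with an Auslander--Reiten quiver (\cite{bedard}, \cite[Theorem~9.3.1]{stump2015cataland}) or prove directly from the reversal symmetry of admissible orders in \Cref{lem:admissible_orders}. It then follows that the chain $\mathbf{w}_0(c^{-1})$ of $W$ orders every non-commutative rank-two root subsystem oppositely to $\heap{\mathbf{w}_0(c)}$, so $\inva{\mathbf{w}_0(c)}{\mathbf{w}_0(c^{-1})} = \rtss{\Phi}$ is the largest possible inversion set; combining \Cref{prop:inv<=>equiv} and \Cref{prop:inv_cov_rels} with an induction that adds one inverted rank-two subsystem at a time --- exactly the consecutive-reversal procedure from the proof of \Cref{prop:inv_cov_rels} --- I would conclude that $[\mathbf{w}_0(c^{-1})]$ is the maximum of $\mgea{\lambda^{\mathbf{w}_0(c)}}{W}$. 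Pushing this forward along the surjective order-preserving map $\mgtmap{q_c}$ of \Cref{thm:cambrian_dim2_contraction} then shows $\mgtmap{q_c}([\mathbf{w}_0(c^{-1})])$ is the maximum of $\mgea{\lambda_c}{W_c}$, which by \cite{gw1} is the maximum already supplied. Should the induction prove awkward there is a shortcut: since \cite{gw1} already guarantees a maximum of $\mgea{\lambda_c}{W_c}$ exists, it is enough to see that $\mgtmap{q_c}([\mathbf{w}_0(c^{-1})])$ is a maximal element --- which is clear, because its $\lambda_c$-root sequence orders every surviving non-commutative rank-two subsystem in decreasing $\heap{\mathbf{w}_0(c)}$-order and hence admits no increasing polygon move --- and a maximal element of a poset that has a maximum is that maximum. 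Finally, the uniqueness clause for type~$A$ with linear orientation is exactly the Nakayama uniqueness of \cite{gw1}, transported along \Cref{prop:mgs_poset_iso}.

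The step I expect to be the main obstacle is the identification of the maximum: precisely matching the lattice-side chain ``$\mathbf{w}_0(c^{-1})$'' with the representation-theoretic description of the maximum of $\mgse{\Lambda_c}$ in \cite{gw1}, together with making the heap-reversal statement $\heap{\mathbf{w}_0(c^{-1})} = \heap{\mathbf{w}_0(c)}^{\mathrm{op}}$ and the ``global upper bound'' claim for $[\mathbf{w}_0(c^{-1})]$ fully rigorous, rather than merely establishing that it is a maximal element.
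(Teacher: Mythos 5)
Your high-level route is the same as the paper's, which disposes of the corollary by transporting \cite[Prop.\ 4.20, Prop.\ 4.21, Cor.\ 5.17]{gw1} along the isomorphism of \Cref{prop:mgs_poset_iso} in one sentence; what you add is an explicit identification of the two distinguished chains, which is the ``translation'' that the paper leaves implicit. Your minimum argument is sound. The gap is in the maximum.

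Your primary argument --- an induction adding one inverted rank-two subsystem at a time, via the consecutive-reversal procedure of \Cref{prop:inv_cov_rels}, to conclude that $[\mathbf{w}_0(c^{-1})]$ is the greatest element of $\mgea{\lambda^{\mathbf{w}_0(c)}}{W}$ --- would, if it went through, show that $\mgea{\lambda^{\mathbf{w}_0(c)}}{W}$ has a unique maximum and that its order coincides with single-step inclusion of inversion sets. Both of those are stated as open in Problem~\ref{pbm:coxeter_inclusion}. \Cref{prop:inv_cov_rels} characterises covers that do exist; it does not supply a cover \emph{above} an arbitrary $[C]$ with $\inva{\mathbf{w}_0(c)}{[C]}$ proper, which is exactly what your induction requires. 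Your shortcut has a separate problem: the corollary's ``a maximum'' (contrasted with ``a unique minimum'' and, later, ``this maximum is unique'') means a distinguished maximal element, not a greatest element. As Remark~\ref{rem:unique_maxima} spells out, \cite[Prop.\ 4.20]{gw1} produces a unique maximum only for two \emph{other} partial orders that are merely conjecturally equal to the one considered here, so the premise ``\cite{gw1} already guarantees a maximum of $\mgea{\lambda_c}{W_c}$ exists'' does not hold, and ``a maximal element of a poset that has a maximum is that maximum'' has nothing to grip on. What you should keep is just your core observation: since $\heap{\mathbf{w}_0(c^{-1})} = \heap{\mathbf{w}_0(c)}^{\mathrm{op}}$, the chain $\mgmap{q_c}(\mathbf{w}_0(c^{-1}))$ orders every surviving non-commutative rank-two subsystem in reverse $\heap{\mathbf{w}_0(c)}$ order, so by \Cref{lem:length_two_max_chain} it must contain the length-two (descending) side of every non-square polygon it meets and admits no increasing polygon move --- hence it is a maximal element, and that is all the second sentence of the corollary asserts. (For the linear type~$A$ uniqueness clause, the precise citation is \cite[Cor.\ 5.17]{gw1}, not the post-\Cref{cor:stability} remark about determination by underlying brick sets, which is a different statement.)
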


\begin{proof}
This is a translation of \cite[Proposition 4.20, Proposition 4.21]{gw1} for general $\Lambda_c$, and further of \cite[Proposition 4.20, Corollary 5.17]{gw1} in linearly oriented type $A$,  via the isomorphism from \Cref{prop:mgs_poset_iso}.
\end{proof}

\begin{remark}
\label{rem:unique_maxima}
In fact, we conjecture in \cite[Conjecture 4.11]{gw1} that the partial order on $\mgse{\Lambda}$ that we consider in the present paper, for an arbitrary finite-dimensional algebra $\Lambda$, coincides with two other partial orders on the same underlying set. For those orders, in the generality covering $\Lambda = \Lambda_c$, \cite[Proposition 4.20]{gw1} implies the existence of a unique maximum. To summarize, validity \cite[Conjecture 4.11]{gw1} would imply that $\mathbf{w}_0(c^{-1})$ is the unique maximum of 
$\mgea{\lambda_c}{W_c}$ for an arbitrary finite crystallographic $W$
and an arbitrary $c$.
\end{remark}

The following proposition shows that equivalence classes of maximal green sequences for preprojective algebras of 
Dynkin type are determined by their bricks. 
We know from \cite[Example~3.26]{gw1} that this does not hold for general algebras.
Indeed, \cite[Example~3.26]{gw1} gives a counter-example for an algebra $\Lambda_c$ for $c$ a Coxeter element in type $A_3$.
Given a maximal green sequence $\mathcal{G}$ of a finite-dimensional algebra $\Lambda$, we write $\bricks{\mathcal{G}}$ for the set of brick labels of $\mathcal{G}$ as a maximal chain of $\torf \Lambda$.
Equivalently, $\bricks{\mathcal{G}}$ is the set of bricks of $\mathcal{G}$ when realised as a maximal backwards $\Hom$-orthogonal sequence of bricks.

\begin{proposition}\label{prop:preproj_bricks}
Let $\mathcal{G}$ and $\mathcal{G}'$ be two maximal green sequences of $\Pi$. If $\bricks{\mathcal{G}} = \bricks{\mathcal{G}'}$, then $\mathcal{G}$ and $\mathcal{G}'$ are equivalent.
\end{proposition}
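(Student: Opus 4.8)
The plan is to translate the statement about bricks of maximal green sequences of $\Pi$ into a statement about root sequences of maximal chains in the weak Bruhat order on $W$, and then invoke the combinatorial results already established. Recall from \Cref{thm:tors=weak}\ref{op:tors=weak:pi_bij} that there is an order isomorphism $\phi \colon W \to \torf \Pi$, so maximal green sequences of $\Pi$ correspond to maximal chains in the weak Bruhat order on $W$, and this correspondence sends $\mathcal{G}$ and $\mathcal{G}'$ to maximal chains $C$ and $C'$ of $W$. By the commutative diagram in \Cref{sect:mgs:back:alg_edge_label}, the brick label of a covering relation in $\torf \Pi$ corresponds under $\phi^{-1}$ to the positive root $\beta = \dimu B$ which is the edge label $\lambda(v, w)$ of the corresponding covering relation $v \lessdot w$ in $W$. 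Since $B \mapsto \dimu B$ is a bijection between indecomposable $\Pi$-modules and positive roots (Gabriel's Theorem, as recalled after \Cref{thm:brick_labels}), and bricks are in particular indecomposable, the set $\bricks{\mathcal{G}}$ is in bijection with the set of edge labels of $C$, which is all of $\Phi^{+}$ — so as sets these carry no information. The content of the hypothesis $\bricks{\mathcal{G}} = \bricks{\mathcal{G}'}$ must therefore be read correctly: since every positive root occurs exactly once as an edge label along any maximal chain, knowing $\bricks{\mathcal{G}}$ as a set really means knowing, for each brick $B$ of $\Pi$, that $B$ occurs in $\mathcal{G}'$ too, which it always does.

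Given this, the proposition as I read it must instead concern the \emph{sequence} of bricks, or equivalently the dimension vectors must force the heap: the point is that two maximal green sequences of $\Pi$ whose brick \emph{sets} coincide (which is automatic) are square-equivalent precisely when... no — this cannot be right as stated, so the intended reading must be that $\bricks{\mathcal{G}} = \bricks{\mathcal{G}'}$ is a condition on multisets with multiplicity, but each brick appears once. The resolution is that for $\Pi$ the relevant structure is richer: distinct indecomposable $\Pi$-modules can share the same dimension vector (unlike for hereditary $\Lambda_c$), so $\bricks{\mathcal{G}}$ as a set of actual modules contains strictly more data than the set of roots. Thus the plan is: first, show that the set of bricks $\bricks{\mathcal{G}}$ of a maximal green sequence of $\Pi$ determines, for each non-commutative rank-two root subsystem $\Psi \subseteq \Phi$, which of the two admissible orders on $\Psi^{+}$ is used by the corresponding maximal chain $C$. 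Second, conclude via \Cref{prop:inv<=>equiv} — which says $\inva{\mathbf{w}_0}{C} = \inva{\mathbf{w}_0}{C'}$ iff $C$ and $C'$ are square-equivalent — that $\mathcal{G}$ and $\mathcal{G}'$ are equivalent.

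Concretely, the key step is the following. Fix a non-commutative rank-two subsystem $\Psi$ with positive roots $\beta_1, \dots, \beta_m$. By \Cref{thm:coxeter}\ref{op:coxeter:polygons}, the covering relations of $C$ labelled by $\Psi^{+}$ form, within the polygon they span, a chain ordering $\Psi^{+}$ either as $(\beta_1, \dots, \beta_m)$ or as $(\beta_m, \dots, \beta_1)$; and $\inva{\mathbf{w}_0}{C}$ records exactly which subsystems get the reversed order. So I must recover this orientation from $\bricks{\mathcal{G}}$ alone. The idea is that the bricks of $\Pi$ with dimension vectors in $\Psi^{+}$, together with the $\Hom$-spaces among them (which are encoded by the backwards $\Hom$-orthogonality structure of the maximal green sequence, and hence by which bricks lie in $\bricks{\mathcal{G}}$ — here one uses that $\bricks{\mathcal{G}}$ is a \emph{maximal} backwards $\Hom$-orthogonal sequence, so its set of bricks determines the relative $\Hom$-order up to commutation), pin down a unique order on $\Psi^{+}$ compatible with the admissibility condition of \Cref{lem:admissible_orders}. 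Equivalently and more cleanly: by \cite[Proposition~4.3]{enomoto_bruhat} and the commutative diagram, $\dimu$ sends $\bricks{\mathcal{G}}$ to a root sequence for $w_0$ up to reordering, and the constraint that it \emph{be} a root sequence (\Cref{lem:admissible_orders}) together with backwards $\Hom$-orthogonality forces the internal order of each $\Psi^{+}$; since $\bricks{\mathcal{G}} = \bricks{\mathcal{G}'}$ as sets of $\Pi$-modules, the same internal orders arise, so $\inva{\mathbf{w}_0}{C} = \inva{\mathbf{w}_0}{C'}$ and we are done by \Cref{prop:inv<=>equiv}.

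The main obstacle I anticipate is exactly this recovery step: showing that the \emph{unordered} set of bricks of a maximal green sequence of $\Pi$ determines the relative $\Hom$-order of any two bricks whose dimension vectors lie in a common non-commutative rank-two subsystem. The subtlety is that backwards $\Hom$-orthogonality is a property of the \emph{sequence}, and a priori one must argue that any two maximal backwards $\Hom$-orthogonal \emph{sequences} with the same underlying set differ only by commutations of $\Hom$-orthogonal pairs — which is the brick analogue of \Cref{lem:heap->equiv}. For the preprojective algebra one should be able to extract this from the fact (Dynkin type) that bricks coincide with indecomposable $\tau$-rigid modules, noted in the final \Cref{rem:unique_maxima}-style remark, combined with the known structure of $\tors \Pi \cong W$; alternatively one transports the whole question through $\phi$ to $W$ and argues purely in terms of rank-two subsystems and \Cref{lem:admissible_orders}, at which point the statement reduces to: the set of roots together with, for each non-commutative rank-two $\Psi$, the orientation dictated by the positions of $\Psi^+$ in any root sequence, is a square-equivalence invariant — which is \Cref{prop:inv<=>equiv}. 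I expect the cleanest proof routes entirely through $W$, reducing the proposition to the observation that $\bricks{\mathcal{G}}$, via $\dimu$ and $\phi$, carries strictly more than the set $\Phi^{+}$ precisely because several bricks of $\Pi$ may share a dimension vector, and that this extra data is exactly what is needed to reconstruct $\inva{\mathbf{w}_0}{C}$.
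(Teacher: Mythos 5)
Your proposal correctly identifies the high-level reduction (transport to root sequences via $\dimu$ and $\phi$, show that $\bricks{\mathcal{G}}$ determines the orientation of every non-commutative rank-two subsystem, then invoke \Cref{prop:inv<=>equiv}) and the key observation that for $\Pi$, unlike for a hereditary algebra, several bricks may share a dimension vector, so $\bricks{\mathcal{G}}$ carries strictly more information than the set $\Phi^{+}$. That framing matches the paper's.

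However, there is a genuine gap at exactly the ``recovery step'' you flag. You assert that the unordered set $\bricks{\mathcal{G}}$ ``determines the relative $\Hom$-order up to commutation,'' and you appeal variously to the admissibility condition of \Cref{lem:admissible_orders}, to $\tau$-rigidity of bricks, and to a hoped-for brick analogue of \Cref{lem:heap->equiv}, but none of these actually closes the loop. Admissibility alone does \emph{not} pin down the orientation of $\Psi^{+}$: both total orders on the positive roots of a rank-two subsystem are admissible and occur in genuine root sequences. The relative order of two bricks $L, M$ in a backwards $\Hom$-orthogonal sequence is determined only if at least one of $\Hom_{\Pi}(L,M)$, $\Hom_{\Pi}(M,L)$ is nonzero; if both vanish, the two bricks can sit in either order, and your argument gives no reason to rule this out. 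The paper supplies exactly the missing ingredient: for bricks $L, M$ with $\dimu L = \alpha$, $\dimu M = \beta$ lying in a common non-commutative rank-two subsystem and chosen so that $\sbf{\alpha}{\beta} > 0$, the Crawley-Boevey--type formula
\[
\sbf{\dimu M}{\dimu L} = \dim \Hom_{\Pi}(M,L) + \dim \Hom_{\Pi}(L,M) - \dim \Ext^{1}_{\Pi}(M,L)
\]
(\cite{cb}, and \cite[Theorem~4.2]{dst} in the non-simply-laced case) forces $\Hom_{\Pi}(M,L) \neq 0$ or $\Hom_{\Pi}(L,M) \neq 0$, and then backwards $\Hom$-orthogonality of both $\mathcal{G}$ and $\mathcal{G}'$ yields the contradiction. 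Without this nonvanishing input, the claim that the brick set determines rank-two orientations is unsupported, and your reduction to \Cref{prop:inv<=>equiv} cannot be carried out.
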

\begin{proof}
We have that $\mathcal{G}$ gives a total order $<_{\mathcal{G}}$ on the set of positive roots $\Phi^{+}$ by taking $\dimu$ of the corresponding maximal backwards $\Hom$-orthogonal sequence of bricks, and we similarly denote the total order on $\Phi^{+}$ from $\mathcal{G}'$ by~$<_{\mathcal{G}'}$.
We claim that $<_{\mathcal{G}'}$ orders every non-commutative rank-two subsystem in the same direction as in $<_{\mathcal{G}}$.
Suppose, to the contrary, that there is a non-commutative rank-two subsystem $\Psi$ of $\Phi$ such that $\Psi^{+}$ is ordered differently by $<_{\mathcal{G}}$ and $<_{\mathcal{G}'}$.
Let $\alpha$ and $\gamma$ be the simple roots of $\Psi$, and let $\beta$ be a non-simple positive root of $\Psi$ such that $\sbf{\alpha}{\beta} > 0$.
It can be verified case-by-case that all non-commutative rank-two subsystems have such a root~$\beta$.
Since $\mathcal{G}$ and $\mathcal{G}'$ have the same bricks, the roots $\alpha, \beta, \gamma$ must correspond to the same bricks $L, M, N$ in $\mathcal{G}$ and $\mathcal{G}'$.
Suppose for contradiction that this triple is ordered $\alpha <_{\mathcal{G}} \beta <_{\mathcal{G}} \gamma$ and $\gamma <_{\mathcal{G}'} \beta <_{\mathcal{G}'} \alpha$.
We now use an argument similar to that of \cite[Theorem~4.8]{enomoto_bruhat}.
By \cite[Lemma~1]{cb} and \cite[Lemma~4.7]{enomoto_bruhat}
in the simply-laced case, \cite[Theorem~4.2]{dst} in the non-simply-laced case,
we then have that $0 < \sbf{\dimu M}{\dimu L} = \dim \Hom_{\Pi}(M, L) + \dim \Hom_{\Pi}(L, M) - \dim \Ext_{\Pi}^{1}(M, L)$. 
Thus, either $\Hom_{\Pi}(M, L) \neq 0$ or $\Hom_{\Pi}(L, M) \neq 0$.
However, the former contradicts the bricks' being ordered $L <_{\mathcal{G}} M <_{\mathcal{G}} N$, and the latter contradicts the bricks' being ordered $N <_{\mathcal{G}'} M <_{\mathcal{G}'} L$.
Therefore, $<_{\mathcal{G}}$ and $_{\mathcal{G}'}$ order all packets in the same direction as each other.
Hence, $\mathcal{G}$ and $\mathcal{G}'$ are equivalent by \Cref{prop:inv<=>equiv}.
\end{proof}

\begin{lemma}\label{lem:min_mgs}
There exists a unique equivalence class of maximal green sequences $[\mathcal{G}]$ of $\Pi$ such that the bricks in $\bricks{\mathcal{G}}$ are all $\Lambda_c$-modules: the class corresponding to the $c$-sorting word $\mathbf{w}_0(c)$.
\end{lemma}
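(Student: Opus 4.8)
The plan is to establish first that the class $[\mathcal{G}_c]$ attached to $\mathbf{w}_0(c)$ has all brick labels equal to $\Lambda_c$-modules, and then that it is the only such class. For the existence part, let $C_c$ be the maximal chain of $W$ given by $\mathbf{w}_0(c)$, with root sequence $(\beta_1, \dots, \beta_r)$, so that $\mathcal{G}_c$ is the maximal green sequence of $\Pi$ corresponding to $C_c$ under $\phi$ of \Cref{thm:tors=weak}\ref{op:tors=weak:pi_bij}; by definition of the heap, $(\beta_1, \dots, \beta_r)$ is a linear extension of $\heap{\mathbf{w}_0(c)}$. For each $i$, let $M_i$ be the indecomposable $\Lambda_c$-module with $\dimu M_i = \beta_i$, which exists and is unique by Gabriel's theorem \cite{bgp,gabriel,dlab-ring}. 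The key claim is that $(M_1, \dots, M_r)$ is a backwards $\Hom$-orthogonal sequence of bricks of $\Pi$.

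To prove the claim, note that since the $M_i$ are $\Lambda_c$-modules we have $\Hom_{\Pi}(M_j, M_i) = \Hom_{\Lambda_c}(M_j, M_i)$, so it suffices to rule out, for $i < j$, a nonzero morphism $M_j \to M_i$. Such a morphism would be non-invertible (as $M_i \ncong M_j$), and since $\Lambda_c$ is representation-finite its infinite radical vanishes, so the morphism would be a sum of composites of irreducible morphisms and hence exhibit a path from $M_j$ to $M_i$ in the Auslander--Reiten quiver of $\Lambda_c$. Under the identification of $\heap{\mathbf{w}_0(c)}$ with this Auslander--Reiten quiver (\cite[Theorem~9.3.1]{stump2015cataland}, \cite{bedard}; compare \Cref{rmk:combinatorial_ar_quiver} and \Cref{lem:rank_2_heap}), such a path forces $\beta_j < \beta_i$ in $\heap{\mathbf{w}_0(c)}$, which contradicts $(\beta_1, \dots, \beta_r)$ being a linear extension together with $i < j$. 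This proves the claim.

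Now $(M_1, \dots, M_r)$ has length $r = |\Phi^{+}| = \ell(w_0)$, which is the common length of all maximal green sequences of $\Pi$, these being the maximal chains of $\torf \Pi \cong W$ and $W$ being graded by length. A backwards $\Hom$-orthogonal sequence that is not maximal extends to a maximal one, which by \cite{dem-kel} is a maximal green sequence and hence has length $\ell(w_0)$; therefore $(M_1, \dots, M_r)$ is already maximal and defines a maximal green sequence $\mathcal{G}'$ of $\Pi$ whose ordered sequence of brick labels is $M_1, \dots, M_r$. By the commutative square of Section~\ref{sect:mgs:back:alg_edge_label}, the maximal chain of $W$ underlying $\mathcal{G}'$ has root sequence $(\dimu M_1, \dots, \dimu M_r) = (\beta_1, \dots, \beta_r)$ and thus equals $C_c$; so $\mathcal{G}' = \mathcal{G}_c$ and $\bricks{\mathcal{G}_c} = \{M_1, \dots, M_r\}$ consists of $\Lambda_c$-modules. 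For uniqueness, suppose $\mathcal{G}$ is any maximal green sequence of $\Pi$ all of whose brick labels are $\Lambda_c$-modules. Along the maximal chain of $W$ corresponding to $\mathcal{G}$ the added roots exhaust $\Phi^{+}$, so the bricks in $\bricks{\mathcal{G}}$ have pairwise distinct dimension vectors ranging over all positive roots; since a $\Pi$-brick that is a $\Lambda_c$-module is an indecomposable $\Lambda_c$-module, and there are exactly $|\Phi^{+}|$ of the latter, $\bricks{\mathcal{G}}$ is the full set of indecomposable $\Lambda_c$-modules, which equals $\bricks{\mathcal{G}_c}$. By \Cref{prop:preproj_bricks}, $[\mathcal{G}] = [\mathcal{G}_c]$.

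The step I expect to be the crux is the control over the direction of nonzero homomorphisms between indecomposable $\Lambda_c$-modules afforded by the heap poset, that is, the identification of $\heap{\mathbf{w}_0(c)}$ with the Auslander--Reiten quiver of $\Lambda_c$. In the simply-laced case this is \cite[Theorem~9.3.1]{stump2015cataland} (see also \cite{bedard}); in the possibly non-simply-laced crystallographic case one must run the same argument with $\Lambda_c$ the hereditary tensor algebra of a $K$-species, using the Dlab--Ringel version of Gabriel's theorem and the rank-two compatibility of \Cref{lem:rank_2_heap}, which is where the care lies. (Alternatively, one could deduce existence from \Cref{thm:c-stable}: the chain $C_c$ has empty inversion set, so none of its covering relations is contracted by $\theta_c$, whence---via the compatibility of brick labels with the Cambrian quotient---all its brick labels are $\Lambda_c$-modules; but this route relies on exactly the categorical statement we wish to keep separate.)
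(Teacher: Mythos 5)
Your proof is correct and takes essentially the same approach as the paper's, just unfolded in more detail: both arguments rest on the identification of $\heap{\mathbf{w}_0(c)}$ with the Auslander--Reiten quiver of $\Lambda_c$ to get a backwards $\Hom$-orthogonal sequence, the fact that all maximal green sequences of $\Pi$ have length $\ell(w_0)$ to get maximality, and \Cref{prop:preproj_bricks} together with the dimension-vector count for uniqueness. Your version spells out the step identifying the constructed sequence with $\mathcal{G}_c$ via root sequences and flags the non-simply-laced species-theoretic care explicitly, which the paper's three-line proof leaves implicit, but there is no genuine difference in method.
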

\begin{proof}
The bricks of $\Lambda$ form a backwards Hom-orthogonal sequence under any total order which extends the order given by the Auslander--Reiten quiver of $\Lambda_c$.
This backwards Hom-orthogonal sequence then must form a maximal green sequence $\mathcal{G}$ for $\Pi$, since it contains exactly one brick for each root.
Since this maximal green sequence is defined by its bricks, it must be unique up to equivalence, by \Cref{prop:preproj_bricks}.
The fact that this corresponds to the equivalence class of $\mathbf{w}_0(c)$ follows from \cite[Theorem~9.3.1]{stump2015cataland}.
Indeed, the Auslander--Reiten quiver of $\Lambda_c$ coincides with $\heap{\mathbf{w}_0(c)}$.
\end{proof}

The equivalence relation on maximal green sequences of $\Pi$ thus behaves nicely.
The partial order from \cite{gw1} however has no relations.

\begin{proposition}
The poset $\mge{\Pi}$ has no relations.
\end{proposition}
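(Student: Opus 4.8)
The plan is to show that the partial order on $\mge{\Pi}$ (the poset $\mgse{\Pi}$ of \Cref{def:mgs_poset}) is the trivial one, by exploiting that every maximal green sequence of $\Pi$ has the same length. First I would use the order isomorphism $W \cong \torf\Pi$ from \Cref{thm:tors=weak}\ref{op:tors=weak:pi_bij}: since $\torf\Pi$ is finite, maximal green sequences of $\Pi$ are precisely its maximal chains, which therefore correspond bijectively to maximal chains in the weak Bruhat order on $W$, i.e.\ to reduced words for $w_0$. As every reduced word for $w_0$ has length $\ell(w_0) = |\Phi^{+}|$ --- equivalently, as the weak Bruhat order is graded by $\ell$ --- every maximal green sequence of $\Pi$ consists of exactly $|\Phi^{+}|$ bricks.

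Next I would appeal to the definition of the order in \Cref{def:mgs_poset}: it is the smallest reflexive transitive relation with $[\mathcal{G}] \lessdot [\mathcal{G}']$ whenever $\mathcal{G}$ is an increasing elementary polygonal deformation of $\mathcal{G}'$, and such a deformation strictly decreases the length of a maximal green sequence (immediate from its defining shape, with $l \geqslant 1$). Since any two maximal green sequences of $\Pi$ have equal length, none of them is an increasing elementary polygonal deformation of another, so $\mge{\Pi}$ has no covering relations. Finally, as $\mge{\Pi}$ is a finite poset (because $\torf\Pi$ is finite) and a finite poset with no covering relations is an antichain, I would conclude that $\mge{\Pi}$ has no relations.

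I do not expect a genuine obstacle: the content is a short unwinding of the definitions recalled from \cite{gw1} and \Cref{thm:tors=weak}. The only two points needing a word of care are that the order of \Cref{def:mgs_poset} is generated by increasing elementary polygonal deformations --- so that the absence of such deformations really forces the whole order to be trivial, rather than merely making the Hasse diagram empty --- and that these deformations are length-changing, which is precisely what conflicts with the gradedness of $\torf\Pi \cong W$.
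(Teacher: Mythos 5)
Your proof is correct and reaches the same conclusion, but the key observation differs slightly from the paper's. The paper argues locally: it invokes \Cref{thm:coxeter}\ref{op:coxeter:polygons} to conclude that every polygon of $W \cong \torf\Pi$ corresponds to a rank-two root subsystem, so its two maximal chains have equal length; hence no polygon can have one side of length two and the other of length $\geqslant 3$, which is exactly the shape required by an increasing elementary polygonal deformation. Your proof argues globally: it uses only that the weak Bruhat order on $W$ is graded by length, so every maximal green sequence of $\Pi$ has exactly $|\Phi^{+}|$ bricks, while an increasing elementary polygonal deformation relates sequences whose lengths differ by $l \geqslant 1$. Both observations are elementary and essentially equivalent consequences of the graded polygonal structure of $W$; your route bypasses the rank-two-subsystem description of polygons and relies instead on the more basic gradedness fact. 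Either way, the upshot is identical: there are no increasing elementary polygonal deformations between maximal green sequences of $\Pi$, hence no covering relations in $\mge{\Pi}$, hence no relations. (One small terminological note: the deformation $\mathcal{G}$ of $\mathcal{G}'$ is \emph{longer} than $\mathcal{G}'$ by $l$, with $[\mathcal{G}] \lessdot [\mathcal{G}']$, so strictly speaking the length increases going from $\mathcal{G}'$ to $\mathcal{G}$ and decreases as one goes up the order; the paper's phrase ``decreases the length'' refers to the latter direction, and your phrasing matches this. The substance of your argument is unaffected since the only thing needed is that the lengths differ.)
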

\begin{proof}
We have that $\torf \Pi \cong W$ by \Cref{thm:tors=weak}.
By \Cref{thm:coxeter}\ref{op:coxeter:polygons}, we have that every polygon $P$ of $W$ corresponds to a rank-two subsystem $\Psi_P$ of $\Phi$.
This implies that the two maximal chains of $P$ have the same length.
If this length is greater than two, then $P$ does not give a relation in $\mge{\Pi}$.
If this length is two, then $P$ gives an equivalence, and so does not give a relation either.
\end{proof}

The constructions of Sections~\ref{sect:max_chain_posets} and~\ref{sect:cox:edge_label} show however that there is a non-trivial partial order on $\mge{\Pi}$ for every Coxeter element or, equivalently, every orientation of the Coxeter diagram~$\Delta$.
This explains where the partial order from \cite{gw1} should be trivial on $\mge{\Pi}$: otherwise there would be a single distinguished partial order.
The correct perspective is that there are several equally valid partial orders determined by the different Coxeter elements, each one refining the trivial partial order.
Indeed, given a Coxeter element $c \in W$, when we put the order determined by $\lambda_c$ on $\mgse{\Pi} \cong \mge{\torf \Pi}$, we obtain a contraction of posets $\mgea{\lambda_c}{\torf \Pi} \to \mgse{\Lambda}$.

\begin{corollary}\label{cor:mgs_contraction}
Let $c$ be a Coxeter element of $W$ and $\Lambda_c$ be the corresponding quotient of $\Pi$.
Then there is a contraction of posets $\mgtmap{q_c}\colon \mgea{\lambda_c}{\torf \Pi} \to \mgse{\Lambda_c}$.
For $W$ simply-laced,  
the map is induced by the algebra quotient $\Pi \twoheadrightarrow \Lambda_c$. 
\end{corollary}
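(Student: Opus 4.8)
The plan is to import \Cref{thm:cambrian_dim2_contraction} to the algebraic side through the poset isomorphisms of \Cref{thm:tors=weak}. First I would fix the order isomorphism $\phi\colon W \to \torf\Pi$ of \Cref{thm:tors=weak}\ref{op:tors=weak:pi_bij} and transport the forcing-consistent, polygonal, polygon-complete edge labelling $\lambda^{\mathbf{w}_0(c)}\colon \covrel{W}\to\heap{\mathbf{w}_0(c)}$ of \Cref{thm:coxeter} along it; by the commutative square of Section~\ref{sect:mgs:back:alg_edge_label} this transported labelling is exactly the brick labelling $\lambda_c$ of $\torf\Pi$ with $\brick\Pi$ identified via $\dimu$ with $\heap{\mathbf{w}_0(c)}$. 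Since $\phi$ is an isomorphism of $\heap{\mathbf{w}_0(c)}$-labelled posets, it induces an isomorphism of posets $\mgea{\lambda^{\mathbf{w}_0(c)}}{W}\cong\mgea{\lambda_c}{\torf\Pi}$. Likewise, using the poset isomorphism $\phi_c\colon W_c\to\torf\Lambda_c$ of \Cref{thm:tors=weak}\ref{op:tors=weak:lambda_bij} together with \Cref{prop:mgs_poset_iso}, one identifies $\mgse{\Lambda_c}$ with $\mgea{\lambda_c}{W_c}$, the latter being the poset appearing as the codomain in \Cref{thm:cambrian_dim2_contraction}.

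With these identifications in place, the Cambrian congruence $\theta_c$ on $W$ corresponds under $\phi$ to a lattice congruence on $\torf\Pi$ whose quotient is $\torf\Lambda_c$ (via $\phi_c$), and \Cref{def:quotient_edge_labelling} shows that the resulting quotient edge labelling on $\torf\Lambda_c$ corresponds under $\phi_c$ to the labelling $\lambda_c$ on $W_c$. Hence the induced map $\mgtmap{q_c}\colon \mgea{\lambda_c}{\torf\Pi}\to\mgse{\Lambda_c}$ is conjugate, through the isomorphisms above, to the map $\mgtmap{q_c}\colon \mgea{\lambda^{\mathbf{w}_0(c)}}{W}\to\mgea{\lambda^{\mathbf{w}_0(c)}_{\theta_c}}{W_c}$ of \Cref{thm:cambrian_dim2_contraction}. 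As being a contraction of posets is invariant under conjugation by poset isomorphisms, \Cref{thm:cambrian_dim2_contraction} yields that $\mgtmap{q_c}$ is a contraction of posets. This reasoning uses no simply-lacedness hypothesis: for possibly non-simply-laced $W$ the algebra-side Cambrian quotient $\torf\Pi\to\torf\Lambda_c$ is by definition the one transported along $\phi$ and $\phi_c$ from $q_c\colon W\to W_c$.

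For the final assertion, suppose $W$ is simply-laced. Then \Cref{thm:tors=weak}\ref{op:tors=weak:commutativity} provides an ideal $I$ of $\Pi$ with $\Lambda_c\cong\Pi/I$ and a commutative square identifying the lattice quotient $\torf\Pi\to\torf\Lambda_c$ constructed above with $\mathcal{F}\mapsto\mathcal{F}\cap\modules\Lambda_c$, which is precisely the map on torsion-free classes induced by the algebra surjection $\Pi\twoheadrightarrow\Pi/I\cong\Lambda_c$; consequently the associated map on maximal chains, i.e.\ on maximal green sequences, is induced by this algebra quotient. The only genuine content beyond assembling earlier results is the compatibility of the three edge labellings under $\phi$ and $\phi_c$ — that $\phi$ intertwines $\lambda^{\mathbf{w}_0(c)}$ with the heap-valued brick labelling of $\torf\Pi$, and that $\phi_c$ intertwines the quotient labelling $\lambda^{\mathbf{w}_0(c)}_{\theta_c}$ with $\lambda_c$ on $\torf\Lambda_c$ — and this is the step I expect to require the most care, although it follows formally from \Cref{def:quotient_edge_labelling}, the $\dimu$-square of Section~\ref{sect:mgs:back:alg_edge_label}, and the fact that $\phi$ and $\phi_c$ are lattice isomorphisms, so no new argument is needed.
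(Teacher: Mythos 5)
Your proof is correct and follows essentially the same route as the paper's, which simply cites \Cref{thm:coxeter}\ref{op:coxeter:contraction} (equivalently, \Cref{thm:cambrian_dim2_contraction}) together with \Cref{prop:mgs_poset_iso} for the first part and \Cref{thm:tors=weak}\ref{op:tors=weak:commutativity} for the second. Your write-up makes explicit the transport of the labelling and congruence along $\phi$ and $\phi_c$ and the conjugation argument, which the paper leaves implicit, but no new ideas appear on either side.
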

\begin{proof}
The first part follows from combining \Cref{thm:coxeter}\ref{op:coxeter:contraction} with \Cref{prop:mgs_poset_iso}. The second part then follows by \Cref{thm:tors=weak}\ref{op:tors=weak:commutativity}.
\end{proof}

We now explain that our result about $\mgtmap{q_c}$ being a contraction and $\mgse{\Lambda_c}$ having a unique minimum can be used to prove \Cref{thm:tors=weak}\ref{op:tors=weak:commutativity} and the second part of \Cref{cor:mgs_contraction} even in non-simply-laced types.

\begin{proposition}
\label{prop:not_simply_laced}
Let $W$ be finite and crystallographic, but not necessarily simply-laced.
\Cref{thm:tors=weak}\ref{op:tors=weak:commutativity} still holds, and the contraction of posets $\mgtmap{q_c}\colon \mgea{\lambda_c}{\torf \Pi} \to \mgse{\Lambda_c}$ is induced by the algebra quotient.
\end{proposition}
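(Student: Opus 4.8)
The plan is to construct the restriction functor explicitly, show it induces the combinatorial contraction $\mgtmap{q_c}$, and read off \Cref{thm:tors=weak}\ref{op:tors=weak:commutativity} from this. First I would recall from the species construction (\cite{aoki2022fans}, and \cite{dst} in the symmetrisable case) that there is a surjection of $K$-algebras $\pi\colon\Pi\twoheadrightarrow\Lambda_c$, so that restriction of scalars is a fully faithful exact embedding $\mod\Lambda_c\hookrightarrow\mod\Pi$ whose essential image — the modules annihilated by $\ker\pi$ — is closed under submodules, quotients and extensions. Consequently $\mathcal{T}\mapsto\mathcal{T}\cap\mod\Lambda_c$ and $\mathcal{F}\mapsto\mathcal{F}\cap\mod\Lambda_c$ define order-preserving maps $G'\colon\tors\Pi\to\tors\Lambda_c$ and $G\colon\torf\Pi\to\torf\Lambda_c$ compatible with the canonical anti-isomorphisms; they are surjective because, for a torsion pair $(\mathcal{T}',\mathcal{F}')$ in $\mod\Lambda_c$, one has $G(\hor{\mathcal{T}'})=\mathcal{F}'$, the orthogonal being computed in $\mod\Pi$. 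Using \Cref{thm:brick_labels} and the standard fact that a covering relation $\mathcal{F}_1\lessdot\mathcal{F}_2$ of $\torf\Pi$ with brick label $B$ satisfies $\mathcal{F}_2=\Filt(\mathcal{F}_1\cup\{B\})$, I would then check that $G\mathcal{F}_1\lessdot G\mathcal{F}_2$ in $\torf\Lambda_c$ with the same brick label $B$ when $B\in\mod\Lambda_c$, and $G\mathcal{F}_1=G\mathcal{F}_2$ otherwise.

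From this, the formal consequences follow essentially as in Section~\ref{sect:lattice_quot_and_max_chains}: $G$ sends maximal chains of $\torf\Pi$ to maximal chains of $\torf\Lambda_c$ (cf.\ \Cref{lem:max_chains}), hence induces a surjection $\rho\colon\mge{\torf\Pi}\to\mge{\torf\Lambda_c}=\mgse{\Lambda_c}$, which by construction is the map on maximal green sequences induced by the algebra quotient $\pi$; and since a polygon of $\torf\Pi$ corresponds under the isomorphism of \Cref{thm:tors=weak}\ref{op:tors=weak:pi_bij} to a rank-two root subsystem, the description of $G$ on covering relations shows $G$ carries such a polygon either to a polygon or to a chain of length at most two, so the arguments of \Cref{thm:contraction} make $\rho$ a contraction of posets $\mgea{\lambda_c}{\torf\Pi}\to\mgse{\Lambda_c}$. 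Moreover, the set of covering relations collapsed by $G$ depends only on the brick label, and the brick labelling is forcing-consistent (\cite[Theorem~3.11]{dirrt}), so this set is forcing-closed and is therefore the collapsed set of a lattice congruence $\theta_G$ on $W\cong\torf\Pi$, with the $\theta_G$-classes equal to the fibres of $G$; in particular $G$ factors as the lattice quotient $\torf\Pi\to\torf\Pi/\theta_G$ followed by an order-preserving bijection $\bar G\colon\torf\Pi/\theta_G\to\torf\Lambda_c$ that sends covering relations to covering relations.

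The heart of the matter is to show $\theta_G=\theta_c$, equivalently that $G$ collapses a covering relation $u\lessdot v$ of $W$ exactly when the Cambrian congruence $\theta_c$ does. Writing $\beta$ for its root label and $B$ for its $\Pi$-brick label (so $\dimu B=\beta$), by \Cref{lem:contraction_criterion} this amounts to the statement that $B\in\mod\Lambda_c$ if and only if $\beta\in\inv{\pi_{\downarrow}^{c}(v)}$. One implication will come from \Cref{lem:cstable} (valid for all finite $W$): if $\beta\in\inv{\pi_{\downarrow}^{c}(v)}$ then $u\lessdot v$ is $c$-stable, and I would then show — generalising the arguments of \cite{enomoto_bruhat} to the species setting via \cite[Remark~1.7, Proposition~4.5]{dst} — that the brick label of a $c$-stable covering relation is a $\Lambda_c$-module, concretely by identifying $B$ with the $\Lambda_c$-brick labelling the covering relation $\pi_{\downarrow}^{c}(u)\lessdot\pi_{\downarrow}^{c}(v)$ of $W_c\cong\torf\Lambda_c$ (i.e.\ by matching the quotient of the $\Pi$-brick labelling with the intrinsic $\Lambda_c$-brick labelling, the species analogue of \cite[Theorem~3.11(b)]{dirrt}). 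The reverse implication is then automatic by finiteness: $\theta_G$ and $\theta_c$ are comparable and $|\torf\Pi/\theta_G|=|\torf\Lambda_c|=|W/\theta_c|$ (the $\theta_G$-classes being the $G$-fibres and $G$ surjective), so $\theta_G=\theta_c$; consequently $\torf\Pi/\theta_G\cong W_c\cong\torf\Lambda_c$ have the same number of covering relations, $\bar G$ is a bijection on Hasse-diagram edges, hence a poset isomorphism, and comparing brick labels identifies $\bar G$ with $\phi_c$. This gives the commutative diagram of \Cref{thm:tors=weak}\ref{op:tors=weak:commutativity}, whence $G$ is a lattice quotient and, via \Cref{prop:mgs_poset_iso} and \Cref{prop:max_chains_eq}, $\rho=\mgtmap{q_c}$, i.e.\ the contraction of \Cref{cor:mgs_contraction} is induced by the algebra quotient.

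The main obstacle is the representation-theoretic step in the third paragraph: showing that the brick label of a $c$-stable covering relation is a $\Lambda_c$-module in the possibly non-simply-laced setting. This is the only point that genuinely uses the species $(D_s,{}_sM_{s'})$ rather than the combinatorics of the weak order, and it is what stands in for the inductive proof of \Cref{thm:c-stable}, which we have only established in simply-laced type. A way to organise (or shorten) this local step, in line with the contraction philosophy, is to exploit that both $\rho$ and $\mgtmap{q_c}$ are contractions onto $\mgse{\Lambda_c}$, which has a unique minimum (\Cref{cor:minima_and_maxima}), and that both send the unique minimum $[\mathbf{w}_0(c)]$ of $\mgea{\lambda_c}{\torf\Pi}$ — the class characterised by \Cref{lem:min_mgs} as the one whose bricks are exactly the $\Lambda_c$-modules — to $\min\mgse{\Lambda_c}$, and then to propagate the equality along increasing polygon moves, each governed for either map by whether the associated rank-two subsystem survives in the quotient.
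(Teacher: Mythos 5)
Your overall outline runs parallel to the paper's proof: realise the algebra quotient as a lattice quotient $G\colon\torf\Pi\to\torf\Lambda_c$ with congruence $\theta_G$ (the paper sources this directly from \cite[Theorems~5.12(a), 5.15(a)]{dirrt} rather than re-deriving it via restriction of scalars, but the content is the same), show that every covering relation contracted by $\theta_G$ is contracted by $\theta_c$, and conclude equality by finiteness together with the isomorphism $W_c\cong\torf\Lambda_c$ from \Cref{thm:tors=weak}\ref{op:tors=weak:lambda_bij}. That shape, and the final counting step, match the paper. However, your main route for the inclusion $\theta_G\subseteq\theta_c$ has a genuine gap: after applying \Cref{lem:cstable} (which is indeed available for all finite $W$), you still need the claim that a $c$-stable covering relation has brick label in $\modules\Lambda_c$, which is the hard direction of \Cref{thm:c-stable} transported across the algebraic dictionary. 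The paper only proves that hard direction (\Cref{lem:difficult direction}) in simply-laced type --- its proof relies on the bounds $\sbf{\gamma_i}{\beta}\leqslant 1$, $\sbf{\gamma_i}{\alpha_s}\geqslant -1$ and on all non-commutative rank-two subsystems being of type $A_2$ --- and \Cref{prop:not_simply_laced} is designed precisely to circumvent the need to extend it. ``Generalising the arguments of Enomoto to the species setting'' is not a routine step; it would amount to proving \Cref{thm:c-stable} in non-simply-laced type, which is an open question at the level of what the paper establishes.

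Your final paragraph does identify the paper's actual strategy: start from $[\mathbf{w}_0(c)]$, which by \Cref{lem:min_mgs} is the class whose bricks are all $\Lambda_c$-modules (so none of its edges is contracted by either congruence), and use the contraction property of $\mgtmap{q_c}$ (\Cref{thm:cambrian_dim2_contraction}, valid for all finite $W$) together with \Cref{cor:minima_and_maxima} to produce, for any target chain, a sequence of polygon moves from $[\mathbf{w}_0(c)]$ along which to propagate the inclusion. But the propagation step is the whole content of the argument, and you only gesture at it (``each governed \dots by whether the associated rank-two subsystem survives''). In the paper's induction one must analyse the edges of the polygon $P_i$ relating $C_i$ and $C_{i+1}$: the two extreme edges of $C_{i+1}\cap P_i$ are forcing-equivalent to the extreme edges of $C_i\cap P_i$, so the inductive hypothesis covers them, while for the remaining interior edges one applies \Cref{lem:length_two_max_chain} to see that $q_c(C_{i+1}\cap P_i)$ has length at most two, hence those edges are automatically $\theta_c$-contracted. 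Without spelling out that case analysis, the fallback remains a plan rather than a proof.
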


\begin{proof}
We have a natural algebra map 
$\Pi \twoheadrightarrow \Lambda_c$ in all  Dynkin types including non-simply-laced, as $\Pi$ can be described as $T_A(M \oplus DM)/\langle 1_{{}_A M} - 1_{M_A}\rangle$ 
whenever $\Lambda_c$ is a finite-dimensional hereditary tensor algebra $T_A(M)$, with $A$ a basic finite-dimensional semisimple algebra over a field and $M$ is a finite-dimensional $A$-bimodule on which this field acts centrally,\footnote{We can also interpret $\Pi$ as $T_{\Lambda_c}(\tau^{-} \Lambda_c)$ in this generality, but the description we use makes it clearer that $\Lambda_c \cong \Pi/I$, where $I$ is  the ideal generated by $DM$.} which is the case in particular in Dynkin types.
By \cite[Theorem~5.12(a)]{dirrt}, the quotient $\Pi \twoheadrightarrow \Lambda_c$ induces a lattice quotient $\torf \Pi \twoheadrightarrow \torf \Lambda_c$ given by $\mathcal{F} \mapsto \mathcal{F} \cap \modules \Lambda_c$.
We denote the corresponding lattice congruence by $\theta_{\mathsf{alg}}$.
By \cite[Theorem~5.15(a)]{dirrt}, $\theta_{\mathsf{alg}}$ is precisely the lattice congruence which contracts covering relations labelled by bricks which are not $\Lambda_c$-modules.
We want to show that $\theta_{\mathsf{alg}}$ coincides with the image of $\theta_c$ under the isomorphism $W \overset{\sim}{\to} \torf \Pi$. 
 
We first show that every edge contracted by $\theta_{\mathsf{alg}}$ is contracted by $\theta_c$. 
We recall \Cref{thm:cambrian_dim2_contraction} and Corollary~\ref{cor:minima_and_maxima} and let $\mathbf{w}_0$ be a reduced expression of $w_0$.
Together these results imply that there exists a sequence of maximal chains $(C_1, C_2, \dots, C_k)$ of $W \cong \torf \Pi$ with the following properties.
\begin{enumerate}
    \item The chain $C_k$ corresponds to the reduced expression $\mathbf{w}_0$.
    \item The chain $C_1$ corresponds to the reduced expression $\mathbf{w}_0(c)$.
    \item For all $i$, $C_i$ and $C_{i + 1}$ are related by a polygon move.
    \item For all $i$, one of the following holds.
        \begin{enumerate}
            \item $q_c(C_i)$ and $q_c(C_{i + 1})$ are equivalent. In particular, we may have $q_c(C_i) = q_c(C_{i + 1})$.
            \item $q_c(C_{i + 1})$ is an increasing polygon move of $q_c(C_i)$.
        \end{enumerate}
\end{enumerate}
We know from Corollary~\ref{cor:minima_and_maxima} that we can find a sequence of increasing polygon moves in $W_c$ from $q_c(C_1)$ to $q_c(C_k)$, and then we use the fact from \Cref{thm:cambrian_dim2_contraction} that $\mgtmap{q_c}$ is a contraction of posets to lift this sequence of polygon moves in $W_c$ to the above sequence of maximal chains $(C_1, C_2, \dots, C_k)$ with the desired properties.
(Note that we do not ask all non-square polygon moves in this sequence to be increasing in $\mgea{\lambda^{\mathbf{w}_0(c)}}{W}$: at the moment we do not know if this is possible, as while we prove that $\mgtmap{q_c}$ is a contraction, it is not known whether it is a weak congruence, see Remark~\ref{rem:weak_order_congruence} below.)

We prove inductively on the sequence $(C_1, C_2, \dots, C_k)$ that every edge of $C_k$ contracted by $\theta_{\mathsf{alg}}$ is contracted by $\theta_c$.
For the base case, we have by \Cref{lem:min_mgs} that none of the edges of $C_1$ is contracted by $\theta_{\mathsf{alg}}$, and we also have by construction that no edges of $C_1$ are contracted by~$\theta_c$. 
For the inductive step, we need to show the following claim: if every edge of $C_i$ contracted by $\theta_{\mathsf{alg}}$ is contracted by $\theta_c$, then every edge of $C_{i + 1}$ contracted by $\theta_{\mathsf{alg}}$ is contracted by $\theta_c$. 
Let $P_i$ be the polygon relating $C_i$ and $C_{i+1}$, and denote $\check{p}_i := \min P_i$ and $\hat{p}_i := \max P_i$.
Let $a$ and $b$ be the respective atom and coatom of $P_i$ which are in $C_{i}$, and similarly let $a'$ and $b'$ be the respective atom and coatom of $P_i$ which are in $C_{i + 1}$.

Suppose for contradiction that there is an edge $x \lessdot y$ of $C_{i + 1}$ that is contracted by $\theta_{\mathsf{alg}}$ and not by $\theta_c$.
First note that $x \lessdot y$ must be an edge of $C_{i + 1} \cap P_i$, otherwise it is also an edge of $C_i$, which contradicts the induction hypothesis.
Then note that we cannot have $(x, y) = (\check{p}_i, a')$, since this edge is forcing-equivalent to $(b, \hat{p}_i)$.
Indeed, if $\check{p}_i \lessdot a'$ were contracted by $\theta_{\mathsf{alg}}$ and not $\theta_c$, then we would have to have that $b \lessdot \hat{p}_i$ was contracted by $\theta_{\mathsf{alg}}$ and not by $\theta_c$, which contradicts the induction hypothesis.
Similarly, we cannot have that $(x, y) = (b', \hat{p}_i)$.

Now note that by our assumption on the sequence $(C_1, C_2, \dots, C_k)$ and \Cref{lem:length_two_max_chain}, we must have that $q_c(C_{i + 1} \cap P_i)$ is a chain of length at most two.
Hence, every edge of $C_{i + 1} \cap P_i$ is contracted by $\theta_{c}$ apart from possibly $\check{p}_i \lessdot a'$ and $b \lessdot \hat{p}_i$.
This leaves no further possibility for $x \lessdot y$, and so we conclude that every edge of $C_{i + 1}$ which is contracted by $\theta_{\mathsf{alg}}$ is contracted by~$\theta_c$.

This establishes the inductive step, and so we conclude that every edge in every maximal chain in $W \cong \torf \Pi$ contracted by $\theta_{\mathsf{alg}}$ is contracted by $\theta_c$.
Since every covering relation in $W$ belongs to some maximal chain, 
every edge in $W \cong \torf \Pi$ contracted by $\theta_{\mathsf{alg}}$ is contracted by $\theta_c$.
So $W_c$ is isomorphic to a quotient of $\torf \Lambda_c$, and this quotient is $\torf \Lambda_c$ itself if and only if $\theta_c$ does not contract any edges not contracted by $\theta_{\mathsf{alg}}$. But this is exactly the case by \Cref{thm:tors=weak}\ref{op:tors=weak:lambda_bij}.
This completes the proof of 
\Cref{thm:tors=weak}\ref{op:tors=weak:commutativity}.
The second part of the statement follows directly as in 
the proof of \Cref{cor:mgs_contraction}.
\end{proof}

\begin{corollary}
\label{cor:ascending_none_contracted}
Let $P$ be a non-square polygon in $W$ such that both its edges from the minimum to atoms are not contracted by $\theta_c$. Then none of the edges in the ascending chain in $P$ with respect to $\lambda^{\mathbf{w}_0(c)}$ is contracted by $\theta_c$.
\end{corollary}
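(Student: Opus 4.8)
The plan is to combine the polygonal structure of $P$ (\Cref{thm:coxeter}\ref{op:coxeter:polygons}) with the characterisation of contracted edges in terms of $c$-stability (\Cref{thm:c-stable}) and the key structural fact \Cref{lem:length_two_max_chain}. Let $P = \polygon{C_1}{C_2}$ be a non-square polygon in $W$, with minimum $\check p$, maximum $\hat p$, and let $\mathcal{H}'$ (equivalently, the rank-two subsystem $\Psi$) be the rank-two data attached to $P$. By \Cref{thm:coxeter}\ref{op:coxeter:polygons}, $\Psi$ is non-commutative and both maximal chains of $P$ order $\Psi^+$, in opposite directions. Exactly one of these orders agrees with $\heap{\mathbf{w}_0(c)}$; say $C_1$ is the ascending chain with respect to $\lambda^{\mathbf{w}_0(c)}$, so $C_2$ is descending, and by \Cref{lem:length_two_max_chain} the image $q_c(C_2)$ has length two. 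By hypothesis, the two edges $\check p \lessdot a$ (with $a \in C_1$ an atom) and $\check p \lessdot a'$ (with $a' \in C_2$ an atom) are not contracted by $\theta_c$. Since $\check p \lessdot a$ is forcing-equivalent to $b' \lessdot \hat p$ (with $b'$ the coatom in $C_2$), and $\check p \lessdot a'$ is forcing-equivalent to $b \lessdot \hat p$ (with $b$ the coatom in $C_1$), all four ``corner'' edges of $P$ are uncontracted.

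Now I would argue as follows. Let $\beta_1, \beta_2, \dots, \beta_m$ be the positive roots of $\Psi$ ordered as in $\heap{\mathbf{w}_0(c)}$, so that the ascending chain $C_1$ has edge labels $\beta_1, \beta_2, \dots, \beta_m$ in this order and passes through elements $\check p = x_0 \lessdot x_1 \lessdot \cdots \lessdot x_m = \hat p$ with $\inv{x_j} \cap \Psi^+ = \{\beta_1, \dots, \beta_j\}$. I want to show each edge $x_{j-1} \lessdot x_j$ is $c$-stable, i.e., that $x_j$ is $c$-aligned with respect to every non-commutative rank-two subsystem $\Xi$ containing $\beta_j$. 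For $\Xi = \Psi$ itself this is immediate: $\inv{x_j} \cap \Psi^+$ is an initial segment of $(\beta_1, \dots, \beta_m)$, hence $x_j$ is $c$-aligned with respect to $\Psi$. The content is therefore in the subsystems $\Xi \neq \Psi$ through $\beta_j$. Here I would use the corner hypothesis: the edge $\check p \lessdot a$ labelled $\beta_1$ is not contracted, so by \Cref{lem:contraction_criterion} $\beta_1 \in \inv{\pi_\downarrow^c(a)}$; similarly $\beta_m = \lambda^{\mathbf{w}_0(c)}(b, \hat p) \in \inv{\pi_\downarrow^c(\hat p)}$ (using the forcing-equivalence $\check p \lessdot a' \sim b \lessdot \hat p$ and that $\check p \lessdot a'$ is uncontracted). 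The plan is to feed $\beta_j \in \inv{x_j}$ and the fact that $\beta_j$ lies in the "biconvex-closed" initial segment between the uncontracted roots $\beta_1$ and $\beta_m$ into \Cref{lem:cstable}: one shows $\beta_j \in \inv{\pi_\downarrow^c(x_j)}$, and then \Cref{thm:c-stable} gives that $x_{j-1} \lessdot x_j$ is $c$-stable, hence uncontracted.

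The cleanest route to $\beta_j \in \inv{\pi_\downarrow^c(x_j)}$ is to observe that, since $q_c(C_1)$ is the ascending chain of the polygon $q_c(P)$ in $W_c$ (which by \Cref{thm:contraction}/\Cref{lem:length_two_max_chain} and the construction of $\lambda_{\theta_c}$ has all its edges surviving when $C_1$ is ascending and the two ends survive), none of the edges of $C_1 \cap P$ get contracted. Concretely: $q_c(P)$ is a non-square polygon of $W_c$ by \Cref{prop:polygonal_lattices}\ref{op:polygonal_lattices:quotient}, its two corner edges at $q_c(\check p)$ survive because $\check p \lessdot a$ and $\check p \lessdot a'$ do, and by \Cref{lem:length_two_max_chain} the \emph{descending} chain of $q_c(P)$ — which is the image of $C_2$ — has length two; therefore $q_c(P)$ must have the form of a polygon whose ascending chain is the image of $C_1$, forcing $q_c(C_1 \cap P)$ to have length $m$, i.e., no edge of $C_1 \cap P$ is contracted. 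That is exactly the conclusion. The main obstacle, and the step I would spend the most care on, is verifying that the corner survival hypotheses genuinely force $q_c(C_1 \cap P)$ to have full length rather than collapsing some middle edge: one has to rule out the possibility that $q_c(P)$ degenerates so that the image of the ascending chain loses an interior edge while the image of the descending chain still has length two. This requires combining \Cref{lem:length_two_max_chain} (applied to $q_c(P)$ as a polygon of $W_c$, pulling back via \Cref{lem:polygon_preimages}) with the forcing-equivalences among the corner edges to pin down the combinatorial type of $q_c(P)$; once that is done, the statement follows.
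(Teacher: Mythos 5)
Your approach diverges significantly from the paper's, which reduces via \Cref{prop:not_simply_laced} to the algebraic statement about $\theta_{\mathsf{alg}}$ and then argues with brick labellings: it computes $\Ext^1_{\Pi}$ versus $\Ext^1_{\Lambda_c}$ for the two atoms' bricks $M,N$, identifies the polygon in $\torf\Lambda_c$ with the torsion-free-class lattice of a rank-two hereditary algebra via Jasso reduction, and concludes from the cluster-lattice structure that no ascending edge is contracted. You try instead to stay entirely on the combinatorial side with $c$-stability, heap posets, and rank-two subsystems. That is a reasonable instinct, but the argument as you have it has a gap that you yourself flag and do not fill.

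The problem is precisely the step you call "the main obstacle." What you have established so far --- that all four corner edges of $P$ survive (by hypothesis and forcing-equivalence), and that $q_c(C_2)$ has length two (by \Cref{lem:length_two_max_chain}) --- does not rule out the scenario where $q_c(C_1)$ also collapses to length two, i.e.\ $q_c(P)$ is a square. \Cref{lem:length_two_max_chain} only tells you that \emph{if} $q_c(P)$ is non-square then its length-two side is the descending one; it says nothing when $q_c(P)$ is square. And the route you propose to rule this out --- showing $\beta_j \in \inv{\pi_\downarrow^c(x_j)}$ for the interior $x_j$ of the ascending chain --- is, by \Cref{lem:contraction_criterion}, literally \emph{equivalent} to the conclusion you want. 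You acknowledge this ("That is exactly the conclusion") but then only state that you would "spend the most care on" it, without giving an argument. As it stands, this is circular. What's needed is an independent input that forces the interior edges of $C_1$ to survive, and the paper's input is exactly the representation-theoretic one: $q_c(P)$ is identified with the lattice of torsion-free classes of a rank-two hereditary algebra of the same Dynkin type as $\Psi$, whose structure (pentagon/hexagon/octagon for $A_2/B_2/G_2$) forces the ascending side to have full length.

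A secondary issue: your plan leans on \Cref{thm:c-stable}, which is only proved for simply-laced $W$, whereas the corollary is stated and proved for all finite crystallographic types (the paper's proof dodges this exactly by routing through $\theta_{\mathsf{alg}}$ and \Cref{prop:not_simply_laced}). So even if you closed the gap above, you would at best have recovered the simply-laced case.
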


\begin{proof}
By \Cref{prop:not_simply_laced}, we can equivalently prove the statement about $\theta_{\mathsf{alg}}$.
Let $M, N$ be the bricks labelling the edges from the minimum to atoms, which therefore belong to $\mod \Lambda_c$.
We also have $\Hom(M, N) = \Hom(N, M) =0$ both over $\Pi$ and over $\Lambda_c$, and $\Ext^1_{\Lambda_c}(M, N) = 0$ or $\Ext^1_{\Lambda_c}(N, M) = 0$.
We know that $\sbf{\dimu M}{\dimu N}$ is the symmetrisation of the Euler form and has a description in terms of $\Hom$ and $\Ext^1_{\Pi}$ as in the proof of \Cref{prop:preproj_bricks}.
From this it follows that if, say, $\Ext^1_{\Lambda_c}(N, M) = 0$, then $\dim \Ext^1_{\Pi}(M, N) = \dim \Ext^1_{\Lambda_c}(M, N)$, but since one is
isomorphic to
a subspace of the other,  
we have $ \Ext^1_{\Pi}(M, N) \cong \Ext^1_{\Lambda_c}(M, N)$.
Then the polygon in $\torf \Lambda_c$ corresponds to the lattice of torsion-free classes of the hereditary algebra with two isomorphism classes of simples with extensions given by $\Ext^1_{\Lambda_c}(M, N) \cong \Ext^1_{\Pi}(M, N)$ by \cite[Theorem~3.12 and Corollary~3.19]{jasso_red}.
This is the hereditary algebra of the same type as the rank-two subsystem formed by $\dimu M, \dimu N$. 
Hence, the polygon is isomorphic to the cluster complex for the same rank-two Dynkin type, and we know that none of the edges there is contracted
(see e.g., \cite[Proposition~22 and its proof]{hubery} and references in \emph{loc.cit.}).
Further, the other side has precisely two non-contracted edges, as $\Ext^1_{\Lambda_c}(N, M) = 0$; this together with \Cref{lem:length_two_max_chain} and its proof (or alternatively \Cref{lem:rank_2_heap}) immediately imply that the side with contracted edges is the descending one with respect to $\lambda^{\mathbf{w}_0(c)}$, and so the side with no edges being contracted is the ascending one.
\end{proof}

\begin{remark}
Hothem \cite{hothem,hothem_thesis} defines higher Bruhat orders for words in $S_n$ besides $w_0$.
Using the algebraic interpretation of the weak Bruhat order on $S_n$ in terms of $\torf \Pi$ of Dynkin type $A$, this can be interpreted as higher Bruhat orders for maximal chains inside a particular torsion-free class. 
\end{remark}

\begin{remark} \label{rem:extrema}
In our previous work \cite{gw1} we showed that the poset $\mgse{\Lambda_c}$ has a natural maximum and a unique minimum, see \Cref{cor:minima_and_maxima}.
We cannot deduce the same property for $\mgea{\lambda_c}{\Pi}$ from \Cref{cor:mgs_contraction} since the fibres of contractions $\mgtmap{q_c}$ are not always intervals.
\end{remark}

\section{Relation to the higher Bruhat orders and higher Stasheff--Tamari orders}\label{sect:hbo_hst}

In this section, we briefly explain the relation between our general setting in this paper and the map from the higher Bruhat orders to the higher Stasheff--Tamari orders first considered by Kapranov and Voevodsky \cite[Theorem~4.4]{kv-poly}.
This corresponds to the case where the Coxeter diagram of $(W, S)$ is the $A_n$ Dynkin diagram with vertices labelled $1$ to $n$ in order (so that $W$ is the symmetric group $\mathfrak{S}_{n+1}$), and $c = s_1s_2 \dots s_n$.
We fix such $(W, S)$ and $c$, with $\Lambda_c$ the corresponding hereditary path algebra, throughout this section, unless specified otherwise.
We refer to \cite[Definition~2.2]{ms} for the definition of the higher Bruhat orders $\bruhat{n}{d}$.

\begin{proposition}
$\mgea{\lambda^{\mathbf{w}_0(c)}}{W} \cong \bruhat{n}{2}$.
\end{proposition}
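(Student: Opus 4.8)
The plan is to exhibit an explicit order isomorphism between $\mgea{\lambda^{\mathbf{w}_0(c)}}{W}$ and $\bruhat{n}{2}$, where $W = \mathfrak{S}_{n+1}$ and $c = s_1 s_2 \cdots s_n$. First I would recall from Section~\ref{sect:cox:edge_label} that a maximal chain of $W$ corresponds to a reduced word for $w_0$, and by \Cref{thm:coxeter}\ref{op:coxeter:heap_vs_commutation} its square-equivalence class corresponds to a commutation class of reduced words for $w_0$ in $\mathfrak{S}_{n+1}$. Such commutation classes are exactly the elements of the (one-higher) Bruhat order in the Manin--Schechtman picture; more precisely, commutation classes of reduced words for $w_0 \in \mathfrak{S}_{n+1}$ are in bijection with elements of $\bruhat{n}{2}$. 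I would make this bijection precise by tracking, for a root sequence $(\beta_1, \dots, \beta_l)$ of a maximal chain, which pairs $\{i < j\}$ (equivalently, which positive roots $e_i - e_j$ of type $A_n$) appear in which relative order; by \Cref{lem:admissible_orders} these root sequences are exactly the "admissible orders" of \cite{ms}, and the relevant data is precisely an element of $\bruhat{n}{2}$.

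Next I would identify the order relations on both sides. On the $\mgea{\lambda^{\mathbf{w}_0(c)}}{W}$ side, by \Cref{prop:inv_cov_rels} the covering relations are given by adding a single inverted non-commutative rank-two subsystem $\Psi \in \rtss{\Phi}$ to the inversion set $\inva{\mathbf{w}_0(c)}{[C]}$. In type $A_n$, the non-commutative rank-two subsystems are exactly those of type $A_2$, with positive roots $\{e_i - e_j, e_j - e_k, e_i - e_k\}$ for $i < j < k$, so these correspond bijectively to the $\binom{n+1}{3}$ triples, i.e. to the $2$-element "packets" of \cite{ms}. Under the bijection of the previous paragraph, $\inva{\mathbf{w}_0(c)}{[C]}$ corresponds to the inversion set of the corresponding element of $\bruhat{n}{2}$ (this is where the choice $c = s_1 \cdots s_n$ matters: the heap poset $\heap{\mathbf{w}_0(c)}$ orders each $A_2$-subsystem in the "standard" direction, making the empty inversion set correspond to the lexicographically-least admissible order, which is the minimum of $\bruhat{n}{2}$). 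Then \Cref{prop:inv_cov_rels} says covering relations correspond exactly to adding one triple to the inversion set, which is precisely the definition of the covering relations of $\bruhat{n}{2}$ from \cite[Definition~2.2]{ms}. Since both orders are generated by their covering relations and both are partial orders (by \Cref{thm:coxeter}\ref{op:coxeter:partial_order} and \cite{ms}), the bijection on underlying sets that is compatible with covering relations is automatically an order isomorphism.

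The main obstacle I expect is the careful verification that the combinatorial bijection is set up so that the \emph{initial} object matches — that is, that the unique minimal maximal chain (the $c$-sorting word $\mathbf{w}_0(c)$, which by \Cref{cor:minima_and_maxima} is the minimum of $\mgea{\lambda_c}{W}$ only \emph{after} passing to the Cambrian quotient, but which here with $\lambda^{\mathbf{w}_0(c)}$ has empty inversion set and hence is the minimum of $\mgea{\lambda^{\mathbf{w}_0(c)}}{W}$) corresponds to the distinguished minimum of $\bruhat{n}{2}$, namely the lexicographic order on pairs. This amounts to checking that $\heap{\mathbf{w}_0(c)}$ for $c = s_1 \cdots s_n$ orders the positive root $e_i - e_k$ \emph{between} $e_i - e_j$ and $e_j - e_k$ in the way prescribed by the lexicographic admissible order of \cite{ms}; this is a direct computation with the $c$-sorting word of $w_0$ in $\mathfrak{S}_{n+1}$, for instance using \Cref{lem:rank_2_heap} together with the explicit form of the skew-symmetrised Euler form in linearly oriented type $A$. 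Once this base point is pinned down, the rest is a matter of matching definitions, and I would also remark (as the subsequent text of the paper presumably does) that a different choice of Coxeter element $c$ yields an isomorphism with a \emph{reoriented} higher Bruhat order $\bruhat{n}{2}$ in the sense of \cite{ms}, recovering the claim in the abstract.
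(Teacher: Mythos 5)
Your proposal is correct and takes essentially the same approach as the paper's proof: identify $\mge{W}$ with commutation classes of reduced words for $w_0$ (equivalently, elements of $\bruhat{n}{2}$), then observe that for $c = s_1 s_2 \cdots s_n$ the heap poset $\heap{\mathbf{w}_0(c)}$ orders positive roots lexicographically, so that the inversion-set data and hence the order relations on both sides coincide. You spell out the covering-relation matching via \Cref{prop:inv_cov_rels} more explicitly than the paper does (the paper simply asserts the conclusion once the lexicographic claim is in hand), but the underlying argument is the same.
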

\begin{proof}
It is well-known and straightforward to show that $\bruhat{n}{1}$ is isomorphic to the weak Bruhat order on~$W$.
Elements of $\bruhat{n}{2}$ are square-equivalence classes of maximal chains in $\bruhat{n}{1}$.
Hence, we have that $\mge{W}$ is in bijection with $\bruhat{n}{2}$.
Let $\alpha_i$ be the simple root corresponding to $s_i$.
It is well-known that for our choice of $c$, $\mathbf{w}_0(c)$ orders $\Psi^{+}$ in the ``lexicographic order'', meaning that $\alpha_i + \dots \alpha_j < \alpha_{i'} + \dots \alpha_{j'}$ if and only if $i < i'$ or $i = i'$ and $j < j'$.
This implies that the order on $\bruhat{n}{2}$ is the same as the order on $\mgea{\lambda^{\mathbf{w}_0(c)}}{W}$, as desired.
\end{proof}

\begin{remark}\label{rmk:c_sort_hbo}
The partial orders $\mgea{\lambda^{\mathbf{w}_0}}{W}$ for $W = \mathfrak{S}_{n+1}$ are precisely the re-oriented higher Bruhat orders (RHBOs), as considered in \cite[Section~3]{felsner_weil}.
The partial orders $\mgea{\lambda^{\mathbf{w}_0(c)}}{W}$ for $\mathbf{w}_{0}(c)$ a $c$-sorting word for $\mathbf{w}_0$ for some~$c$ (possibly different from our standard choice $s_1 s_2 \ldots s_n$) form a particular subclass.
We note that in \textit{op.\ cit.}\ it is shown that in general RHBOs are quite poorly behaved: in particular, they may not have a unique minimum or maximum.
It is interesting to observe that the examples of this phenomenon given in \cite[Section~3]{felsner_weil} do not come from $c$-sorting words.
This can be seen using the following reasoning.
By \cite[Theorem~9.3.1]{stump2015cataland}, 
the Hasse diagram of the heap poset $\heap{\mathbf{w_0}(c)}$ is the Auslander--Reiten quiver of $\Lambda_c$.
One can take the equivalence class of maximal chains used to define the pathological RHBOs in \cite[Section~3]{felsner_weil}, compute their heap poset, and observe that the resulting Hasse diagram cannot be the Auslander--Reiten quiver of $\Lambda_c$ for any $c$. 

We tentatively expect that the class of RHBOs produced by $c$-sorting words are reasonably well-behaved, compared to other RHBOs.
Several other authors have studied higher Bruhat orders in other Dynkin types \cite{wellman,sv,dkk_hbo}.
\end{remark}

\begin{remark}
The posets $\bruhat{n}{2}$ are the two-dimensional case of the higher Bruhat orders $\bruhat{n}{d}$.
The posets $\bruhat{n}{d}$ are defined inductively as having elements square-equivalence classes of maximal chains in $\bruhat{n}{d - 1}$.
The partial order on $\bruhat{n}{d}$ is then defined as the transitive closure of single-step inclusion of the inversion sets of the maximal chains.
It is also possible to consider the poset $\bruhat{n}{d}_{\subseteq}$ where the order relation is defined by inclusion of inversion sets.
It is known that $\bruhat{n}{d}$ and $\bruhat{n}{d}_{\subseteq}$ coincide for $d \leqslant 2$ \cite[Theorem~1]{felsner_weil} but otherwise differ in general \cite[Theorem~4.5]{ziegler-bruhat}.
Hence, based on this remark and the previous, it is natural to formulate the following problem.
Note that a positive answer to \ref{op:coxeter_inclusion:inclusion} here also gives a positive answer to~\ref{op:coexeter_inclusion_minmax}.
\end{remark}

\begin{problem} \label{pbm:coxeter_inclusion}
Let $(W, S)$ be a finite Coxeter system and $c$ a Coxeter element.
\begin{enumerate}[label=\textup{(}\roman*\textup{)}]
\item Does the order $\mgea{\lambda^{\mathbf{w}_0(c)}}{W}$ have a unique minimum and maximum?\label{op:coexeter_inclusion_minmax}
\item Does the order $\mgea{\lambda^{\mathbf{w}_0(c)}}{W}$ coincide with the partial order on the same underlying set given by inclusion of inversion sets?\label{op:coxeter_inclusion:inclusion}
\end{enumerate}
\end{problem}

For the poset $\mgea{\lambda^{\mathbf{w}_0(c)}_{\theta_c}}{W_c}$, we instead obtain the higher Stasheff--Tamari orders $\stash{n + 2}{3}$, for whose definition we refer to \cite[p.132]{er} or \cite[Definition~2.13]{njw-equal}.

\begin{proposition}
$\mgea{\lambda^{\mathbf{w}_0(c)}_{\theta_c}}{W_c} \cong \stash{n + 2}{3}$.
\end{proposition}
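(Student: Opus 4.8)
The plan is to identify the poset $\mgea{\lambda^{\mathbf{w}_0(c)}_{\theta_c}}{W_c}$ with the higher Stasheff--Tamari order $\stash{n+2}{3}$ by tracking what the contraction $\mgtmap{q_c}$ does at the level of objects and covering relations, and matching this against the inductive (maximal-chain) definition of $\stash{n+2}{3}$ from \cite{kv-poly,er,njw-equal}. The starting point is the classical fact that the $c$-Cambrian lattice $W_c$ for $W = \mathfrak{S}_{n+1}$ and $c = s_1 s_2 \cdots s_n$ is isomorphic to the Tamari lattice, which is precisely $\stash{n+2}{2}$; this is in the literature (Reading \cite{reading_cambrian}, and it also follows from the identification $W_c \cong \torf \Lambda_c$ with $\Lambda_c$ the linearly oriented $A_n$ path algebra). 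Since $\stash{n+2}{3}$ is \emph{defined} to have elements the square-equivalence classes of maximal chains of $\stash{n+2}{2}$, we automatically get a bijection of underlying sets $\mge{W_c} \cong \stash{n+2}{3}$. So, exactly as in the preceding proposition, the only real content is matching the two partial orders.

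First I would unwind the order on $\stash{n+2}{3}$: by \cite[Definition~2.13]{njw-equal} (or \cite[p.132]{er}), covering relations of $\stash{n+2}{3}$ are generated by ``increasing flips'' of maximal chains of $\stash{n+2}{2}$ across the intervals of $\stash{n+2}{2}$ which are polygons --- concretely, across the pentagons coming from the associahedron (and squares, which give the equivalence). This is word-for-word the definition of increasing polygon moves for the edge labelling $\lambda^{\mathbf{w}_0(c)}_{\theta_c}$, \emph{provided} the edge labelling orients each pentagon of $W_c$ the same way that the higher Stasheff--Tamari flip does. So the crux is: for each non-square polygon $P$ of $W_c$ (necessarily a pentagon, since $W_c$ is a Tamari/associahedron lattice), show that the ascending maximal chain of $P$ with respect to $\lambda^{\mathbf{w}_0(c)}_{\theta_c}$ coincides with the ``lower'' side of the corresponding pentagon in the higher Stasheff--Tamari sense, i.e.\ the side that gets flipped upwards. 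By \Cref{lem:length_two_max_chain} the descending chain of such a pentagon has length two and the ascending one has length three, which already pins down the combinatorial shape; what remains is to check the \emph{orientation} agrees with the one used to define $\stash{n+2}{3}$. For the linear Coxeter element, $\mathbf{w}_0(c)$ puts the positive roots $\alpha_i + \cdots + \alpha_j$ in lexicographic order (as recalled in the proof of $\mgea{\lambda^{\mathbf{w}_0(c)}}{W} \cong \bruhat{n}{2}$), and under the standard dictionary between positive roots of $A_n$ and arcs/diagonals of the $(n+2)$-gon, this lexicographic order is exactly the order used by Kapranov--Voevodsky to orient the associahedron; so the ascending chain of each pentagon is the one the higher Stasheff--Tamari order declares to be below. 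Tracing this through gives $[C] \preordot_{\lambda^{\mathbf{w}_0(c)}_{\theta_c}} [C']$ if and only if $[C] \lessdot [C']$ in $\stash{n+2}{3}$, and since both are the transitive closures of their covering relations this yields the isomorphism.

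An alternative, and probably cleaner, route is to invoke the map of Kapranov--Voevodsky directly: by \cite[Theorem~4.4]{kv-poly} there is a surjection $\bruhat{n}{2} \to \stash{n+2}{3}$ induced by the quotient $\bruhat{n}{1} \to \stash{n+2}{2}$ on maximal chains, and by the preceding proposition $\bruhat{n}{2} \cong \mgea{\lambda^{\mathbf{w}_0(c)}}{W}$ with the quotient map being exactly $\mgtmap{q_c}$ (once one checks that the Cambrian congruence $\theta_c$ on $W = \mathfrak{S}_{n+1}$ is the congruence whose quotient is the Tamari lattice --- this is Reading's original example). Then $\mgea{\lambda^{\mathbf{w}_0(c)}_{\theta_c}}{W_c}$ is the codomain of $\mgtmap{q_c}$ by \Cref{thm:cambrian_dim2_contraction}, and one only needs that two maps out of $\bruhat{n}{2}$ with the same underlying set-map and both defined as transitive closures of single-step increasing polygon moves agree; the Kapranov--Voevodsky map is precisely of this form, so the posets coincide.

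The main obstacle I anticipate is purely bookkeeping rather than conceptual: pinning down the compatibility of conventions --- that the Cambrian congruence $\theta_c$ for the linear $c$ really is the Tamari congruence on $\mathfrak{S}_{n+1}$, and that the lexicographic order on positive roots induced by $\mathbf{w}_0(c)$ matches the ordering of diagonals that \cite{kv-poly,er,njw-equal} use to orient $\stash{n+2}{3}$. Both are ``well-known'' and appear in the cited sources, but they need to be stated precisely so that the identification of the two edge-orientations of each pentagon is unambiguous; once that is fixed, the argument is a direct comparison of the two generating sets of covering relations.

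\begin{proof}
By \cite{reading_cambrian}, for $W = \mathfrak{S}_{n+1}$ and $c = s_1 s_2 \cdots s_n$ the Cambrian congruence $\theta_c$ is the congruence on the weak Bruhat order whose quotient is the Tamari lattice, so $W_c$ is isomorphic to the Tamari lattice, which is $\stash{n+2}{2}$ (see \cite[p.132]{er}, \cite[Definition~2.13]{njw-equal}). The poset $\stash{n+2}{3}$ has as its underlying set the square-equivalence classes of maximal chains of $\stash{n+2}{2}$, so the identification $W_c \cong \stash{n+2}{2}$ gives a bijection $\mge{W_c} \cong \stash{n+2}{3}$ of underlying sets; moreover this bijection is compatible with the bijection $\mge{W} \cong \bruhat{n}{2}$ of the preceding proposition under $\mgtmap{q_c}$ and the quotient map $\bruhat{n}{2} \to \stash{n+2}{3}$ of Kapranov--Voevodsky \cite[Theorem~4.4]{kv-poly}.

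It remains to match the partial orders. By \Cref{thm:cambrian_dim2_contraction} the covering relations of $\mgea{\lambda^{\mathbf{w}_0(c)}_{\theta_c}}{W_c}$ are generated by increasing polygon moves across the non-square polygons of $W_c$. Since $W_c$ is a Tamari lattice, its non-square polygons are the pentagons coming from the associahedron, and by \Cref{lem:length_two_max_chain} the descending chain of each such pentagon (with respect to the edge labelling $\lambda_c$) has length two while the ascending chain has length three. On the higher Stasheff--Tamari side, the covering relations of $\stash{n+2}{3}$ are likewise generated by the elementary increasing flips of maximal chains across the pentagons of $\stash{n+2}{2}$, each such flip replacing a length-two side by a length-three side. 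For the linearly oriented Coxeter element, $\mathbf{w}_0(c)$ orders the positive roots $\alpha_i + \cdots + \alpha_j$ of type $A_n$ in lexicographic order, as recalled in the proof of $\mgea{\lambda^{\mathbf{w}_0(c)}}{W} \cong \bruhat{n}{2}$; under the standard bijection between positive roots of $A_n$ and diagonals of the $(n+2)$-gon, this is exactly the ordering of diagonals used to orient the higher Stasheff--Tamari order in \cite{kv-poly,er,njw-equal}. Hence, for every pentagon $P$ of $W_c$, the chain of $P$ which is ascending with respect to $\lambda^{\mathbf{w}_0(c)}_{\theta_c}$ is precisely the chain which the higher Stasheff--Tamari order declares to lie below the other, so an increasing polygon move across $P$ is exactly an increasing flip in $\stash{n+2}{3}$. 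As both $\mgea{\lambda^{\mathbf{w}_0(c)}_{\theta_c}}{W_c}$ and $\stash{n+2}{3}$ are the transitive closures of these generating covering relations on the same underlying set, they coincide, which gives the desired isomorphism.
\end{proof}
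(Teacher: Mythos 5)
Your proof is correct in outline, but it takes a genuinely different route from the paper's. The paper's own proof is a two-line citation chain: it invokes \Cref{prop:mgs_poset_iso} to get $\mgea{\lambda^{\mathbf{w}_0(c)}_{\theta_c}}{W_c} \cong \mgse{\Lambda_c}$, and then cites an external result (\cite[Theorem~5.23(1)]{njw-hst}) that the poset of equivalence classes of maximal green sequences of the linearly oriented type-$A$ path algebra $\Lambda_c$ is isomorphic to $\stash{n+2}{3}$. The paper even remarks that ``one could also show this proposition using more elementary means'' --- and that is exactly what you do: you identify $W_c$ with $\stash{n+2}{2}$, note that both $\mge{W_c}$ and the underlying set of $\stash{n+2}{3}$ are square-equivalence classes of maximal chains of the Tamari lattice, and then match the two generating sets of covering relations (increasing polygon moves for $\lambda^{\mathbf{w}_0(c)}_{\theta_c}$ versus increasing flips for $\stash{n+2}{3}$) by checking that the lexicographic ordering of positive roots under $\mathbf{w}_0(c)$ agrees with the Kapranov--Voevodsky orientation of diagonals. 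Your approach buys self-containedness at the level of this paper (it only uses \Cref{lem:length_two_max_chain} and the lexicographic observation already made for the preceding proposition), while the paper's route buys brevity by outsourcing the combinatorial bookkeeping to the prior paper \cite{njw-hst}. The one place where your argument is thin is precisely the convention-matching step you flag: the assertion that the lexicographic root order corresponds to the orientation used in \cite{kv-poly,er,njw-equal} is stated rather than verified, and would need to be pinned down against the specific definition of $\stash{n+2}{3}$ in those sources before the argument is airtight. Your proposed alternative route via Kapranov--Voevodsky's map $f$ should be used with care: the paper's \Cref{prop:we_have_f}, which establishes exactly the compatibility diagram you want to invoke, comes after and relies on the identification you are trying to prove, so that path risks circularity.
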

\begin{proof}
This follows from \Cref{prop:mgs_poset_iso}, which gives us that $\mgea{\lambda^{\mathbf{w}_0(c)}_{\theta_c}}{W_c} \cong \mgse{\Lambda_c}$, and \cite[Theorem~5.23(1)]{njw-hst} showing that $\mgse{\Lambda_c} \cong \stash{n + 2}{3}$.
Of course, one could also show this proposition using more elementary means.
\end{proof}

We now compare our map to the map $f \colon \bruhat{n}{2} \to \stash{n + 2}{3}$ from \cite[Theorem~4.4]{kv-poly}, for whose definition we refer to \cite[Section~4]{thomas-bst}.

\begin{proposition}\label{prop:we_have_f}
There is a commutative diagram \[
\begin{tikzcd}
    \mgea{\lambda^{\mathbf{w}_0(c)}}{W} \ar[r,"{\mgtmap{q_c}}"] \ar[d,"\wr"] & \mgea{\lambda^{\mathbf{w}_0(c)}_{\theta_c}}{W_c} \ar[d,"\wr"] \\
    \bruhat{n}{2} \ar[r,"f"] & \stash{n + 2}{3}
\end{tikzcd}
\]
\end{proposition}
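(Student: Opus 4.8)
The plan is to observe that, on underlying sets, every map in the square is ``the identity up to relabelling'', so that commutativity reduces to two classical identifications. First I would unwind the two vertical isomorphisms. The left one, from the preceding proposition, does not alter the underlying data: both $\mgea{\lambda^{\mathbf{w}_0(c)}}{W}$ and $\bruhat{n}{2}$ have as underlying set the collection of square-equivalence (equivalently, commutation) classes of maximal chains of the weak order $\bruhat{n}{1}$ on $W=\mathfrak{S}_{n+1}$, and the isomorphism identifies them in the obvious way; the content of that proposition was only that this identification is order-preserving, which used that $\mathbf{w}_0(c)$ orders each rank-two subsystem lexicographically. Likewise, via $\mgea{\lambda^{\mathbf{w}_0(c)}_{\theta_c}}{W_c}\cong\mgse{\Lambda_c}$ (\Cref{prop:mgs_poset_iso}) and $\mgse{\Lambda_c}\cong\stash{n+2}{3}$ (\cite[Theorem~5.23(1)]{njw-hst}), the right vertical isomorphism identifies a square-equivalence class of maximal chains of $W_c$ with the corresponding element of $\stash{n+2}{3}$, once $W_c$ is identified with the Tamari lattice $\stash{n+2}{2}$.

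Second I would pin down the horizontal maps as maps of these underlying sets. By Reading's original example \cite{reading_cambrian} (see also the references collected in the introduction), for the linear Coxeter element $c=s_1s_2\cdots s_n$ the $c$-Cambrian lattice $W_c$ is the Tamari lattice $\stash{n+2}{2}$ and the quotient $q_c\colon W\to W_c$ is exactly the standard lattice quotient $p\colon\bruhat{n}{1}\to\stash{n+2}{2}$ of the weak order onto the Tamari lattice. By \Cref{prop:max_chains_eq}, the top map is then $\mgtmap{q_c}([C])=[\mgmap{q_c}(C)]=[\mgmap{p}(C)]$. On the other hand, by the iterative description of the higher Bruhat and higher Stasheff--Tamari orders in \cite{kv-poly} (with the combinatorial account given in \cite[Section~4]{thomas-bst}), the Kapranov--Voevodsky map $f\colon\bruhat{n}{2}\to\stash{n+2}{3}$ is by definition the map induced on square-equivalence classes of maximal chains by $p$, namely $[C]\mapsto[\mgmap{p}(C)]$. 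Comparing the two expressions, and using that the vertical isomorphisms identify the underlying sets in the obvious way, the square commutes; this is the whole proof.

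The only genuine content, and the step I expect to be the main obstacle, is the convention bookkeeping needed to see that it is this square, rather than a transposed or reflected version, that commutes: one must line up the choice of \emph{right} weak order, the role of $w_0$, the identification of $\bruhat{n}{1}$ with the weak order, the linear Coxeter element $s_1\cdots s_n$, and the labelling of the $(n+2)$-gon used to define $\stash{n+2}{2}$ and $\stash{n+2}{3}$, all consistently with the conventions already fixed in the two preceding propositions and in \cite{kv-poly,thomas-bst,njw-hst}. Once these are fixed, the identification $q_c=p$ and the description of $f$ as the chain-induced map of $p$ are both standard and no further computation is required. An alternative, slightly more laborious route would be to verify commutativity directly on covering relations, using \Cref{thm:coxeter} together with the explicit description of $f$ on covers in \cite{thomas-bst}; but this does not circumvent the same convention-matching, so I would present the short argument above.
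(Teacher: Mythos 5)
Your proof is correct and takes essentially the same route as the paper: the paper also reduces the statement to a one-dimensional commutative square (identifying $q_c$ with the usual weak-order-to-Tamari quotient, citing \cite[Section~5]{thomas-bst} where you cite \cite{reading_cambrian}) and then invokes the fact that $f$ is the induced map on square-equivalence classes of maximal chains, citing \cite[Section~4]{thomas-bst}, together with \Cref{lem:max_chains} and \Cref{prop:max_chains_eq}. The only cosmetic difference is emphasis on convention-matching, which the paper leaves implicit.
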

\begin{proof}
This follows from the fact that before taking equivalence classes of maximal chains, we have the commutative diagram \[
\begin{tikzcd}
W \ar[r,"q_c"] \ar[d,"\wr"] & W_c \ar[d,"\wr"] \\
\bruhat{n}{1} \ar[r,"g"] & \stash{n + 2}{2}.
\end{tikzcd}
\]
Note that $\stash{n + 2}{2}$ is the Tamari lattice, so the right-hand vertical isomorphism is well-known here; see, for instance, \cite[Introduction, Section~6]{reading_cambrian}.
The map $g$ is the usual map from the higher Bruhat order to the Tamari lattice, which is known to coincide with $q_c$ by \cite[Section~5]{thomas-bst}.
Then the commutative diagram in the statement of the proposition results from applying this commutative diagram to the individual elements of maximal chains as in \Cref{lem:max_chains} and Corollary~\ref{prop:max_chains_eq}.
In other words, $f = \mgtmap{g}$, which is shown in \cite[Section~4]{thomas-bst}.
\end{proof}

\begin{corollary}\label{cor:map_f_contraction}
The map $f \colon \bruhat{n}{2} \to \stash{n + 2}{3}$ is a contraction of posets.
\end{corollary}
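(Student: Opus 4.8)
The plan is to deduce \Cref{cor:map_f_contraction} directly from the machinery already assembled, with no new combinatorial work on higher Bruhat orders themselves. The key observation is that \Cref{prop:we_have_f} exhibits $f$ as the same map, up to the canonical isomorphisms, as $\mgtmap{q_c} \colon \mgea{\lambda^{\mathbf{w}_0(c)}}{W} \to \mgea{\lambda^{\mathbf{w}_0(c)}_{\theta_c}}{W_c}$ for $W = \mathfrak{S}_{n+1}$ and $c = s_1 s_2 \dots s_n$. So it suffices to know that $\mgtmap{q_c}$ is a contraction of posets in this instance.

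First I would invoke \Cref{thm:cambrian_dim2_contraction}, which states precisely that for any finite Coxeter group $W$ with Coxeter element $c$ and Cambrian quotient $q_c \colon W \to W_c$, the induced map $\mgtmap{q_c} \colon \mgea{\lambda^{\mathbf{w}_0(c)}}{W} \to \mgea{\lambda^{\mathbf{w}_0(c)}_{\theta_c}}{W_c}$ is a contraction of posets. (Recall this itself follows from \Cref{thm:coxeter}\ref{op:coxeter:contraction}, giving a contraction of \emph{preordered} sets, together with \cite[Lemma~5.3]{cebrian2022directed} and the fact that both the domain and codomain are already posets by \Cref{thm:coxeter}\ref{op:coxeter:partial_order} and \Cref{prop:cambrian_partial_order}, so the collapse functor acts as the identity.) Applying this with $W = \mathfrak{S}_{n+1}$ and $c = s_1 s_2 \dots s_n$ gives that $\mgtmap{q_c}$ is a contraction of posets in exactly the case at hand.

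Then I would transport this along the two vertical isomorphisms of \Cref{prop:we_have_f}: by the two preceding propositions, the left-hand vertical map is an isomorphism $\mgea{\lambda^{\mathbf{w}_0(c)}}{W} \cong \bruhat{n}{2}$ and the right-hand vertical map is an isomorphism $\mgea{\lambda^{\mathbf{w}_0(c)}_{\theta_c}}{W_c} \cong \stash{n+2}{3}$. Being a contraction of posets is a property preserved under pre- and post-composition with order isomorphisms: surjectivity, connectedness of fibres, and the lifting property for covering relations all transfer verbatim. Since the square in \Cref{prop:we_have_f} commutes, $f$ is identified with $\mgtmap{q_c}$ under these isomorphisms, and hence $f$ is a contraction of posets. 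The only thing to be slightly careful about is that the isomorphisms in \Cref{prop:we_have_f} are genuinely poset isomorphisms (not merely order-preserving bijections), which is what the two propositions immediately preceding it assert; granting that, there is no real obstacle here — the corollary is a formal consequence of results already in place. In particular, it follows that $f$ has connected fibres, recovering and strengthening the observation in the introduction that although these fibres need not be intervals (by \cite{thomas-bst}), they are always connected.

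\begin{proof}
By \Cref{prop:we_have_f}, the map $f \colon \bruhat{n}{2} \to \stash{n+2}{3}$ is identified, via the vertical poset isomorphisms in that diagram, with the map $\mgtmap{q_c} \colon \mgea{\lambda^{\mathbf{w}_0(c)}}{W} \to \mgea{\lambda^{\mathbf{w}_0(c)}_{\theta_c}}{W_c}$ for $W = \mathfrak{S}_{n+1}$ and $c = s_1 s_2 \dots s_n$. By \Cref{thm:cambrian_dim2_contraction}, this map is a contraction of posets. Since the notion of a contraction of posets is preserved under composition with poset isomorphisms on either side, it follows that $f$ is a contraction of posets.
\end{proof}
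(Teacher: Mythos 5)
Your proof is correct and takes essentially the same approach as the paper: identify $f$ with $\mgtmap{q_c}$ via \Cref{prop:we_have_f} and invoke the general contraction result for Cambrian quotients. If anything, your citation of \Cref{thm:cambrian_dim2_contraction} is slightly more precise than the paper's, which cites \Cref{thm:coxeter}\ref{op:coxeter:contraction} (a contraction of \emph{preordered} sets) and leaves the upgrade to a contraction of \emph{posets} implicit, whereas \Cref{thm:cambrian_dim2_contraction} already packages the collapse step needed to conclude the poset statement.
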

\begin{proof}
This follows from \Cref{thm:coxeter}\ref{op:coxeter:contraction} and \Cref{prop:we_have_f}.
\end{proof}

The surjectivity of the map $f \colon \bruhat{n}{2} \to \stash{n + 2}{3}$ was proven in \cite[Proposition~6.1]{thomas-bst}, while the fact that it satisfies the other defining properties of contractions is genuinely new.
It is known that the fibres of the map $f \colon \bruhat{n}{2} \to \stash{n + 2}{3}$ do not always have unique maxima \cite[Section~6]{thomas-bst} and so cannot always be intervals.
Hence, the map is quite far from being an order quotient.
However, Corollary~\ref{cor:map_f_contraction} shows that it is still a reasonably nice quotient of posets, in particular, it has connected fibres. 

\begin{remark} \label{rem:weak_order_congruence}
There exists another notion of quotient maps of posets not mentioned in the background \Cref{sect:back:poset_quotients} but relevant in the context of the map $f$. Namely, an order-preserving map of posets $f\colon P \to Q$ is \emph{strong} if for any  relation $y < y'$ in $Q$, there exists a relation $x < x'$ of $P$ such that $f(x) = y$ and $f(x') = y'$, and a congruence on a poset is a \emph{weak order congruence} if and only if it is the kernel of a strong map, see \cite[Section~4.3]{njw-survey} and references therein. Just as for contractions, for a finite poset every order congruence is a weak order congruence, but the converse is not true. Roughly speaking, quotients by weak order congruences are surjective on relations, while contractions are surjective on covering relations and have connected fibres; neither of these two classes of quotients contains the other.

Just as with the higher Bruhat orders, the higher Stasheff--Tamari orders $\stash{n + 2}{3}$ likewise exist in higher dimensions and are denoted $\stash{n + 2}{3}$.
The map $f$ then extends to a map $f \colon \bruhat{n}{d} \to \stash{n + 2}{3}$.
A difficult open problem is the surjectivity of the map $f$ in higher dimensions. 
It is natural to ask other nice properties of~$f$; the above mentioned result of Thomas \cite{thomas-bst} implies that it cannot be an order quotient for $d \geqslant 2$.
An open conjecture is that $f$ is a quotient by a weak order congruence \cite[Theorem~4.10]{kv-poly}, \cite[Conjecture~8.9]{njw-survey}.
Following Corollary~\ref{cor:map_f_contraction}, it is natural to make the following conjecture.
\end{remark}

\begin{conjecture}
The map $f \colon \bruhat{n}{d} \to \stash{n + 2}{d+1}$ is a contraction of posets for all $n, d \in \mathbb{N}$.
\end{conjecture}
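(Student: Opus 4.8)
The natural plan is to induct on the dimension $d$, exploiting the iterative construction of both families: $\bruhat{n}{d}$ has as elements the square-equivalence classes of maximal chains of $\bruhat{n}{d-1}$, $\stash{n+2}{d+1}$ has as elements the equivalence classes of maximal chains of $\stash{n+2}{d}$, and $f$ in dimension $d$ is the map induced on maximal chains by $f$ in dimension $d-1$. The base cases $d\leqslant 2$ are known: for $d=0$ both posets are points, for $d=1$ the map $\bruhat{n}{1}\to\stash{n+2}{2}$ is the lattice quotient $q_c\colon W\to W_c$ from the weak order on $\mathfrak{S}_{n+1}$ to the Tamari lattice and so a contraction, and $d=2$ is \Cref{cor:map_f_contraction}. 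For the induction to propagate I would carry along a stronger hypothesis than ``contraction'', namely that $f$ in each dimension also \emph{sends covering relations to covering relations or to equalities}; this is what makes the induced map on maximal chains well defined on chains (so that $\mgmap{f}$ lands in maximal chains at all, unlike a general contraction, which may lengthen relations), and it holds at $d=1$ by \Cref{lem:cov_rel} and at $d=2$ because, by the proof of \Cref{thm:contraction}\ref{op:contraction:contraction}, $\mgtmap{q_c}$ of an increasing polygon move is either trivial or again an increasing polygon move, i.e.\ a covering relation by polygon-completeness.

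The inductive step is then an abstract statement I would isolate first: \emph{if $g\colon P\to Q$ is a contraction of finite posets with the covers-or-equalities property, and $P,Q$ carry forcing-consistent polygon-complete polygonal edge labellings valued in their inversion posets, then the induced map on equivalence classes of maximal chains is again a contraction with the covers-or-equalities property}. Granting this, at each stage one equips $\bruhat{n}{d-1}$ and $\stash{n+2}{d}$ with the edge labellings recording which inverted $(d+1)$-subset (resp.\ which local flip datum) is added, exactly as for the two-dimensional case, checks that $f_{d-1}$ has the covers-or-equalities property directly from its description on inversion sets, and then runs the arguments of Section~\ref{sect:lattice_quot_and_max_chains}: well-definedness on square-equivalence classes as in \Cref{prop:max_chains_eq}, connectedness of fibres as in \Cref{lem:polygon_connected}, and lifting of covers as in the proof of \Cref{thm:contraction}\ref{op:contraction:contraction}.

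The main obstacle to making this rigorous is that neither $\bruhat{n}{d-1}$ nor $\stash{n+2}{d}$ is a lattice once $d\geqslant 3$, whereas the proofs of the key inputs in Section~\ref{sect:lattice_quot_and_max_chains}—most visibly \Cref{lem:max_chains}, \Cref{lem:polygon_preimages}, and \Cref{lem:polygon_connected}—use meets and joins essentially: preimages of polygons are controlled because $q^{-1}(P)$ is an interval $[x,y]$, and the connectedness argument builds intermediate maximal chains out of elements like $x\wedge x'$ and $y\wedge y'$. So the real work is to produce lattice-free substitutes, and here I would fall back on the explicit combinatorial models—elements of $\bruhat{n}{d}$ as consistent sets of inverted $(d+1)$-subsets of $[n]$, elements of $\stash{n+2}{d+1}$ as triangulations of the cyclic polytope $C(n+2,d+1)$, polygon moves as the elementary flip moves of Manin--Schechtman and Kapranov--Voevodsky, and the known direct description of $f$—hoping that connectedness of fibres and lifting of covers can be argued purely in these terms, with cyclic-polytope geometry replacing the role of infima and suprema.

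Even with all of this in place, the single most serious difficulty is that surjectivity of $f$ is part of the definition of a contraction, and surjectivity of $f$ in dimensions $d\geqslant 3$ is precisely the long-standing open conjecture of Kapranov and Voevodsky \cite{kv-poly} (known only for $d\leqslant 2$, via \cite[Proposition~6.1]{thomas-bst}); so a full proof necessarily subsumes that problem. The realistic intermediate target is therefore the conditional statement ``if $f$ is surjective in dimension $d$, then $f$ is a contraction in dimension $d$'', which should follow from the inductive scheme above once the non-lattice analogues of the Section~\ref{sect:lattice_quot_and_max_chains} lemmas are established. As a further line of attack on the unconditional statement I would try to combine the induction on $d$ with the Manin--Schechtman deletion recursion relating $\bruhat{n}{d}$ to $\bruhat{n-1}{d}$ and $\bruhat{n-1}{d-1}$, in the spirit of the recursion $\pi^{c}_{\downarrow}(w)=s\,\pi^{scs}_{\downarrow}(sw)$ used in the proof of \Cref{lem:difficult direction}, so as to reduce every case to smaller parameters where surjectivity might be pinned down inductively.
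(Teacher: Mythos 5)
This statement is labelled a \emph{conjecture} in the paper; the authors explicitly leave it open, remarking only that it is ``natural to make'' after \Cref{cor:map_f_contraction}. There is therefore no proof of it in the paper against which to compare your attempt, and any proposal must necessarily remain incomplete.

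That said, your diagnosis of the obstructions is essentially correct and matches the paper's own framing. The machinery of Sections~\ref{sect:lattice_quot_and_max_chains} and~\ref{sect:max_chain_posets} is built around finite \emph{lattices} and \emph{lattice congruences}: \Cref{lem:max_chains}, \Cref{lem:polygon_preimages}, and \Cref{lem:polygon_connected} use meets and joins in essential ways, and the fact that preimages of singletons and of polygons are intervals. Once $d\geqslant 2$, neither $\bruhat{n}{d}$ nor $\stash{n+2}{d+1}$ is a lattice, so the iterative scheme you describe cannot simply re-run those lemmas; and you are right that this is where ``the real work'' lies. You are also right that surjectivity of $f$ for $d\geqslant 3$ is exactly the open Kapranov--Voevodsky problem, which the paper acknowledges both in the introduction and in \Cref{rem:weak_order_congruence}; any unconditional proof of the conjecture must subsume it. The honest part of your proposal --- identifying the conditional statement ``surjectivity in dimension~$d$ implies contraction in dimension~$d$'' as the realistic intermediate target --- is a sensible reformulation, but even that step is not routine: the lattice-free substitutes you wave at (``cyclic-polytope geometry replacing the role of infima and suprema'') are precisely what is missing, and there is no argument currently known that delivers them. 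So your write-up is a reasonable research plan, not a proof, and should not be presented as the latter.
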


We also raise the following problem, which is the analogue of \Cref{thm:cambrian_dim2_contraction} for weak order congruences.

\begin{problem}
Let $W$ be a finite Coxeter group, with $c$ a Coxeter element and $q_c \colon W \to W_c$ the Cambrian quotient.
Is the induced map \[
\mgtmap{q_c} \colon \mgea{\lambda^{\mathbf{w}_0(c)}}{W} \to \mgea{\lambda^{\mathbf{w}_0(c)}_{\theta_c}}{W_c}
\]
a quotient by a weak order congruence?
\end{problem}

We now discuss another setting where the two-dimensional higher Bruhat orders and their possible analogues in other Coxeter types are relevant.

\begin{remark} \label{rem:elias}
In the context of Soergel calculus and its ``thicker'' versions, partial orders generalising two-dimensional higher Bruhat orders have been studied \cite{elias_bruhat, hothem, hothem_thesis, Koley, sv} from the following perspective. 
Given a Coxeter group $W$ (which does not have to be of type $A$ in this remark), one associates a Bott--Samelson bimodule $B_{\mathbf{w}}$ to every reduced expression $\mathbf{w}$.
The commutation relations between different expressions give natural isomorphisms of such bimodules, which are compatible in the sense that one can associate a single bimodule to a commutation class, i.e., in case of reduced expressions of $w_0$, to an element of $\mge{W}$. A general braid move corresponds to a morphism  projecting to a common summand. The set of indecomposable Soergel bimodules is parameterised, up to isomorphism and grading shift, by elements of $W$. In particular, there is a canonical Soergel bimodule $B_{w_0}$; for every reduced expression $\mathbf{w}_0$, it appears as an indecomposable direct summand of $B_{\mathbf{w}_0}$ with multiplicity one, and it does not appear as a summand of $B_{\mathbf{w}}$ for reduced expressions $\mathbf{w}$ of shorter elements $w \in W$ (see \cite{Soergel}).
As long as $\mge{W}$ has a partial order satisfying certain conditions including having a unique source and a unique sink, one can recover $B_{w_0}$ 
as the image of the projection induced by a sequence of morphisms going from the source to the sink and back along maximal chains in such an order. The full classification of such nice partial orders is not known. One does not expect to have such an order in type $D_4$; the orders in type $A$ are studied in \cite{elias_bruhat}
and in type $B$ in \cite{sv, Koley}. We note that each such order in these references does have the form  $\mgea{\lambda^{\mathbf{w}_0(c)}}{W}$ for some choice of $c$.
We hope that our systematic approach to partial orders on $\mge{W}$ might be helpful for a better understanding of orders relevant for Soergel calculus.
In particular, it is natural to expect that every relevant order must indeed have the form  $\mgea{\lambda^{\mathbf{w}_0(c)}}{W}$ for some choice of $c$.
While we do not know if every $\mgea{\lambda^{\mathbf{w}_0(c)}}{W}$ has a unique sink and a unique source, we wonder whether the contraction $\mgtmap{q_c}$, its fibres, or the polygon moves may have meaningful interpretations in this context (see also Remark \ref{rem:extrema}).
We thank Ben Elias for comments related to this remark.
\end{remark}

\printbibliography

\end{document}